\documentclass[a4paper,reqno,10pt]{amsart}

\usepackage{amsthm,amsmath,amssymb,mathtools,extarrows,tasks,enumitem,thmtools,xcolor,multirow,makecell}
\usepackage[all,2cell,cmtip]{xy}
\usepackage[hypertexnames=false,draft]{hyperref}
\usepackage[nameinlink]{cleveref}

\newcommand{\mysmallspace}{\mkern 3mu}
\mathchardef\ordinarycomma=\mathcode`,
\makeatletter
\newcommand{\setupactivecomma}{\mathcode`,="8000\begingroup\lccode`~=`,\lowercase{\endgroup\def~}{\ordinarycomma\mysmallspace}}
\newcommand{\dg}[1]{{
  \setupactivecomma
  \left(\begin{smallmatrix}#1\end{smallmatrix}\right)
}}
\newcommand{\xdashrightarrow}[2][]{\ext@arrow 0359\tofill@@{#1}{#2}}
\newcommand{\shortdash}{
  \mathord{\rule[0.54ex]{0.75ex}{0.37pt}}
}
\def\tofill@@{\arrowfill@@\relax\shortdash\dashrightarrow}
\def\arrowfill@@#1#2#3#4{
$\m@th\thickmuskip0mu\medmuskip\thickmuskip\thinmuskip\thickmuskip
   \relax#4#1
   \xleaders\hbox{$#4\mkern1.6mu#2\mkern1.6mu$}\hfill
   #3$
}
\renewcommand{\tocsection}[3]{%
  \indentlabel{\@ifnotempty{#2}{\ignorespaces#1 #2.\quad}}
  {\mdseries\let\normalfont\relax\let\bfseries\relax #3}%
}
\renewcommand{\tocsubsection}[3]{%
  \indentlabel{\hspace{2.2em}\@ifnotempty{#2}{\ignorespaces#1 #2.}}
  \mdseries #3%
}
\renewcommand{\@tocpagenum}[1]{}
\renewcommand{\@pnumwidth}{0pt}

\makeatother
\newcommand{\mytableofcontents}{%
  \begingroup
  \setcounter{tocdepth}{2}%
  \renewcommand{\contentsname}{}%
  \tableofcontents
  \endgroup
}
\let\oldsection\section
\RenewDocumentCommand{\section}{s o m}{%
  \IfBooleanTF{#1}{%
    \oldsection*{\normalfont\bfseries #3}%
  }{%
    \IfNoValueTF{#2}{%
      \oldsection[#3]{\normalfont\bfseries #3}%
    }{%
      \oldsection[#2]{\normalfont\bfseries #3}%
    }%
  }%
}
\pretocmd{\section}{\addtocontents{toc}{\protect\addvspace{0.7em}}}{}{}

\newcommand{\con}[6]{
  #1 \xrightarrow{#2} #3 \xrightarrow{#4} #5 \xdashrightarrow{#6}{}\!\!\!
}
\newcommand{\tri}[6]{
  #1 \xrightarrow{#2} #3 \xrightarrow{#4} #5 \xrightarrow{#6} \Sigma #1
}
\declaretheoremstyle[headfont=\bfseries,bodyfont=\itshape]{plainstyle}
\declaretheoremstyle[headfont=\bfseries,bodyfont=\normalfont]{defstyle}
\declaretheoremstyle[headfont=\bfseries,bodyfont=\normalfont]{remarkstyle}
\declaretheorem[style=plainstyle,numberwithin=section,name=Theorem]{theorem}
\declaretheorem[style=plainstyle,sibling=theorem,name=Lemma]{lemma}
\declaretheorem[style=plainstyle,sibling=theorem,name=Proposition]{proposition}
\declaretheorem[style=plainstyle,sibling=theorem,name=Corollary]{corollary}
\declaretheorem[style=defstyle,sibling=theorem,name=Definition]{definition}
\declaretheorem[style=defstyle,sibling=theorem,name=Example]{example}
\declaretheorem[style=remarkstyle,sibling=theorem,name=Remark]{remark}

\declaretheorem[style=defstyle,sibling=theorem,name=Notation]{notation}
\declaretheorem[style=defstyle,sibling=theorem,name=Condition]{condition}
\crefname{equation}{}{}
\Crefname{equation}{}{}
\let\OriginalCref\Cref
\renewcommand{\Cref}[1]{\textup{\OriginalCref{#1}}}
\setlist[enumerate]{font=\upshape, align=left, widest=(2'), leftmargin=*, labelindent=1.2em, labelsep=-.3em}
\numberwithin{equation}{section}

\title[Homotopic morphisms and diagram theorems]{Homotopic morphisms and diagram theorems \\ in extriangulated categories}
\author{Chencheng Zhang, \ \  Xue-Song Lu, \ \ Pu Zhang$^\ast$}
\thanks{$^*$ Corresponding author}
\thanks{zhangchencheng@sjtu.edu.cn \ \ leocedar@sjtu.edu.cn \ \ pzhang@sjtu.edu.cn}
\thanks{Supported by the National Key Research and Development Project 2025YFA1017202, and National Natural Science Foundation of China with Grant No. 12131015.}
\address{Chencheng Zhang, School of Mathematical Sciences, Shanghai Jiao Tong University, Shanghai 200240, PR China}
\address{Xue-Song Lu, School of Mathematical Sciences, Shanghai Jiao Tong University, Shanghai 200240, PR China}
\address{Pu Zhang, School of Mathematical Sciences, Shanghai Jiao Tong University, Shanghai 200240, PR China}
\begin{document}

\begin{abstract} \ Homotopic morphisms of $\mathbb E$-triangles in extriangulated categories are introduced.
Any morphism of $\mathbb E$-triangles is a composition of homotopic morphisms. Any morphism $(\alpha_1, \alpha_2, \alpha_3)$ of $\mathbb E$-triangles can be modified to be homotopic, by changing one of $\alpha_i$;
moreover, all the 15 cases where $\alpha_i$ is an $\mathbb E$-inflation ($\mathbb E$-deflation) are analyzed. Some diagram theorems, especially $4\times 4$ Lemma and its $14$ variants, including $3\times 3$ diagram and Horseshoe Lemma, are
investigated. A relation between homotopic morphisms and (middling) good morphisms in triangulated categories are given.
Weakly idempotent complete extriangulated categories are characterized.

\vskip5pt

{\it Keywords and phrases.}  extriangulated category, homotopic morphism, $6$-term exact sequence,  $4 \times 4$ Lemma,   weakly idempotent completeness
\vskip5pt

2020 Mathematics Subject Classification. Primary 18G80, Secondary 18A20, 18E99.
\end{abstract}

\maketitle

\section{Introduction}

The notion of extriangulated category,  introduced by H. Nakaoka and Y. Palu in \cite{NP19}, is a common generalization of exact categories and triangulated categories.
There are many extriangulated categories which are neither exact nor triangulated (\cite[Remark 2.18, Proposition 3.30]{NP19}, \cite[Remark 4.13]{ZZ}; also \cite{Wu23}, \cite{Zho24}).

\vskip5pt

Diagram theorems are widely used and have interests in various categories. 
Since $\mathbb E$-triangles cannot rotate,
a diagram theorem in extriangulated categories has variants. For example,
$4\times 4$ Lemma in an extriangulated category has $14$ variants.

\vskip5pt

In this paper, homotopic morphisms of $\mathbb E$-triangles in an extriangulated category are introduced, and various diagram theorems are studied.

\vskip5pt

\subsection{Homotopic morphisms}  A homotopic morphism of $\mathbb E$-triangles is defined in \Cref{def:htm}. This notion seems to be new even for triangulated categories. 
Comparing with morphisms of $\mathbb E$-triangles, homotopic morphisms enjoy pleasant properties, and play an important role.
They induce new $\mathbb E$-triangles, and any morphism of $\mathbb E$-triangles is a composition of two homotopic morphisms (\Cref{compofhm}).

\vskip5pt

Given a morphism $(\alpha, \beta, \gamma)$ of $\mathbb E$-triangles, it is a question that whether it can be modified to be homotopic; and furthermore, if two of $\alpha, \beta, \gamma$ are in $$\{\text{$\mathbb E$-inflation and $\mathbb E$-deflation, $\mathbb E$-inflation,  $\mathbb E$-deflation}\}$$
whether the third one is still in the set, such that $(\alpha, \beta, \gamma)$ is a homotopic morphism of $\mathbb E$-triangles.
Logically, there are $27$ possible cases, but only $15$ cases admit affirmative answer (the other $12$ cases do not hold in general, as shown in \Cref{ex:no-good-completion}).
The following result claims the $8$ affirmative cases among them (the remaining $7$ cases are dual, and omitted).

\vskip5pt
\begin{theorem} \label{1.1}{\rm (\Cref{thm:hs-morphism}, \Cref{thm:hmor})} \ Let $(\mathcal{C}, \ \mathbb E, \ \mathfrak s)$ be an extriangulated category, and  $(\alpha,\beta,\gamma)$ a morphism of $\mathbb E$-triangles$:$
	\begin{equation*}
\xymatrix@R=0.4cm{
  X\ar[r]^{f}\ar[d]_-{\alpha} & Y\ar[r]^{g}\ar[d]^-{\beta} & Z\ar@{-->}[r]^{\delta}\ar[d]^-{\gamma} & {} \\
  X'\ar[r]^{f'} & Y'\ar[r]^{g'} & Z'\ar@{-->}[r]^{{\delta' }} & {}
}
\end{equation*}
Then there are $\alpha':X\longrightarrow X'$, $\beta':Y\longrightarrow Y'$, and $\gamma':Z\longrightarrow Z'$,  such that $(\alpha',\beta,\gamma)$, $(\alpha,\beta',\gamma)$, and $(\alpha,\beta,\gamma')$ are homotopic morphisms.
Explicitly, one has

\vskip5pt

$(1)$ \  If $\alpha$ and $\gamma$ are $\mathbb E$-inflations and $\mathbb E$-deflations, then so is $\beta'$.

\vskip5pt

$(2)$ \  If $\alpha$ is an $\mathbb E$-inflation and an $\mathbb E$-deflation and $\gamma$ is an $\mathbb E$-inflation, then $\beta'$ is an $\mathbb E$-inflation.

\vskip5pt

$(3)$ \  If $\alpha$ is an $\mathbb E$-inflation and $\gamma$ is an $\mathbb E$-inflation and an $\mathbb E$-deflation, then $\beta'$ is an $\mathbb E$-inflation.

\vskip5pt

$(4)$ \  If $\alpha$ and $\gamma$ are $\mathbb E$-inflations, then so is $\beta'$.

\vskip5pt

$(5)$ \  If $\alpha$ and $\beta$ are $\mathbb E$-inflations and $\mathbb E$-deflations, then $\gamma'$ is an $\mathbb E$-deflation.

\vskip5pt

$(6)$ \  If $\alpha$ is an $\mathbb E$-inflation and an $\mathbb E$-deflation and $\beta$ is an $\mathbb E$-deflation, then $\gamma'$ is an $\mathbb E$-deflation.

\vskip5pt

$(7)$ \  If $\alpha$ is an $\mathbb E$-inflation and $\beta$ is an $\mathbb E$-inflation and an $\mathbb E$-deflation, then $\gamma'$ is an $\mathbb E$-deflation.

\vskip5pt

$(8)$ \  If $\alpha$ is an $\mathbb E$-inflation and $\beta$ is an $\mathbb E$-deflation, then $\gamma'$ is an $\mathbb E$-deflation.

\vskip10pt

In case  $\mathcal{C}$ is weakly idempotent complete, Nakaoka and Palu \cite[Lemma 5.9]{NP19} in fact deals with the following cases$:$
   \vskip5pt
   $(9)$ \ If $\alpha$ is an $\mathbb E$-deflation and $\beta$ is an $\mathbb E$-inflation and an $\mathbb E$-deflation, then $\gamma'$ is
an $\mathbb E$-deflation. In this case, $\alpha$ is an $\mathbb E$-inflation and $\mathbb E$-deflation.
     \vskip5pt
   $(10)$ \ If $\alpha$ and $\beta$ are $\mathbb E$-deflations, then $\gamma'$ is an $\mathbb E$-deflation.
\end{theorem}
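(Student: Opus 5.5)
The plan is to first settle existence of the three modifications, and then read off the inflation/deflation properties in each case from the explicit construction. I will use \Cref{def:htm} in the form that a morphism $(\alpha,\beta,\gamma)$ is homotopic when it is realized by a mapping-cone type $\mathbb E$-triangle. Concretely, there should be an $\mathbb E$-triangle
$$Y\oplus X'\xrightarrow{\left(\begin{smallmatrix}-g&0\\ \beta&f'\end{smallmatrix}\right)} Z\oplus Y'\xrightarrow{(\gamma\ \ g')} Z'\overset{}{\dashrightarrow},$$
equivalently an $\mathbb E$-triangle of shape $X\to Y\oplus X'\to Y'\dashrightarrow$, whose deflation/inflation parts are built from $(f,\alpha)$ and $(\beta,f')$. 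If the paper's precise convention differs, the steps below adapt verbatim.

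\emph{Modifying $\gamma$ (cases (5)--(8)).} Start from the inflation $\left(\begin{smallmatrix} f\\ \alpha\end{smallmatrix}\right)\colon X\to Y\oplus X'$. It is an inflation because $f$ is one, by the standard splitting argument of \cite{NP19}. Realize it as an $\mathbb E$-triangle $X\to Y\oplus X'\to W\dashrightarrow$. Comparing with the triangle of $f'$ via (ET4)/(ET3), I obtain $W\cong Y'$ and a map $\gamma'\colon Z\to Z'$ completing the diagram, for which the resulting triangle is exactly the homotopy condition. This gives $(\alpha,\beta,\gamma')$ homotopic; the case of $\alpha'$ is the dual construction.

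\emph{Modifying $\beta$ (cases (1)--(4)).} Here I use the known fact that $(\alpha_*\delta, \gamma^*\delta')$-compatibility gives $\alpha_*\delta=\gamma^*\delta'$. I will realize $\alpha_*\delta$ and apply \cite[Proposition 3.15]{NP19} to the pair of triangles ending in $Z$ and $Z'$. This produces the middle term together with a homotopy-cartesian middle map $\beta'$.

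\emph{Inflation/deflation properties.} In each case the new map factors as a composite of a split inflation (or deflation) with maps known to be inflations (deflations). For instance, in (4) I would write $\beta'$ as
$$Y\to Y\oplus X'\to Y'$$
after pushing out along $\alpha$, and then apply (ET4), which says that composites of inflations are inflations. For (8) the analogous statement for deflations uses (ET4$^{\rm op}$). Cases (1)--(3) and (5)--(7) refine (4) and (8): when one of the given maps is moreover an $\mathbb E$-deflation, the corresponding summand-projection in the factorization is also a deflation, so $\beta'$ gains both properties in (1). Cases (9)--(10) I would simply quote from \cite[Lemma 5.9]{NP19}. For the extra claim in (9), I would argue that weak idempotent completeness forces $\alpha$ to be an inflation once $\beta$ is both an inflation and a deflation, since $f'\alpha=\beta f$ is an inflation.

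The main obstacle is checking that the map produced by (ET4) is simultaneously the one making the morphism homotopic \emph{and} an inflation. Both properties have to come out of a single octahedral diagram, so the bookkeeping of signs and of the extensions $\alpha_*\delta=\gamma^*\delta'$ must be tracked through \cite[Lemma 3.14, Proposition 3.15]{NP19}. This is where I would spend most care, checking case (4) in full and deducing the others by symmetry.
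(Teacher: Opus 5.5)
There is a genuine gap, and it starts with the definition. In \Cref{def:htm} a homotopic morphism is not witnessed by a single ``mapping-cone'' $\mathbb E$-triangle. It is a factorization $\beta=\beta_2\beta_1$ through an $\mathbb E$-triangle $X'\xrightarrow{s}E\xrightarrow{t}Z$ realizing $\alpha_\ast\delta=\gamma^\ast\delta'$, such that the left square (sides $f,\alpha,\beta_1,s$) and the right square (sides $t,\beta_2,\gamma,g'$) are homotopic. Your sequence $Y\oplus X'\to Z\oplus Y'\to Z'$ is not even a conflation for the identity morphism of a short exact sequence in $\mathbf{Ab}$: the first map kills $\{(fx,-x)\}\cong X$. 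The shape $X\to Y\oplus X'\to Y'$ is the right test only when $\gamma=1_Z$ (\Cref{prop:mor-of-confl}).

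The same confusion breaks your construction of $\gamma'$, and dually of $\alpha'$. The third term $W$ of an $\mathbb E$-triangle on $\binom{f}{\alpha}$ is, up to isomorphism, the intermediate object $E$, not $Y'$. For instance, with $X=X'=0$, $g=1_Y$, $g'=1_{Y'}$, $\beta=\gamma$, one gets $W\cong Y$. So your comparison does not produce $\gamma'$.

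What works is the following. Take $\beta_1$ with $(\alpha,\beta_1,1_Z)$ homotopic (\Cref{lem:hs-f?1}(1)). Use the weak pushout property of that homotopic square (\Cref{lem:weak}(2)) to factor the \emph{given} $\beta$ as $\beta_2\beta_1$ with $\beta_2 s=f'$. Then apply \Cref{lem:hs-f?1}$(2)'$ to $\beta_2$ to get $\gamma'$ with $(1_{X'},\beta_2,\gamma')$ homotopic. For $\beta'$, build $\beta_1$ from $\alpha$ and $\beta_2$ from $\gamma$ independently and compose. Your appeal to \cite[Proposition 3.15]{NP19} does not give this, since that result needs two triangles with a common third term.

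The inflation/deflation part also fails as proposed. Writing $\beta'$ as $Y\xrightarrow{\binom10}Y\oplus X'\to Y'$ does not help in (4): the natural second map $(\beta',-f')$ is not an inflation in general (for the identity morphism it is $(1_Y,-f)$, which is not monic). Two ingredients are missing.
\begin{enumerate}
\item[(a)] Transfer along homotopic squares (\Cref{prop:hs-inflation}): in a homotopic square an arrow is an $\mathbb E$-inflation (resp.\ deflation) iff the parallel arrow is. Hence $\alpha$ controls $\beta_1$, and $\gamma$ (or $\gamma'$) controls $\beta_2$. Cases (1)--(4) then follow from ET4 and ET4$^{\rm op}$ applied to $\beta'=\beta_2\beta_1$.
\item[(b)] A cancellation step for (5)--(8), where information flows backwards. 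You know the composite $\beta=\beta_2\beta_1$ is a deflation and need $\beta_2$ to be one; no composition axiom, and certainly not ET4$^{\rm op}$, yields this. The paper uses the K\"unzer-type cancellation \Cref{prop:comp-infldefl}$(1)'$: if $gf$ is a deflation and $f$ is an inflation, then $g$ is a deflation. Take $f=\beta_1$, which is an inflation because $\alpha$ is; this is exactly where the hypothesis on $\alpha$ enters. Then $\gamma'$ is a deflation by (a).
\end{enumerate}

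For (10), quoting \cite[Lemma 5.9]{NP19} does not show that the particular $\gamma'$ making $(\alpha,\beta,\gamma')$ homotopic is a deflation. Instead, cancel $\beta_1$ from $\beta=\beta_2\beta_1$ using the WIC condition, and transfer as in (a). Your argument that $\alpha$ is an inflation in (9), applying WIC to $f'\alpha=\beta f$, is correct and reduces (9) to (7).
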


\vskip10pt

\subsection {$6$-term exact sequences} A distinguished property of an extriangulated category is that any $\mathbb E$-triangle $\con XfYgZ\delta$ induces two $6$-term exact sequences of functors:
    \begin{equation*}    \resizebox{0.85\textwidth}{!}{
      $\mathcal C(-,X)\xrightarrow{}\mathcal C(-,Y)\xrightarrow{}\mathcal C(-,Z)\xrightarrow{}\mathbb E(-, X)\xrightarrow{}\mathbb E(-, Y)\xrightarrow{}\mathbb E(-, Z),$
      }
\end{equation*}
and
        \begin{equation*}    \resizebox{0.85\textwidth}{!}{
      $\mathcal{C}(Z, -) \xrightarrow{} \mathcal{C}(Y, -) \xrightarrow{} \mathcal{C}(X, -) \xrightarrow{} \mathbb E(Z, -) \xrightarrow{} \mathbb E(Y, -) \xrightarrow{} \mathbb E(X, -).$
      }
\end{equation*}

\vskip5pt

For a morphism $(\alpha, \beta, 1)$ of $\mathbb E$-triangles

\begin{equation*}
\xymatrix@R=0.4cm{
  X\ar[r]^{f}\ar[d]_-{\alpha} & Y\ar[r]^{g}\ar[d]^-{\beta} & Z\ar@{-->}[r]^{\delta}\ar@{=}[d] & {} \\
  X'\ar[r]^{{f'}} & Y'\ar[r]^{{g'}} & Z\ar@{-->}[r]^{{\alpha_\ast \delta}} &. {}
}
\end{equation*}
Although the sequence $\con X{\dg{f \\\alpha}}{Y\oplus X'}{\dg{\beta,-f'}}{Y'}{(g')^\ast\delta}$ is not necessarily an $\mathbb E$-triangle, there is still a $6$-term exact sequence of covariant functors; but the $6$-term sequence of contravariant functors is not necessarily exact (\Cref{ex:contra-les-fails}); however, if $(\alpha, \beta, 1)$ is semi-homotopic (\Cref{shm}) then it is also exact.

\vskip5pt

\begin{theorem} {\rm (\Cref{prop:semi-homotopic-6}, \Cref{twolongexactseq})} \ Let $(\mathcal{C}, \mathbb E, \mathfrak s)$ be an  extriangulated category. Consider the morphism $(\alpha, \beta, 1)$ of $\mathbb E$-triangles and sequence $\con X{\dg{f \\\alpha}}{Y\oplus X'}{\dg{\beta,-f'}}{Y'}{(g')^\ast\delta}$,   as given above.

\vskip5pt

$(1)$ \ There is a $6$-term exact sequence of covariant functors
\begin{equation*}    \resizebox{1\textwidth}{!}{
      $\mathcal C(Y',-)\xrightarrow{\mathcal C(\dg{\beta,-f'},-)}\mathcal C(Y\oplus X',-)\xrightarrow{\mathcal C(\dg{f \\\alpha},-)}\mathcal C(X,-)\xrightarrow{((g')^\ast\delta)^\sharp}\mathbb E(Y',-)\xrightarrow{\dg{\beta,-f'}^\ast}\mathbb E(Y\oplus X',-)\xrightarrow{\dg{f \\\alpha}^\ast}\mathbb E(X,-).$
      }
\end{equation*}
\vskip5pt

$(2)$ \ If $(\alpha, \beta, 1)$ is semi-homotopic, then one has a $6$-term exact sequence of contravariant functors
 \begin{equation*}    \resizebox{1\textwidth}{!}{
      $\mathcal C(-,X)\xrightarrow{\mathcal C(-,\dg{f \\\alpha})}\mathcal C(-,Y\oplus X')\xrightarrow{\mathcal C(-,\dg{\beta,-f'})}\mathcal C(-,Y')\xrightarrow{((g')^\ast\delta)_\sharp}\mathbb E(-,X)\xrightarrow{\dg{f \\\alpha}_\ast}\mathbb E(-,Y\oplus X')\xrightarrow{\dg{\beta,-f'}_\ast}\mathbb E(-,Y').$
      }
\end{equation*}
\vskip5pt

$(3)$ \ If $(\mathcal{C}, \mathbb E, \mathfrak s)$ is a $\mathrm{Hom}$-finite $k$-linear category, then $(\alpha, \beta, 1)$ is semi-homotopic.
\end{theorem}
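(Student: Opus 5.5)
The plan is to prove everything by direct diagram chasing. The inputs are the two $6$-term exact sequences attached to the $\mathbb E$-triangles $\con XfYgZ\delta$ and $\con{X'}{f'}{Y'}{g'}Z{\alpha_\ast\delta}$, together with the identities $\beta f=f'\alpha$ and $g'\beta=g$. Where the chase gets stuck, I will compare with an honest $\mathbb E$-triangle. By the standard result of Nakaoka--Palu (the analogue of the homotopy cartesian square for a morphism with third component $1$), there is $\beta_0\colon Y\to Y'$ with $\beta_0 f=f'\alpha$ and $g'\beta_0=g$ such that
\[\con X{\dg{f \\ \alpha}}{Y\oplus X'}{\dg{\beta_0,-f'}}{Y'}{(g')^\ast\delta}\]
is an $\mathbb E$-triangle. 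Then $(\beta-\beta_0)f=0$, so $\beta-\beta_0=hg$ for some $h\colon Z\to Y'$, and $g'hg=0$. I expect $h$ to measure exactly the failure of being an $\mathbb E$-triangle; presumably semi-homotopy is a condition on $h$, or on a factorization through $g'$.

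For (1), I first check that consecutive composites vanish: $\beta f-f'\alpha=0$, and $((g')^\ast\delta)\circ(\beta,-f')$ reduces to $(g'\beta)^\ast\delta=g^\ast\delta=0$ and $(g'f')^\ast\delta=0$.

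Exactness at $\mathcal C(Y\oplus X',W)$ goes as follows. Take $(u,v)$ with $uf+v\alpha=0$. Then
\[v_\ast\alpha_\ast\delta=(v\alpha)_\ast\delta=-u_\ast f_\ast\delta=0,\]
so there is $w_0$ with $w_0f'=-v$. Next $(u-w_0\beta)f=0$ gives $u-w_0\beta=tg$ for some $t\colon Z\to W$. Setting $w=w_0+tg'$ yields $wf'=-v$ and $w\beta=w_0\beta+tg=u$.

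Exactness at $\mathcal C(X,W)$: if $x^\ast\colon$ $x_\ast (g')^\ast\delta=0$, I pull back along $g'$ and use exactness of $\mathcal C(Y',W)\to\mathcal C(X',W)\to\mathbb E(Z,W)$. Concretely, $x_\ast\delta=0$ modulo the image of $(g')^\ast$, so $x$ extends through $f$ up to a correction by $\alpha$.

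For the two $\mathbb E$-spots I chase in the covariant $\mathbb E(-,W)$-parts of both long sequences. The key relation is that $\mathbb E(Y',W)\to\mathbb E(Z,W)\to\cdots$ is controlled by $g'$, and that $\beta^\ast$ and $f'^\ast$ act compatibly because $g'\beta=g$.

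For (2), the contravariant chase runs as follows. Take $(a,b)\colon W\to Y\oplus X'$ with $\beta a=f'b$. Then $ga=g'\beta a=0$, so $a=fc$, and hence $f'(\alpha c-b)=0$. This only gives that $\alpha c-b$ comes from $\mathcal C(W,Z)$ via $(\alpha_\ast\delta)_\sharp$; the correction $c$ may be changed only by elements killed by $f$. So this spot, and the dual one at $\mathbb E(-,Y\oplus X')$, is exactly where (\Cref{ex:contra-les-fails}) shows failure in general. The plan is to use the semi-homotopy hypothesis (\Cref{shm}) precisely here: it should let me replace $\beta$ by $\beta_0$, or absorb $h$. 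Then the sequence becomes isomorphic, as a sequence of functors, to that of the $\mathbb E$-triangle above, and exactness follows from the $\mathbb E$-triangle case. The remaining spots go through by the same chase as in (1).

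For (3), I would argue by counting. For each $W$, all terms in the six-term sequences are finite-dimensional, and the sequence for $\beta$ and the sequence for $\beta_0$ have identical terms and identical extension maps; only the components through $\beta$ differ. Since (1) holds for both, I aim to show that semi-homotopy (which I expect amounts to $h$ factoring suitably) is forced. I would do this by a Fitting-type argument on the endomorphism $1+hg$-style automorphism of $Y$ or $Y'$, in a Hom-finite $k$-linear setting, to find an isomorphism of sequences. This last step is the main obstacle, since it depends on the precise definition of semi-homotopic. My first try is to show that some endomorphism of $Y\oplus X'$ of the form $1+(\text{nilpotent})$ intertwines $(\beta,-f')$ with $(\beta_0,-f')$, using finite length of $\mathrm{End}$.
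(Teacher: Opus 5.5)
The decisive gap is in part (3), and it comes from not pinning down the definition. By \Cref{shm}, semi-homotopic just means that $X\xrightarrow{\dg{f\\ \alpha}}Y\oplus X'\xrightarrow{\dg{\beta,-f'}}Y'$ is an $\mathbb E$-triangle for \emph{some} extension $\eta$. Dimension counting cannot show that a sequence is an $\mathbb E$-triangle. The natural place for the correcting automorphism is $Y'$, acting by post-composition, not $Y\oplus X'$.

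With your $h$, put $r:=-hg'\in\operatorname{End}(Y')$. Then $rf'=0$ and $r\beta=\beta_0-\beta$. Moreover $r^2\beta=-h(g'\beta_0-g'\beta)=0$. Hence \emph{every} $\theta=1+r+ar^2$ satisfies $\theta f'=f'$ and $\theta\beta=\beta_0$.

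Hom-finiteness is needed only to choose $a$ so that $\theta$ is a unit; this is condition (C1) of \Cref{con:c}. To get it, write $k[r]$ as a product of local algebras. Take $a=-r^{-1}$ on the factors where $r$ is a unit and $a=0$ on those where $r$ is nilpotent. Then \Cref{isomorphisms} applied with $(1,1,\theta^{-1})$ turns your $\beta_0$-triangle into $\con X{\dg{f\\\alpha}}{Y\oplus X'}{\dg{\beta,-f'}}{Y'}{\theta^\ast(g')^\ast\delta}$.

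The quadratic term is essential, because $r$ itself need not be nilpotent. If $1+r$ were always invertible for a suitable $h$, the same argument would make every $(\alpha,\beta,1)$ semi-homotopic, contradicting \Cref{ex:contra-les-fails}.

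There are also smaller gaps in (1) and (2).

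In (1), your chases at $\mathcal C(Y\oplus X',-)$, $\mathcal C(X,-)$ and $\mathbb E(Y',-)$ can be completed. At $\mathcal C(X,W)$, use $\ker (g')^\ast=\operatorname{Im}(\alpha_\ast\delta)^\sharp$ in $\mathbb E(Z,W)$, not ``modulo the image of $(g')^\ast$''. This gives $x_\ast\delta=v_\ast\alpha_\ast\delta$, hence $x-v\alpha=uf$.

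At $\mathbb E(Y\oplus X',-)$, however, a chase stalls. You must lift $\varepsilon_2$ along $(f')^\ast\colon\mathbb E(Y',W)\to\mathbb E(X',W)$, which is the last map of a sequence that stops there, so nothing controls its image. Here the $\beta_0$-triangle is indispensable:
\begin{itemize}
\item its sequence gives $\varepsilon=\dg{\beta_0,-f'}^\ast\eta$;
\item one has $\dg{\beta_0,-f'}=(1-hg')\dg{\beta,-f'}$;
\item hence $\varepsilon=\dg{\beta,-f'}^\ast\big((1-hg')^\ast\eta\big)$.
\end{itemize}
The paper obtains this factorization from exactness at $\mathcal C(Y\oplus X',-)$.

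In (2), the semi-homotopy triangle gives exactness at $\mathcal C(-,Y\oplus X')$ and $\mathbb E(-,Y\oplus X')$ at once, since only the two maps enter at those spots. The spots $\mathcal C(-,Y')$ and $\mathbb E(-,X)$ need no hypothesis. The paper reads them off the $\mathbb E$-triangle $X\xrightarrow{\dg{f\\ \alpha_0}}Y\oplus X'\xrightarrow{\dg{\beta,-f'}}Y'$ and your $\beta_0$-triangle, both from \Cref{lem:hs-f?1}; a chase also works there.

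Your claim that the whole sequence is isomorphic to that of the $\beta_0$-triangle is unjustified. ET3 does give $\theta\in\operatorname{Aut}(Y')$ with $\theta\dg{\beta,-f'}=\dg{\beta_0,-f'}$ and $\eta=\theta^\ast(g')^\ast\delta$. But $\theta_\ast$ respects the connecting maps only when $\theta^\ast(g')^\ast\delta=(g')^\ast\delta$. A spot-by-spot comparison is all you need.
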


\vskip5pt

Note that a semi-homotopic morphism is not necessarily homotopic (\Cref{ex:ho-square-not-ho-mor}).

\subsection {$4\times 4$ Lemma and its variants} \ Homotopic morphisms induce many diagram theorems. A main situation is that a homotopic morphism consisting of $\mathbb E$-inflations and $\mathbb E$-deflations induces {\rm $4 \times 4$ Lemma}.

\vskip5pt

\begin{theorem} \ {\rm (\Cref{thm:4x4-sss})} \ Let $(\mathcal{C}, \mathbb E, \mathfrak s)$ be an extriangulated category, and
		\begin{equation*}
\xymatrix@R=0.4cm{
  X\ar[r]^{f}\ar[d]_-{\alpha} & Y\ar[r]^{g} & Z\ar@{-->}[r]^{\delta} \ar[d]^-{\gamma}& {} \\
  L\ar[r]^{u} & M\ar[r]^{v} & N\ar@{-->}[r]^{{\varepsilon }} & {}
}
\end{equation*}
a diagram of $\mathbb E$-triangles such that  $\alpha_\ast \delta=\gamma^\ast \varepsilon$, and $\alpha$ and $\gamma$ are $\mathbb E$-inflations and $\mathbb E$-deflations.	
Then there is an $\mathbb E$-inflation and an $\mathbb E$-deflation $\beta$ such that $(\alpha, \beta, \gamma)$ is a homotopic morphism, and there is a commutative diagram
\begin{equation*}
\xymatrix@R=0.4cm{
  K_\alpha\ar@{..>}[r]^{{f'}}\ar@{..>}[d]_{{i_\alpha }} & K_\beta\ar@{..>}[r]^{{g'}}\ar@{..>}[d]^{{i_\beta}} & K_\gamma\ar@{..>}[d]^{{i_\gamma}}\ar@{..>}[r] & {} \\
  X\ar[r]^{f}\ar[d]_-{\alpha} & Y\ar[r]^{g}\ar@{..>}[d]^-{\beta} & Z\ar@{-->}[r]^{\delta}\ar[d]^-{\gamma} & {} \\
  L\ar[r]^{u}\ar@{..>}[d]_{{p_\alpha}} & M\ar[r]^{v}\ar@{..>}[d]^{{p_\beta}} & N\ar@{-->}[r]^{{\varepsilon }}\ar@{..>}[d]^{{p_\gamma}} & {} \\
  C_\alpha\ar@{..>}[r]^{{u'}} & C_\beta\ar@{..>}[r]^{{v'}} & C_\gamma \ar@{..>}[r] & {}
}
\end{equation*}
with connecting morphism $z: K_\gamma\to C_\alpha$, such that the following are $\mathbb E$-triangles$:$
\vskip5pt
\begin{tasks}[style=enumerate, label=$(\arabic*)$, label-width=17.8pt](2)
		\task $\con {K_\alpha}{f'}{K_\beta}{g'}{K_\gamma}{};$
		\task $\con {K_\beta}{g'}{K_\gamma}z{C_\alpha}{};$
		\task $\con {K_\gamma}z{C_\alpha}{u'}{C_\beta}{};$
		\task $\con {C_\alpha}{u'}{C_\beta}{v'}{C_\gamma}{};$
		\task $\con {K_\alpha}{i_\alpha}X\alpha L{};$
		\task $\con {K_\beta}{i_\beta}Y\beta M{};$
		\task $\con {K_\gamma}{i_\gamma}Z\gamma N{};$
		\task $\con X\alpha L{p_\alpha}{C_\alpha}{};$
		\task $\con Y\beta M{p_\beta}{C_\beta}{};$
		\task $\con Z\gamma N{p_\gamma}{C_\gamma}{}$
	\end{tasks}
\vskip10pt
with morphisms of $\mathbb E$-triangles $(\alpha, \beta, \gamma)$, $(i_\alpha,i_\beta,i_\gamma)$, $(p_\alpha,p_\beta,p_\gamma)$, $(f',f,u)$, $(f,u, u')$, $(g',g,v)$ and $(g,v,v')$. In particular, there is a sequence
	\begin{equation*}		K_\alpha \xrightarrow{f'} K_\beta \xrightarrow{g'} K_\gamma \xrightarrow{z} C_\alpha \xrightarrow{u'} C_\beta \xrightarrow{v'} C_\gamma
\end{equation*}
such that any subsequent three objects form an $\mathbb E$-triangle as in $(1)$-$(4)$.
\end{theorem}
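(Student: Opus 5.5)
The plan is to first produce $\beta$ using \Cref{1.1}(1), and then build the outer rows and columns by repeated use of (ET4) and its dual (ET4)$^{\rm op}$.

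\textbf{Step 1: the middle column.} The hypothesis $\alpha_\ast\delta=\gamma^\ast\varepsilon$ and (ET3) give some $\beta_0$ making $(\alpha,\beta_0,\gamma)$ a morphism of $\mathbb E$-triangles. Since $\alpha$ and $\gamma$ are $\mathbb E$-inflations and $\mathbb E$-deflations, \Cref{1.1}(1) replaces $\beta_0$ by some $\beta$ such that $(\alpha,\beta,\gamma)$ is homotopic and $\beta$ is again an $\mathbb E$-inflation and an $\mathbb E$-deflation. We then fix the $\mathbb E$-triangles (5)--(10) by choosing kernels $K_\bullet$ and cokernels $C_\bullet$ of $\alpha,\beta,\gamma$ via (ET2).

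\textbf{Step 2: the morphisms $f',g'$ and $u',v'$.} Because $\beta f=u\alpha$, applying (ET3)$^{\rm op}$ to the triangles (5) and (6) gives $f'$ with $(f',f,u)$ a morphism of triangles. Applying (ET3) to (8) and (9) gives $u'$ with $(f,u,u')$ a morphism. The maps $g'$ and $v'$, together with the morphisms $(g',g,v)$ and $(g,v,v')$, are obtained the same way. The morphisms $(i_\alpha,i_\beta,i_\gamma)$ and $(p_\alpha,p_\beta,p_\gamma)$ are then the assembled columns; their extension compatibilities should follow from the choices above, possibly after adjusting the lifts.

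\textbf{Step 3: the four horizontal triangles (1)--(4).} This is where homotopy is essential. Homotopic morphisms induce new $\mathbb E$-triangles, namely a triangle built from $\dg{f\\\alpha}:X\to Y\oplus L$ and $\dg{\beta,-u}$, together with the mapping-cone-type triangles. I would use these to express $Y\to M$ and $X\to L$ as composites of inflations and deflations. Then (ET4) and (ET4)$^{\rm op}$, applied along the compositions $\beta f=u\alpha$ and $v\beta=\gamma g$, identify the cones.

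Concretely, for (4), factor the inflation $u\alpha=\beta f$ in two ways and apply (ET4) to each factorization. This yields triangles $C_\alpha\to \operatorname{cone}(u\alpha)\to N$ and $C_\beta\to\operatorname{cone}(\beta f)\to Z$. The octahedral comparison, made compatible by homotopy, then produces $C_\alpha\xrightarrow{u'}C_\beta\xrightarrow{v'}C_\gamma$. Dually, (ET4)$^{\rm op}$ gives (1). For (2) and (3), I would define $z$ as a connecting map obtained from these octahedra, essentially a composite $K_\gamma\xrightarrow{i_\gamma}Z\to\Sigma$-free data$\to C_\alpha$. I would extract $z$ from the triangle induced by the homotopic morphism, whose end terms are $K_\gamma$ and $C_\alpha$, and then show that $K_\beta\to K_\gamma\to C_\alpha$ and $K_\gamma\to C_\alpha\to C_\beta$ are conflations by further applications of (ET4).

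\textbf{Main obstacle.} The hardest part is ensuring that the morphisms $f',g',u',v',z$ produced by separate (ET4) applications agree with those fixed in Step 2. This means checking that the diagram commutes and that each $\mathbb E$-triangle really has the stated maps. I expect to handle it by choosing the (ET4) octahedra so that they contain the previously chosen lifts, using the homotopy data, in the form of the extra $\mathbb E$-triangle, to force uniqueness up to the needed compatibility.
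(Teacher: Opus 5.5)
Your Step 1 matches the paper: $\beta$ is taken from \Cref{thm:hmor}(1). It factors as $\beta=\beta_2\beta_1$ through $L\xrightarrow{s}E\xrightarrow{t}Z$, with $\beta_1,\beta_2$ both $\mathbb E$-inflations and $\mathbb E$-deflations. (The auxiliary $\beta_0$ comes from the realization axiom, not from ET3, but that is harmless.) After that there are two genuine gaps.

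\textbf{First gap: the order of construction in Step 2 cannot work.}
\begin{itemize}
\item Lifts $f',g',u',v'$ chosen independently by ET3 and ET3$^{\rm op}$ are far from unique. An arbitrary choice need not occur in any $\mathbb E$-triangle $K_\alpha\to K_\beta\to K_\gamma$ or $C_\alpha\to C_\beta\to C_\gamma$, nor satisfy the extension compatibilities for $(i_\alpha,i_\beta,i_\gamma)$ and $(p_\alpha,p_\beta,p_\gamma)$.
\item Example: in a triangulated category with $X=Y=Z=0$, ET3$^{\rm op}$ allows $f'=g'=0$. But $\Sigma^{-1}L\xrightarrow{0}\Sigma^{-1}M\xrightarrow{0}\Sigma^{-1}N$ is not a triangle unless $M=0$.
\item There is no uniqueness that could ``force'' compatibility afterwards.
\end{itemize}
In the paper the maps are outputs of the construction, not inputs. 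The homotopic variants \Cref{prop:strict-et4op-2-h} and \Cref{prop:pb-2-h}, applied to the decomposition, give $\mathbb E$-triangles $K_\alpha\xrightarrow{fi_\alpha}Y\xrightarrow{\beta_1}E$ and $K_\gamma\xrightarrow{x}E\xrightarrow{\beta_2}M$. Then $g'$ is chosen by \Cref{lem:hs-?g1}(2) so that $(g',\beta_1,1_M)$ is homotopic; hence $g'$ is an $\mathbb E$-inflation and $\mathbb E$-deflation by \Cref{prop:hs-inflation}. Only then does \Cref{prop:strict-et4op-2-h} produce $f'$ together with triangle (1).

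\textbf{Second gap, more serious: Step 3 has no mechanism linking the kernel row to the cokernel row.} Triangles (2) and (3) are rotations of (1), and an extriangulated category has no suspension. The only bridge between $K_\alpha$ and $C_\alpha$ is the $\mathbb E$-triangle $K_\alpha\to 0\to C_\alpha$. It exists because $\alpha\in S$ forces $K_\alpha\in\mathcal L$ (\Cref{prop:slr-2}). Your sketch never uses this, and the octahedral comparison you describe does not close up:
\begin{itemize}
\item ET4 gives $Z\to\operatorname{cone}(\beta f)\to C_\beta$ (not $C_\beta\to\operatorname{cone}(\beta f)\to Z$) and $C_\alpha\to\operatorname{cone}(u\alpha)\to N$.
\item An octahedron along $Z\to\operatorname{cone}(u\alpha)\to N$ needs $\operatorname{cone}(u\alpha)\to N$ to be an $\mathbb E$-inflation. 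That holds exactly when $C_\alpha\in\mathcal L$, whereas you only know $C_\alpha\in\mathcal R$.
\item The homotopic morphism induces no triangle with end terms $K_\gamma$ and $C_\alpha$; it induces $X\to Y\oplus L\to E$ and $E\to Z\oplus M\to N$.
\end{itemize}

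The missing ingredient is the paper's substitute for rotation. It consists of the equivalence $F:\mathcal L\to\mathcal R$ with natural isomorphisms $\ell:\mathbb E(-,K)\cong\mathcal C(-,FK)$ and $\rho:\mathbb E(FK,-)\cong\mathcal C(K,-)$. It also uses \Cref{lem:rotation} and \Cref{thm:hs-square-s}, which turn $K\to A\xrightarrow{f}B\overset{\eta}{\dashrightarrow}$ with $f\in S$ into $A\xrightarrow{f}B\xrightarrow{\ell(\eta)}FK$, compatibly with homotopic morphisms. With this machinery the construction runs as follows.
\begin{itemize}
\item One gets $z=yx$, where $y:E\to C_\alpha$ comes from rotating $K_\alpha\xrightarrow{fi_\alpha}Y\xrightarrow{\beta_1}E$.
\item Triangle (2) comes from rotating the homotopic morphism $(1_{K_\alpha},i_\beta,x)$ out of (1).
\item $z$ and then $u'$ lie in $S$ by \Cref{prop:slr-2}, since $K_\beta,K_\gamma\in\mathcal L$. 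So (2) rotates to (3), and (3) to (4).
\item One takes $C_\bullet=FK_\bullet$, $u'=Ff'$, $v'=Fg'$.
\item All required morphisms of $\mathbb E$-triangles are verified through the naturality of $\ell$ and $\rho$ (\Cref{binaturality}), not by adjusting lifts.
\end{itemize}
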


\vskip5pt

Since $\mathbb E$-triangles cannot rotate, $4\times 4$ Lemma has 14 variants (Subsection 5.4). Interesting cases include $3\times 3$ diagram and Horseshoe Lemma.

\vskip5pt

\begin{theorem}{\rm (\Cref{cor:3x3-lemma-inflation}, \Cref{horseshoe})} \ Let $(\mathcal{C}, \mathbb E, \mathfrak s)$ be an extriangulated category. Suppose that
		\begin{equation*}
\xymatrix@R=0.4cm{
  A_1\ar[r]^{f_A}\ar[d]_-{i_1} & A_2\ar[r]^{g_A} & A_3\ar@{-->}[r]^{\delta_A} \ar[d]^-{i_3}& {} \\
  B_1\ar[r]^{f_B} & B_2\ar[r]^{g_B} & B_3\ar@{-->}[r]^{{\delta_B }} & {}
}
\end{equation*}
is a diagram of $\mathbb E$-triangles such that  $(i_1)_\ast \delta_A=(i_3)^\ast \delta_B$, and $i_1$ and $i_3$ are $\mathbb E$-inflations.	
Then there is an $\mathbb E$-inflation $i_2$ such that $(i_1, i_2, i_3)$ is a homotopic morphism of $\mathbb E$-triangles, and there is a $3\times 3$ diagram$:$
	\begin{equation*}
\xymatrix@R=0.4cm{
  A_1\ar[r]^{{f_A}}\ar[d]_{{i_1}} & A_2\ar[r]^{{g_A}}\ar@{..>}[d]^{{i_2}} & A_3\ar@{-->}[r]^{{\delta_A}}\ar[d]^{{i_3}} & {} \\
  B_1\ar[r]^{{f_B}}\ar[d]_{{p_1}} & B_2\ar[r]^{{g_B}}\ar@{..>}[d]^{{p_2}} & B_3\ar@{-->}[r]^{{\delta_B}}\ar[d]^{{p_3}} & {} \\
  C_1\ar@{..>}[r]^{{f_C}}\ar@{-->}[d]_{{\varepsilon _1}} & C_2\ar@{..>}[r]^{{g_C}}\ar@{..>}[d]^{{\varepsilon _2}} & C_3\ar@{..>}[r]^{{\delta_C}}\ar@{-->}[d]^{{\varepsilon _3}} & {} \\
  {} & {} & {} & {}
}
\end{equation*}

\vskip5pt

In particular, if $i_1$ factors through $f_A$ and $\delta_B = 0$, then the $3\times 3$ diagram is of the form
	\begin{equation*}
\xymatrix@R=0.4cm{
  A_1\ar[r]^{f_A}\ar[d]_-{i_1} & A_2\ar[r]^{g_A}\ar@{..>}[d] & A_3\ar@{-->}[r]^{\delta_A}\ar[d]^-{i_3} & {} \\
  B_1\ar[r]^-{{\dg{1 \\ 0}}}\ar[d] & B_1  \oplus B_3\ar[r]^-{{(0,1)}}\ar@{..>}[d] & B_3\ar@{-->}[r]^{0}\ar[d] & {} \\
  C_1\ar@{..>}[r]\ar@{-->}[d] & C_2\ar@{..>}[r]\ar@{..>}[d] & C_3\ar@{..>}[r]\ar@{-->}[d] & {} \\
  {} & {} & {} & {}
}
\end{equation*}
\end{theorem}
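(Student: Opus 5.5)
Taking the earlier results of the paper as given, the plan has two parts: the homotopic morphism comes from Theorem 1.1(4), and the $3\times 3$ diagram comes from the $4\times 4$-type construction specialised to inflations. By Theorem 1.1(4), applied to any morphism $(i_1,\beta,i_3)$, we get an $\mathbb E$-inflation $i_2$ with $(i_1,i_2,i_3)$ homotopic and $i_2 f_A = f_B i_1$, $g_B i_2 = i_3 g_A$. Such a morphism exists because $(i_1)_\ast\delta_A=(i_3)^\ast\delta_B$: realize both sides, or use (ET3) with the pullback/pushout $\mathbb E$-triangles of $\delta_A$ along $i_1$ and of $\delta_B$ along $i_3$.

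Next, fix conflations $\con{A_k}{i_k}{B_k}{p_k}{C_k}{\varepsilon_k}$ for $k=1,2,3$. The task is to build $f_C,g_C,\delta_C$ making the bottom row an $\mathbb E$-triangle and every square commute, with $(p_1,p_2,p_3)$ and $(f_A,f_B,f_C)$, $(g_A,g_B,g_C)$ morphisms of $\mathbb E$-triangles. I would follow the proof of \Cref{thm:4x4-sss}, keeping only its cokernel half (items (4), (8)--(10)), since that half only needs the $\alpha_i$ to be inflations. Concretely:
\begin{itemize}[label={}]
\item the homotopy condition on $(i_1,i_2,i_3)$ supplies an $\mathbb E$-triangle $A_2\to A_3\oplus B_1\to B_2$ (the mapping-cone triangle of \Cref{def:htm});
\item (ET4) and (ET4)$^{\rm op}$ applied to the composites $f_B i_1 = i_2 f_A$ and $g_B i_2 = i_3 g_A$ give the induced maps $f_C:C_1\to C_2$ and $g_C:C_2\to C_3$ together with compatibilities of the extensions;
\item in particular $(f_C)^\ast\varepsilon_2$ and $\varepsilon_1$ are related via $(f_A)_\ast$.
\end{itemize}

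The main obstacle is showing that $C_1\xrightarrow{f_C}C_2\xrightarrow{g_C}C_3$ is an $\mathbb E$-triangle for some $\delta_C$ with $(p_1)_\ast\delta_B=(p_3)^\ast\delta_C$, while also satisfying $(\varepsilon_k)$-compatibility in the third column. I would first try to show that $f_C$ is an $\mathbb E$-inflation whose cone is $C_3$. To do this, compose the inflations $B_1\to B_2\to C_2$ and compare, via the octahedral axiom (ET4) applied to $p_2 f_B = f_C p_1$, with the conflation of $i_3$. The homotopy property is exactly what makes this comparison an $\mathbb E$-triangle and not merely a sequence, so the $4\times 4$ Lemma argument applies verbatim. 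The signs in $\delta_C$ are fixed using the identity $(i_1)_\ast\delta_A=(i_3)^\ast\delta_B$.

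For the particular case, assume $i_1=h f_A$ and $\delta_B=0$. Then $B_2\cong B_1\oplus B_3$ with the split maps $\dg{1 \\ 0}$ and $(0,1)$. Take $i_2=\dg{h \\ i_3 g_A}$, which is a morphism of $\mathbb E$-triangles: $i_2f_A=\dg{i_1 \\ 0}$ since $g_Af_A=0$, and $(0,1)i_2=i_3g_A$. Compatibility holds because $(i_1)_\ast\delta_A=h_\ast (f_A)_\ast\delta_A=0=(i_3)^\ast 0$. To finish, I would verify that this choice is homotopic by checking the defining condition of \Cref{def:htm} directly, or by noting that the general construction can be arranged to produce it. The general part then yields the displayed diagram.
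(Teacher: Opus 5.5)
There is a genuine gap: the $3\times 3$ diagram itself is never actually constructed, and none of the tools you invoke for it applies.

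\textbf{What is fine.} \Cref{thm:hmor}$(4)$, started from any morphism $(i_1,\beta,i_3)$ supplied by the realization axiom, gives an $\mathbb E$-inflation $i_2$ with $(i_1,i_2,i_3)$ homotopic. The special case also works once you note $(i_1)_\ast\delta_A=h_\ast(f_A)_\ast\delta_A=0=(i_3)^\ast\delta_B$ and that $0$ is realized by the split sequence. Guessing $i_2=\dg{h\\ i_3g_A}$ and checking it is homotopic is unnecessary.

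\textbf{Where the construction fails.}
A homotopic morphism does not provide an $\mathbb E$-triangle $A_2\to A_3\oplus B_1\to B_2$. By \Cref{def:htm} it provides a factorization $i_2=j_2j_1$ through some $E$, together with $\mathbb E$-triangles $\con{A_1}{\dg{f_A\\ i_1}}{A_2\oplus B_1}{\dg{j_1,-s}}{E}{t^\ast\delta_A}$ and $\con{E}{\dg{-t\\ j_2}}{A_3\oplus B_2}{\dg{i_3,g_B}}{B_3}{s_\ast\delta_B}$.

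You cannot ``compose the inflations $B_1\to B_2\to C_2$'' or apply ET4 to $p_2f_B=f_Cp_1$. Here $p_1,p_2$ are $\mathbb E$-deflations and need not be $\mathbb E$-inflations. ET4$^{\rm op}$, which composes deflations, does not apply to $g_Bi_2=i_3g_A$ either.

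The proof of \Cref{thm:4x4-sss} does not transfer verbatim. Its items (4) and (8)--(10) come from the kernel half via \Cref{lem:rotation}. That needs morphisms in $S$ and the functor $F\colon\mathcal L\to\mathcal R$, neither of which is available when $i_1,i_3$ are merely inflations.

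Fixing all three columns first and then choosing $f_C$ and $g_C$ separately is also the wrong order. Independently chosen fill-ins need not even compose to zero, and this already happens in triangulated categories. So $p_2,\varepsilon_2,f_C,g_C,\delta_C$ must come out of a single construction.

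\textbf{The missing idea.} Use the factorization $i_2=j_2j_1$, where $j_1,j_2$ are $\mathbb E$-inflations by \Cref{prop:hs-inflation}, and let ET4 build the middle column and the bottom row at once.

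First, \Cref{prop:po-1-h}$(2)$ applied to the homotopic $(i_1,j_1,1)$ and the given $\con{A_1}{i_1}{B_1}{p_1}{C_1}{\varepsilon_1}$ yields $\con{A_2}{j_1}{E}{q_1}{C_1}{(f_A)_\ast\varepsilon_1}$ with $q_1s=p_1$.

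Second, \Cref{prop:strict-et4-2-h}$(2)$ applied to $(1,j_2,i_3)$ and $\con{A_3}{i_3}{B_3}{p_3}{C_3}{\varepsilon_3}$ yields $\con{E}{j_2}{B_2}{q_2}{C_3}{\theta}$ with $q_2=p_3g_B$, $t_\ast\theta=\varepsilon_3$ and $(p_3)^\ast\theta=s_\ast\delta_B$.

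Third, ET4 applied to the inflations $j_1$ and $j_2$ produces both $\con{A_2}{i_2}{B_2}{p_2}{C_2}{\varepsilon_2}$ and $\con{C_1}{f_C}{C_2}{g_C}{C_3}{\delta_C}$. These satisfy $\delta_C=(q_1)_\ast\theta$, $(f_C)^\ast\varepsilon_2=(f_A)_\ast\varepsilon_1$ and $(j_1)_\ast\varepsilon_2=(g_C)^\ast\theta$.

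The remaining identities are short chases:
$f_Cp_1=f_Cq_1s=p_2j_2s=p_2f_B$, $g_Cp_2=q_2=p_3g_B$, $(p_1)_\ast\delta_B=(q_1)_\ast(p_3)^\ast\theta=(p_3)^\ast\delta_C$, and $(g_A)_\ast\varepsilon_2=t_\ast(j_1)_\ast\varepsilon_2=t_\ast(g_C)^\ast\theta=(g_C)^\ast\varepsilon_3$.

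This is the dual of the first (kernel) stage of the proof of \Cref{thm:4x4-sss}, not of its cokernel half.
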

\vskip5pt

\vskip5pt

As observed by A. Heller (\cite[Appendix B]{Buh10}), the $\rm (?,i,i)$-variant of $4 \times 4$ Lemma is one of the equivalent characterizations of weakly idempotent complete extriangulated categories (\Cref{thm:wic}).

\subsection{{Homotopic morphisms and (middling) good morphisms}} A. Neeman \cite[Definition 1.9]{Nee91} has introduced the notion of {\it good morphisms} and {\it middling good morphisms} of distinguished triangles in a triangulated category.
By \cite[Theorem 2.3]{Nee91} a good morphism is middling good. A relation between homotopic morphisms and (middling) good morphisms is as follows.

\vskip5pt

\begin{theorem}\label{1.5} \ {\rm (\Cref{middlinggood}, \Cref{good})} \ Let $(\mathcal T, \Sigma, \Delta)$ be a triangulated category. Then

\vskip5pt

$(1)$ \ A homotopic morphism of  distinguished triangles is middling good.

\vskip5pt

$(2)$ \ If $(\alpha, \beta, \gamma)$ is a homotopic morphism of distinguished triangles
\begin{equation*}
\xymatrix@R=0.4cm{
  X\ar[r]^{f}\ar[d]_-{\alpha} & Y\ar[r]^{g}\ar[d]^-{\beta} & Z\ar[r]^{h}\ar[d]^-{\gamma} & \Sigma X\ar[d]^{{\Sigma\alpha}} \\
  X'\ar[r]^{{f'}} & Y'\ar[r]^{{g'}} & Z'\ar[r]^{{h'}} & \Sigma X'
}
\end{equation*}
with $\mathcal T(Z', \Sigma X)=0$, then it is good.
\end{theorem}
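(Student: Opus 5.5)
The plan is to treat a distinguished triangle as an $\mathbb E$-triangle in the extriangulated structure of $(\mathcal T,\Sigma,\Delta)$, with $\mathbb E=\mathcal T(-,\Sigma-)$. In this structure every morphism is both an $\mathbb E$-inflation and an $\mathbb E$-deflation. I will use the definition of homotopic morphism (\Cref{def:htm}) in its triangulated form: a homotopic morphism comes with a distinguished triangle on the "mapping cone" sequence $Y\oplus X'\to Z\oplus Y'\to \Sigma X\oplus Z'\to \Sigma Y\oplus \Sigma X'$. The first two maps are the standard ones, $\dg{-g,0\\ \beta,f'}$ and $\dg{-h,0\\ \gamma,g'}$. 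Because the third term of a triangle is not determined by the first two maps alone, the third map is a priori only of the form $\dg{-\Sigma f,0\\ \Sigma\alpha,h'}$ up to a correction.

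For $(1)$, I recall Neeman's definition \cite[Definition 1.9]{Nee91}: a morphism is middling good if it embeds in a $4\times 4$ diagram of distinguished triangles in which all squares commute except the one that anticommutes. I then run the construction of \Cref{thm:4x4-sss} with the given $\beta$, rather than the $\beta$ produced there. The proof of that theorem should use only the homotopic-morphism hypothesis, together with the fact that $\alpha,\beta,\gamma$ are inflations and deflations, which holds automatically here. It produces:
\begin{itemize}
\item the kernel row $K_\alpha\to K_\beta\to K_\gamma$ and the cokernel row $C_\alpha\to C_\beta\to C_\gamma$;
\item the columns $(5)$--$(10)$;
\item the connecting morphism $z$.
\end{itemize}
In the triangulated setting, $K_\bullet$ is a rotation of $C_\bullet$, namely $K_\alpha\cong\Sigma^{-1}C_\alpha$ and so on. The columns then become the triangles $X\to L\to C_\alpha\to\Sigma X$, etc., so the $4\times 4$ diagram collapses into Neeman's $3\times 3$-with-boundary diagram. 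The step needing care is the bookkeeping of signs: I must check that the connecting maps $C_\gamma\to\Sigma X$-type squares commute or anticommute as Neeman requires, possibly after replacing one connecting map by its negative, which does not affect distinguishedness.

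For $(2)$, I start from the distinguished cone triangle supplied by homotopicity. I compare its third map with Neeman's prescribed matrix $\dg{-\Sigma f,0\\ \Sigma\alpha,h'}$. Both maps extend the same first two maps, so by the $6$-term exact sequence their difference $\theta$ is constrained. I plan to show, by restricting to the summands $\Sigma X$ and $Z'$, that the only component of $\theta$ not forced to vanish, or forced to be absorbable by an automorphism of $\Sigma X\oplus Z'$, factors through a map $Z'\to\Sigma X$. When $\mathcal T(Z',\Sigma X)=0$, that component is zero, which is the one place the hypothesis enters. The remaining discrepancy is then absorbed by an isomorphism of triangles that is the identity on the first two terms. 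This shows the mapping cone with Neeman's matrix is distinguished, i.e.\ $(\alpha,\beta,\gamma)$ is good.

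The main obstacle is precisely this last identification of the correction term. I will first try the direct route: compute $\theta\circ\dg{-h,0\\ \gamma,g'}=0$ and apply $\mathcal T(-,\Sigma Y\oplus\Sigma X')$-exactness to write $\theta$ as a composite through $\Sigma Y\oplus\Sigma X'$. If this becomes messy, the fallback is to use the $4\times 4$ diagram from $(1)$ together with \cite[Theorem 2.3]{Nee91}-style arguments to pin down the third map.
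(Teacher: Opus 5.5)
Your part $(1)$ follows the paper's argument: run the construction in the proof of \Cref{thm:4x4-sss} on the given homotopic decomposition, identify $C_\bullet=\Sigma K_\bullet$, and rotate the kernel row to the bottom, which leaves exactly one anticommuting corner square.

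The gap is in $(2)$: the distinguished triangle you start from is not supplied by \Cref{def:htm}. A homotopic morphism only gives two triangles glued along $E$, namely $X\xrightarrow{\dg{f \\ \alpha}}Y\oplus X'\xrightarrow{\dg{\beta_1,-s}}E\xrightarrow{ht}\Sigma X$ and $E\xrightarrow{\dg{-t \\ \beta_2}}Z\oplus Y'\xrightarrow{\dg{\gamma,g'}}Z'\xrightarrow{(\Sigma s)h'}\Sigma E$. Producing a triangle on $Z\oplus Y'\to\Sigma X\oplus Z'\to\Sigma Y\oplus\Sigma X'$ is the main step. It needs the octahedral axiom applied to $\Sigma^{-1}Z'\xrightarrow{-s\Sigma^{-1}h'}E\xrightarrow{ht}\Sigma X$, whose composite vanishes because $ts=0$. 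The resulting triangle has rotated first map $\dg{-g & 0 \\ \beta & -f'}$. Its second and third maps, however, are only known to have the shape $M=\dg{a & b \\ \gamma & g'}$ and $N=\dg{-\Sigma f & c \\ -\Sigma\alpha & d}$, with $a,b,c,d$ undetermined. So the second map is not known to be standard either. In the paper, the hypothesis $\mathcal T(Z',\Sigma X)=0$ is already needed to get $b=0$ and $a=-h$.

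Your plan for the third map also does not close as stated. Suppose all you know is that $(u,v,N)$ is distinguished for the standard $u,v$. Then $\theta v=0$ only yields $M_3=(1+\psi)N$. But the admissible third maps for fixed $(u,v)$ are exactly the $N\phi$ with $\phi$ an automorphism of the cone satisfying $\phi v=v$. Changing the third map by an endomorphism, even an automorphism, of $\Sigma(Y\oplus X')$ need not preserve distinguishedness; this is the Canonaco--K\"unzer phenomenon behind \Cref{ex:ho-square-not-ho-mor}.

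What makes the argument work is the extra information the octahedral axiom gives: $M\dg{-t \\ \beta_2}=\dg{ht \\ 0}$, $N\dg{1 \\ 0}=\dg{-\Sigma f \\ -\Sigma\alpha}$, and $\dg{-\Sigma\beta_1,\Sigma s}N=\dg{0,-(\Sigma s)h'}$. Combine these with $M\dg{-g & 0 \\ \beta & -f'}=0$, $\dg{\Sigma g & 0 \\ -\Sigma\beta & \Sigma f'}N=0$, and exactness of $\mathcal T(-,\Sigma X)$ and $\mathcal T(Z',-)$ along the rows $(f,g,h)$, $(s,t,h'\gamma)$, $(f',g',h')$. This forces $M=\dg{1 & \kappa \\ 0 & 1}\dg{-h & 0 \\ \gamma & g'}$ and $N=\dg{-\Sigma f & 0 \\ -\Sigma\alpha & -h'}\dg{1 & -\lambda \\ 0 & 1}$ for some $\kappa,\lambda\colon Z'\to\Sigma X$.

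So your guess that the discrepancy lives in $\mathcal T(Z',\Sigma X)$ is correct. But it comes out of these octahedral compatibilities, not from the $6$-term sequences of an abstract triangle. Under the hypothesis $\kappa=\lambda=0$, and the sign change $\dg{1 & 0 \\ 0 & -1}$ on the outer terms turns the triangle into Neeman's mapping cone.
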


\subsection{Organization} The paper is organized as follows.
\vspace{-20pt}

\mytableofcontents

\vspace{-20pt}

\section{Preliminaries}\label{sec:pre}

\subsection{Extriangulated categories} We recall the definition of an extriangulated category from H. Nakaoka and Y. Palu \cite{NP19}.

\vskip5pt

Let $\mathcal{C}$ be an additive category, and $\mathbb E: \mathcal{C}^{\mathrm{op}} \times \mathcal{C} \to \mathbf{Ab}$ an additive bifunctor, where $\mathbf{Ab}$ is the category of abelian groups.
For a morphism $f: X\to X'$, let $f^\ast = \mathbb E(f, -): \mathbb E(X', -)\to \mathbb E(X, -)$ and $f_\ast = \mathbb E(-, f): \mathbb E(-, X)\to \mathbb E(-, X')$
be the natural transformations. For  objects $X, Z\in\mathcal C$, an element $\delta\in \mathbb{E}(Z, X)$
is called {\it an $\mathbb{E}$-extension}. The zero element $0\in\mathbb{E}(Z, X)$ is called {\it the split $\mathbb{E}$-extension}.
Since $\mathbb E$ is a bifunctor, for a morphism $h : Z' \to Z$ and  $\delta \in \mathbb E(Z, X)$, one has
$$(f_\ast)_{Z'} (h^\ast)_{X} (\delta) = (h^\ast)_{X'} (f_\ast)_{Z} (\delta)\in \mathbb E(Z', X').$$

    \vskip5pt

   Let $X, X', Z, Z'\in \mathcal C$. For $\mathbb{E}$-extensions $\delta \in \mathbb E(Z,X)$ and $\delta' \in \mathbb E(Z', X')$, \textit{a morphism of $\mathbb E$-extensions}
   $\delta \to \delta'$ is a pair of morphisms $(\alpha, \gamma)$ with $\alpha: X\to X'$ and $\gamma: Z\to Z'$, such that $\alpha_\ast \delta = \gamma^\ast \delta'\in \mathbb E(Z, X')$.
   Denote by $\delta \oplus \delta' \in \mathbb E(Z \oplus Z', X \oplus X')$ the image of $(\delta, \delta') \in \mathbb E(Z,X) \oplus \mathbb E(Z',X')$ under the natural inclusion
    \begin{equation*}\resizebox{.9\textwidth}{!}{
        $\mathbb E(Z,X) \oplus \mathbb E(Z',X') \hookrightarrow  \mathbb E(Z,X) \oplus \mathbb E(Z',X')\oplus \mathbb E(Z',X) \oplus \mathbb E(Z,X') \cong  \mathbb E(Z \oplus Z', X \oplus X')$.
        }
        \end{equation*}
In particular, if $X=X'$ and $Z=Z'$, then
\begin{equation*}    \dg{1_Z \\ 1_Z}^\ast \dg{1_X,1_X}_\ast (\delta \oplus \delta') =   \dg{1 , 1} \dg{\delta & 0 \\ 0 & \delta'} \dg{1 \\ 1}  = \delta + \delta'.
\end{equation*}

Let $X, Y, Y', Z\in \mathcal C$. By definition two sequences $X \xrightarrow{f} Y \xrightarrow{g} Z$ and $X \xrightarrow{f'} Y' \xrightarrow{g'} Z$ are {\it equivalent}, if there exists an isomorphism $\varphi: Y \to Y'$ such that the following diagram commutes:
\begin{equation*}
\xymatrix@R=0.4cm{
  X\ar[r]^{f}\ar@{=}[d] & Y\ar[r]^{g}\ar[d]^{{\varphi }}_{\cong} & Z\ar@{=}[d] \\
  X\ar[r]^{{f'}} & Y'\ar[r]^{{g'}} & Z
}
\end{equation*}

    A {\it realization} $\mathfrak s$ of $\mathbb E$ is a collection of ``mappings'', sending each $\delta \in \mathbb E(Z,X)$ for $Z, X\in \mathcal C$ to an equivalence class of sequences of the form $[X \xrightarrow{f} Y \xrightarrow{g} Z]$, satisfying that for any morphism of $\mathbb E$-extensions $(\alpha, \gamma): \delta \to \delta'$ and any representative $X \xrightarrow{f} Y \xrightarrow{g} Z$ of $\mathfrak s(\delta)$ and $X' \xrightarrow{f'} Y' \xrightarrow{g'} Z'$ of $\mathfrak s(\delta')$, there exists $\beta: Y \to Y'$ such that the following diagram commutes:
			\begin{equation*}
\xymatrix@R=0.4cm{
  X\ar[r]^{f}\ar[d]_-{\alpha} & Y\ar[r]^{g}\ar@{..>}[d]^-{\beta} & Z\ar[d]^-{\gamma} \\
  X'\ar[r]^{{f'}} & Y'\ar[r]^{{g'}} & Z'
}
\end{equation*}
In this case, one says that an $\mathbb E$-extension $\delta$ is {\it realized} by $X \xrightarrow f Y \xrightarrow g Z$, if
\begin{equation*}  \mathfrak s (\delta) = \left[X \xrightarrow f Y \xrightarrow g Z\right].
\end{equation*}

A {\it realization} $\mathfrak s$ of $\mathbb E$ is {\it additive}, if any split $\mathbb E$-extension $0\in \mathbb E(Y,X)$ is realized by
$X \xrightarrow{\dg{1\\ 0}} X \oplus Y \xrightarrow{\dg{0 , 1}} Y ;$ and if
$\mathfrak s(\delta_i)=[X_i \xrightarrow {f_1}Y_i\xrightarrow {g_i}Z_i]$, $i =1, 2$, then $\delta_1 \oplus \delta_2$ is realized by
		\begin{equation*}			X_1 \oplus X_2 \xrightarrow{\dg{f_1 & 0 \\ 0 & f_2}} Y_1 \oplus Y_2 \xrightarrow{\dg{g_1 & 0 \\ 0 & g_2}} Z_1 \oplus Z_2.
\end{equation*}

\vskip10pt

\begin{definition} \ {\rm (\cite[Definition 2.12]{NP19})} \ An {\it extriangulated category} is a triplet $(\mathcal{C}, \mathbb E, \mathfrak s)$, satisfying the following axioms$:$

\vskip5pt

{\bf ET1.} \ $\mathcal{C}$ is an additive category, and $\mathbb E: \mathcal{C}^{\mathrm{op}}\times \mathcal{C} \to \mathbf{Ab}$ is an additive bifunctor.

\vskip5pt

{\bf ET2.} \ $\mathfrak s$ is an additive realization of $\mathbb E$.

\vskip5pt

If an $\mathbb E$-extension $\delta$ is realized by $X \xrightarrow{f} Y \xrightarrow{g} Z$,  which will be denoted by $\con XfYgZ\delta$, then it is called an \textit{$\mathbb E$-triangle},
$f$ an \textit{$\mathbb E$-inflation} and $g$ an \textit{$\mathbb E$-deflation}. {\it A morphism} of $\mathbb E$-triangles is a triple $(\alpha, \beta, \gamma)$ such that $\alpha_\ast \delta = \gamma ^\ast \delta '$ and the following diagram commutes:
	\begin{equation}\label{morphism}
\xymatrix@R=0.4cm{
  X\ar[r]^{f}\ar[d]_-{\alpha} & Y\ar[r]^{g}\ar[d]^-{\beta} & Z\ar@{-->}[r]^{\delta}\ar[d]^-{\gamma} & {} \\
  X'\ar[r]^{{f'}} & Y'\ar[r]^{{g'}} & Z'\ar@{-->}[r]^{{\delta'}} & {}
}
\end{equation}

{\bf ET3.} \ Let $\con XfYgZ\delta$ and $\con {X'}{f'}{Y'}{g'}{Z'}{\delta'}$ be $\mathbb E$-triangles. If there are $\alpha : X \to X'$ and $\beta : Y \to Y'$ such that $\beta f = f'  \alpha$,
then there exists $\gamma: Z\to Z'$ such that $(\alpha, \beta, \gamma)$ is a morphism of $\mathbb E$-triangles.

\vskip5pt

{\bf ET3$^{\mathrm{op}}$.} \ Let $\con XfYgZ\delta$ and $\con {X'}{f'}{Y'}{g'}{Z'}{\delta'}$ be $\mathbb E$-triangles. If there are $\beta : Y \to Y'$ and $\gamma : Z' \to Z$ such that $\gamma  g = g'  \beta$,
then there exists $\alpha:X\to X'$ such that $(\alpha, \beta, \gamma)$ is a morphism of $\mathbb E$-triangles.

\vskip5pt

{\bf ET4.} \ Let $\con AfBgD\delta$ and $\con BuCvE\varepsilon$ be $\mathbb E$-triangles. Then there exists a commutative diagram
	\begin{equation*}
\xymatrix@R=0.4cm{
  A\ar[r]^{f}\ar@{=}[d] & B\ar[r]^{g}\ar[d]^{u} & D\ar@{-->}[r]^{\delta}\ar@{..>}[d]^{w} & {} \\
  A\ar@{..>}[r]^{m} & C\ar@{..>}[r]^{h}\ar[d]^{v} & F\ar@{..>}[r]^{\theta}\ar@{..>}[d]^{q} & {} \\
  {} & E\ar@{=}[r]\ar@{-->}[d]^{{\varepsilon }} & E\ar@{..>}[d]^{\eta} & {} \\
  {} & {} & {} & {}
}
\end{equation*}
	such that $\con AmChF\theta$ and $\con DwFqE\eta$ are $\mathbb E$-triangles and $w^\ast \theta=\delta$, $g_\ast \varepsilon=\eta$ and $f_\ast \theta= q^\ast\varepsilon$. In particular, $\mathbb E$-inflations are closed under compositions.

\vskip5pt

{\bf ET4$^{\mathrm{op}}$.} \ Let $\con AmChF\theta$ and $\con DwFqE\eta$ be $\mathbb E$-triangles. Then there exists a commutative diagram
	\begin{equation*}
\xymatrix@R=0.4cm{
  A\ar@{..>}[r]^{f}\ar@{=}[d] & B\ar@{..>}[r]^{g}\ar@{..>}[d]^{u} & D\ar@{..>}[r]^{\delta}\ar[d]^{w} & {} \\
  A\ar[r]^{m} & C\ar[r]^{h}\ar@{..>}[d]^{v} & F\ar@{-->}[r]^{\theta}\ar[d]^{q} & {} \\
  {} & E\ar@{=}[r]\ar@{..>}[d]^{{\varepsilon }} & E\ar@{-->}[d]^{\eta} & {} \\
  {} & {} & {} & {}
}
\end{equation*}
	such that $\con AfBgD\delta$ and $\con BuCvE\varepsilon$ are $\mathbb E$-triangles and $w^\ast \theta=\delta$, $g_\ast \varepsilon=\eta$ and $f_\ast \theta= q^\ast\varepsilon$. In particular, $\mathbb E$-deflations are closed under compositions.
\end{definition}

\vskip5pt

\begin{proposition} \label{isomorphisms} {\rm (\cite[Proposition 3.7]{NP19})} \ Let $(\mathcal{C}, \mathbb E, \mathfrak s)$ be an extriangulated category, and $\con XfYgZ\delta$  an $\mathbb E$-triangle.
Then, for any isomorphisms $\alpha: X\to X'$, \ $\beta : Y \to Y'$ and $\gamma: Z\to Z'$,
$$\con {X'}{\beta f \alpha^{-1}}{Y'}{\gamma g \beta^{-1}}{Z'}{\alpha_\ast (\gamma^{-1})^\ast \delta}$$
is also an $\mathbb E$-triangle.	In particular, $\con XfY{-g}Z{-\delta}$, $\con X{-f}YgZ{-\delta}$ and $\con X{-f}Y{-g}Z\delta$ are $\mathbb E$-triangles.
\end{proposition}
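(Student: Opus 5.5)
We first show that the realization $\mathfrak s$ is compatible with changing the middle term by an isomorphism. Set $\delta' := \alpha_\ast(\gamma^{-1})^\ast\delta \in \mathbb E(Z',X')$, and write $\mathfrak s(\delta') = [X' \xrightarrow{f''} Y'' \xrightarrow{g''} Z']$ for some representative. The goal is to produce an isomorphism $Y'' \to Y'$ that carries $(f'',g'')$ to $(\beta f\alpha^{-1}, \gamma g\beta^{-1})$. The equivalence class of this sequence would then be $\mathfrak s(\delta')$.

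The plan proceeds in two steps.

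First, $(\alpha,\gamma)\colon \delta\to\delta'$ is a morphism of $\mathbb E$-extensions. Indeed,
\[
\gamma^\ast\delta' = \gamma^\ast\alpha_\ast(\gamma^{-1})^\ast\delta = \alpha_\ast\delta,
\]
by the bifunctoriality identity recalled above. Likewise $(\alpha^{-1},\gamma^{-1})\colon\delta'\to\delta$ is a morphism of extensions. By the defining property of a realization, there is $\varphi\colon Y\to Y''$ with $\varphi f = f''\alpha$ and $g''\varphi = \gamma g$. There is also $\psi\colon Y''\to Y$ with $\psi f'' = f\alpha^{-1}$ and $g\psi = \gamma^{-1}g''$.

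Second, we show $\varphi$ is an isomorphism. This is the main obstacle, since $\psi\varphi$ is only an endomorphism of $Y$ fixing $f$ and $g$, not necessarily the identity. Here I would invoke the standard fact from \cite{NP19} (Corollary 3.6 there, a consequence of ET3 and the $6$-term exact sequences): if $(1_X, \theta, 1_Z)$ is a morphism of $\mathbb E$-triangles from $\delta$ to itself, then $\theta$ is an isomorphism. The triple $(1_X,\psi\varphi,1_Z)$ is such a morphism, so $\psi\varphi$ is invertible. By the same argument $\varphi\psi$ is invertible. Hence $\varphi$ is an isomorphism.

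If that corollary is unavailable, I would prove the needed statement directly. Apply the exact sequence $\mathcal C(-,X)\to\mathcal C(-,Y)\to\mathcal C(-,Z)\to\mathbb E(-,X)$ to the endomorphism $\theta$ and use a five-lemma argument, in the form of the Yoneda functor applied to the morphism of long exact sequences; this shows $\mathcal C(-,\theta)$ is an isomorphism. Establishing this exactness from ET3 alone is the delicate part.

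It remains to conclude. Set $\chi := \beta\varphi^{-1}\colon Y''\to Y'$, which is an isomorphism. Then
\[
\chi f'' = \beta\varphi^{-1}f'' = \beta f\alpha^{-1}
\quad\text{and}\quad
\gamma g\beta^{-1}\chi = \gamma g\varphi^{-1} = g''.
\]
So the two sequences are equivalent, and the first claim follows.

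For the particular cases, take $(\alpha,\beta,\gamma) = (1,-1,1)$, giving $\con X{-f}Y{-g}Z\delta$. Next take $(1,1,-1)$: since $((-1)^{-1})^\ast\delta = -\delta$, this gives $\con XfY{-g}Z{-\delta}$. Finally take $(-1,1,1)$, which gives $\con X{-f}YgZ{-\delta}$.
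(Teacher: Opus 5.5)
Your proof is correct. The paper gives no proof of its own and simply defers to Nakaoka--Palu's Proposition 3.7; your argument is the standard one: realize $\delta'=\alpha_\ast(\gamma^{-1})^\ast\delta$, lift $(\alpha,\gamma)$ to $\varphi$, invert $\varphi$ using \Cref{cone}, and transport along $\beta\varphi^{-1}$. There is no circularity, since that corollary (Nakaoka--Palu's Corollary 3.6) precedes their Proposition 3.7.

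You can shorten it by applying \Cref{cone} directly to the morphism of $\mathbb E$-triangles $(\alpha,\varphi,\gamma)$, whose outer components are isomorphisms. This makes $\psi$ and the $\psi\varphi$, $\varphi\psi$ detour unnecessary. Your fallback paragraph is also easier than you expect: $g(\theta-1)=0$ gives $\theta-1=fx$ by exactness at $\mathcal C(-,Y)$, which follows directly from ET3$^{\rm op}$. Hence $(\theta-1)^2=(\theta-1)fx=0$, and $\theta^{-1}=2\cdot 1_Y-\theta$.
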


\vskip5pt

\begin{proposition} \label{prop:pb-2} {\rm (The dual of \cite[Proposition 3.17]{NP19})} \ Let $(\mathcal{C}, \mathbb E, \mathfrak s)$ be an extriangulated category.
For $\mathbb E$-triangles $\con{A_1}{e_1}{E}{p_2}{B_2}{\eta}$, $\con{A_1}{f_1}{B_1}{g_1}{C}{\delta_1}$ and $\con{A_2}{e_2}{E}{p_1}{B_1}{\varepsilon}$ with $f_1=p_1e_1$, there is a completion of the diagram
\begin{equation}\label{eq:pb-2}
\xymatrix@R=0.4cm{
  {} & A_2\ar@{=}[r]\ar[d]^{{e_2}} & A_2\ar@{..>}[d]^{{f_2}} & {} \\
  A_1\ar[r]^{{e_1}}\ar@{=}[d] & E\ar[r]^{{p_2}}\ar[d]^{{p_1}} & B_2\ar@{-->}[r]^{\eta}\ar@{..>}[d]^{{g_2}} & {} \\
  A_1\ar[r]^{{f_1}} & B_1\ar[r]^{{g_1}}\ar@{-->}[d]^{{\varepsilon }} & C\ar@{-->}[r]^{{\delta_1}}\ar@{..>}[d]^{{\delta_2}} & {} \\
  {} & {} & {} & {}
}
\end{equation}
    such that $\con{A_2}{f_2}{B_2}{g_2}{C}{\delta_2}$ is an $\mathbb E$-triangle, $(g_1)^\ast\delta_2=\varepsilon$, $(g_2)^\ast \delta_1=\eta$ and $(e_1)_\ast\delta_1+(e_2)_\ast\delta_2=0$.
\end{proposition}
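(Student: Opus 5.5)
We will prove this by dualizing, step by step, the argument of \cite[Proposition 3.17]{NP19}. That means replacing ET4 by ET4$^{\mathrm{op}}$, ET3 by ET3$^{\mathrm{op}}$, and the contravariant long exact sequences by the covariant ones. The data are as follows: $p_1\colon E\to B_1$ and $g_1\colon B_1\to C$ are composable $\mathbb E$-deflations, and the factorization $f_1=p_1e_1$ ties $\eta$ to $\delta_1$. The map $f_2$ is forced to be $p_2e_2$. We will construct $g_2$ and $\delta_2$ from the octahedral axiom.

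\textbf{Step 1.} Apply ET4$^{\mathrm{op}}$ to $\con{A_2}{e_2}E{p_1}{B_1}{\varepsilon}$ and $\con{A_1}{f_1}{B_1}{g_1}C{\delta_1}$. This gives an object $M$ and $\mathbb E$-triangles $\con{A_2}mMh{A_1}{\mu}$ and $\con Mu E{g_1p_1}C{\nu}$. These satisfy $ue_2'=e_2$ (where $m$ plays the role of the first map), $h$ compatible with $f_1$ via $p_1u=f_1h$, $m^\ast\nu$-type identities, $(g_1)^\ast\nu=\varepsilon$ up to the relations listed in ET4$^{\mathrm{op}}$, and $(e_2)_\ast$-compatibility.

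\textbf{Step 2.} Compare $M$ with $A_1$ through $e_1$. We have $g_1p_1e_1=g_1f_1=0$, and $\mathcal C(A_1,M)\to\mathcal C(A_1,E)\to\mathcal C(A_1,C)$ is exact. Hence $e_1=us$ for some $s\colon A_1\to M$. Then $f_1hs=p_1us=p_1e_1=f_1$. Using the triangle $\mu$, we will adjust $s$ by an element coming from $A_2$ so that $hs=1_{A_1}$. This should make $\mu$ split, with $M\cong A_2\oplus A_1$ and $u$ identified with $(e_2,e_1)$.

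\textbf{Step 3.} Obtain the required triangle. Apply ET4$^{\mathrm{op}}$ again, now to $\con{A_1}{e_1}E{p_2}{B_2}{\eta}$ and the triangle $\con{A_2\oplus A_1}{(e_2,e_1)}E{g_1p_1}C{\nu}$. Alternatively, apply ET3$^{\mathrm{op}}$ to get $g_2$ with $g_2p_2=g_1p_1$; such a $g_2$ exists because $g_1p_1e_1=0$ and $\mathcal C(B_2,C)\to\mathcal C(E,C)\to\mathcal C(A_1,C)$ is exact. Either way, we produce an $\mathbb E$-triangle $\con{A_2}{f_2}{B_2}{g_2}C{\delta_2}$ with $f_2=p_2e_2$. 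The identities $(g_1)^\ast\delta_2=\varepsilon$ and $(g_2)^\ast\delta_1=\eta$ then come from the compatibility relations in ET4$^{\mathrm{op}}$ ($w^\ast\theta=\delta$, $g_\ast\varepsilon=\eta$), transported along the splitting of Step 2.

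\textbf{Step 4.} Derive the relation $(e_1)_\ast\delta_1+(e_2)_\ast\delta_2=0$ from $\nu$. Restrict $\nu\in\mathbb E(C,A_2\oplus A_1)$ along the two summand projections and push forward by $(e_2,e_1)$. Since $u_\ast\nu=0$ by exactness, this yields the sum of the two pushed-forward classes. After identifying those components with $\delta_2$ and $\delta_1$, we get the relation, possibly after replacing $\delta_2$ by $-\delta_2$ using \Cref{isomorphisms}.

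We expect the main obstacle to be the bookkeeping in Steps 2 and 4. One has to choose the splitting so that the components of $\nu$ are exactly $\delta_1$ and $\delta_2$ and not merely equal up to an automorphism, and one has to track the signs so that all three stated identities hold simultaneously. If that becomes unwieldy, the fallback is to transcribe the proof of \cite[Proposition 3.17]{NP19} literally into $\mathcal C^{\mathrm{op}}$, where $(\mathcal C^{\mathrm{op}},\mathbb E^{\mathrm{op}},\mathfrak s^{\mathrm{op}})$ is again extriangulated, and read off the statement.
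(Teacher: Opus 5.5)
Your fallback is exactly what the paper does. The statement is simply quoted as \cite[Proposition 3.17]{NP19} read in $(\mathcal C^{\mathrm{op}},\mathbb E^{\mathrm{op}},\mathfrak s^{\mathrm{op}})$, with no separate proof. The direct argument you actually sketch, however, has two genuine gaps.

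\textbf{Step 2.} Adjusting $s$ by $ma$ with $a\colon A_1\to A_2$ can never achieve $hs=1_{A_1}$. Since $hm=0$, you always have $h(s+ma)=hs$. Nor does $f_1(hs-1)=0$ give $hs=1$, because $\mathbb E$-inflations need not be monic. You are also trying to deduce that $\mu$ splits from $hs=1$, whereas the splitting is available a priori. The relation $w^\ast\theta=\delta$ in ET4$^{\mathrm{op}}$ gives $\mu=f_1^\ast\varepsilon=e_1^\ast p_1^\ast\varepsilon=0$.

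So the order must be reversed. First take a section $s_0$ of $h$. Then $p_1(us_0-e_1)=f_1hs_0-f_1=0$, so $us_0-e_1=e_2a$ for some $a$. Now $s:=s_0-ma$ satisfies both $hs=1$ and $us=e_1$. Hence $(m,s)\colon A_2\oplus A_1\to M$ is an isomorphism with inverse $\binom{r}{h}$, and $mr+sh=1_M$.

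The Step 1 relations you actually need are $h_\ast\nu=\delta_1$ and $g_1^\ast\nu=m_\ast\varepsilon$. As written, ``$(g_1)^\ast\nu=\varepsilon$'' is not well typed.

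\textbf{Step 3.} This step never produces the $\mathbb E$-triangle $\con{A_2}{f_2}{B_2}{g_2}C{\delta_2}$, which is the real content of the proposition.
\begin{itemize}
\item ET4$^{\mathrm{op}}$ does not apply to $\con{A_1}{e_1}E{p_2}{B_2}{\eta}$ and $\con{A_2\oplus A_1}{(e_2,e_1)}E{g_1p_1}C{\nu}$. It needs composable deflations, and both of yours start at $E$.
\item ET3$^{\mathrm{op}}$, or the long exact sequence, only gives a morphism $g_2$ with $g_2p_2=g_1p_1$, never a new $\mathbb E$-triangle.
\end{itemize}

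What works is ET4, applied to the split triangle $\con{A_1}{\binom{0}{1}}{A_2\oplus A_1}{(1,0)}{A_2}{0}$ and the transported triangle $\con{A_2\oplus A_1}{(e_2,e_1)}E{g_1p_1}C{\binom{r}{h}_\ast\nu}$. It yields $\con{A_1}{e_1}E{c}F{\theta}$ and $\con{A_2}{w}F{q}C{r_\ast\nu}$. These satisfy $w=ce_2$, $qc=g_1p_1$ and $\binom{0}{1}_\ast\theta=q^\ast\binom{r}{h}_\ast\nu$, so $\theta=q^\ast h_\ast\nu=q^\ast\delta_1$.

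Next apply ET3 with $(1_{A_1},1_E)$ to compare with $\eta$. This gives $\varphi\colon F\to B_2$ with $\varphi c=p_2$ and $\theta=\varphi^\ast\eta$, and $\varphi$ is an isomorphism by \Cref{cone}. Now set $f_2=\varphi w=p_2e_2$, $g_2=q\varphi^{-1}$ and $\delta_2=r_\ast\nu$. The required identities hold: $g_1^\ast\delta_2=r_\ast m_\ast\varepsilon=\varepsilon$ and $g_2^\ast\delta_1=(\varphi^{-1})^\ast\theta=\eta$.

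With these repairs your Step 4 goes through as intended, with no sign change needed: $(e_1)_\ast\delta_1+(e_2)_\ast\delta_2=(ush+umr)_\ast\nu=u_\ast\nu=0$.
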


\subsection{$6$-term exact sequences} Note that the category of covariant (respectively, contravariant) functors from an additive category to {\bf Ab} is an abelian category.

\vskip5pt

\begin{theorem} \label{lem:long-ext-seq} {\rm (\cite[Corollary 3.12]{NP19})} \ Let $(\mathcal{C}, \mathbb E, \mathfrak s)$ be an extriangulated category. For any $\mathbb E$-triangle $\con XfYgZ\delta$,
there is a $6$-term exact sequences of functors$:$
    \begin{equation*}    \resizebox{0.85\textwidth}{!}{
      $\mathcal C(-,X)\xrightarrow{\mathcal C(-,f)}\mathcal C(-,Y)\xrightarrow{\mathcal C(-,g)}\mathcal C(-,Z)\xrightarrow{\delta_\sharp}\mathbb E(-, X)\xrightarrow{f_\ast}\mathbb E(-, Y)\xrightarrow{g_\ast}\mathbb E(-, Z),$
      }
\end{equation*}
    where $\delta_\sharp : \mathcal{C}(-, Z) \to \mathbb E(-, X)$ is a natural transformation sending $T \xrightarrow{\varphi} Z$ to $\varphi^\ast \delta;$
         and  there is a $6$-term exact sequences of functors$:$  \begin{equation*}     \resizebox{0.85\textwidth}{!}{
 $\mathcal{C}(Z, -) \xrightarrow{\mathcal{C}(g, -)} \mathcal{C}(Y, -) \xrightarrow{\mathcal{C}(f, -)} \mathcal{C}(X, -) \xrightarrow{\delta^\sharp} \mathbb E(Z, -) \xrightarrow{g^\ast} \mathbb E(Y, -) \xrightarrow{f^\ast} \mathbb E(X, -),$
      }
\end{equation*}
where $\delta^\sharp : \mathcal{C}(X, -) \to \mathbb E(Z, -)$ is a natural transformation sending $X \xrightarrow{\psi} T$ to $\psi_\ast \delta$.
\end{theorem}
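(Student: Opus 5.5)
The plan is to prove the contravariant sequence $\mathcal C(-,X)\to\cdots\to\mathbb E(-,Z)$ in full and get the covariant one by the dual argument, interchanging ET3 and ET3$^{\mathrm{op}}$ and ET4 and ET4$^{\mathrm{op}}$. Naturality of $\delta_\sharp$ is clear from bifunctoriality of $\mathbb E$. So only exactness needs proof, evaluated at an arbitrary object $T$. The split triangles I will use come from ET2: $\con T{1}T{}0{0}$, $\con 0{}TT1{0}$ and $\con X{\dg{1\\0}}{X\oplus T}{\dg{0,1}}T0$.

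\textbf{Step 1: the composites vanish.}
\begin{enumerate}
\item For $gf=0$, apply ET3 to the pair $(1_X,f)$ from $\con X1X{}0{0}$ to $\con XfYgZ\delta$. This gives $\gamma\colon 0\to Z$ with $g f=\gamma\circ 0=0$.
\item For $g^\ast\delta=0$, apply ET3$^{\mathrm{op}}$ to the triangles $\con 0{}Y1Y0$ and $\con XfYgZ\delta$, with $\beta=1_Y$ and $\gamma=g$. This gives $\alpha\colon 0\to X$ with $g^\ast\delta=\alpha_\ast 0=0$.
\item For $f_\ast\delta=0$, apply ET3 to $(f,1_Y)$ from $\con XfYgZ\delta$ to $\con Y1Y{}0{0}$. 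This gives $\gamma\colon Z\to 0$ with $f_\ast\delta=\gamma^\ast 0=0$.
\end{enumerate}
By naturality, these three identities give that every consecutive composite in the sequence is zero.

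\textbf{Step 2: exactness at the three Hom-terms.}
\begin{enumerate}
\item At $\mathcal C(T,Y)$: let $y\colon T\to Y$ with $gy=0$. Apply ET3$^{\mathrm{op}}$ to $\con T1T{}0{0}$ and $\con XfYgZ\delta$, with $\beta=y$ and $\gamma=0$. This gives $\alpha\colon T\to X$ with $f\alpha=y$.
\item At $\mathcal C(T,Z)$: let $\varphi\colon T\to Z$ with $\varphi^\ast\delta=0$. Then $(1_X,\varphi)$ is a morphism of $\mathbb E$-extensions $0\to\delta$, with $0\in\mathbb E(T,X)$ realized by the split sequence. The realization axiom in ET2 gives $\beta\colon X\oplus T\to Y$ with $g\beta=\varphi\dg{0,1}$. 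Hence $\varphi=g\beta\dg{0\\1}$.
\item These arguments dualize verbatim, so the covariant sequence is exact at its Hom-terms as well. I will use that fact in Step 3.
\end{enumerate}

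\textbf{Step 3: exactness at $\mathbb E(T,X)$.}
Let $\theta\in\mathbb E(T,X)$ with $f_\ast\theta=0$, and realize it as $\con XeET\theta$. Exactness of the covariant sequence of this triangle at $\mathcal C(X,Y)$ (Step 2, dualized) gives $\psi\colon E\to Y$ with $f=\psi e$. ET3 applied to $(1_X,\psi)$ then gives $\gamma\colon T\to Z$ with $\theta=(1_X)_\ast\theta=\gamma^\ast\delta=\delta_\sharp(\gamma)$.

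\textbf{Step 4: exactness at $\mathbb E(T,Y)$.}
I expect this to be the main obstacle, since it is the only place where ET4 is needed. Let $\rho\in\mathbb E(T,Y)$ with $g_\ast\rho=0$, and realize it as $\con YmFvT\rho$. Apply ET4 to $\con XfYgZ\delta$ and $\con YmFvT\rho$. This gives:
\begin{enumerate}
\item an $\mathbb E$-triangle $\con XhF{}D\theta$;
\item an $\mathbb E$-triangle $\con ZwDqT\eta$ with $\eta=g_\ast\rho=0$;
\item the identity $f_\ast\theta=q^\ast\rho$.
\end{enumerate}
Since $\eta=0$, the second triangle is equivalent to the split one. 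Hence $q$ has a section $s\colon T\to D$ with $qs=1_T$. Therefore
$$\rho=s^\ast q^\ast\rho=s^\ast f_\ast\theta=f_\ast(s^\ast\theta),$$
so $\rho$ lies in the image of $f_\ast$.

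The covariant sequence is obtained by running Steps 1–4 dually, with ET4$^{\mathrm{op}}$ used in the last step.
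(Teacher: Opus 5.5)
Your proof is correct. The paper gives no argument of its own here and simply cites \cite[Corollary 3.12]{NP19}; your Steps 1--4 essentially reproduce Nakaoka--Palu's route:
\begin{enumerate}
\item vanishing composites and exactness at the Hom-terms, via ET3/ET3$^{\mathrm{op}}$ applied to split triangles and the realization axiom;
\item exactness at $\mathbb E(-,X)$ via ET3;
\item exactness at $\mathbb E(-,Y)$ via ET4, together with the splitting of the triangle realizing $g_\ast\rho=0$.
\end{enumerate}
The dualizations you defer, including the ET4$^{\mathrm{op}}$ step for the covariant sequence, go through as you claim.
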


\vskip5pt

\begin{remark}\label{naturality}  \ 		Moreover, $\delta_\sharp$ and $\delta^\sharp$ are natural at $\delta$, in the sense that given a morphism of $\mathbb E$-triangles $(\alpha, \beta, \gamma)$ as in \Cref{morphism}, the following diagrams commute:
	\begin{equation*}
\xymatrix@R=0.4cm{
  \mathcal C(-, Z)\ar[r]^{{\delta_\sharp}}\ar[d]_{{\mathcal C(-,\gamma)}} & \mathbb E(-, X)\ar[d]^{{\alpha_\ast}} & \mathcal C(X',-)\ar[r]^{{(\delta')^\sharp}}\ar[d]_{{\mathcal C(\alpha,-)}} & \mathbb E(Z',-)\ar[d]^{{\gamma^\ast}} \\
  \mathcal C(-, Z')\ar[r]^{{(\delta')_\sharp}} & \mathbb E(-, X') & \mathcal C(X,-)\ar[r]^{{\delta^\sharp}} & \mathbb E(Z,-)
}
\end{equation*}
\end{remark}

	\begin{proof} \ Let $T\in \mathcal C$. For $\varphi: T \to Z$,  there is
		\begin{equation*}			\resizebox{0.9\textwidth}{!}{
			$(\delta')_\sharp^T( \mathcal C(T, \gamma)(\varphi)) = (\delta')_\sharp^T (\gamma \varphi) = (\gamma \varphi)^\ast \delta' = \varphi^\ast \gamma^\ast \delta' = \varphi^\ast \alpha_\ast \delta = \alpha_\ast \varphi ^\ast \delta = \alpha_\ast \delta_\sharp^T(\varphi).$
			}
\end{equation*}
        For $\psi: X'\to T$,  there is
		\begin{equation*}			\resizebox{0.9\textwidth}{!}{
			$\delta^\sharp_T( \mathcal C(\alpha, T)(\psi)) = \delta^\sharp_T (\psi \alpha) = (\psi \alpha)_\ast \delta = \psi_\ast \alpha_\ast \delta  = \psi_\ast \gamma^\ast \delta'  = \gamma^\ast\psi_\ast  \delta'  = \gamma^\ast (\delta')^\sharp_T(\psi).$
			}
\end{equation*}
	\end{proof}

\begin{corollary}\label{cone}  {\rm (\cite[Corollary 3.6]{NP19})} \  Let $(\mathcal{C}, \mathbb E, \mathfrak s)$ be an extriangulated category, $(\alpha, \beta, \gamma)$ be a morphism of $\mathbb E$-triangles.
If two of $\alpha, \beta, \gamma$ are isomorphisms, then so is the third one.

\end{corollary}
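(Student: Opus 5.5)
The plan is to reduce everything to the Yoneda lemma, using the two $6$-term exact sequences of \Cref{lem:long-ext-seq} together with the naturality of $\delta_\sharp$ and $\delta^\sharp$ in \Cref{naturality}. A morphism of $\mathbb E$-triangles $(\alpha,\beta,\gamma)$ gives two commutative ladders of exact sequences of functors. The first is the contravariant ladder
\begin{equation*}
\resizebox{0.9\textwidth}{!}{$\mathcal C(-,X)\to\mathcal C(-,Y)\to\mathcal C(-,Z)\xrightarrow{\delta_\sharp}\mathbb E(-,X)\to\mathbb E(-,Y)\to\mathbb E(-,Z)$}
\end{equation*}
mapped to the primed row by $\mathcal C(-,\alpha),\mathcal C(-,\beta),\mathcal C(-,\gamma),\alpha_\ast,\beta_\ast,\gamma_\ast$. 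The second is the covariant ladder starting at $\mathcal C(Z',-)\to\mathcal C(Y',-)\to\mathcal C(X',-)\to\cdots$, mapped to the unprimed row by $\mathcal C(\gamma,-),\mathcal C(\beta,-),\mathcal C(\alpha,-),\gamma^\ast,\beta^\ast,\alpha^\ast$. Commutativity of the squares not involving $\delta_\sharp,\delta^\sharp$ is just functoriality together with $\beta f=f'\alpha$ and $\gamma g=g'\beta$; the connecting squares are exactly \Cref{naturality}. Since functor categories to $\mathbf{Ab}$ are abelian and these are objectwise sequences of abelian groups, the two halves of the four lemma apply pointwise: if $A_2,A_4$ are epi and $A_5$ is mono, then $A_3$ is epi; if $A_1$ is epi and $A_2,A_4$ are mono, then $A_3$ is mono.

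\emph{Case $\alpha,\beta$ isomorphisms.} Take the window $\mathcal C(-,X)\to\mathcal C(-,Y)\to\mathcal C(-,Z)\to\mathbb E(-,X)\to\mathbb E(-,Y)$ of the contravariant ladder, with middle term $\mathcal C(-,Z)$. All four outer vertical maps are isomorphisms, so $\mathcal C(-,\gamma)$ is a natural isomorphism. By Yoneda, $\gamma$ is an isomorphism.

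\emph{Case $\beta,\gamma$ isomorphisms.} This is dual. Use the window $\mathcal C(Z',-)\to\mathcal C(Y',-)\to\mathcal C(X',-)\to\mathbb E(Z',-)\to\mathbb E(Y',-)$ of the covariant ladder, with middle term $\mathcal C(X',-)$. The four outer vertical maps are induced by $\gamma$ or $\beta$, hence are isomorphisms, so $\mathcal C(\alpha,-)$ is an isomorphism and $\alpha$ is an isomorphism.

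\emph{Case $\alpha,\gamma$ isomorphisms.} This is the step I expect to be the main obstacle. The middle term $\mathcal C(-,Y)$ sits second in its sequence, so there is no term to its left and the injectivity half of the four lemma is unavailable. I will therefore only extract surjectivity from each ladder, which needs nothing further to the left.
\begin{enumerate}
\item In the contravariant ladder, the window $\mathcal C(-,X)\to\mathcal C(-,Y)\to\mathcal C(-,Z)\to\mathbb E(-,X)$ has $\mathcal C(-,\alpha)$ and $\mathcal C(-,\gamma)$ epi and $\alpha_\ast$ mono. Hence $\mathcal C(-,\beta)$ is epi, and evaluating at $Y'$ on $1_{Y'}$ gives $s$ with $\beta s=1$.
\item In the covariant ladder, the window $\mathcal C(Z',-)\to\mathcal C(Y',-)\to\mathcal C(X',-)\to\mathbb E(Z',-)$ has $\mathcal C(\gamma,-)$ and $\mathcal C(\alpha,-)$ epi and $\gamma^\ast$ mono. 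Hence $\mathcal C(\beta,-)$ is epi, and evaluating at $Y$ on $1_Y$ gives $t$ with $t\beta=1$.
\end{enumerate}
Then $s=t$, and $\beta$ is an isomorphism. The only care needed throughout is to check that the diagram chase uses only the four terms available in each window.
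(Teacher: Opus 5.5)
Your proof is correct and is essentially the standard argument behind the cited result of Nakaoka--Palu (the paper gives no proof of its own, only the citation): the exact sequences of \Cref{lem:long-ext-seq} with the naturality of \Cref{naturality}, the four/five lemma applied objectwise, and Yoneda, with the $(\alpha,\gamma)$ case handled correctly by extracting a right inverse of $\beta$ from one ladder and a left inverse from the other, since the sequences are not left exact. You also only use exactness at the second, third and fourth terms, which needs only ET1--ET3 and not ET4, so the argument is not circular even relative to the original source, where the full $6$-term sequences appear after this corollary.
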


\subsection{Weakly idempotent completeness} \  A morphism $\varphi: A\to B$ is a \textit{section}
(respectively, a \textit{retraction}) if there is a morphism $\psi: B\to A$ such that $\psi \varphi = 1_A$ (respectively, $\varphi \psi = 1_B$).

\vskip5pt

\begin{lemma}\label{kernelcokernel} \ Let $\mathcal A$ be an additive category. For morphisms $f :A\to B$ and $g: B \to A$ with $gf=1_A$, $f$ admits a cokernel if and only if $g$ admits a kernel.
\end{lemma}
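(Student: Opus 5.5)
Both implications will be checked directly from the universal properties. We use that $gf=1_A$ makes $f$ a split monomorphism and $g$ a split epimorphism, so $e:=fg$ is an idempotent on $B$. The idea is that a cokernel of $f$ and a kernel of $g$ are both ways of splitting the complementary idempotent $1_B-fg$.

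For the direction ``$g$ has a kernel $\Rightarrow$ $f$ has a cokernel'', let $k:K\to B$ be a kernel of $g$.
Since $g(1_B-fg)=g-g=0$, the morphism $1_B-fg$ factors uniquely as $1_B-fg=kc$ for some $c:K\leftarrow B$, i.e.\ $c:B\to K$.

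We first record three identities.
\begin{itemize}
\item $ck=1_K$: we have $k(ck)=(1-fg)k=k-f(gk)=k$, and $k$ is monic.
\item $cf=0$: we have $kcf=(1-fg)f=f-f=0$, and $k$ is monic.
\item $kc+fg=1_B$: this holds by construction.
\end{itemize}

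Now we claim $c$ is a cokernel of $f$. Suppose $h:B\to T$ satisfies $hf=0$. Then
$$h=h(kc+fg)=(hk)c,$$
so $h$ factors through $c$. The factorization is unique because $c$ is a split epimorphism, as $ck=1_K$.

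The converse is dual. Let $c:B\to C$ be a cokernel of $f$. Since $(1-fg)f=0$, we can write $1-fg=kc$ for a unique $k:C\to B$. Then:
\begin{itemize}
\item $ck=1_C$: we have $(ck)c=c(1-fg)=c$, and $c$ is epic.
\item $gk=0$: we have $gkc=g(1-fg)=0$, and $c$ is epic.
\end{itemize}
For any $h$ with $gh=0$, we get $h=(kc+fg)h=k(ch)$. Uniqueness holds since $k$ is a split monomorphism. Hence $k$ is a kernel of $g$.

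There is no real obstacle here. The only care needed is to verify $ck=1$ via the mono/epi cancellation before using it for uniqueness.
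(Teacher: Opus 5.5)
Your argument is correct and complete. The factorizations of $1_B-fg$ through the kernel and the cokernel are right, the cancellation steps giving $ck=1$ and $cf=0$ (resp.\ $gk=0$) are right, and the uniqueness via the split epi/mono is right. The paper states this lemma without proof as a standard fact (cf.\ B\"uhler's Lemma 7.1), and your idempotent-splitting verification is the standard argument behind it.
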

\vskip5pt

\begin{lemma}\label{lem:infl-section} \ Let $(\mathcal{C}, \mathbb E, \mathfrak s)$ be an extriangulated category.
\vskip5pt
$(1)$ \ {\rm (\cite[Corollary 3.5]{NP19})} \ Let $\con XfYgZ\delta$ be an $\mathbb E$-triangle. Then $f$ is a section if and only if $\delta = 0$, if and only if $g$ is a retraction.
\vskip5pt
$(2)$ \  An $\mathbb E$-inflation which is a section admits a cokernel, and an $\mathbb E$-deflation which is a retraction admits a kernel.
\vskip5pt
$(3)$ \  A monic $\mathbb E$-deflation is a section, and an epic $\mathbb E$-inflation is a retraction.	
 \end{lemma}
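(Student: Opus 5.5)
Part (1) is \cite[Corollary 3.5]{NP19}, which we take as given. For the others we argue as follows.

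For (2), let $f\colon X\to Y$ be an $\mathbb E$-inflation that is a section, realized by $\con XfYgZ\delta$. By (1), $\delta=0$ and $g$ is a retraction. We claim $g$ is a cokernel of $f$. First, $gf=0$, which holds in any $\mathbb E$-triangle; it follows from exactness of $\mathcal C(-,X)\to\mathcal C(-,Y)\to\mathcal C(-,Z)$ in \Cref{lem:long-ext-seq} applied to $1_X$. Next, let $h\colon Y\to T$ satisfy $hf=0$. Exactness of
\[
\mathcal C(Z,T)\xrightarrow{\mathcal C(g,T)}\mathcal C(Y,T)\xrightarrow{\mathcal C(f,T)}\mathcal C(X,T)
\]
gives some $h'$ with $h=h'g$. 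This factorization is unique because $g$ is a retraction, hence epic. So $g=\mathrm{coker} f$. The deflation statement is dual: the contravariant sequence shows that $f$ is a kernel of the retraction $g$, and $f$ is monic because it is a section.

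For (3), let $g\colon Y\to Z$ be a monic $\mathbb E$-deflation, realized by $\con XfYgZ\delta$. Since $gf=0$ and $g$ is monic, we get $f=0$. Apply the covariant $6$-term sequence at $T=X$:
\[
\mathcal C(Y,X)\xrightarrow{\mathcal C(f,X)}\mathcal C(X,X)\xrightarrow{\delta^\sharp}\mathbb E(Z,X).
\]
Because $f=0$, the first map is zero, so $\delta^\sharp$ is injective on $\mathcal C(X,X)$. But $\delta^\sharp(1_X)=\delta$. Hence we should look at $\mathrm{Im}\,\delta^\sharp$ versus $\ker g^\ast$ instead.

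Apply exactness at $\mathbb E(Z,X)\xrightarrow{g^\ast}\mathbb E(Y,X)$: we have $g^\ast\delta=0$, which holds in any triangle. This alone does not finish the argument, so we use the other sequence. In
\[
\mathcal C(-,X)\xrightarrow{}\mathcal C(-,Y)\xrightarrow{\mathcal C(-,g)}\mathcal C(-,Z)\xrightarrow{\delta_\sharp}\mathbb E(-,X),
\]
the map $\mathcal C(-,f)$ is zero, so $\mathcal C(-,g)$ is injective, consistent with $g$ being monic. Evaluating at $Z$ gives $\delta_\sharp(1_Z)=\delta$, and we need to show $\delta=0$.

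Use instead $\mathbb E(-,X)\xrightarrow{f_\ast}\mathbb E(-,Y)$. Since $f=0$, the map $f_\ast$ is zero, so $\delta_\sharp\colon\mathcal C(-,Z)\to\mathbb E(-,X)$ is surjective. This still does not conclude.

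The direct route is through ET4$^{\mathrm{op}}$-free reasoning with ET3. Compare the given triangle with $\con X{1}X{}0{0}$ via the maps $(1_X, ?, \ldots)$. Concretely, the morphism of $\mathbb E$-triangles $(0,0,\cdot)$ does not help. Instead note that $f=0$ is an $\mathbb E$-inflation. Then $0\colon X\to Y$ factors as $X\xrightarrow{} 0\to Y$, and we compare with the split triangle $\con X{\dg{1\\0}}{X\oplus Y}{(0,1)}{Y}{0}$. By ET3 applied to $\alpha=1_X$, $\beta=(0,1)\colon Y\to$, or more cleanly by the realization property for $(1_X,\ldots)$, one obtains morphisms in both directions whose middle components together with \Cref{cone} show that the given triangle is isomorphic to one with $\delta=0$. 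Then (1) gives that $g$ is a retraction, say $gs=1_Z$. Since $g$ is monic and $g(sg)=g$, we get $sg=1_Y$, so $g$ is in fact a section (indeed an isomorphism).

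The epic-inflation case is dual. The main obstacle is this step showing $\delta=0$ from $f=0$; one should verify directly that $(0,0)\colon X\to Y$ being an inflation forces the extension to split.
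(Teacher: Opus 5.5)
Your argument for (2) is correct: exactness of $\mathcal C(Z,T)\to\mathcal C(Y,T)\to\mathcal C(X,T)$ gives the factorization, and $g$ is a retraction, hence epic, which gives uniqueness. The kernel case is dual. The paper treats (2) as evident and writes out only (3).

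Part (3) has a genuine gap. The step you call ``the main obstacle'' is not just unproved, it is false, so this route cannot be completed. From $f=0$ you cannot conclude $\delta=0$, and a monic $\mathbb E$-deflation need not be an isomorphism. Your own computation at $T=X$ already shows this. When $f=0$, $\delta^\sharp$ is injective on $\mathcal C(X,X)$ and $\delta=\delta^\sharp(1_X)$, so $\delta=0$ forces $1_X=0$, i.e.\ $X=0$.

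For a concrete instance, take a triangulated category regarded as an extriangulated one. There $\binom{1}{0}\colon Y\to Y\oplus W$ with $W\neq 0$ is a monic deflation, realized by the distinguished triangle $\Sigma^{-1}W\xrightarrow{0}Y\xrightarrow{\binom{1}{0}}Y\oplus W\xrightarrow{(0,1)}W$. Its extension is nonzero, and the map is a section but not a retraction. So no comparison with a split $\mathbb E$-triangle via ET3 and \Cref{cone} can work, and your final conclusion that $g$ is an isomorphism is wrong.

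The missing idea is to evaluate the covariant sequence at $T=Y$ instead of $T=X$. In $\mathcal C(Z,Y)\xrightarrow{\mathcal C(g,Y)}\mathcal C(Y,Y)\xrightarrow{\mathcal C(f,Y)}\mathcal C(X,Y)$ one has $\mathcal C(f,Y)(1_Y)=f=0$. Hence $1_Y=hg$ for some $h\colon Z\to Y$, i.e.\ $g$ is a section; this is exactly the paper's proof.

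Dually, for an epic $\mathbb E$-inflation $f$, you get $g=0$ from $gf=0$. Exactness of $\mathcal C(Y,X)\xrightarrow{\mathcal C(Y,f)}\mathcal C(Y,Y)\xrightarrow{\mathcal C(Y,g)}\mathcal C(Y,Z)$ at $1_Y$ then gives $s$ with $fs=1_Y$. The argument never needs any information about $\delta$.
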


 \begin{proof} \ One only shows (3). Let $\con XfYgZ\delta$ be an $\mathbb E$-triangle. Suppose that $g$ is a monomorphism. Since $gf=0$, $f=0$. It follows that $1_Yf=0$. By \Cref{lem:long-ext-seq} there exists $h: Z \to Y$ such that $hg = 1_Y$, i.e., $g$ is a section. Similarly, one can prove the dual assertion.
\end{proof}

\ An additive category $\mathcal A$ is \textit{weakly idempotent complete},
provided that any section has a cokernel, or equivalently, any retraction has a kernel (see e.g. \cite[Lemma 7.1, Definition 7.2]{Buh10}).
A triangulated category is always weakly idempotent complete. For weakly idempotent complete exact categories one refers to \cite[Section 7]{Buh10}.

\vskip5pt

\begin{condition} \label{con:wic} {\rm (\cite[Condition 5.8]{NP19})} \ An extriangulated category $(\mathcal{C} , \mathbb E, \mathfrak s)$ satisfies WIC condition, if for any composable morphisms $f: X\to Y$ and $g: Y\to Z:$
\vskip5pt
$(1)$ \  If $gf$ is an $\mathbb E$-inflation, then so is $f$;
\vskip5pt
$(2)$ \ If $gf$ is an $\mathbb E$-deflation, then so is $g$.
\end{condition}

\vskip5pt

An extriangulated category is weakly idempotent complete if and only if it satisfies WIC condition (\cite[Proposition C]{Kla22}).
More equivalent conditions will be discussed in \Cref{thm:wic}.

\section{Homotopic morphisms}

Throughout this section we work in an extriangulated category $(\mathcal{C}, \ \mathbb E, \ \mathfrak s)$.
\subsection{Homotopic squares and homotopic morphisms}

Homotopic squares in triangulated categories are introduced in \cite{BN93}, and generalized to extriangulated categories in \cite{He19}.
They have different names, e.g., homotopy Cartesian squares (\cite{Nee01}), homotopy pullback and pushout squares (\cite{May01}), distinguished weak squares (\cite{Kun07}).

\vskip5pt

\begin{definition}\label{def:htsq} \ A \textit{homotopic square} in an extriangulated category is a commutative diagram
	\begin{equation}
\label{eq:htsq}
\xymatrix@R=0.4cm{
  A_1\ar[r]^{f}\ar[d]_{u} & B_1\ar[d]^{v} \\
  A_2\ar[r]^{g} & B_2
}
\end{equation}
		such that $\con{A_1}{\dg{f\\ u}}{B_1 \oplus A_2}{\dg{v , -g}}{B_2}\delta$ is an $\mathbb E$-triangle for some $\mathbb E$-extension $\delta\in \mathbb E(B_2, A_1)$.
\end{definition}

\vskip5pt

The following fact is well-known.

\vskip5pt

\begin{lemma}\label{lem:weak} \ Consider the homotopic square \Cref{eq:htsq}.

 \vskip5pt

$(1)$ \ For any morphisms $m : T \to B_1$ and $n : T \to A_2$ with $v m = g n$, there is $s : T \to A_1$ such that $fs=m$ and $us=n$.
\begin{equation*}
\xymatrix@R=0.4cm{
  T\ar@{..>}[dr]^{s}\ar@/^1pc/[drr]^{m}\ar@/_1pc/[ddr]_{n} & {} & {} \\
  {} & A_1\ar[r]^{f}\ar[d]_{u} & B_1\ar[d]^{v} \\
  {} & A_2\ar[r]^{g} & B_2
}
\end{equation*}

$(2)$ \ For any morphisms $m' : B_1\to T'$ and $n' : A_2\to T'$ with $m' f = n' u$, there is $s' : B_2 \to T'$ such that $s'v=m'$ and $s'g=n'$.
	\begin{equation*}
\xymatrix@R=0.4cm{
  A_1\ar[r]^{f}\ar[d]_{u} & B_1\ar[d]^{v}\ar@/^1pc/[ddr]^{{m'}} & {} \\
  A_2\ar[r]^{g}\ar@/_1pc/[drr]_{{n'}} & B_2\ar@{..>}[dr]^{{s'}} & {} \\
  {} & {} & T'
}
\end{equation*}
\end{lemma}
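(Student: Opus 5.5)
We prove part (1) by reading off the universal property from the six-term exact sequence of \Cref{lem:long-ext-seq} attached to the $\mathbb E$-triangle
\[A_1\xrightarrow{\dg{f\\ u}} B_1\oplus A_2\xrightarrow{\dg{v , -g}} B_2 \overset{\delta}{\dashrightarrow}\]
defining the homotopic square. Part (2) is the dual statement and uses the other six-term sequence.

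For (1), evaluate the contravariant sequence at the object $T$. Its first three terms give an exact sequence of abelian groups
\[
\mathcal C(T,A_1)\xrightarrow{\mathcal C(T,\dg{f\\ u})}\mathcal C(T,B_1\oplus A_2)\xrightarrow{\mathcal C(T,\dg{v , -g})}\mathcal C(T,B_2).
\]
Given $m:T\to B_1$ and $n:T\to A_2$, form $\dg{m\\ n}:T\to B_1\oplus A_2$. Its image under the second map is $\dg{v , -g}\dg{m\\ n}=vm-gn$, which is $0$ by hypothesis. Exactness at the middle term then gives $s:T\to A_1$ with $\dg{f\\ u}s=\dg{m\\ n}$. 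Comparing components yields $fs=m$ and $us=n$, as required.

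For (2), evaluate the covariant sequence at $T'$:
\[
\mathcal C(B_2,T')\xrightarrow{\mathcal C(\dg{v , -g},T')}\mathcal C(B_1\oplus A_2,T')\xrightarrow{\mathcal C(\dg{f\\ u},T')}\mathcal C(A_1,T').
\]
Given $m':B_1\to T'$ and $n':A_2\to T'$, take the morphism $\dg{m' , -n'}:B_1\oplus A_2\to T'$. Its image under the second map is $\dg{m' , -n'}\dg{f\\ u}=m'f-n'u=0$. Exactness gives $s':B_2\to T'$ with $s'\dg{v , -g}=\dg{m' , -n'}$. Comparing components gives $s'v=m'$ and $-s'g=-n'$, that is, $s'g=n'$.

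The only point requiring care is the sign in (2): the minus sign in $\dg{v , -g}$ has to be matched by choosing $\dg{m' , -n'}$ rather than $\dg{m' , n'}$. Apart from that, the argument is a direct application of exactness at the second term of the two long exact sequences.
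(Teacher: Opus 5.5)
Your proof is correct. The paper records this lemma as well known and gives no proof; your argument (exactness at the middle term of the two six-term sequences of \Cref{lem:long-ext-seq}, applied to the $\mathbb E$-triangle defining the homotopic square) is the standard one, and your choice of $(m',-n')$ in part (2) correctly handles the sign in $(v,-g)$.
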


\begin{lemma}\label{thm:hs-composition} {\rm (\cite[Theorem 3.2]{HXZ23})} \ Suppose that two squares in the following commutative diagram are homotopic squares
	\begin{equation*}
\xymatrix@R=0.4cm{
  A\ar[r]^{f}\ar[d]_-{\alpha} & B\ar[r]^{g}\ar[d]^-{\beta} & C\ar[d]^-{\gamma} \\
  D\ar[r]^{u} & E\ar[r]^{v} & F
}
\end{equation*}
i.e., $\con{A}{\dg{f\\ \alpha}}{B \oplus D}{\dg{\beta,-u}}E\kappa$ and $\con B{\dg{g \\ \beta}}{C\oplus E}{(\gamma,-v)}F\varepsilon$ are $\mathbb E$-triangles.
Then the outside rectangle is also a homotopic square with an $\mathbb E$-triangle $\con A{\dg{gf \\ \alpha}}{C\oplus D}{\dg{\gamma ,-vu}}F\delta$, such that $v^\ast \delta = \kappa$ and $\varepsilon=f_\ast\delta$.
\end{lemma}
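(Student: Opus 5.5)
The plan is to obtain the outer square from the octahedral axiom ET4 applied to two composable $\mathbb E$-inflations, and then to cancel a superfluous direct summand $E$.

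First I take the direct sum of $\con B{\dg{g \\ \beta}}{C\oplus E}{(\gamma,-v)}F\varepsilon$ with the split $\mathbb E$-triangle $0\to D\xrightarrow{1}D\dashrightarrow 0$. By additivity of $\mathfrak s$ (ET2) this gives an $\mathbb E$-triangle
\[
B\oplus D\xrightarrow{\ \dg{g & 0\\ \beta & 0 \\ 0 & 1}\ } C\oplus E\oplus D\xrightarrow{\ (\gamma,-v,0)\ } F,
\]
still with extension $\varepsilon$ (more precisely $\dg{1,0}^\ast$-compatible with $\varepsilon$). Its first morphism is composable with the $\mathbb E$-inflation $\dg{f\\ \alpha}:A\to B\oplus D$. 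The composite is $\dg{gf\\ \beta f\\ \alpha}=\dg{gf\\ u\alpha\\ \alpha}$, using $\beta f=u\alpha$. Applying ET4 to the $\mathbb E$-triangles $A\to B\oplus D\to E$ (extension $\kappa$) and $B\oplus D\to C\oplus E\oplus D\to F$ produces:
\begin{itemize}
\item[(a)] an $\mathbb E$-triangle $A\xrightarrow{m}C\oplus E\oplus D\xrightarrow{h}G\overset{\theta}{\dashrightarrow}$ with $m$ equal to the composite;
\item[(b)] an $\mathbb E$-triangle $E\xrightarrow{w}G\xrightarrow{q}F\overset{\eta}{\dashrightarrow}$;
\item[(c)] the relations $w^\ast\theta=\kappa$, $\eta=\dg{\beta,-u}_\ast\varepsilon$ (which is $0$ up to the identification) and $\dg{f\\ \alpha}_\ast\theta=q^\ast\varepsilon$.
\end{itemize}

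Next I change coordinates by the automorphism of $C\oplus E\oplus D$ sending $(c,e,d)\mapsto(c,e-ud,d)$. By \Cref{isomorphisms}, the morphism $m$ becomes $\dg{gf\\ 0\\ \alpha}$. I then need to identify $G$. The image of $E$ under $h$ (via the middle summand) should give a splitting of $w$. By commutativity of the ET4 diagram, $h$ restricted to the $E$-summand composed with $q$ equals $-v$ up to sign, and the extension $\eta$ of (b) should vanish. By \Cref{lem:infl-section}(1), vanishing of $\eta$ makes $w$ a section, so $G\cong E\oplus F$ with $q$ the projection. Transporting through this isomorphism, the triangle in (a) takes the form
\[
A\xrightarrow{\dg{gf\\ 0\\ \alpha}} C\oplus E\oplus D\longrightarrow E\oplus F,
\]
where the $E$-component of the second map is an isomorphism on the $E$-summand.

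I then cancel $E$. The second map is block triangular with an invertible $E$-block. Another pair of isomorphisms (\Cref{isomorphisms}) turns it into $1_E\oplus(\gamma,-vu)$ on $E\oplus(C\oplus D)\to E\oplus F$, and the resulting $\mathbb E$-triangle is the direct sum of $E\xrightarrow{1}E\to 0$ with a candidate sequence $A\xrightarrow{\dg{gf\\ \alpha}}C\oplus D\xrightarrow{\dg{\gamma,-vu}}F$. To conclude that the latter is an $\mathbb E$-triangle, I realize $\delta:=\dg{0,1}^\ast\theta'$ (with $\theta'$ the transported extension) by some $A\to Y\to F$. Its direct sum with the trivial triangle has the same extension as the big triangle, so the two middle terms are related by an isomorphism that is the identity on the ends (\Cref{cone}). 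Restricting that isomorphism to the complement of $E$, using the $6$-term sequences of \Cref{lem:long-ext-seq}, should identify $A\to C\oplus D\to F$ with $A\to Y\to F$.

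Finally, I read off the identities $v^\ast\delta=\kappa$ and $f_\ast\delta=\varepsilon$ from the ET4 relations $w^\ast\theta=\kappa$ and $\dg{f\\ \alpha}_\ast\theta=q^\ast\varepsilon$ after composing with the coordinate changes. This bookkeeping should also fix the signs.

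I expect the main obstacle to be this cancellation step. One must show that $\eta=0$, so that $G$ genuinely splits as $E\oplus F$ compatibly with all maps, and then remove the common summand $E$ while tracking the extensions exactly. If the direct splitting argument becomes unwieldy, I would instead try the dual of \cite[Proposition 3.17]{NP19} (\Cref{prop:pb-2}) applied with $E=C\oplus E\oplus D$, which produces the desired $\mathbb E$-triangle together with the relations among extensions in one stroke.
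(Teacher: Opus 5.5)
The paper does not prove this lemma; it quotes it from \cite[Theorem 3.2]{HXZ23}, so there is no in-paper argument to compare with. On its own terms, your strategy is sound: ET4 for $A\to B\oplus D\to E$ and the second triangle plus a trivial summand, then splitting off $E$. However, each step you mark with ``should'' needs an argument, and one claim is false as stated.

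\textbf{Up to the splitting of $G$.} The summand to add is $D\xrightarrow{1}D\to 0$, not $0\to D\to D$ (your matrices already assume this), and the resulting extension is $\dg{1\\0}_\ast\varepsilon$. Then $\eta=\dg{\beta,-u}_\ast\dg{1\\0}_\ast\varepsilon=\beta_\ast\varepsilon=\dg{0,1}_\ast\dg{g\\\beta}_\ast\varepsilon=0$, by \Cref{lem:long-ext-seq} applied to the second triangle. Hence there is an isomorphism $\varphi:G\to E\oplus F$ with $\varphi w=\dg{1\\0}$ and $\dg{0,1}\varphi=q$. The $F$-row of $\varphi h$ is exactly $(\gamma,-v,0)$.

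\textbf{The false claim.} The $E$-row $(a,b,c)$ of $\varphi h$ is \emph{not} automatically invertible on $E$. The relation $h\dg{g&0\\\beta&0\\0&1}=w\dg{\beta,-u}$ only gives $c=-u$ and $(a,b-1)\dg{g\\\beta}=0$. By the weak cokernel property of $\dg{\gamma,-v}$, this means $(a,b-1)=t\dg{\gamma,-v}$ for some $t:F\to E$, i.e.\ $b=1-tv$, which can vanish for a badly chosen splitting. The missing step is to replace $\varphi$ by $\varphi':=\dg{1&-t\\0&1}\varphi$. This still splits $(w,q)$ and has $E$-row $(0,1,-u)$. After your change of coordinates on $C\oplus E\oplus D$ and the automorphism $\dg{1&0\\v&1}$ of $E\oplus F$, the triangle becomes $A\xrightarrow{\dg{gf\\0\\\alpha}}C\oplus E\oplus D\xrightarrow{\dg{0&1&0\\\gamma&0&-vu}}E\oplus F$, with some extension $\theta'$.

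\textbf{Cancelling $E$.} Before cancelling, you must check that $\theta'$ has no $E$-component. Since $\dg{1\\0}=\dg{0&1&0\\\gamma&0&-vu}\dg{0\\1\\0}$ and the deflation annihilates $\theta'$, one gets $\theta'=\dg{0,1}^\ast\delta$ with $\delta:=\dg{0\\1}^\ast\theta'$. Only then does the triangle realize the same extension as the sum of $0\to E\xrightarrow{1}E$ with a realization $A\xrightarrow{x}Y\xrightarrow{y}F$ of $\delta$. The resulting equivalence $E\oplus(C\oplus D)\to E\oplus Y$ is lower triangular with identity $E$-block. So its $C\oplus D\to Y$ block is an isomorphism compatible with all maps; no $6$-term sequence is needed.

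\textbf{The two identities.} Put $\sigma:=\varphi'^{-1}\dg{0\\1}$; then $q\sigma=1_F$ and $\delta=\sigma^\ast\theta$, since $\dg{1&0\\v&1}$ fixes $\dg{0\\1}$. Applying $\dg{1,0}_\ast$ to the ET4 relation $\dg{f\\\alpha}_\ast\theta=q^\ast\dg{1\\0}_\ast\varepsilon$ gives $f_\ast\theta=q^\ast\varepsilon$, hence $f_\ast\delta=\varepsilon$. Also $\varphi'(\sigma v-w)=\dg{-1\\v}=-\varphi'h\dg{0\\1\\0}$, so $v^\ast\delta-\kappa=(\sigma v-w)^\ast\theta=-\dg{0\\1\\0}^\ast h^\ast\theta=0$. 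No signs need adjusting.

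\textbf{Your fallback is shorter.} It works cleanly if the middle object is $C\oplus B\oplus D$ rather than $C\oplus E\oplus D$. Apply \Cref{prop:pb-2} to three triangles:
\begin{itemize}
\item the split triangle $B\xrightarrow{\dg{g\\1\\0}}C\oplus B\oplus D\xrightarrow{\dg{1&-g&0\\0&0&1}}C\oplus D$;
\item $B\xrightarrow{\dg{g\\\beta}}C\oplus E\xrightarrow{\dg{\gamma,-v}}F$;
\item $A\xrightarrow{\dg{0\\f\\\alpha}}C\oplus B\oplus D\xrightarrow{\dg{1&0&0\\0&\beta&-u}}C\oplus E$, the sum of $0\to C\xrightarrow{1}C$ and the first triangle, with extension $\dg{0,1}^\ast\kappa$.
\end{itemize}
This yields an $\mathbb E$-triangle $A\xrightarrow{\dg{-gf\\\alpha}}C\oplus D\xrightarrow{\dg{\gamma,vu}}F$ with extension $\delta_2$, satisfying $\dg{\gamma,-v}^\ast\delta_2=\dg{0,1}^\ast\kappa$ and $\varepsilon+f_\ast\delta_2=0$. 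Twisting by $-1$ on $C$ and on $F$ via \Cref{isomorphisms} gives the required triangle with $\delta=-\delta_2$. The two relations then become $v^\ast\delta=\kappa$ and $f_\ast\delta=\varepsilon$. This route avoids choosing and correcting splittings. It is the same device the paper uses in the proofs of \Cref{prop:strict-et4-h} and \Cref{prop:po-1-h}.
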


\vskip10pt

A main notion defined in this paper is as follows.

\vskip5pt

\begin{definition}\label{def:htm} \  A morphism of $\mathbb E$-triangles $(\alpha, \beta, \gamma)$ in \Cref{morphism} is \textit{homotopic}, provided that there exists a commutative diagram
\begin{equation*}
\xymatrix@R=0.4cm{
  X\ar[r]^{f}\ar[d]_-{\alpha} & Y\ar[r]^{g}\ar@{..>}[d]_{{\beta_1}} & Z\ar@{-->}[r]^{\delta} & {} \\
  X'\ar[r]^{s}\ar@{=}[d] & E\ar[r]^{t}\ar@{..>}[d]_{{\beta_2}} & Z\ar@{=}[u]\ar@{-->}[r]^{\eta}\ar[d]^-{\gamma} & {} \\
  X'\ar[r]^{{f'}} & Y'\ar[r]^{{g'}} & Z'\ar@{-->}[r]^{{\delta'}} & {}
}
\end{equation*}
such that $(\alpha, \beta_1,1_Z)$ and $(1_{X'},\beta_2,\gamma)$ are morphisms of $\mathbb E$-triangles with $\beta = \beta_2 \beta_1$, and there are  $\mathbb E$-triangles
\begin{equation*}	\con X{\dg{f\\\alpha}}{Y\oplus X'}{\dg{\beta_1,-s}}E{t^\ast\delta}\ \text{and} \  \  \con E{\dg{-t\\\beta_2}}{Z\oplus Y'}{\dg{\gamma,g'}}{Z'}{s_\ast \delta'}.
\end{equation*}
\end{definition}

\vskip5pt

\begin{remark} \ In an exact category, any morphism of conflations is homotopic (see e.g. \cite[Proposition 3.1]{Buh10}). However, this is not true in a triangulated category: 
an example can be found in \cite[Section 4]{CK11}.
\end{remark}

\vskip5pt

\begin{proposition}\label{prop:mor-of-confl} \  $(1)$ \ Suppose that the following is a morphism of $\mathbb E$-triangles$:$

\[\xymatrix@R=0.4cm{
  X\ar[r]^{f}\ar[d]_-{\alpha} & Y\ar[r]^{g}\ar[d]^-{\beta} & Z\ar@{-->}[r]^{\delta}\ar@{=}[d] & {} \\
  X'\ar[r]^{{f'}} & Y'\ar[r]^{{g'}} & Z\ar@{-->}[r]^{{\alpha_\ast \delta}} &}\]
Then $(\alpha,\beta,1_Z)$ is a homotopic morphism if and only if $\con X{\dg{f\\ \alpha}}{Y\oplus X'}{\dg{\beta,- f'}}{Y'}{(g')^\ast \delta}$ is an $\mathbb E$-triangle. If this is the case, then the left square is homotopic.

\vskip5pt

$(1)'$ \ Suppose that the following is a morphism of $\mathbb E$-triangles$:$
\[\xymatrix@R=0.4cm{
  X\ar[r]^{f}\ar@{=}[d] & Y\ar[r]^{g}\ar[d]_-{\beta} & Z\ar@{-->}[r]^{{\gamma^*\varepsilon}}\ar[d]^-{\gamma} & {} \\
  X\ar[r]^{f'} & Y'\ar[r]^{g'} & Z'\ar@{-->}[r]^{\varepsilon} & }\]
Then $(1_X,\beta,\gamma)$ is a homotopic morphism if and only if $\con Y{\dg{-g\\ \beta}}{Z\oplus Y'}{\dg{\gamma,g'}}{Z'}{f_\ast \varepsilon}$ is an $\mathbb E$-triangle. If this is the case, then the right square is homotopic.
\end{proposition}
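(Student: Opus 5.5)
The plan is to prove $(1)$ by matching the data in \Cref{def:htm} directly against the given sequence; $(1)'$ is the dual statement and follows the same way. In both directions the key tools are \Cref{cone}, which gives isomorphisms, and \Cref{isomorphisms}, which transports $\mathbb E$-triangles along isomorphisms.

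\emph{``If'' in $(1)$.} Assume $\con X{\dg{f\\ \alpha}}{Y\oplus X'}{\dg{\beta,- f'}}{Y'}{(g')^\ast \delta}$ is an $\mathbb E$-triangle. I take the trivial factorization in \Cref{def:htm}: $E=Y'$, $s=f'$, $t=g'$, $\eta=\alpha_\ast\delta$, $\beta_1=\beta$ and $\beta_2=1_{Y'}$. Then $(\alpha,\beta,1_Z)$ and $(1_{X'},1_{Y'},1_Z)$ are morphisms of $\mathbb E$-triangles, and $\beta=\beta_2\beta_1$. The first required $\mathbb E$-triangle is exactly the hypothesis, since $t^\ast\delta=(g')^\ast\delta$. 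For the second one, note that $s_\ast\delta'=f'_\ast\alpha_\ast\delta=0$, because $f'_\ast$ kills the extension realized by $\con{X'}{f'}{Y'}{g'}{Z}{\alpha_\ast\delta}$ (\Cref{lem:long-ext-seq}). So I must show that
\[
\con{Y'}{\dg{-g'\\ 1}}{Z\oplus Y'}{\dg{1,g'}}{Z}{0}
\]
is an $\mathbb E$-triangle. I start from the split $\mathbb E$-triangle $\con{Y'}{\dg{0\\1}}{Z\oplus Y'}{\dg{1,0}}{Z}{0}$ and apply \Cref{isomorphisms} with the automorphism $\varphi=\dg{1&-g'\\0&1}$ of $Z\oplus Y'$. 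Its inverse is $\varphi^{-1}=\dg{1&g'\\0&1}$, and one checks $\varphi\dg{0\\1}=\dg{-g'\\1}$ and $\dg{1,0}\varphi^{-1}=\dg{1,g'}$. This gives the required triangle.

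\emph{``Only if'' in $(1)$.} Assume we are given the diagram of \Cref{def:htm} with $\gamma=1_Z$. Since $(\alpha,\beta_1,1_Z)$ is a morphism of $\mathbb E$-triangles, $\eta=1_Z^\ast\eta=\alpha_\ast\delta=\delta'$. Hence $(1_{X'},\beta_2,1_Z)$ is a morphism between two $\mathbb E$-triangles with the same end terms, and \Cref{cone} shows that $\beta_2$ is an isomorphism. Now apply \Cref{isomorphisms} to the $\mathbb E$-triangle $\con X{\dg{f\\\alpha}}{Y\oplus X'}{\dg{\beta_1,-s}}E{t^\ast\delta}$, using the isomorphism $\beta_2\colon E\to Y'$ on the third term and identities elsewhere. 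This produces the $\mathbb E$-triangle with maps $\dg{f\\\alpha}$ and
\[
\beta_2\dg{\beta_1,-s}=\dg{\beta,-f'},
\]
using $\beta=\beta_2\beta_1$ and $f'=\beta_2 s$. Its extension is $(\beta_2^{-1})^\ast t^\ast\delta=(t\beta_2^{-1})^\ast\delta=(g')^\ast\delta$, because $g'\beta_2=t$. The final claim of $(1)$, that the left square is homotopic, is then immediate from \Cref{def:htsq}.

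\emph{Part $(1)'$.} This is the dual argument. For ``if'', take $\beta_1=1_Y$, $\beta_2=\beta$, $E=Y$, $s=f$ and $t=g$. The first required triangle, $\con X{\dg{f\\1}}{Y\oplus X}{\dg{1,-f}}Y{0}$, is split up to the automorphism $\dg{1&f\\0&1}$ of $Y\oplus X$; its extension is zero because $g^\ast\gamma^\ast\varepsilon=0$. For ``only if'', the same reasoning with \Cref{cone} shows that $\beta_1$ is an isomorphism, and one transports the second triangle along $\beta_1^{-1}$ on the first term.

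The only step needing care is the sign and matrix bookkeeping in the two applications of \Cref{isomorphisms}, together with identifying the resulting extensions (for example $s_\ast\delta'=0$). There is no genuine obstacle.
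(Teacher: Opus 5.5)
Your proposal is correct and follows essentially the same route as the paper. For ``if'' the paper uses the same trivial factorization $E=Y'$, $\beta_1=\beta$, $\beta_2=1_{Y'}$ with a split second triangle. For ``only if'' it likewise uses \Cref{cone} to see that $\beta_2$ is an isomorphism and then transports the first triangle along it via \Cref{isomorphisms}. The only difference is that you also check why the second triangle is split, via $f'_\ast\alpha_\ast\delta=0$ and the automorphism $\left(\begin{smallmatrix}1&-g'\\0&1\end{smallmatrix}\right)$ of $Z\oplus Y'$, which the paper simply asserts.
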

\begin{proof} \ We only justify $(1)$. If $\con X{\dg{f\\ \alpha}}{Y\oplus X'}{\dg{\beta,- f'}}{Y'}{(g')^\ast \delta}$ is an $\mathbb E$-triangle, then the following diagram shows that $(\alpha,\beta,1_Z)$ is a homotopic morphism:
\begin{equation*}
\xymatrix@R=0.4cm{
  X\ar[r]^{f}\ar[d]_-{\alpha} & Y\ar[r]^{g}\ar[d]^-{\beta} & Z\ar@{-->}[r]^{\delta} & {} \\
  X'\ar[r]^{{f'}}\ar@{=}[d] & Y'\ar[r]^{{g'}}\ar@{=}[d] & Z\ar@{=}[u]\ar@{-->}[r]^{{\alpha_\ast \delta}}\ar@{=}[d] & {} \\
  X'\ar[r]^{{f'}} & Y'\ar[r]^{{g'}} & Z\ar@{-->}[r]^{{\alpha_\ast\delta}} & {}
}
\end{equation*}
where $\con {Y'}{\dg{-g' \\ 1_{Y'}}}{Z \oplus Y'}{\dg{1_Z, g'}}{Z}{0}$ is a split $\mathbb E$-triangle.

\vskip5pt

Conversely, suppose that $(\alpha,\beta,1_Z)$ is a homotopic morphism. By definition there is a commutative diagram
\begin{equation*}
\xymatrix@R=0.5cm{
  X\ar[r]^{f}\ar[d]_-{\alpha} & Y\ar[r]^{g}\ar[d]^(.4){{\sigma^{-1}\beta}} & Z\ar@{-->}[r]^{\delta} & {} \\
  X'\ar[r]^{{\sigma^{-1}f'}}\ar@{=}[d] & Y''\ar[r]^{{g'\sigma}}\ar[d]^{\sigma} & Z\ar@{=}[u]\ar@{-->}[r]^{{\alpha_\ast \delta}}\ar@{=}[d] & {} \\
  X'\ar[r]^{{f'}} & Y'\ar[r]^{{g'}} & Z\ar@{-->}[r]^{{\alpha_\ast\delta}} & {}
}
\end{equation*}
where $\sigma$ is an isomorphism by \Cref{cone}, and $\con X{\dg{f\\ \alpha}}{Y\oplus X'}{\dg{\sigma^{-1}\beta,- \sigma^{-1}f'}}{Y''}{(g'\sigma)^\ast \delta}$ is an $\mathbb E$-triangle. By \Cref{isomorphisms} the following commutative diagram shows that $\con X{\dg{f\\ \alpha}}{Y\oplus X'}{\dg{\beta,- f'}}{Y'}{\theta}$ is an $\mathbb E$-triangle, where $\theta=(\sigma^{-1})^\ast(g'\sigma)^{\ast}\delta =(\sigma^{-1})^\ast\sigma^\ast(g')^{\ast}\delta=(g')^\ast \delta $.
\begin{equation*}
\xymatrix@R=0.4cm{
  X\ar[r]^(0.4){{\dg{f \\ \alpha}}}\ar@{=}[d] & Y\oplus X'\ar[rr]^{{\dg{\beta, f'}}}\ar@{=}[d] & {} & Y'\ar@{-->}[rr]^{\theta}\ar[d]^(.4){{\sigma^{-1}}} & & \\
  X\ar[r]^(0.4){{\dg{f \\ \alpha}}} & Y\oplus X'\ar[rr]^(0.55){{\dg{\sigma^{-1}\beta,\sigma^{-1}f'}}} & {} & Y''\ar@{-->}[rr]^-{{(g'\sigma)^\ast\delta}} & &
}
\end{equation*}
\end{proof}

\vskip5pt

\begin{remark}\label{rmk:mor-vs-con} \ Even if the left square of the diagram in \Cref{prop:mor-of-confl}$(1)$ is homotopic,
$\con X{\dg{f\\ \alpha}}{Y\oplus X'}{\dg{\beta,- f'}}{Y'}{(g')^\ast \delta}$ is not necessarily an $\mathbb E$-triangle, and $(\alpha, \beta, 1_Z)$ is not necessarily a homotopic morphism of $\mathbb E$-triangles (\Cref{ex:ho-square-not-ho-mor}).

\end{remark}

\vskip5pt

It is natural to ask if a composition of two homotopic morphisms of $\mathbb{E}$-triangles is itself homotopic. The answer is negative, as shown by the following result.

\vskip5pt

\begin{proposition} \label{compofhm} \  Any morphism of $\mathbb E$-triangles is a composition of two homotopic morphisms.
	\begin{proof} \ Let $(\alpha,\beta,\gamma)$ be a morphism of $\mathbb E$-triangles as in \Cref{morphism}. Consider
\begin{equation*}
\xymatrix@C=1.5cm@R=.5cm{
  X\ar[r]^-{f}\ar[d]_-{ \scalebox{0.8}{$\dg{1 \\ \alpha}$}} & Y\ar[r]^-{g}\ar[d]^-{ \dg{1 \\ \beta}}  & Z\ar@{-->}[r]^-{\delta}\ar[d]^-{ \scalebox{0.8}{$\dg{1 \\ \gamma}$}} & {} \\
  X \oplus X'\ar[r]^-{ \scalebox{0.8}{$\dg{f&0\\ 0&f'}$}}\ar[d]_-{(0,1)} &  Y \oplus Y'\ar[r]^-{ \scalebox{0.8}{$\dg{g&0\\ 0&g'}$}}\ar[d]^-{(0,1)}  & Z \oplus Z'\ar@{-->}[r]^-{\delta \oplus \delta'}\ar[d]^-{(0,1)} & {} \\
  X'\ar[r]^-{f'}  & Y'\ar[r]^-{g'} & Z'\ar@{-->}[r]^-{\delta'} & {}
}
\end{equation*}

\vskip5pt

\noindent We claim that the following commutative diagram gives a homotopic morphism of $\mathbb E$-triangles $(\dg{1 \\ \alpha},\dg{1 \\ \beta},\dg{1 \\ \gamma})$.
\begin{equation*}
\xymatrix@C=1.5cm@R=.5cm{
  X\ar[r]^-{f}\ar[d]_-{ \scalebox{0.8}{$\dg{1 \\ \alpha}$}} & Y\ar[r]^-{g}\ar[d]^-{ \scalebox{0.8}{$\dg{1 \\ 0}$}} & Z\ar@{-->}[r]^-{\delta}\ar@{=}[d] & {} \\
  X \oplus X'\ar[r]^-{ \scalebox{0.8}{$\dg{f & 0 \\ -\alpha & 1}$}}\ar@{=}[d] & Y \oplus X'\ar[r]^-{\dg{g,0}}\ar[d]^-{ \scalebox{0.8}{$\dg{1 & 0 \\ \beta & f'}$}} & Z\ar@{-->}[r]^-{ \scalebox{0.8}{$\dg{1 \\ \alpha}_\ast \delta$}}\ar[d]^-{ \scalebox{0.8}{$\dg{1 \\ \gamma}$}} & {} \\
  X \oplus X'\ar[r]^-{ \scalebox{0.8}{$\dg{f & 0 \\ 0 & f'}$}} & Y \oplus Y'\ar[r]^(.6){ \scalebox{0.8}{$\dg{g & 0 \\ 0 & g'}$}} & Z \oplus Z'\ar@{-->}[r]^-{\delta \oplus \delta'} & {}
}
\end{equation*}

\vskip5pt

\noindent In fact $\dg{1 \\ \gamma}^\ast (\delta \oplus \delta') = \dg{1 \\ \alpha}_\ast \delta$, since $\dg{1,0}_\ast \dg{1 \\ \gamma}^\ast (\delta \oplus \delta') = \delta = \dg{1,0}_\ast \dg{1 \\ \alpha}_\ast \delta$ and $$\dg{0,1}_\ast \dg{1 \\ \gamma}^\ast (\delta \oplus \delta') = \gamma ^\ast \delta' = \alpha_\ast \delta = \dg{0,1}_\ast \dg{1 \\ \alpha}_\ast \delta.$$ Note that $g^\ast \delta = 0$, the following is a split $\mathbb E$-triangle
\begin{equation*}
  \con X {\scalebox{0.8}{$\dg{f \\ 1 \\ \alpha}$}}{ Y \oplus X \oplus X'} {\scalebox{0.8}{$\dg{1 & -f & 0 \\ 0 & \alpha & -1}$}} {Y \oplus X'}{(g,0)^\ast \delta}.
\end{equation*}

In the commutative diagram with vertical isomorphisms:
\begin{equation*}
\xymatrix@C=1cm{
  Y \oplus X'\ar[rr]^-{ \scalebox{0.8}{$\dg{0&0\\1&0\\0&f'}$}}\ar@{=}[d] & {} & Z \oplus Y \oplus Y'\ar[rr]^-{ \dg{1&0&0\\0&0&g'}}\ar[d]_-{ \scalebox{0.8}{$\dg{1 & -g & 0 \\ 0 & 1 & 0 \\ 0 & \beta & 1} $}} & {} & Z \oplus Z'\ar@{-->}[rr]^-{0 \oplus \delta ' }\ar[d]_-{ \dg{1 & 0 \\ \gamma & 1}} & & {} \\
  Y \oplus X'\ar[rr]^-{ \scalebox{0.8}{$\dg{-g&0\\1&0\\\beta&f'}$}} & {} & Z \oplus Y \oplus Y'\ar[rr]^-{ \scalebox{0.8}{$\dg{1&g&0\\\gamma&0&g'}$}} & {} & Z \oplus Z'\ar@{-->}[rr]^-{ \scalebox{0.8}{$\dg{f & 0\\ -\alpha & 1}_\ast (\delta \oplus \delta')$}} & & {}
}
\end{equation*}
the first row is the direct sum of a split $\mathbb E$-triangle and $\con {X'}{f'}{Y'}{g'}{Z'}{\delta'}$. By \Cref{isomorphisms} the second row is also an $\mathbb E$-triangle. This proves the claim.

\vskip5pt

In particular  $(\dg{1 \\ 0},\dg{1 \\ 0},\dg{1 \\ 0})$ is a homotopic morphism of $\mathbb E$-triangles. Similarly, $(\dg{0,1}, \dg{0,1}, \dg{0,1})$ is a homotopic morphism of $\mathbb E$-triangles, and then
$(\alpha, \beta, \gamma) = (\dg{0,1}, \dg{0,1}, \dg{0,1}) \circ (\dg{1 \\ \alpha},\dg{1 \\ \beta},\dg{1 \\ \gamma})$.
	\end{proof}

\end{proposition}

\subsection{From morphisms of $\mathbb E$-triangles to homotopic morphisms}
Comparing with a morphism of $\mathbb E$-triangles, a homotopic morphism of $\mathbb E$-triangles enjoys pleasant properties.
For example, it induces two new $\mathbb E$-triangles:
$$\con X{\dg{f\\\alpha}}{Y\oplus X'}{\dg{\beta_1,-s}}E{t^\ast\delta},  \ \ \ \  \  \con E{\dg{-t\\\beta_2}}{Z\oplus Y'}{\dg{\gamma,g'}}{Z'}{s_\ast \delta'}.$$
This subsection aims to obtain homotopic morphisms of $\mathbb E$-triangles, from a morphism $(\alpha, \beta, \gamma)$  of $\mathbb E$-triangles, by changing one of $\alpha, \beta, \gamma$.
This is in fact a good fill-in of ET2, ET3 and ET3$^{\rm op}$.

\vskip5pt

\begin{lemma}\label{lem:hs-f?1}\label{lem:hs-?g1} \label{lem:hs-1?h}\label{lem:hs-1g?}
    Let $\con XfYgZ\delta$, $\con{X'}{f'}{Y'}{g'}Z{\delta'}$ and $\con{X}{f''}{Y''}{g''}{Z''}{\delta''}$ be $\mathbb E$-triangles.
\vskip5pt
$(1)$ \ {\rm (\cite[Proposition 1.20]{LN19}; \cite[Lemma 3.2]{He19})} \ Suppose that there exists $\alpha:X\to X'$ such that $\delta'=\alpha_\ast \delta$. Then there exists $\beta: Y\to Y'$ such that
$(\alpha, \beta, 1)$ is a homotopic morphism of $\mathbb E$-triangles$:$
        \begin{equation*}
\xymatrix@R=0.4cm{
  X\ar[r]^{f}\ar[d]_-{\alpha} & Y\ar[r]^{g}\ar@{..>}[d]^-{\beta} & Z\ar@{-->}[r]^{\delta}\ar@{=}[d] & {} \\
  X'\ar[r]^{{f'}} & Y'\ar[r]^{{g'}} & Z\ar@{-->}[r]^{{\delta'}} & {}
}
\end{equation*}
        \vskip5pt
$(2)$ \ {\rm (\cite[Theorem 3.3]{Kon24})} \ Suppose that there exists $\beta: Y\to Y'$ such that $g'\beta=g$. Then there exists $\alpha:X\to X'$ such that $(\alpha, \beta, 1)$ is a homotopic morphism of $\mathbb E$-triangles$:$
\begin{equation*}
\xymatrix@R=0.4cm{
  X\ar[r]^{f}\ar@{..>}[d]_-{\alpha} & Y\ar[r]^{g}\ar[d]^-{\beta} & Z\ar@{-->}[r]^{\delta}\ar@{=}[d] & {} \\
  X'\ar[r]^{{f'}} & Y'\ar[r]^{{g'}} & Z\ar@{-->}[r]^{{\delta'}} & {}
}
\end{equation*}

\vskip5pt
$(1)'$ \ Suppose that there exists $\gamma:Z\to Z''$ such that $\delta=\gamma^\ast \delta''$. Then there exists $\beta: Y\to Y''$ such that $(1, \beta, \gamma)$ is a homotopic morphism of $\mathbb E$-triangles$:$
    \begin{equation*}
\xymatrix@R=0.4cm{
  X\ar[r]^{f}\ar@{=}[d] & Y\ar[r]^{g}\ar@{..>}[d]^-{\beta} & Z\ar@{-->}[r]^{\delta}\ar[d]^-{\gamma} & {} \\
  X\ar[r]^{{f''}} & Y''\ar[r]^{{g''}} & Z''\ar@{-->}[r]^{{\delta''}} & {}
}
\end{equation*}
\vskip5pt
$(2)'$ \ Suppose that there exists $\beta: Y\to Y''$ such that $\beta f=f''$. Then there exists $\gamma:Z\to Z''$ such that $(1, \beta, \gamma)$ is a homotopic morphism of $\mathbb E$-triangles$:$
    \begin{equation*}
\xymatrix@R=0.4cm{
  X\ar[r]^{f}\ar@{=}[d] & Y\ar[r]^{g}\ar[d]^-{\beta} & Z\ar@{-->}[r]^{\delta}\ar@{..>}[d]^-{\gamma} & {} \\
  X\ar[r]^{{f''}} & Y''\ar[r]^{{g''}} & Z''\ar@{-->}[r]^{{\delta''}} & {}
}
\end{equation*}

\end{lemma}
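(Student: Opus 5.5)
Parts (1) and (2) are cited results, and (1)$'$ and (2)$'$ are their duals, so the plan is to prove (1) and (2) directly and obtain the primed versions by duality. By \Cref{prop:mor-of-confl}(1), for a morphism $(\alpha,\beta,1_Z)$ it suffices to produce an $\mathbb E$-triangle
$$\con X{\dg{f\\ \alpha}}{Y\oplus X'}{\dg{\beta,-f'}}{Y'}{(g')^\ast \delta}$$
and to check that $(\alpha,\beta,1_Z)$ is a morphism of $\mathbb E$-triangles. Dually, for (1)$'$ and (2)$'$ it suffices to produce the $\mathbb E$-triangle $\con Y{\dg{-g\\ \beta}}{Z\oplus Y''}{\dg{\gamma,g''}}{Z''}{f_\ast \delta''}$, by \Cref{prop:mor-of-confl}(1)$'$.

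For (1), start from the split $\mathbb E$-triangle realizing $0\in\mathbb E(X',X)$ and from $\delta$. One route is to apply ET4 to the inflations $X\xrightarrow{\dg{f\\ \alpha}}Y\oplus X'$; these are inflations because they are the composite of the split inflation $\dg{1\\ \alpha}\colon X\to X\oplus X'$ (an isomorphism followed by $\dg{1\\0}$) with $f\oplus 1_{X'}$. The cone of $f\oplus 1$ is $Z$, with extension $\delta\oplus 0$, and the cone of $\dg{1\\ \alpha}$ is $X'$. ET4 then gives an $\mathbb E$-triangle $\con X{\dg{f\\ \alpha}}{Y\oplus X'}{}{W}{\theta}$ together with an $\mathbb E$-triangle $X'\to W\to Z$. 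The next task is to identify the extension of that second triangle with $\alpha_\ast\delta$, using the compatibilities $w^\ast\theta=\ldots$ and $f_\ast\theta=q^\ast\varepsilon$ from ET4; the sign conventions here need care, and \Cref{isomorphisms} absorbs the $-f'$. Once this is done, $W\cong Y'$ compatibly with $f'$ and $g'$, because both realize $\alpha_\ast\delta$. Transporting along this isomorphism defines $\beta$ as the first component of $Y\oplus X'\to W\cong Y'$, gives $(g')^\ast\delta$ as the extension, and shows that $\beta f=f'\alpha$ and $g'\beta=g$.

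For (2), $\beta$ is given. First apply ET3$^{\rm op}$ to obtain some $\alpha_0$ making $(\alpha_0,\beta,1)$ a morphism. This $\alpha_0$ may fail to give a homotopic morphism. The fix is to use (1) to get a homotopic $(\alpha_0,\beta_0,1)$. Then $g'(\beta-\beta_0)=0$, so $\beta-\beta_0=f'h$ for some $h\colon Y\to X'$ by the exact sequence of \Cref{lem:long-ext-seq}. Set $\alpha=\alpha_0+hf$. The $\mathbb E$-triangle for $(\alpha_0,\beta_0,1)$ is carried to one for $(\alpha,\beta,1)$ by the automorphism $\dg{1&0\\ h&1}$ of $Y\oplus X'$, via \Cref{isomorphisms}. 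Finally one checks that $\alpha_\ast\delta=\alpha_{0\ast}\delta+h_\ast f_\ast\delta=\alpha_{0\ast}\delta=\delta'$, using $f_\ast\delta=0$. I expect this correction trick to work cleanly.

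The main obstacle is the extension bookkeeping in (1): verifying that the third term produced by ET4 (or by \Cref{prop:pb-2}, which may fit more directly with $E=Y\oplus X'$) carries exactly $\alpha_\ast\delta$ and that the last extension is $(g')^\ast\delta$ rather than a sign-twisted variant. If ET4 gets messy, I would instead use \Cref{prop:pb-2} with $e_1=\dg{f\\ \alpha}$ and $e_2$ the split inclusion of $X'$, reading off the relation $(e_1)_\ast\delta_1+(e_2)_\ast\delta_2=0$.
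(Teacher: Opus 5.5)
Your proposal is correct and gives a self-contained proof. The paper gives no argument of its own for this lemma: it quotes (1) and (2) from the literature and leaves the primed parts as duals.

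\textbf{Part (1).} The bookkeeping you flagged closes with no residual sign. Take the cokernel $(-\alpha,1)$ of $\binom{1}{\alpha}$, and $\varepsilon=\binom{1}{0}_\ast\delta$ for $f\oplus 1_{X'}$. Then ET4 gives:
\begin{itemize}
\item $\eta=-\alpha_\ast\delta$;
\item from $h(f\oplus 1_{X'})=w(-\alpha,1)$, $h=(h_1,w)$ with $h_1f=-w\alpha$;
\item from $qh=(g,0)$, $qh_1=g$;
\item applying $(1,0)_\ast$ to $\binom{1}{\alpha}_\ast\theta=q^\ast\binom{1}{0}_\ast\delta$, $\theta=q^\ast\delta$.
\end{itemize}
Replace $w$ by $-w$ via \Cref{isomorphisms}, and choose $\varphi\colon F\to Y'$ with $\varphi(-w)=f'$ and $g'\varphi=q$. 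Setting $\beta:=\varphi h_1$, you get $\varphi h=(\beta,-f')$ and $(\varphi^{-1})^\ast\theta=(g')^\ast\delta$. You also get $\beta f=f'\alpha$ and $g'\beta=g$. So no detour through \Cref{prop:pb-2} is needed.

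\textbf{Part (2).} Your reduction to (1) is correct. You use ET3$^{\rm op}$ to get $\alpha_0$, then (1) to get $\beta_0$, then write $\beta-\beta_0=f'h$. The shear $\begin{pmatrix}1&0\\ h&1\end{pmatrix}$ of $Y\oplus X'$ replaces $\alpha_0$ by $\alpha_0+hf$, and $\alpha_\ast\delta$ is unchanged because $f_\ast\delta=0$. This shows (2) is a formal consequence of (1), so the whole lemma rests on a single ET4 computation rather than on two separate cited results.

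It is worth contrasting this with \Cref{prop:C1-ho}. There $\alpha$ is also fixed, so the discrepancy between $\beta$ and the $\beta'$ produced by (1) cannot be absorbed by a shear of $Y\oplus X'$ without altering $\alpha$. The paper instead twists by an automorphism of $Y'$, and that is exactly where Condition (C1) enters.

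\textbf{Primed parts.} The duality argument is sound: passing to $\mathcal C^{\rm op}$ exchanges the characterisations in \Cref{prop:mor-of-confl}(1) and (1)$'$.
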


\vskip5pt

Similar results in triangulated categories for the following statement can be found in Neeman \cite[Theorem 1.8]{Nee91} and Christensen and Frankland \cite[Remark 2.4]{CF22}.

\vskip5pt

\begin{theorem}\label{thm:hs-morphism} \ Let $(\alpha,\beta,\gamma)$ be a morphism of $\mathbb E$-triangles
	\begin{equation*}
\xymatrix@R=0.4cm{
  X\ar[r]^{f}\ar[d]^-{\alpha} & Y\ar[r]^{g}\ar[d]^-{\beta} & Z\ar@{-->}[r]^{\delta}\ar[d]^-{\gamma} & \ \\
  X'\ar[r]^{f'} & Y'\ar[r]^{g'} & Z'\ar@{-->}[r]^{{\delta' }} &  .
}
\end{equation*}
Then there exist $\alpha':X\longrightarrow X'$, $\beta':Y\longrightarrow Y'$, and $\gamma':Z\longrightarrow Z'$,  such that $(\alpha',\beta,\gamma)$, $(\alpha,\beta',\gamma)$, and $(\alpha,\beta,\gamma')$ are homotopic morphisms of $\mathbb E$-triangles.
\end{theorem}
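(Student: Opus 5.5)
Each of the three claims reduces to Lemma~\ref{lem:hs-f?1} by factoring $(\alpha,\beta,\gamma)$ through an intermediate $\mathbb E$-triangle with end terms $X'$ and $Z$. Realize $\alpha_\ast\delta=\gamma^\ast\delta'\in\mathbb E(Z,X')$ by an $\mathbb E$-triangle $\con{X'}{s}{E}{t}{Z}{\eta}$ with $\eta=\alpha_\ast\delta=\gamma^\ast\delta'$. In every case we then need two morphisms of $\mathbb E$-triangles, $(\alpha,\beta_1,1_Z)\colon\delta\to\eta$ and $(1_{X'},\beta_2,\gamma)\colon\eta\to\delta'$, both homotopic in the sense of Proposition~\ref{prop:mor-of-confl}. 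Their composite has to be the modified triple, with the diagram of Definition~\ref{def:htm} exhibiting it.

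By Proposition~\ref{prop:mor-of-confl}(1), homotopy of $(\alpha,\beta_1,1_Z)$ is equivalent to $\con X{\dg{f\\\alpha}}{Y\oplus X'}{\dg{\beta_1,-s}}{E}{t^\ast\delta}$ being an $\mathbb E$-triangle. By Proposition~\ref{prop:mor-of-confl}$(1)'$, homotopy of $(1_{X'},\beta_2,\gamma)$ is equivalent to $\con E{\dg{-t\\\beta_2}}{Z\oplus Y'}{\dg{\gamma,g'}}{Z'}{s_\ast\delta'}$ being an $\mathbb E$-triangle. These are exactly the two $\mathbb E$-triangles Definition~\ref{def:htm} requires, so it suffices to produce the two homotopic pieces with $\beta_2\beta_1$ equal to the (possibly modified) middle map.

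\emph{Changing $\beta$.} Lemma~\ref{lem:hs-f?1}(1) applied to $\alpha$ and $\eta=\alpha_\ast\delta$ gives $\beta_1$ with $(\alpha,\beta_1,1_Z)$ homotopic. Lemma~\ref{lem:hs-f?1}$(1)'$ applied to $\gamma$ and $\eta=\gamma^\ast\delta'$ gives $\beta_2$ with $(1_{X'},\beta_2,\gamma)$ homotopic. Setting $\beta'=\beta_2\beta_1$ makes $(\alpha,\beta',\gamma)$ homotopic.

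\emph{Changing $\gamma$ (keeping $\alpha,\beta$).} Take $\beta_1$ as above, so $\beta=\beta_2\beta_1$ must be achieved by a suitable $\beta_2$.
\begin{enumerate}
\item Use the first triangle, $\con X{\dg{f\\\alpha}}{Y\oplus X'}{\dg{\beta_1,-s}}{E}{t^\ast\delta}$. Since $\beta f=f'\alpha$, the map $(\beta,-f')$ vanishes on $\dg{f\\\alpha}$.
\item The contravariant exact sequence of Theorem~\ref{lem:long-ext-seq} then gives $\beta_2\colon E\to Y'$ with $\beta_2\beta_1=\beta$ and $\beta_2 s=f'$.
\item Apply Lemma~\ref{lem:hs-f?1}$(2)'$ to $\beta_2$ with $\beta_2 s=f'$. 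This yields $\gamma'$ such that $(1_{X'},\beta_2,\gamma')$ is homotopic.
\item Then $(\alpha,\beta,\gamma')$ is homotopic via the factorization through $E$.
\end{enumerate}

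\emph{Changing $\alpha$ (keeping $\beta,\gamma$).} This is the dual argument.
\begin{enumerate}
\item Take $\beta_2$ from Lemma~\ref{lem:hs-f?1}$(1)'$, and use the second triangle $\con E{\dg{-t\\\beta_2}}{Z\oplus Y'}{\dg{\gamma,g'}}{Z'}{s_\ast\delta'}$.
\item The map $\dg{-g\\\beta}\colon Y\to Z\oplus Y'$ satisfies $(\gamma,g')\dg{-g\\\beta}=-\gamma g+g'\beta=0$.
\item The covariant exact sequence therefore gives $\beta_1\colon Y\to E$ with $-t\beta_1=-g$ and $\beta_2\beta_1=\beta$.
\item Lemma~\ref{lem:hs-f?1}(2), applied to $\beta_1$ with $t\beta_1=g$, gives $\alpha'$ such that $(\alpha',\beta_1,1_Z)$ is homotopic.
\end{enumerate}

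The main obstacle is a compatibility in these last two cases. The intermediate triangle used must satisfy the Definition~\ref{def:htm} conditions relative to the \emph{new} map: in the $\gamma'$ case we need $\eta=\gamma'^\ast\delta'$, and in the $\alpha'$ case we need $\eta=\alpha'_\ast\delta$. These identities are supplied by the morphisms of $\mathbb E$-triangles produced in Lemma~\ref{lem:hs-f?1}(2) and $(2)'$. Alongside this, one must check that the two $\mathbb E$-triangles in Definition~\ref{def:htm} are exactly those furnished by Proposition~\ref{prop:mor-of-confl}, with the correct extensions $t^\ast\delta$ and $s_\ast\delta'$.
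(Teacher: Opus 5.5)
Your proposal is correct and is essentially the paper's proof. $\beta'=\beta_2\beta_1$ comes from \Cref{lem:hs-f?1}$(1)$ and $(1)'$. For $\alpha'$ (resp.\ $\gamma'$) you factor $\beta$ through $E$ using the $\mathbb E$-triangle attached to $(1_{X'},\beta_2,\gamma)$ (resp.\ $(\alpha,\beta_1,1_Z)$), which is exactly \Cref{lem:weak}$(1)$ (resp.\ $(2)$), and then invoke \Cref{lem:hs-f?1}$(2)$ (resp.\ $(2)'$); the paper writes out only the $\alpha'$ case and leaves your $\gamma'$ argument as ``similar''.
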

\begin{proof} \ We first show the existence of $\beta'$. There exists an $\mathbb E$-triangle $\con {X'}sEtZ{\alpha_\ast \delta}$ realizing $\alpha_\ast \delta = \gamma^\ast \delta'$. One then takes $\beta_1$ and $\beta_2$ by \Cref{lem:hs-f?1}$(1)$ and \Cref{lem:hs-1?h}$(1)'$ respectively, as in the following diagram:
		\begin{equation*}
\xymatrix@R=0.4cm{
  X\ar[r]^{f}\ar[d]^-{\alpha} & Y\ar[r]^{g}\ar@{..>}[d]^-{{\beta_1}} & Z\ar@{-->}[r]^{\delta}\ar@{=}[d] & {} \\
  X'\ar[r]^{s}\ar@{=}[d] & E\ar[r]^{t}\ar@{..>}[d]^-{{\beta_2}} & Z\ar@{-->}[r]^{{\alpha_\ast \delta}}\ar[d]^-{\gamma} & {} \\
  X'\ar[r]^{{f'}} & Y'\ar[r]^{{g'}} & Z'\ar@{-->}[r]^{{\delta' }} & {}
}
\end{equation*}
That is, $(\alpha, \beta'=\beta_2\beta_1, \gamma)$ is a homotopic morphism.

\vskip5pt

We next show the existence of $\alpha'$. By \Cref{lem:hs-1?h}$(1)'$ and \Cref{lem:weak}$(1)$, there is a commutative diagram
		\begin{equation*}
\xymatrix@R=0.4cm{
  X\ar[r]^{f} & Y\ar[r]^{g}\ar@{..>}[d]^-{\beta_1} & Z\ar@{-->}[r]^{\delta}\ar@{=}[d] & {} \\
  X'\ar[r]^{s}\ar@{=}[d] & E\ar[r]^{t}\ar@{..>}[d]^-{\beta_2} & Z\ar@{-->}[r]^{{\gamma^\ast \delta' }}\ar[d]^-{\gamma} & {} \\
  X'\ar[r]^{f'} & Y'\ar[r]^{g'} & Z'\ar@{-->}[r]^{{\delta' }} & {}
}
\end{equation*}
		where $\beta_2\beta_1 = \beta$ and $t\beta_1 = g$. The desired $\alpha' : X \to X'$ is given by \Cref{lem:hs-?g1}$(2)$.

\vskip5pt

The existence of $\gamma'$ is similar to that of $\alpha'$.
	\end{proof}

\begin{lemma} \label{prop:hs-inflation} \  Consider the homotopic square in \Cref{eq:htsq}.
\vskip5pt
$(1)$ \  {\rm (\cite[Lemma 3.1]{HXZ23})} \  $f$ is an $\mathbb E$-inflation if and only if $g$ is an $\mathbb E$-inflation. In this case, there exists a homotopic morphism of $\mathbb E$-triangles$:$
\begin{equation*}
\xymatrix@R=0.4cm{
  A_1\ar[r]^{f}\ar[d]_{u} & B_1\ar[r]\ar[d]^{v} & C\ar@{-->}[r]^{{\delta_1}}\ar@{=}[d] & {} \\
  A_2\ar[r]^{g} & B_2\ar[r] & C\ar@{-->}[r]^{{u_\ast\delta_1}} & {}
}
\end{equation*}
\vskip5pt
$(2)$ \ {\rm (The dual of \cite[Lemma 3.1]{HXZ23})} \ $f$ is an $\mathbb E$-deflation if and only if $g$ is an $\mathbb E$-deflation. In this case, there exists a homotopic morphism of $\mathbb E$-triangles$:$
\begin{equation*}
\xymatrix@R=0.4cm{
  K\ar[r]\ar@{=}[d] & A_1\ar[r]^{f}\ar[d]_{u} & B_1\ar@{-->}[r]^{{v^\ast \varepsilon _2}}\ar[d]^{v} & {} \\
  K\ar[r] & A_2\ar[r]^{g} & B_2\ar@{-->}[r]^{{\varepsilon_2}} & {}
}
\end{equation*}
\end{lemma}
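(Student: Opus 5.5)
Part $(1)$ splits into the two implications, and I would treat the converse first because it is the cleaner one. Let $\con{A_1}{\dg{f\\ u}}{B_1 \oplus A_2}{\dg{v , -g}}{B_2}\delta$ be the $\mathbb E$-triangle given by the homotopic square, and consider the split $\mathbb E$-triangle $\con{A_2}{\dg{0\\ 1}}{B_1\oplus A_2}{\dg{1,0}}{B_1}{0}$. Both have middle term $B_1\oplus A_2$, and the relevant composites are
$$\dg{1,0}\dg{f\\ u}=f, \qquad \dg{v,-g}\dg{0\\ 1}=-g.$$
Suppose $g$ is an $\mathbb E$-inflation, say $\con{A_2}{g}{B_2}{k}{C}{\varepsilon}$. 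By \Cref{isomorphisms}, $\con{A_2}{-g}{B_2}{k}{C}{-\varepsilon}$ is also an $\mathbb E$-triangle. Then \cite[Proposition 3.17]{NP19}, which is the non-dual form of \Cref{prop:pb-2}, applies with $e_1=\dg{f\\ u}$, $p_2=\dg{v,-g}$, $e_2=\dg{0\\1}$, $p_1=\dg{1,0}$. It produces an $\mathbb E$-triangle $\con{A_1}{f}{B_1}{}{C}{}$, so $f$ is an $\mathbb E$-inflation. Running the same argument with the roles of the two given triangles exchanged (now \Cref{prop:pb-2} itself, with $f_1=f$) gives the implication from $f$ to $g$. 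The only thing to check in each case is that the composites are exactly $f$ and $-g$, and the sign is harmless by \Cref{isomorphisms}.

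For the homotopic morphism, assume $\con{A_1}{f}{B_1}{h}{C}{\delta_1}$ and realize $u_\ast\delta_1$ by $\con{A_2}{g'}{B'}{h'}{C}{u_\ast\delta_1}$. By \Cref{lem:hs-f?1}$(1)$ there is $\beta\colon B_1\to B'$ such that $(u,\beta,1_C)$ is homotopic. By \Cref{prop:mor-of-confl}$(1)$,
$$\con{A_1}{\dg{f\\ u}}{B_1\oplus A_2}{\dg{\beta,-g'}}{B'}{(h')^\ast\delta_1}$$
is then an $\mathbb E$-triangle. It has the same inflation as the triangle $\delta$ of the homotopic square. Applying ET3 to the pair $(1,1)$ gives a morphism $(1,1,\varphi)$ with $\varphi\colon B_2\to B'$, and $\varphi$ is an isomorphism by \Cref{cone}. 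From $\varphi\dg{v,-g}=\dg{\beta,-g'}$ we read off $\varphi v=\beta$ and $\varphi g=g'$.

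By \Cref{isomorphisms}, transporting along $\varphi^{-1}$ makes $\con{A_2}{g}{B_2}{h'\varphi}{C}{u_\ast\delta_1}$ an $\mathbb E$-triangle. The triple $(u,v,1_C)$ is a morphism of $\mathbb E$-triangles, since $vf=gu$ and $h'\varphi v=h'\beta=h$. Transporting the first triangle along $\varphi^{-1}$ also gives the $\mathbb E$-triangle
$$\con{A_1}{\dg{f\\u}}{B_1\oplus A_2}{\dg{v,-g}}{B_2}{(h'\varphi)^\ast\delta_1}.$$
By \Cref{prop:mor-of-confl}$(1)$ again, $(u,v,1_C)$ is a homotopic morphism.

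Part $(2)$ is dual: use \Cref{lem:hs-f?1}$(1)'$ and \Cref{prop:mor-of-confl}$(1)'$, and for the equivalence use the same pair of triangles with the roles of inflations and deflations exchanged. The step I expect to be the real obstacle is the bare equivalence "$f$ inflation $\iff$ $g$ inflation": there is no WIC condition available, so one cannot simply cancel the split summand. Everything rests on recognizing the split triangle on $B_1\oplus A_2$ as the correct partner for \cite[Proposition 3.17]{NP19} and \Cref{prop:pb-2}.
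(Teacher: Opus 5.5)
The paper does not prove this lemma; it only cites \cite[Lemma 3.1]{HXZ23} and its dual. Your argument therefore has to stand on its own, and most of it does.

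Your second paragraph is correct. You compare the two $\mathbb E$-triangles with common inflation $\dg{f\\ u}$ via ET3 and \Cref{cone}, transport along $\varphi^{-1}$, and apply \Cref{prop:mor-of-confl}$(1)$; this gives the homotopic morphism $(u,v,1_C)$. It also proves ``$f$ is an $\mathbb E$-inflation $\Rightarrow$ $g$ is an $\mathbb E$-inflation'' by itself, since $g=\varphi^{-1}g'$ with $\varphi$ an isomorphism and $g'$ an $\mathbb E$-inflation. So the first paragraph only has to deliver the converse. Part $(2)$ by duality is fine.

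The first paragraph goes wrong exactly at that converse, through a mix-up of labels and citation.
\begin{itemize}
\item The paper presents \Cref{prop:pb-2} as the dual of \cite[Proposition 3.17]{NP19}. Since \Cref{prop:pb-2} moves an $\mathbb E$-inflation structure from $p_1e_1$ to $p_2e_2$, the non-dual \cite[Proposition 3.17]{NP19} is a statement about $\mathbb E$-deflations. It cannot output $\con{A_1}{f}{B_1}{}{C}{}$.
\item With your assignment $e_1=\dg{f\\ u}$, $p_2=\dg{v,-g}$, $e_2=\dg{0\\ 1}$, $p_1=\dg{1,0}$, \Cref{prop:pb-2} has $f_1=p_1e_1=f$. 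So it presupposes that $f$ is an $\mathbb E$-inflation and yields $-g$. That is the forward implication, not the converse.
\end{itemize}

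The fix is to apply \Cref{prop:pb-2} itself with the two $\mathbb E$-triangles through $B_1\oplus A_2$ interchanged (objects relabeled accordingly).
\begin{itemize}
\item Let the split one play the role of $e_1=\dg{0\\ 1}$, $p_2=\dg{1,0}$. It is an $\mathbb E$-triangle by ET2 and \Cref{isomorphisms} applied to the swap $A_2\oplus B_1\cong B_1\oplus A_2$.
\item Let the homotopic-square one play the role of $e_2=\dg{f\\ u}$, $p_1=\dg{v,-g}$.
\item Then $f_1=p_1e_1=-g$, so your $\mathbb E$-triangle $\con{A_2}{-g}{B_2}{k}{C}{}$ is a legitimate input.
\item The output is an $\mathbb E$-triangle $\con{A_1}{f_2}{B_1}{}{C}{}$ with $f_2=p_2e_2=\dg{1,0}\dg{f\\ u}=f$.
\end{itemize}
Because \Cref{prop:pb-2} is symmetric in the two $\mathbb E$-triangles through the middle object, it alone gives both directions of $(1)$. \cite[Proposition 3.17]{NP19}, applied to the same pair of $\mathbb E$-triangles, is the right tool for the deflation equivalence in $(2)$. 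With this correction your proof is complete.
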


\vskip5pt

The following result is an analog of K{\"u}nzer's axiom in exact categories (see \cite{Kun07} and \cite[Exercise 3.11]{Buh10}).

\vskip5pt

\begin{proposition}\label{prop:comp-infldefl} \ Let  $f :X \to Y$ and $g : Y \to Z$ be morphisms.

\vskip5pt

$(1)$ \  If $gf$ is an $\mathbb E$-inflation with $g$ an $\mathbb E$-deflation, then $f$ is an $\mathbb E$-inflation.

\vskip5pt

In particular, if $f$ is an $\mathbb E$-inflation, then $\dg{f \\ s}: X\rightarrow Y\oplus W$ is an $\mathbb E$-inflation for any morphism $s: X\rightarrow W$.

\vskip5pt

$(1)'$ \ If $gf$ is an $\mathbb E$-deflation with $f$ an $\mathbb E$-inflation, then $g$ is an $\mathbb E$-deflation.

\vskip5pt

In particular, if $g$ is an $\mathbb E$-deflation, then $(g, t): Y\oplus W \rightarrow Z$ is an $\mathbb E$-deflation for any morphism $t: W\rightarrow Z$.
\end{proposition}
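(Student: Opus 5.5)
We would realize the claim by pulling back the $\mathbb E$-deflation $g$ along $gf$ via a homotopic square, then showing $f$ factors as a split inflation followed by an inflation. We prove $(1)$; $(1)'$ is the dual argument, with Lemma \ref{prop:hs-inflation}$(2)$ and ET4$^{\rm op}$ in place of $(1)$ and ET4.

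\emph{Step 1: the pullback.} Fix $\mathbb E$-triangles $\con KkYgZ\eta$ and $\con X{gf}ZhW\theta$. Since $g^\ast\eta=0$ by \Cref{lem:long-ext-seq}, we get $(gf)^\ast\eta=f^\ast g^\ast\eta=0$. Realize it as $\con KaPbX{(gf)^\ast\eta}$, which is split. By \Cref{lem:hs-1?h}$(1)'$ there is $\beta:P\to Y$ such that $(1_K,\beta,gf)$ is a homotopic morphism. By \Cref{prop:mor-of-confl}$(1)'$ the right square
\[
\xymatrix@R=0.4cm{ P\ar[r]^{\beta}\ar[d]_{b} & Y\ar[d]^{g}\\ X\ar[r]^{gf} & Z}
\]
is then homotopic; the conflation is $P\to X\oplus Y\to Z$. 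After reordering the summands and using the signs of \Cref{isomorphisms}, this is a homotopic square in the sense of \Cref{def:htsq}, with top arrow $\beta$, left arrow $b$, right arrow $g$ and bottom arrow $gf$.

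\emph{Step 2: $\beta$ is an inflation.} The bottom arrow $gf$ of this square is an $\mathbb E$-inflation. Hence Lemma \ref{prop:hs-inflation}$(1)$ shows that the top arrow $\beta$ is an $\mathbb E$-inflation.

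\emph{Step 3: factor $f$.} The morphisms $1_X:X\to X$ and $f:X\to Y$ satisfy $(gf)1_X=gf$. So \Cref{lem:weak}$(1)$ gives $s:X\to P$ with $bs=1_X$ and $\beta s=f$. Thus $s$ is a section of $b$ in the split $\mathbb E$-triangle $\con KaPbX0$. Using \Cref{lem:infl-section} and \Cref{isomorphisms}, we identify $P\cong K\oplus X$ so that $s$ becomes $\dg{0\\1}$. This makes $s$ the inflation of the split $\mathbb E$-triangle $X\xrightarrow{s}P\to K$, so $s$ is an $\mathbb E$-inflation. Then $f=\beta s$ is a composite of $\mathbb E$-inflations, hence an $\mathbb E$-inflation by ET4.

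\emph{The particular case.} Given $s:X\to W$, take $g=(1,0):Y\oplus W\to Y$, which is a split $\mathbb E$-deflation. Then $g\dg{f\\s}=f$ is an $\mathbb E$-inflation, so $\dg{f\\s}$ is an $\mathbb E$-inflation by the main statement.

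The main obstacle is bookkeeping in Step 1: checking that the conflation from \Cref{prop:mor-of-confl}$(1)'$ really puts $\beta$ horizontally and $gf$ in the position required by Lemma \ref{prop:hs-inflation}$(1)$ after rearranging signs and summands. The section argument in Step 3 is routine.
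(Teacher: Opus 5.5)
Your proof is correct, but it reaches the key homotopic square from the opposite side.

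\textbf{The paper's route.} The paper starts from the $\mathbb E$-inflation $gf$. It completes $\con X{gf}ZhC\delta$, notes that $hg$ is an $\mathbb E$-deflation by ET4$^{\rm op}$, completes $\con StY{hg}C\varepsilon$, and applies \Cref{lem:hs-?g1}$(2)$ to get $p\colon S\to X$ with $(p,g,1_C)$ homotopic. Its square (top $t$, left $p$, right $g$, bottom $gf$) has the same cospan $Y\xrightarrow{g}Z\xleftarrow{gf}X$ as yours. Both corner objects are the cocone of the same $\mathbb E$-deflation $\dg{g,-gf}$, so the two squares are isomorphic, with $t\leftrightarrow\beta$ and $p\leftrightarrow b$.

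\textbf{Where the two proofs differ.} The difference is which property comes for free.

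In the paper, $t$ is an $\mathbb E$-inflation because it is the first map of an $\mathbb E$-triangle. Then $p$ is shown to be an $\mathbb E$-deflation by the pullback half of \Cref{prop:hs-inflation}$(2)$: $g$ is a deflation, so its pullback $p$ is one. After \Cref{lem:weak}$(1)$, the paper still needs \Cref{lem:infl-section}$(2)$ and \Cref{kernelcokernel} to see that the section $s$ has a cokernel.

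In your version, $b$ is a split deflation by construction, since $g^\ast\eta=0$. So the splitting of $s$ is explicit, and you avoid ET4$^{\rm op}$ and the kernel/cokernel lemma. The price is that $\beta$ is not an inflation a priori. You must invoke the reverse half of \Cref{prop:hs-inflation}$(1)$: an $\mathbb E$-inflation on the bottom edge $gf$ forces one on the top edge $\beta$, so inflations are reflected rather than pushed out. That is the less evident direction of the cited equivalence, and it carries the real content of your argument. Since the lemma is stated as an ``if and only if'', the step is legitimate.

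\textbf{A small fix in Step 3.} An isomorphism $P\cong K\oplus X$ compatible with $a$ and $b$ only sends $s$ to $\dg{\sigma\\ 1}$ for some $\sigma\colon X\to K$. Compose it with the shear $\dg{1&-\sigma\\0&1}$, which fixes $\dg{1\\0}$ and $\dg{0,1}$, to get $\dg{0\\1}$. The input here is that the zero extension is realized by the direct-sum sequence (ET2), together with \Cref{isomorphisms}, rather than \Cref{lem:infl-section}.
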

\begin{proof} \ One only justifies $(1)$. Since $gf$ is an $\mathbb E$-inflation, there is an $\mathbb E$-triangle $\con X{gf}Z h C\delta$. Since both $g$ and $h$ are $\mathbb E$-deflations, $hg$ is an $\mathbb E$-deflation. It follows that there is an $\mathbb E$-triangle $\con StY{hg}C{\varepsilon}$. By \Cref{lem:hs-?g1}$(2)$ there is $p: S\to X$ such that $\con S {\dg{t\\ p}} {Y\oplus X} {\dg{g , -gf}} Z {h^\ast \varepsilon}$ is an $\mathbb E$-triangle.
\begin{equation*}
\xymatrix@R=0.4cm{
  S\ar[r]^{t}\ar@{..>}[d]_{p} & Y\ar[r]^{hg}\ar[d]^{g} & C\ar@{=}[d] \\
  X\ar[r]^{gf} & Z\ar[r]^{h} & C
}
\end{equation*}

Since $g$ is an $\mathbb E$-deflation, $p$ is an $\mathbb E$-deflation by \Cref{prop:hs-inflation}$(2)$. By \Cref{lem:weak}$(1)$ there is $s: X\to S$ such that $ts=f$ and $ps=1_X$.

        \begin{equation*}
\xymatrix@R=0.4cm{
  {} & K\ar@{=}[r]\ar@{..>}[d] & K\ar[d] & {} \\
  X\ar@{..>}[r]^{s}\ar@/^1.4pc/[rr]^(0.3){f}\ar@{=}@/_0.6pc/[dr] & S\ar@{..>}[r]^(0.4){t}\ar@{..>}[d]_{p} & Y\ar@{..>}[r]^{hg}\ar[d]^{g} & C\ar@{=}[d] \\
  {} & X\ar[r]^{gf} & Z\ar[r]^{h} & C
}
\end{equation*}
        Since $p$ is an $\mathbb E$-deflation and a retraction, $g$ admits a kernel by \Cref{lem:infl-section}. Thus, $s$ admits a cokernel by \Cref{kernelcokernel}. Then $s$ is an $\mathbb E$-inflation since every split short exact sequence is an $\mathbb E$-triangle. Finally, $f=ts$ is an $\mathbb E$-inflation by ET4.
    \end{proof}

\Cref{thm:hs-morphism} modifies a morphism  $(\alpha,\beta,\gamma)$ of $\mathbb E$-triangles into a homotopic one.
In applications, we meet such a question that if two of $\alpha, \beta, \gamma$ are in the set $$\{\text{$\mathbb E$-inflation and $\mathbb E$-deflation, $\mathbb E$-inflation,  $\mathbb E$-deflation}\}$$
whether the third one is still in the set, such that $(\alpha, \beta, \gamma)$ is a homotopic morphism of $\mathbb E$-triangles.
Logically, there are $27$ possible cases, but only $15$ cases admit affirmative answer (the other $12$ cases do not hold in general, as shown in \Cref{ex:no-good-completion}).
The following result claims the $8$ affirmative cases among them (the remaining $7$ cases are dual, and omitted).

\vskip5pt
    \begin{theorem}\label{thm:hmor} \ Let $(\mathcal{C}, \ \mathbb E, \ \mathfrak s)$ be an extriangulated category, and  $(\alpha,\beta,\gamma)$ a morphism of $\mathbb E$-triangles$:$
	\begin{equation*}
\xymatrix@R=0.4cm{
  X\ar[r]^{f}\ar[d]^-{\alpha} & Y\ar[r]^{g}\ar[d]^-{\beta} & Z\ar@{-->}[r]^{\delta}\ar[d]^-{\gamma} & {} \\
  X'\ar[r]^{f'} & Y'\ar[r]^{g'} & Z'\ar@{-->}[r]^{{\delta' }} & {}
}
\end{equation*}
Then there are $\alpha':X\longrightarrow X'$, $\beta':Y\longrightarrow Y'$, and $\gamma':Z\longrightarrow Z'$,  such that $(\alpha',\beta,\gamma)$, $(\alpha,\beta',\gamma)$, and $(\alpha,\beta,\gamma')$ are homotopic morphisms of $\mathbb E$-triangles.
Explicitly one has

\vskip5pt

$(1)$ \  If $\alpha$ and $\gamma$ are $\mathbb E$-inflations and $\mathbb E$-deflations, then so is $\beta'$.

\vskip5pt

$(2)$ \  If $\alpha$ is an $\mathbb E$-inflation and an $\mathbb E$-deflation and $\gamma$ is an $\mathbb E$-inflation, then $\beta'$ is an $\mathbb E$-inflation.

\vskip5pt

$(3)$ \  If $\alpha$ is an $\mathbb E$-inflation and $\gamma$ is an $\mathbb E$-inflation and an $\mathbb E$-deflation, then $\beta'$ is an $\mathbb E$-inflation.

\vskip5pt

$(4)$ \  If $\alpha$ and $\gamma$ are $\mathbb E$-inflations, then so is $\beta'$.

\vskip5pt

$(5)$ \  If $\alpha$ and $\beta$ are $\mathbb E$-inflations and $\mathbb E$-deflations, then $\gamma'$ is an $\mathbb E$-deflation.

\vskip5pt

$(6)$ \  If $\alpha$ is an $\mathbb E$-inflation and an $\mathbb E$-deflation and $\beta$ is an $\mathbb E$-deflation, then $\gamma'$ is an $\mathbb E$-deflation.

\vskip5pt

$(7)$ \  If $\alpha$ is an $\mathbb E$-inflation and $\beta$ is an $\mathbb E$-inflation and an $\mathbb E$-deflation, then $\gamma'$ is an $\mathbb E$-deflation.

\vskip5pt

$(8)$ \  If $\alpha$ is an $\mathbb E$-inflation and $\beta$ is an $\mathbb E$-deflation, then $\gamma'$ is an $\mathbb E$-deflation.

\vskip10pt

In case  $\mathcal{C}$ is weakly idempotent complete, Nakaoka and Palu \cite[Lemma 5.9]{NP19} in fact deals with the following cases$:$
   \vskip5pt
   $(9)$ \ If $\alpha$ is an $\mathbb E$-deflation and $\beta$ is an $\mathbb E$-inflation and an $\mathbb E$-deflation, then $\gamma'$ is
an $\mathbb E$-deflation. In this case, $\alpha$ is an $\mathbb E$-inflation and $\mathbb E$-deflation.
     \vskip5pt
   $(10)$ \ If $\alpha$ and $\beta$ are $\mathbb E$-deflations, then $\gamma'$ is an $\mathbb E$-deflation.
\end{theorem}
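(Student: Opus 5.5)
The plan is to rerun the two constructions in the proof of \Cref{thm:hs-morphism} and to track inflations and deflations through the homotopic squares they produce. The existence of $\alpha',\beta',\gamma'$ is already \Cref{thm:hs-morphism}, so only (1)--(8) need proof. The basic tool is \Cref{prop:hs-inflation} applied to \emph{transposed} homotopic squares. If \Cref{eq:htsq} is homotopic, then so is the transposed square, with $u$ horizontal and $f$ vertical. Indeed, the $\mathbb E$-triangle $\con{A_1}{\dg{f\\u}}{B_1\oplus A_2}{\dg{v,-g}}{B_2}{\delta}$ turns into one with middle term $A_2\oplus B_1$ after composing with the swap isomorphism and changing signs, using \Cref{isomorphisms}. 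Hence \Cref{prop:hs-inflation} also gives: $u$ is an $\mathbb E$-inflation (resp.\ $\mathbb E$-deflation) if and only if $v$ is. The main fiddly point is getting the signs right, especially for squares presented via $\dg{-t\\\beta_2}$ and $\dg{\gamma,g'}$. I expect this to be routine via \Cref{isomorphisms}.

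\textbf{Cases (1)--(4).} Take $\beta'=\beta_2\beta_1$ exactly as in the proof of \Cref{thm:hs-morphism}. The left square, with $f$, $\alpha$, $\beta_1$, $s$, is homotopic via $\con X{\dg{f\\\alpha}}{Y\oplus X'}{\dg{\beta_1,-s}}E{t^\ast\delta}$. Transposing it, $\alpha$ being an $\mathbb E$-inflation (resp.\ $\mathbb E$-deflation) forces $\beta_1$ to be one. The right square, with $t$, $\beta_2$, $\gamma$, $g'$, is homotopic via $\con E{\dg{-t\\\beta_2}}{Z\oplus Y'}{\dg{\gamma,g'}}{Z'}{s_\ast\delta'}$, after replacing $t$ by $-t$ and adjusting with \Cref{isomorphisms}. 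Transposing it, $\gamma$ being an $\mathbb E$-inflation (resp.\ $\mathbb E$-deflation) forces $\beta_2$ to be one. Then ET4 (resp.\ ET4$^{\rm op}$) gives that $\beta'$ is an $\mathbb E$-inflation (resp.\ $\mathbb E$-deflation). This proves (4), hence (2) and (3), and also (1).

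\textbf{Cases (5)--(8).} Here I construct $\gamma'$ dually to the construction of $\alpha'$.
\begin{enumerate}
\item Realize $\alpha_\ast\delta$ by $\con{X'}sEtZ{\alpha_\ast\delta}$, and take $\beta_1$ from \Cref{lem:hs-f?1}(1), so that $(\alpha,\beta_1,1_Z)$ is homotopic and its left square is homotopic.
\item Since $\beta f=f'\alpha$, \Cref{lem:weak}(2) applied to that square gives $\beta_2:E\to Y'$ with $\beta_2\beta_1=\beta$ and $\beta_2 s=f'$.
\item \Cref{lem:hs-1g?}(2)$'$ then gives $\gamma'$ with $(1_{X'},\beta_2,\gamma')$ homotopic. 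Together these exhibit $(\alpha,\beta,\gamma')$ as homotopic, with $\beta=\beta_2\beta_1$.
\end{enumerate}
In all of (5)--(8), $\alpha$ is an $\mathbb E$-inflation, so $\beta_1$ is an $\mathbb E$-inflation by the transposed left square. Also $\beta$ is an $\mathbb E$-deflation in each case. So \Cref{prop:comp-infldefl}$(1)'$, applied to $\beta=\beta_2\beta_1$, shows that $\beta_2$ is an $\mathbb E$-deflation. The right square of $(1_{X'},\beta_2,\gamma')$ is homotopic by \Cref{prop:mor-of-confl}$(1)'$. Transposing it gives that $\gamma'$ is an $\mathbb E$-deflation.

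\textbf{Cases (9) and (10).} These run the same construction under WIC. For (10), $\beta_1$ is an $\mathbb E$-deflation because $\alpha$ is. WIC applied to $\beta=\beta_2\beta_1$ then makes $\beta_2$ an $\mathbb E$-deflation, and hence $\gamma'$ is one. For (9), $\beta f=f'\alpha$ is an $\mathbb E$-inflation, so WIC makes $\alpha$ an $\mathbb E$-inflation, and we are back in case (8).
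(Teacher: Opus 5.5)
Your proposal is correct and follows the paper's proof essentially step for step. You decompose $\beta'=\beta_2\beta_1$ (resp.\ $\beta=\beta_2\beta_1$), carry the inflation/deflation property across the two homotopic squares via \Cref{prop:hs-inflation}, and conclude with ET4/ET4$^{\rm op}$, \Cref{prop:comp-infldefl}$(1)'$, or WIC. The only differences are minor: you make the transposition of the squares explicit, where the paper applies \Cref{prop:hs-inflation} to the vertical arrows tacitly. In (9) you obtain that $\alpha$ is an $\mathbb E$-inflation from $f'\alpha=\beta f$ via WIC and then reduce to (8), whereas the paper obtains it from $\beta_1$ via WIC and the left square.
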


\begin{proof} \ By \Cref{thm:hs-morphism} we only need to prove the explicit cases. It suffices to prove $(1)$, $(5)$ and $(9)$, all the other cases can be similarly proved.

\vskip5pt

$(1)$ \ Assume that $(\alpha, \beta, \gamma)$ is a morphism of $\mathbb E$-triangles such that $\alpha$ and $\gamma$ are $\mathbb E$-inflations and $\mathbb E$-deflations.
By \Cref{thm:hs-morphism} there is $\beta': Y\to Y'$ such that $(\alpha, \beta', \gamma)$ is a homotopic morphism. That is, there is a decomposition
		\begin{equation*}
\xymatrix@R=0.4cm{
  X\ar[r]^{f}\ar[d]_-{\alpha} & Y\ar[r]^{g}\ar[d]_{{\beta_1}} & Z\ar@{-->}[r]^{\delta}\ar@{=}[d] & {} \\
  X'\ar[r]^{s}\ar@{=}[d] & E\ar[r]^{t}\ar[d]_{{\beta_2}} & Z\ar@{-->}[r]^{{\eta}}\ar[d]^-{\gamma} & {} \\
  X'\ar[r]^{{f'}} & Y'\ar[r]^{{g'}} & Z'\ar@{-->}[r]^{{\delta'}} & {}
}
\end{equation*}
    where $\con{X}{\dg{f\\\alpha}}{Y \oplus X'}{\dg{\beta_1,-s}}{E}{t^\ast \delta}$ and $\con{E}{\dg{-t\\\beta_2}}{Z \oplus Y'}{\dg{\gamma,g'}}{Z'}{s_\ast \delta'}$ are $\mathbb E$-triangles, $\eta = \alpha_\ast \delta=\gamma^\ast \delta'$, and $\beta'=\beta_2\beta_1$. By \Cref{prop:hs-inflation}, $\beta_1$ and $\beta_2$ are $\mathbb E$-inflations and $\mathbb E$-deflations. Thus, $\beta'=\beta_2\beta_1$ is an $\mathbb E$-inflation and an $\mathbb E$-deflation, by ET4 and ET4$^{\rm op}$, respectively.

\vskip5pt

$(5)$ \ \ Assume that $(\alpha, \beta, \gamma)$ is a morphism of $\mathbb E$-triangles such that $\alpha$ and $\beta$ are $\mathbb E$-inflations and $\mathbb E$-deflations. By \Cref{thm:hs-morphism} there is $\gamma': Z\to Z'$ such that $(\alpha,\beta,\gamma')$ is a homotopic morphism. That is, there is a decomposition
		\begin{equation*}
\xymatrix@R=0.4cm{
  X\ar[r]^{f}\ar[d]_-{\alpha} & Y\ar[r]^{g}\ar[d]_{{\beta_1}} & Z\ar@{-->}[r]^{\delta}\ar@{=}[d] & {} \\
  X'\ar[r]^{s}\ar@{=}[d] & E\ar[r]^{t}\ar[d]_{{\beta_2}} & Z\ar@{-->}[r]^{{\eta}}\ar[d]^-{\gamma'} & {} \\
  X'\ar[r]^{{f'}} & Y'\ar[r]^{{g'}} & Z'\ar@{-->}[r]^{{\delta'}} & {}
}
\end{equation*}
    where $\con{X}{\dg{f\\\alpha}}{Y \oplus X'}{\dg{\beta_1,-s}}{E}{t^\ast \delta}$ and $\con{E}{\dg{-t\\\beta_2}}{Z \oplus Y'}{\dg{\gamma' ,g'}}{Z'}{s_\ast \delta'}$ are $\mathbb E$-triangles, $\eta = \alpha_\ast \delta=(\gamma')^\ast \delta'$, and $\beta=\beta_2\beta_1$. By \Cref{prop:hs-inflation}, $\beta_1$ is an $\mathbb E$-inflation. By assumption $\beta=\beta_2\beta_1$ is an $\mathbb E$-deflation.
    It follows from \Cref{prop:comp-infldefl}$(1)'$ that $\beta_2$ is an $\mathbb E$-deflation. By \Cref{prop:hs-inflation}, $\gamma'$ is an $\mathbb E$-deflation.
    \vskip5pt

$(9)$ \ Let $\mathcal{C}$ be a weakly idempotent complete extriangulated category.
Assume that $(\alpha, \beta, \gamma)$ is a morphism of $\mathbb E$-triangles such that $\alpha$ is an $\mathbb E$-deflation and $\beta$ is an $\mathbb E$-inflation and an $\mathbb E$-deflation.
By \Cref{thm:hs-morphism} there is $\gamma': Z\to Z'$ such that $(\alpha,\beta,\gamma')$ is a homotopic morphism. That is, there is a decomposition
		\begin{equation*}
\xymatrix@R=0.4cm{
  X\ar[r]^{f}\ar[d]_-{\alpha} & Y\ar[r]^{g}\ar[d]_{{\beta_1}} & Z\ar@{-->}[r]^{\delta}\ar@{=}[d] & {} \\
  X'\ar[r]^{s}\ar@{=}[d] & E\ar[r]^{t}\ar[d]_{{\beta_2}} & Z\ar@{-->}[r]^{{\eta}}\ar[d]^-{\gamma'} & {} \\
  X'\ar[r]^{{f'}} & Y'\ar[r]^{{g'}} & Z'\ar@{-->}[r]^{{\delta'}} & {}
}
\end{equation*}
    where $\con{X}{\dg{f\\\alpha}}{Y \oplus X'}{\dg{\beta_1,-s}}{E}{t^\ast \delta}$ and $\con{E}{\dg{-t\\\beta_2}}{Z \oplus Y'}{\dg{\gamma' ,g'}}{Z'}{s_\ast \delta'}$ are $\mathbb E$-triangles, $\eta = \alpha_\ast \delta=(\gamma')^\ast \delta'$, and $\beta=\beta_2\beta_1$. By \Cref{con:wic}, $\beta_2$ is an $\mathbb E$-deflation. By \Cref{prop:hs-inflation}, $\gamma'$ is an $\mathbb E$-deflation.
    Since $\beta =\beta_2\beta_1$ is an $\mathbb E$-inflation, it follows from \Cref{con:wic} that $\beta_1$ is an $\mathbb E$-inflation, and hence $\alpha$ is an $\mathbb E$-inflation, by \Cref{prop:hs-inflation}.
    \end{proof}

\vskip5pt

As promised, we give counterexamples for the remaining $12$ cases: we will only write down the $7$ cases among them, the other $5$ dual cases are omitted.

\vskip5pt

\begin{example}\label{ex:no-good-completion} \ Let $(\alpha,\beta,\gamma)$ be a morphism of $\mathbb E$-triangles, as in \Cref{thm:hmor}.
	
\vskip5pt

${\rm (1)}$ \  Even if $\alpha$ is an $\mathbb E$-inflation and $\gamma$ is an $\mathbb E$-deflation, there may not exist an $\mathbb E$-inflation nor an $\mathbb E$-deflation $\beta': Y\to Y'$ such that $(\alpha,\beta',\gamma)$ is a morphism of $\mathbb E$-triangles.

  \vskip5pt

${\rm (2)}$ \  Even if $\alpha$ is an $\mathbb E$-deflation and $\gamma$ is an $\mathbb E$-inflation, there may not exist an $\mathbb E$-inflation nor an $\mathbb E$-deflation $\beta': Y\to Y'$ such that $(\alpha,\beta',\gamma)$ is a morphism of $\mathbb E$-triangles.

\vskip5pt

We only illustrate ${\rm (1)}$, and ${\rm (2)}$ is similar. The following is an example in $\mathbf{Ab}:$
	\begin{equation*}
		\xymatrix@R=0.4cm{
  0\ar[r]\ar[d] & \mathbb Z / 2 \mathbb Z\ar@{=}[r]\ar@{..>}[d] & \mathbb Z / 2 \mathbb Z\ar@{-->}[r]^-{0}\ar[d] & {} \\
  \mathbb Z\ar@{=}[r] & \mathbb Z\ar[r] & 0\ar@{-->}[r]^-{0} & {}
		}
	\end{equation*}

\vskip5pt

${\rm (3)}$ \ Even if $\alpha$ is an $\mathbb E$-inflation and an $\mathbb E$-deflation and $\beta$ is an $\mathbb E$-inflation, there may not exist an $\mathbb E$-inflation nor an $\mathbb E$-deflation $\gamma': Z\to Z'$ such that $(\alpha,\beta,\gamma')$ is a morphism of $\mathbb E$-triangles.
\vskip5pt

${\rm (4)}$ \ Even if $\alpha$ and $\beta$ are $\mathbb E$-inflations, there may not exist an $\mathbb E$-inflation nor an $\mathbb E$-deflation $\gamma': Z\to Z'$ such that $(\alpha,\beta,\gamma')$ is a morphism of $\mathbb E$-triangles.
\vskip5pt

${\rm (5)}$ \ Even if $\alpha$ is an $\mathbb E$-deflation and $\beta$ is an $\mathbb E$-inflation, there may not exist an $\mathbb E$-inflation nor an $\mathbb E$-deflation $\gamma': Z\to Z'$ such that $(\alpha,\beta,\gamma')$ is a morphism of $\mathbb E$-triangles.
\vskip5pt

We only show  ${\rm (3)}$:  $(4)$ and $(5)$ are consequences of $(3)$. Let $D^b(R\mbox{-}{\rm Mod})$ be the bounded derived category of a ring $R$, and $\mathcal{D}$ the full subcategory of complexes with cohomology concentrated in degrees $0$ and $-1$.
Then $\mathcal D$ is an extriangulated category (\cite[Remark 2.18]{NP19}). Note that $\Sigma R \oplus \Sigma^{-1}R \xrightarrow {\dg{1,0}} \Sigma R \xrightarrow 0 R \xrightarrow {\dg{0 \\ 1}} \Sigma^2R \oplus R$ is a distinguished triangle in $D^b(R\mbox{-}{\rm Mod})$, where neither $\Sigma R \oplus \Sigma^{-1}R$ nor $\Sigma^2R \oplus R$ belongs to $\mathcal{D}$. Hence $0: \Sigma R\to R$ is neither an $\mathbb E$-inflation nor an $\mathbb E$-deflation in $\mathcal D$.

\vskip5pt

Consider the morphism of $\mathbb E$-triangles given by the following diagram:
\begin{equation}\label{eg:3x3-fail}
\xymatrix@R=0.4cm{
  0\ar[r]\ar[d] & \Sigma R\ar@{=}[r]\ar[d]^{{\binom 10}} & \Sigma R\ar@{-->}[r]^{0}\ar[d]^{0} & {} \\
  \Sigma R\ar[r]^(0.4){{\binom 10}} & \Sigma R \oplus  R\ar[r]^(0.6){{(0,1)}} & R\ar@{-->}[r]^{0} & {}
}
\end{equation}
In this case, $0: 0\to \Sigma R$ is an $\mathbb E$-inflation and an $\mathbb E$-deflation, $\binom 10 :\Sigma R\to \Sigma R\oplus R$ is an $\mathbb E$-inflation; but any candidate $\gamma': \Sigma R\to R$ must be $0$.

\vskip5pt

${\rm (6)}$  \ Even if $\alpha$ is an $\mathbb E$-deflation and $\beta$ is an $\mathbb E$-inflation and $\mathbb E$-deflation, there may not exist an $\mathbb E$-inflation nor an $\mathbb E$-deflation $\gamma': Z\to Z'$ such that $(\alpha,\beta,\gamma')$ is a morphism of $\mathbb E$-triangles.
\vskip5pt

${\rm (7)}$  \ Even if $\alpha$ and $\beta$ are $\mathbb E$-deflations, there may not exist an $\mathbb E$-inflation nor an $\mathbb E$-deflation $\gamma': Z\to Z'$ such that $(\alpha,\beta,\gamma')$ is a morphism of $\mathbb E$-triangles.
\vskip5pt

We only show ${\rm (6)}$: $(7)$ is a consequence of $(6)$. Let $\mathcal A$ be a semisimple abelian category with only two indecomposable objects $X$ and $Y$. Let $D^b (\mathcal A)$ be its bounded derived category, and 
$\mathcal D=\{ Z\in D^b (\mathcal A)\mid {\rm H}^0(Z)\ne X^m, m>0,  {\rm H}^1(Z)=X^n, n\geq 0, {\rm H}^i(Z)=0, i\ne 0,1\}$. Then $\mathcal{D}$ is closed under extensions, and thus an extriangulated category (\cite[Remark 2.18]{NP19}). Consider the morphism of split $\mathbb E$-triangles:
  \begin{equation*}
    \xymatrix@R=0.8cm@C=1.2cm{
X\oplus Y \ar[r]^-{\scalebox{0.8}{$\dg{1&0\\0&1\\0&0\\0&0}$}} \ar[d]^-\alpha_-{\scalebox{0.8}{$\dg{1&0\\0&1\\0&0}$}} & X\oplus Y\oplus X\oplus Y \ar[r]^-{\scalebox{0.8}{$\dg{0&0&1&0\\0&0&0&1}$}} \ar@{=}[d]^-\beta & X\oplus Y \ar@{-->}[r]^-{0} \ar@{.>}[d]^{\gamma'} & {} \\
X\oplus Y\oplus X \ar[r]^-{\scalebox{0.8}{$\dg{1&0&0\\0&1&0\\0&0&1\\0&0&0}$}} & X\oplus Y\oplus X\oplus Y  \ar[r]^-{\scalebox{0.8}{$\dg{0&0&0&1}$}} & Y \ar@{-->}[r]^-{0} & {}
    }
  \end{equation*}
  Since $\Sigma^{-1} X\in \mathcal D$, $\alpha$ is an $\mathbb E$-deflation. Note that $\gamma'$ has to be a split epimorphism, and $X, \Sigma X\notin \mathcal D$. Thus $\gamma'$ is neither an $\mathbb E$-inflation nor an $\mathbb E$-deflation.

\vskip5pt

\end{example}

\vskip5pt

For a better global view, we combine \Cref{thm:hmor}, \Cref{ex:no-good-completion}, and their duals,  in the following table of homotopic morphisms. Let $(\alpha, \beta, \gamma)$ be a homotopic morphism of $\mathbb E$-triangles.

\begin{equation*}
\xymatrix@R=0.4cm{
  X\ar[r]^{f}\ar[d]_-{\alpha} & Y\ar[r]^{g}\ar[d]_-{\beta} & Z\ar@{-->}[r]^{\delta}\ar[d]_-{\gamma} & {} \\
  X'\ar[r]^{{f'}} & Y'\ar[r]^{{g'}} & Z'\ar@{-->}[r]^{{\delta'}} & .
}
\end{equation*}
In the following table, i, d and i-d are abbreviations of $\mathbb E$-inflation, $\mathbb E$-deflation, and $\mathbb E$-inflation and $\mathbb E$-deflation respectively. For instance,
(i,?,d) means that in a homotopic morphism $(\alpha, \beta, \gamma)$, $\alpha$ is an $\mathbb E$-inflation and $\gamma$ is an $\mathbb E$-deflation, whether $\beta$ is in the set $$\{\text{$\mathbb E$-inflation and $\mathbb E$-deflation, $\mathbb E$-inflation,  $\mathbb E$-deflation}\}.$$
The answer ``implicit'' means that $\beta$ is neither an $\mathbb E$-inflation nor an $\mathbb E$-deflation, in general.

\vskip10pt
\centerline {\bf Table of homotopic morphisms  $(\alpha, \beta, \gamma)$}
\vskip5pt
\begin{center}
    \renewcommand{\arraystretch}{1.3}
    \scriptsize
    \begin{tabular}{|c|c|c|c|c|}
      \hline
      \textbf{Prerequisite} & \textbf{Case} & $\boldsymbol{(\alpha, \beta, \gamma)}$ & \textbf{Completion} & \textbf{Reference} \\
      \hline
           \multirow{9}{*}{\makecell{Given $(\alpha,\gamma)$ \\\\ with $\alpha_\ast \delta = \gamma^\ast \delta'$}}
           & $(1)$ & (i-d, ?, i-d) & $\beta$ is i-d & \multirow{4}{*}{\Cref{thm:hmor}} \\
            \cline{2-3}\cline{4-4}
           & $(2)$ & (i-d, ?, i) & \multirow{3}{*}{$\beta$ is i} &  \\
            \cline{2-3}
           & $(3)$ & (i, ?, i-d) & & \\
            \cline{2-3}
           & $(4)$ & (i, ?, i) & &  \\
            \cline{2-3}\cline{4-4}\cline{5-5}
           & $(2)'$ & (d, ?, i-d) &  \multirow{3}{*}{$\beta$ is d} &  \multirow{3}{*}{The dual of \Cref{thm:hmor}} \\
            \cline{2-3}
           & $(3)'$ & (i-d, ?, d) & & \\
            \cline{2-3}
           & $(4)'$ & (d, ?, d) & & \\
            \cline{2-3}\cline{4-4}\cline{5-5}
           & $(5)$ & (i, ?, d) & \multirow{2}{*}{Implicit} & \multirow{2}{*}{\Cref{ex:no-good-completion}} \\
            \cline{2-3}
           & $(6)$ & (d, ?, i) &  &  \\
            \hline
          \multirow{9}{*}{\makecell{Given $(\alpha, \beta)$ \\\\ with $f'\alpha=\beta f$}}
           & $(7)$ & (i-d, i-d, ?) & \multirow{4}{*}{$\gamma$ is d} & \multirow{4}{*}{\Cref{thm:hmor}} \\
            \cline{2-3}
           & $(8)$ & (i-d, d, ?) &  &  \\
            \cline{2-3}
           & $(9)$ & (i, i-d, ?) &  &  \\
            \cline{2-3}
           & $(10)$ & (i, d, ?) &  & \\
            \cline{2-3}\cline{4-4}\cline{5-5}
           & $(11)$ & (i-d, i, ?) & \multirow{5}{*}{Implicit} & \multirow{5}{*}{\Cref{ex:no-good-completion}} \\
            \cline{2-3}
           & $(12)$ & (i, i, ?) &  & \\
            \cline{2-3}
           & $(13)$ & (d, i, ?) &  &\\
            \cline{2-3}
           & $(14)$ & (d, i-d, ?) &  &  \\
            \cline{2-3}
           & $(15)$ & (d, d, ?) &  &  \\
            \hline
          \multirow{9}{*}{\makecell{Given $(\beta, \gamma)$ \\\\with $g'\beta=\gamma g$}}
           & $(7)'$ & (?, i-d, i-d) & \multirow{4}{*}{$\alpha$ is i} & \multirow{4}{*}{The dual of \Cref{thm:hmor}} \\
            \cline{2-3}
           & $(8)'$ & (?, i, i-d) &  &  \\
            \cline{2-3}
           & $(9)'$ & (?, i-d, d) &  &\\
            \cline{2-3}
           & $(10)'$ & (?, i, d) &  &\\
            \cline{2-3}\cline{4-4}\cline{5-5}
           & $(11)'$ & (?, d, i-d) & \multirow{5}{*}{Implicit} &  \multirow{5}{*}{The dual of \Cref{ex:no-good-completion}} \\
            \cline{2-3}
           & $(12)'$ & (?, d, d) &  & \\
            \cline{2-3}
           & $(13)'$ & (?, d, i) &  &\\
            \cline{2-3}
           & $(14)'$ & (?, i-d, i) &  &   \\
            \cline{2-3}
           & $(15)'$ & (?, i, i) &  &  \\
            \hline
    \end{tabular}
  \end{center}
\vskip5pt

\subsection{More properties of homotopic morphisms} Different from ET4, in the following result, the morphism $w$ is given and $(1, u, w)$ is a homotopic morphism of $\mathbb E$-triangles.

\begin{proposition}\label{prop:strict-et4-h}\label{prop:strict-et4-2-h} \ Let $(1, u, w)$ be a homotopic morphism of $\mathbb E$-triangles$:$
\begin{equation*}
\xymatrix@R=0.4cm{A\ar[r]^{f}\ar@{=}[d] & B\ar[r]^{g}\ar[d]_{u} & D\ar@{-->}[r]^{{w^*\theta}}\ar[d]^{w} & {} \\
  A\ar[r]^{uf} & C\ar[r]^{h} & F\ar@{-->}[r]^{\theta} & {}
}
\end{equation*}
such that both $u$ and $w$ are $\mathbb E$-inflations.

\vskip5pt
$(1)$ \  For any $\mathbb E$-triangle $\con {B}u{C}vE\varepsilon$, there is an $\mathbb E$-triangle $\con {D}w{F}{q}E{\eta}$ such that $v=qh$, $\eta=g_\ast \varepsilon$ and $f_\ast \theta =q^\ast \varepsilon$.
\vskip5pt
$(2)$ \  For any $\mathbb E$-triangle $\con {D}w{F}{q}E{\eta}$, there is an $\mathbb E$-triangle $\con {B}u{C}vE\varepsilon$ such that $v=qh$, $\eta=g_\ast \varepsilon$ and $f_\ast \theta =q^\ast \varepsilon$.

\begin{equation*}
\xymatrix@R=0.4cm{
  A\ar[r]^{f}\ar@{=}[d] & B\ar[r]^{g}\ar[d]_{u} & D\ar@{-->}[r]^{{w^*\theta}}\ar[d]^{w} & {} \\
  A\ar[r]^{uf} & C\ar[r]^{h}\ar[d]_{v} & F\ar@{-->}[r]^{\theta}\ar[d]^{q} & {} \\
  {} & E\ar@{=}[r]\ar@{-->}[d]_{\varepsilon} & E\ar@{-->}[d]^{\eta} & {} \\
  {} & {} & {} & {}
}
\end{equation*}
\end{proposition}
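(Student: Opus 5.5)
The plan is to reduce both parts to statements about a single homotopic square, using \Cref{prop:mor-of-confl}$(1)'$. Since $(1,u,w)$ is homotopic, that result gives an $\mathbb E$-triangle
$$\con B{\dg{-g\\ u}}{D\oplus C}{\dg{w,h}}F{f_\ast\theta}.$$
Next I swap the summands and change signs, invoking \Cref{isomorphisms}. This yields an $\mathbb E$-triangle $\con B{\dg{u\\ g}}{C\oplus D}{\dg{h,-w}}F{\pm f_\ast\theta}$. Hence the square with rows $u\colon B\to C$, $w\colon D\to F$ and columns $g$, $h$ is a homotopic square in the sense of \Cref{def:htsq}. In this square the horizontal arrows $u$ and $w$ are the given $\mathbb E$-inflations.

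For $(1)$, let $\con BuCvE\varepsilon$ be given.
\begin{enumerate}
\item Realize $g_\ast\varepsilon$ by some $\con D{w'}{F'}{q'}E{g_\ast\varepsilon}$.
\item By \Cref{lem:hs-f?1}$(1)$, obtain $\beta\colon C\to F'$ such that $(g,\beta,1_E)$ is a homotopic morphism. By \Cref{prop:mor-of-confl}$(1)$, $\con B{\dg{u\\ g}}{C\oplus D}{\dg{\beta,-w'}}{F'}{(q')^\ast\varepsilon}$ is then an $\mathbb E$-triangle.
\item Both this triangle and the one above realize the same $\mathbb E$-inflation $\dg{u\\ g}$. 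Applying ET3 to the identities on $B$ and $C\oplus D$, and then \Cref{cone}, gives an isomorphism $\varphi\colon F\to F'$ with $\varphi h=\beta$, $\varphi w=w'$, and $\varphi^\ast((q')^\ast\varepsilon)=\pm f_\ast\theta$.
\item Put $q:=q'\varphi$. By \Cref{isomorphisms}, $\con DwFqE{g_\ast\varepsilon}$ is an $\mathbb E$-triangle, so $\eta=g_\ast\varepsilon$ holds by construction.
\item We have $qh=q'\beta=v$, since $(g,\beta,1_E)$ is a morphism of $\mathbb E$-triangles.
\item Finally, $q^\ast\varepsilon=\varphi^\ast(q')^\ast\varepsilon$, which equals $f_\ast\theta$ once the sign is tracked.
\end{enumerate}

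For $(2)$, let $\con DwFqE\eta$ be given. First, $v:=qh$ is an $\mathbb E$-deflation by ET4$^{\rm op}$, because $h$ and $q$ are $\mathbb E$-deflations. I then use the dual argument: apply \Cref{lem:hs-?g1}$(2)$ to the homotopic square viewed through its deflation side, or apply \Cref{prop:pb-2} to the two triangles $\con B{\dg{u\\ g}}{C\oplus D}{}F{}$ and $\con DwFqE\eta$. The aim is an $\mathbb E$-triangle $\con B{u}{C}{qh}E{\varepsilon}$ whose first term is exactly $B$ with inflation $u$. I expect to get this by comparing with a realization of $u$, using \Cref{lem:weak} to force the kernel term to be $B$. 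The identities $g_\ast\varepsilon=\eta$ and $f_\ast\theta=q^\ast\varepsilon$ should then follow from the compatibility relations in \Cref{prop:pb-2}, namely $(g_1)^\ast\delta_2=\varepsilon$ and $(e_1)_\ast\delta_1+(e_2)_\ast\delta_2=0$, after projecting to components.

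The main obstacle is the bookkeeping of the $\mathbb E$-extensions rather than of the morphisms. In $(1)$, the isomorphism $\varphi$ must carry the extension $f_\ast\theta$ to exactly $(q')^\ast\varepsilon$, signs included. In $(2)$, the harder direction, the extension $\varepsilon$ must be produced on a triangle whose first arrow is precisely $u$, and it must satisfy both identities at once. If the route through \Cref{prop:pb-2} does not match signs, I would instead check the identities using the exactness in \Cref{lem:long-ext-seq} applied to the triangle $\con B{\dg{-g\\ u}}{D\oplus C}{}F{f_\ast\theta}$, which determines extensions up to the image of $\delta_\sharp$.
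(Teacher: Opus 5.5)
\paragraph{Part $(1)$: correct, by a different route.} Your argument for $(1)$ is correct. The sign you left open is $+$. Only the middle object of $\con B{\dg{-g\\u}}{D\oplus C}{\dg{w,h}}F{f_\ast\theta}$ is changed, so \Cref{isomorphisms} with outer maps $1_B,1_F$ leaves the extension equal to $f_\ast\theta$. ET3 then gives $\varphi^\ast(q')^\ast\varepsilon=f_\ast\theta$ exactly.

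The paper compares no cones. It applies \Cref{prop:pb-2} once to three triangles: $\con B{\dg{-g\\u}}{D\oplus C}{\dg{w,h}}F{f_\ast\theta}$, the given $\con BuCvE\varepsilon$, and the split triangle $\con D{\dg{1\\0}}{D\oplus C}{\dg{0,1}}C0$. It then reads off $v=qh$ and $q^\ast\varepsilon=f_\ast\theta$, and gets $\eta=g_\ast\varepsilon$ by applying $\dg{1,0}_\ast$ to $\dg{1\\0}_\ast\eta+\dg{-g\\u}_\ast\varepsilon=0$. Part $(2)$ is the same computation with the two triangles through $D\oplus C$ exchanged. Your version explains $(1)$ as uniqueness of the cone of $\dg{u\\g}$ (via \Cref{lem:hs-f?1}$(1)$, \Cref{prop:mor-of-confl}$(1)$, ET3 and \Cref{cone}). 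The paper's single lemma treats both parts uniformly.

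\paragraph{Part $(2)$: the gap.} You list tools and say what you expect, but the key step is not carried out: putting exactly $B$ and $u$ in front of $qh$. The mechanism you name for it, \Cref{lem:weak}, cannot do this. Suppose $\con K{k}C{qh}E{\varepsilon'}$ realizes $qh$. Then \Cref{lem:weak}$(1)$ gives some $s\colon K\to B$ with $us=k$. Exactness gives some $t\colon B\to K$ with $kt=u$, since $qhu=qwg=0$. But nothing forces $st=1$ or $ts=1$, because neither $u$ nor $k$ need be monic.

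What is needed is a comparison of two $\mathbb E$-triangles with a common deflation, that is, the dual of your own $(1)$:
\begin{itemize}
\item By \Cref{lem:hs-?g1}$(2)$ there is $a\colon K\to D$ with $(a,h,1_E)$ homotopic.
\item Then \Cref{prop:mor-of-confl}$(1)$ gives the $\mathbb E$-triangle $\con K{\dg{k\\a}}{C\oplus D}{\dg{h,-w}}F{q^\ast\varepsilon'}$.
\item Compare it with $\con B{\dg{u\\g}}{C\oplus D}{\dg{h,-w}}F{f_\ast\theta}$ using ET3$^{\mathrm{op}}$ with $(1_{C\oplus D},1_F)$, together with \Cref{cone}. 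This yields an isomorphism $\psi\colon B\to K$ with $k\psi=u$, $a\psi=g$ and $\psi_\ast f_\ast\theta=q^\ast\varepsilon'$.
\item Set $\varepsilon:=(\psi^{-1})_\ast\varepsilon'$. Then $g_\ast\varepsilon=a_\ast\varepsilon'=\eta$ and $q^\ast\varepsilon=f_\ast\theta$.
\end{itemize}

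A shorter fix is to derive $(2)$ from $(1)$:
\begin{itemize}
\item Realize $u$ as $\con BuC{v_0}{E_0}{\varepsilon_0}$ and apply your $(1)$. This gives $\con DwF{q_0}{E_0}{g_\ast\varepsilon_0}$ with $v_0=q_0h$ and $q_0^\ast\varepsilon_0=f_\ast\theta$.
\item Compare this with $\con DwFqE\eta$ by ET3 on the common inflation $w$, plus \Cref{cone}. This gives an isomorphism $\phi\colon E_0\to E$ with $\phi q_0=q$ and $g_\ast\varepsilon_0=\phi^\ast\eta$.
\item Take $v:=\phi v_0=qh$ and $\varepsilon:=(\phi^{-1})^\ast\varepsilon_0$.
\end{itemize}

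Finally, \Cref{prop:pb-2} needs three triangles, not two. The missing input is the split triangle above; with it, your second suggestion becomes exactly the paper's proof.
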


	\begin{proof} \ $(1)$ \  By \Cref{prop:pb-2} there is an $\mathbb E$-triangle $\con D{w'}FqE\eta$ which fits the following commutative diagram, satisfying $f_\ast \theta=q^\ast \varepsilon$ and $\dg{1 \\ 0}_\ast \eta + \dg{-g \\ u}_\ast \varepsilon = 0$.
\begin{equation*}
\xymatrix@R=0.6cm{
  {} & D\ar@{=}[r]\ar[d]_{{\binom 10}} & D\ar@{..>}[d]^{{w'}} & {} \\
  B\ar[r]^(0.4){\scalebox{0.8}{${\binom {-g}u}$}}\ar@{=}[d] & D\oplus C\ar[r]^-{{(w, h)}}\ar[d]_{{(0,1)}} & F\ar@{-->}[r]^{{f_\ast \theta}}\ar@{..>}[d]^{q} & {} \\
  B\ar[r]^{u} & C\ar[r]^{v}\ar@{-->}[d]_{0} & E\ar@{-->}[r]^{\varepsilon}\ar@{..>}[d]^{{\eta }} & {} \\
  {} & {} & {} & {}
}
\end{equation*}
Now $w'=\dg{w , h}\dg{1\\0}=w$. And $q \dg{w , h}= v \dg{0 , 1}$ implies that $v=qh$. Finally, $\eta -g_\ast \varepsilon=\dg{1 , 0}_\ast \left(\dg{1 \\ 0}_\ast \eta + \dg{-g \\ u}_\ast \varepsilon \right) = 0$. This completes the proof.

\vskip5pt
$(2)$ \  By \Cref{prop:pb-2} there is an $\mathbb E$-triangle $\con B{u'}CvE\varepsilon$ which fits the following commutative diagram satisfying $f_\ast \theta=q^\ast \varepsilon$ and $\dg{1 \\ 0}_\ast \eta + \dg{-g \\ u}_\ast \varepsilon = 0$.
\begin{equation*}
\xymatrix@R=0.6cm{
  {} & B\ar@{=}[r]\ar[d]_-{\scalebox{0.8}{${\dg {{-g} \\ u}}$}} & B\ar@{..>}[d]^-{{u'}} & {} \\
  D\ar[r]^(0.4){\scalebox{0.8}{${\dg {1\\ 0}}$}}\ar@{=}[d] & D \oplus C\ar[r]^(0.6){{\dg{0,1}}}\ar[d]_-{{\dg{w, h}}} & C \ar@{-->}[r]^{0}\ar@{..>}[d]^{v} & {} \\
  D\ar[r]^-{w} & F\ar[r]^{q}\ar@{-->}[d]_{{f_\ast \theta}} & E\ar@{-->}[r]^{{\eta }}\ar@{..>}[d]^{\varepsilon} & {} \\
  {} & {} & {} & {}
}
\end{equation*}
Now $u'=\dg{0 , 1}\dg{-g\\u}=u$. And $q \dg{w , h}= v \dg{0 , 1}$ implies that $v=qh$. Finally, $\eta -g_\ast \varepsilon=\dg{1 , 0}_\ast \left(\dg{1 \\ 0}_\ast \eta + \dg{-g \\ u}_\ast \varepsilon \right) = 0$. This completes the proof.
\end{proof}

Different from ET4$^{\rm op}$, in the dual of \Cref{prop:strict-et4-h}, the morphism $g$ is given and $(g, h, 1)$ is a homotopic morphism of $\mathbb E$-triangles.

\vskip5pt
\begin{proposition}\label{prop:strict-et4op-h}\label{prop:strict-et4op-2-h}
	Let  $(g, h, 1)$ be a homotopic morphism of $\mathbb E$-triangles$:$
\begin{equation*}
\xymatrix@R=0.4cm{
   B\ar[r]^{g}\ar[d]_{u} & D\ar[d]^{w} \\
   C\ar[r]^{h}\ar[d]_{qh} & F\ar[d]^{q} \\
   E\ar@{=}[r]\ar@{-->}[d]_{\varepsilon} & E\ar@{-->}[d]^{g_\ast\varepsilon} \\
   {} & {}
}
\end{equation*}
	such that both $g$ and $h$ are $\mathbb E$-deflations.
\vskip5pt
$(1)$ \  For any $\mathbb E$-triangle $\con AmChF\theta$, there is an $\mathbb E$-triangle $\con AfBgD\delta$ such that $m=uf$, $\delta=w^\ast \theta$ and $f_\ast \theta =q^\ast \varepsilon$.
\vskip5pt
$(2)$ \  For any $\mathbb E$-triangle $\con AfBgD\delta$, there is an $\mathbb E$-triangle $\con AmChF\theta$ such that $m=uf$, $\delta=w^\ast \theta$ and $f_\ast \theta =q^\ast \varepsilon$.

\begin{equation*}
\xymatrix@R=0.4cm{
  A\ar[r]^{f}\ar@{=}[d] & B\ar[r]^{g}\ar[d]_{u} & D\ar@{-->}[r]^{{\delta}}\ar[d]^{w} & {} \\
  A\ar[r]^{m} & C\ar[r]^{h}\ar[d]_{qh} & F\ar@{-->}[r]^{\theta}\ar[d]^{q} & {} \\
  {} & E\ar@{=}[r]\ar@{-->}[d]_{\varepsilon} & E\ar@{-->}[d]^{g_\ast\varepsilon} & {} \\
  {} & {} & {} & {}
}
\end{equation*}
\end{proposition}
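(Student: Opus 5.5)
The plan is to dualize the proof of \Cref{prop:strict-et4-h}, using \cite[Proposition 3.17]{NP19} itself in place of its dual \Cref{prop:pb-2}. First I translate the hypothesis. By \Cref{prop:mor-of-confl}$(1)$, applied to the morphism $(g,h,1_E)$ from $\con BuC{qh}E\varepsilon$ to $\con DwFqE{g_\ast\varepsilon}$, homotopicity is equivalent to
\[
\con B{\dg{u\\ g}}{C\oplus D}{\dg{h,-w}}F{q^\ast\varepsilon}
\]
being an $\mathbb E$-triangle. I also use the split $\mathbb E$-triangle $\con D{\dg{0\\ 1}}{C\oplus D}{\dg{1,0}}C0$. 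Its deflation $\dg{1,0}$ satisfies $h=\dg{h,-w}\dg{1\\0}$, and this identity is what lets \cite[Proposition 3.17]{NP19} be fed with the given $\mathbb E$-triangle ending in $F$.

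For $(1)$, I apply \cite[Proposition 3.17]{NP19} to three $\mathbb E$-triangles. The first is the homotopy triangle $\con B{\dg{u\\g}}{C\oplus D}{\dg{h,-w}}F{q^\ast\varepsilon}$. The second is the given $\con AmChF\theta$. The third is the split $\mathbb E$-triangle through $\dg{1\\0}:C\to C\oplus D$; I arrange it so that the composite of $\dg{h,-w}$ with the relevant map equals $h$. The proposition then produces an $\mathbb E$-triangle $\con A{f}B{g''}D{\delta}$ in a commutative $3\times3$-type diagram. Its outputs are the following: $\dg{u\\g}f=\dg{m\\0}$, which gives $uf=m$ and $gf=0$; the identity $g''=\dg{0,1}\dg{u\\g}=g$; and extension identities of the form $\delta=w^\ast\theta$ (coming from $\dg{1\\0}$ and $w$) and $f_\ast(q^\ast\varepsilon)$-compatibility. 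The last of these, after composing with $\dg{1,0}$ or $\dg{0,1}$ as in the proof of \Cref{prop:strict-et4-h}, yields $f_\ast\theta=q^\ast\varepsilon$.

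For $(2)$, I proceed the same way, but start from the given $\con AfBgD\delta$ together with the homotopy triangle and the split triangle $\con C{\dg{1\\0}}{C\oplus D}{\dg{0,1}}D0$. Here the composite $\dg{0,1}\dg{u\\g}=g$ matches the deflation of the given triangle. \cite[Proposition 3.17]{NP19} then produces $\con A{m}C{h'}F\theta$. I identify $h'=\dg{h,-w}\dg{1\\0}=h$ and $m=uf$, and read off $\delta=w^\ast\theta$ and $f_\ast\theta=q^\ast\varepsilon$ from the additive relation among extensions. To do so I project along $\dg{0,1}$ or $\dg{1,0}$, exactly mirroring the computation $\eta-g_\ast\varepsilon=\dg{1,0}_\ast(\cdots)=0$ in the proof of \Cref{prop:strict-et4-h}.

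The main obstacle is bookkeeping: matching the precise shape of \cite[Proposition 3.17]{NP19} (which objects are shared, and which maps compose) with the triangles above. The signs also have to come out right, in particular the $-w$ in $\dg{h,-w}$ and the sign in the relation $x_\ast\delta_1+\cdots=0$. If a sign comes out wrong, I would correct it with \Cref{isomorphisms}, replacing $\theta$ or $\delta$ by its negative via an automorphism $-1$. This does not change the underlying morphisms, only the extension.
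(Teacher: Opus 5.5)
Your plan is correct and is the argument the paper intends: the statement is the dual of \Cref{prop:strict-et4-h}, and dualizing that proof means applying \cite[Proposition 3.17]{NP19} to three $\mathbb E$-triangles. These are the homotopy triangle $\con B{\dg{u\\g}}{C\oplus D}{\dg{h,-w}}F{q^\ast\varepsilon}$, the given triangle, and the split triangle $\con C{\dg{1\\0}}{C\oplus D}{\dg{0,1}}D0$ (the one you actually use in (1) and (2), not the $D\to C\oplus D\to C$ one named in your first paragraph); this yields $\dg{1\\0}m=\dg{u\\g}f$ and the middle maps by composition, exactly as you say. The only correction is that you have swapped which extension identity needs the projection trick: $f_\ast\theta=q^\ast\varepsilon$ is read off directly from the output, while $\delta=w^\ast\theta$ comes from pulling back the additive relation $\dg{h,-w}^\ast\theta+\dg{0,1}^\ast\delta=0$ along $\dg{0\\1}\colon D\to C\oplus D$; the signs then come out right on their own, so no appeal to \Cref{isomorphisms} is needed (and that fallback would not work as you describe, since negating the extension there forces negating one of the maps).
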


\vskip5pt
Different from \cite[Proposition 3.17]{NP19}, in the following result, the morphism $f_2$ is given and $(f_2, e_1, 1)$ is a homotopic morphism of $\mathbb E$-triangles.
\vskip5pt
\begin{proposition}\label{prop:po-1-h}\label{prop:po-2-h}
	Let $(f_2, e_1, 1)$ be a homotopic morphism of $\mathbb E$-triangles$:$
\begin{equation*}
\xymatrix@R=0.4cm{
  A\ar[r]^{f_1}\ar[d]_{f_2} & B_1\ar[r]^{p_1 e_1}\ar[d]^{e_1} & C_1\ar@{-->}[r]^{\delta_1}\ar@{=}[d] & {} \\
  B_2\ar[r]^{e_2} & F\ar[r]^{p_1} & C_1\ar@{-->}[r]^{{(f_2)_\ast \delta_1}} & {}
}
\end{equation*}
such that both $f_2$ and $e_1$ are $\mathbb E$-inflations.

\vskip5pt
$(1)$ \  For any $\mathbb E$-triangle $\con {B_1}{e_1}{F}{p_2}{C_2}{\varepsilon}$, there is an $\mathbb E$-triangle $\con {A}{f_2}{B_2}{g_2}{C_2}{\delta_2}$ such that $g_2=p_2e_2$, $\varepsilon=(f_1)_\ast \delta_2$ and $(p_1)^\ast \delta_1+(p_2)^\ast \delta_2 =0$.

\vskip5pt
$(2)$ \  For any $\mathbb E$-triangle $\con {A}{f_2}{B_2}{g_2}{C_2}{\delta_2}$, there is an $\mathbb E$-triangle $\con {B_1}{e_1}{F}{p_2}{C_2}{\varepsilon}$, such that $g_2=p_2e_2$, $\varepsilon=(f_1)_\ast \delta_2$ and $(p_1)^\ast \delta_1+(p_2)^\ast \delta_2 =0$.

\begin{equation*}
\xymatrix@R=0.4cm{
 A\ar[r]^{f_1}\ar[d]_{f_2} & B_1\ar[r]^{p_1 e_1}\ar[d]^{e_1} & C_1\ar@{-->}[r]^{\delta_1}\ar@{=}[d] & {} \\
  B_2\ar[r]^{e_2}\ar[d]_{g_2} & F\ar[r]^{p_1}\ar[d]^{p_2} & C\ar@{-->}[r]^{{(f_2)_\ast \delta_1}} & {} \\
  C_2\ar@{=}[r]\ar@{-->}[d]_{\delta_2} & C_2\ar@{-->}[d]^{\varepsilon} & {} & {} \\
  {} & {} & {} & {}
}
\end{equation*}
\end{proposition}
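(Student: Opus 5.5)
The plan is to mirror the proof of \Cref{prop:strict-et4-h}, replacing the given homotopic morphism by the $\mathbb E$-triangle it produces. By \Cref{prop:mor-of-confl}$(1)$, since $(f_2,e_1,1)$ is homotopic, there is an $\mathbb E$-triangle
$$\con A{\dg{f_1\\ f_2}}{B_1\oplus B_2}{\dg{e_1,-e_2}}F{p_1^\ast\delta_1}.$$
Both parts will come from a completion lemma applied to this triangle together with a split triangle on $B_1\oplus B_2$. The only freedom is the choice of completion lemma, made so that the dotted inflation is forced to be $e_1$ (respectively $f_2$).

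For $(2)$, I use \Cref{prop:pb-2} with the following data:
\begin{itemize}
\item $E=B_1\oplus B_2$;
\item the triangle above, playing the role of $\con{A_1}{e_1}{E}{p_2}{B_2}{\eta}$;
\item the given triangle $\con A{f_2}{B_2}{g_2}{C_2}{\delta_2}$, playing the role of $\con{A_1}{f_1}{B_1}{g_1}{C}{\delta_1}$;
\item the split triangle $\con{B_1}{\dg{1\\0}}{B_1\oplus B_2}{\dg{0,1}}{B_2}{0}$, playing the role of $\con{A_2}{e_2}{E}{p_1}{B_1}{\varepsilon}$.
\end{itemize}
The compatibility condition holds because $\dg{0,1}\dg{f_1\\ f_2}=f_2$. \Cref{prop:pb-2} then gives an $\mathbb E$-triangle $\con{B_1}{e_1'}F{g'}{C_2}{\varepsilon'}$ with the following properties:
\begin{itemize}
\item $e_1'=\dg{e_1,-e_2}\dg{1\\0}=e_1$;
\item $g'\dg{e_1,-e_2}=g_2\dg{0,1}$, so that $g'e_1=0$ and $-g'e_2=g_2$;
\item $\dg{0,1}^\ast\varepsilon'=0$, $\dg{e_1,-e_2}^\ast\delta_2=p_1^\ast\delta_1$ (reading the conditions with the appropriate objects), together with the sum relation $\dg{f_1\\ f_2}_\ast\delta_2+\dg{1\\0}_\ast\varepsilon'=0$.
\end{itemize}
Set $p_2=-g'$. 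By \Cref{isomorphisms}, $\con{B_1}{e_1}F{p_2}{C_2}{\varepsilon}$ is again an $\mathbb E$-triangle after the appropriate sign change of $\varepsilon'$. Then $g_2=p_2e_2$. The two required relations come from projecting the sum relation: applying $\dg{1,0}_\ast$ gives $\varepsilon=(f_1)_\ast\delta_2$. Applying $\dg{0,1}_\ast$ to the other identity, exactly as in the last line of the proof of \Cref{prop:strict-et4-h}, gives $p_1^\ast\delta_1+p_2^\ast\delta_2=0$.

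For $(1)$, the given triangle is now $\con{B_1}{e_1}F{p_2}{C_2}{\varepsilon}$, and the triangle to be produced starts at $A$ with inflation $f_2$. So I use the non-dual version, \cite[Proposition 3.17]{NP19}, with a common third term. Its inputs are:
\begin{itemize}
\item the triangle $A\to B_1\oplus B_2\to F$ above;
\item the given triangle $B_1\to F\to C_2$;
\item the split triangle $\con{B_2}{\dg{0\\1}}{B_1\oplus B_2}{\dg{1,0}}{B_1}0$, where $\dg{e_1,-e_2}$ restricts on the $B_1$-summand to $e_1$.
\end{itemize}
Rearranging, this produces an $\mathbb E$-triangle $A\to B_2\to C_2$ whose inflation is $\dg{0,1}\dg{f_1\\f_2}=f_2$ and whose deflation $g_2$ satisfies $g_2=\pm p_2e_2$. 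Alternatively, one can apply ET4$^{\rm op}$ to the composite deflation $p_2\dg{e_1,-e_2}$ and then use that the dotted map is determined on the summand. As before, I fix signs via \Cref{isomorphisms} and read off $\varepsilon=(f_1)_\ast\delta_2$ and $p_1^\ast\delta_1+p_2^\ast\delta_2=0$ by projecting the sum relation onto the two summands.

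The main obstacle is the bookkeeping of signs and of which relation among the extensions each completion lemma actually provides. In particular, one has to check that a single sign change $p_2=-g'$, together with the matching change of the extension, yields simultaneously $g_2=p_2e_2$, $\varepsilon=(f_1)_\ast\delta_2$, and the sum relation with a plus sign. If the signs refuse to match, I would first try instead using the triangle with middle map $\dg{e_1,e_2}$ and first map $\dg{f_1\\-f_2}$, obtained from \Cref{isomorphisms}, and only then adjust the sign of $\delta_2$.
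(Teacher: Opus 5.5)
Your overall strategy is the paper's: use \Cref{prop:mor-of-confl}$(1)$ to get the $\mathbb E$-triangle $\con A{\dg{f_1\\f_2}}{B_1\oplus B_2}{\dg{e_1,-e_2}}F{p_1^\ast\delta_1}$, then combine it with a split triangle on $B_1\oplus B_2$ in \Cref{prop:pb-2} so that the dotted inflation is forced. Your assignment of roles for part $(2)$ matches the paper exactly. Part $(1)$, however, fails as you set it up.

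With your three inputs, $B_1\oplus B_2$ is the common middle object. In \Cref{prop:pb-2}, the given triangle $\con{B_1}{e_1}F{p_2}{C_2}\varepsilon$ can therefore only play $\con{A_1}{f_1}{B_1}{g_1}C{\delta_1}$. That forces the triangle in the role of $\con{A_1}{e_1}E{p_2}{B_2}\eta$ to start at $B_1$. Your split triangle $B_2\to B_1\oplus B_2\to B_1$ starts at $B_2$, so no assignment fits.

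The pushout-type dual does no better. There the input not passing through the common middle object must start where the output starts, namely at $A$, and yours starts at $B_1$. Your ET4$^{\rm op}$ fallback only yields a triangle $B\to B_1\oplus B_2\xrightarrow{\dg{0,-p_2e_2}}C_2$ with an unknown middle term $B$, and you give no way to extract $A\xrightarrow{f_2}B_2\to C_2$ from it.

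Tellingly, the identities you compute, $\dg{0,1}\dg{f_1\\f_2}=f_2$ and $\dg{e_1,-e_2}\dg{1\\0}=e_1$, use the projection and inclusion of the \emph{other} split triangle. That is the paper's repair. Apply \Cref{prop:pb-2} with:
\begin{itemize}
\item $\con{B_1}{\dg{1\\0}}{B_1\oplus B_2}{\dg{0,1}}{B_2}0$ as $\con{A_1}{e_1}E{p_2}{B_2}\eta$;
\item the given triangle as $\con{A_1}{f_1}{B_1}{g_1}C{\delta_1}$;
\item the homotopic triangle as $\con{A_2}{e_2}E{p_1}{B_1}\varepsilon$.
\end{itemize}
The output $\con A{f_2'}{B_2}g{C_2}\delta$ has $f_2'=\dg{0,1}\dg{f_1\\f_2}=f_2$ and $g\dg{0,1}=p_2\dg{e_1,-e_2}$, so $g=-p_2e_2$. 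It also satisfies $p_2^\ast\delta=p_1^\ast\delta_1$ and $\dg{1\\0}_\ast\varepsilon+\dg{f_1\\f_2}_\ast\delta=0$. Now set $g_2=-g$ and $\delta_2=-\delta$, and use \Cref{isomorphisms}. This gives $p_1^\ast\delta_1+p_2^\ast\delta_2=0$ directly, and $\varepsilon=(f_1)_\ast\delta_2$ after applying $\dg{1,0}_\ast$ to the sum relation.

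In part $(2)$ your transcription of the extension identities is also off. The expressions $\dg{0,1}^\ast\varepsilon'$ and $\dg{e_1,-e_2}^\ast\delta_2$ are not even defined, since $\varepsilon'\in\mathbb E(C_2,B_1)$ and $\delta_2\in\mathbb E(C_2,A)$. What \Cref{prop:pb-2} actually gives is $g_2^\ast\varepsilon'=0$ and $(g')^\ast\delta_2=p_1^\ast\delta_1$. With $p_2=-g'$, the latter is already $p_1^\ast\delta_1+p_2^\ast\delta_2=0$, so no projection is involved; applying $\dg{0,1}_\ast$ to the sum relation only returns $(f_2)_\ast\delta_2=0$. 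Only $\varepsilon=(f_1)_\ast\delta_2$ comes from projecting the sum relation by $\dg{1,0}_\ast$, with $\varepsilon=-\varepsilon'$. With these corrections both parts become exactly the paper's proof, and one sign change per part suffices.
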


	\begin{proof} \ $(1)$ \  By \Cref{prop:pb-2} there is an $\mathbb E$-triangle $\con {A}{f_2'}{B_2}{-g_2}{C_2}{-\delta_2}$ which fits the following commutative diagram satisfying
        $(p_1)^\ast \delta_1=(p_2)^\ast (-\delta_2)$ and $\dg{1 \\ 0}_\ast \varepsilon + \dg{f_1 \\ f_2}_\ast (-\delta_2) = 0$
\begin{equation*}
\xymatrix@R=0.6cm{
  {} & A\ar@{=}[r]\ar[d]_-{\scalebox{0.8}{${\dg{f_1 \\ f_2}}$}} & A\ar@{..>}[d]^-{{f_2'}} & {} \\
  B_1\ar[r]^(0.4){\scalebox{0.8}{${\dg{1 \\ 0}}$}}\ar@{=}[d] & B_1\oplus B_2\ar[r]^(0.6){{\dg{0,1}}}\ar[d]_-{{\dg{e_1,-e_2}}} & B_2\ar@{-->}[r]^{0}\ar@{..>}[d]^-{{-g_2}} & {} \\
  B_1\ar[r]^{e_1} & F\ar[r]^{p_2}\ar@{-->}[d]_{{(p_1)^\ast \delta_1}} & C_2\ar@{-->}[r]^{{\varepsilon }}\ar@{..>}[d]^{{-\delta_2}} & {} \\
  {} & {} & {} & {}
}
\end{equation*}
Now $f_2'=\dg{0 , 1}\dg{f_1\\f_2}=f_2$. Then $-g_2 \dg{0 , 1}= p_2 \dg{e_1 , -e_2}$ implies that $g_2=p_2e_2$, and $(p_1)^\ast \delta_1=(p_2)^\ast (-\delta_2)$ implies that $(p_1)^\ast \delta_1+(p_2)^\ast \delta_2=0$. Finally, $\varepsilon-(f_1)_\ast \delta_2=\dg{1 , 0}_\ast \left(\dg{1 \\ 0}_\ast \varepsilon + \dg{f_1 \\ f_2}_\ast (-\delta_2)\right) = 0$. This completes the proof.

\vskip5pt
$(2)$ \
		By \Cref{prop:pb-2} there is an $\mathbb E$-triangle $\con {B_1}{e_1'}{F}{-p_2}{C_2}{-\varepsilon}$ which fits the following commutative diagram satisfying
        $(p_1)^\ast \delta_1=(p_2)^\ast (-\delta_2)$ and $\dg{f_1 \\ f_2}_\ast \delta_2 + \dg{1 \\ 0}_\ast (-\varepsilon) = 0:$
\begin{equation*}
\xymatrix@R=0.7cm{
  {} & B_1\ar@{=}[r]\ar[d]_-{{\dg{1 \\ 0}}} & B_1\ar@{..>}[d]^-{{e_1'}} & {} \\
  A\ar[r]^(0.4){{\dg{f_1 \\ f_2}}}\ar@{=}[d] & B_1\oplus B_2\ar[r]^(0.6){{\dg{e_1,-e_2}}}\ar[d]_-{{\dg{0,1}}} & F\ar@{-->}[r]^{{(p_1)^\ast \delta_1}}\ar@{..>}[d]^-{{-p_2}} & {} \\
  A\ar[r]^{f_2} & B_2\ar[r]^{g_2}\ar@{-->}[d]_{0} & C_2\ar@{-->}[r]^{\delta_2}\ar@{..>}[d]^{{-\varepsilon }} & {} \\
  {} & {} & {} & {}
}
\end{equation*}
		Now $e_1' = \dg{e_1, -e_2} \dg{1 \\ 0}=e_1$. Then $g_2 \dg{0 , 1}=(- p_2) \dg{e_1 , -e_2}$ implies that $g_2=p_2e_2$ and $(p_1)^\ast \delta_1=(p_2)^\ast (-\delta_2)$ implies that $(p_1)^\ast \delta_1+(p_2)^\ast \delta_2=0$. Finally, $\varepsilon-(f_1)_\ast \delta_2=\dg{1 , 0}_\ast \left(\dg{1 \\ 0}_\ast \varepsilon + \dg{f_1 \\ f_2}_\ast (-\delta_2)\right) = 0$. This completes the proof.
	\end{proof}
\vskip5pt
Different from \Cref{prop:pb-2}, in the following result, the morphism $g_2$ is given and $(1, p_1, g_2)$ is a homotopic morphism of $\mathbb E$-triangles.
\vskip5pt

\begin{proposition}\label{prop:pb-1-h}\label{prop:pb-2-h}
	Let $(1, p_1, g_2)$ be a homotopic morphism of $\mathbb E$-triangles$:$
	\begin{equation*}
\xymatrix@R=0.4cm{
  A_1\ar[r]^{e_1}\ar@{=}[d] & E\ar[r]^{p_2}\ar[d]_{p_1} & B_2\ar@{-->}[r]^{{(g_2)^\ast \delta_1}}\ar[d]^{g_2} & {} \\
  A_1\ar[r]^{p_1e_1} & B_1\ar[r]^{g_1} & C\ar@{-->}[r]^{\delta_1} & {}
}
\end{equation*}
	such that both $p_1$ and $g_2$ are $\mathbb E$-deflations.
\vskip5pt
$(1)$ \  For any $\mathbb E$-triangle $\con {A_2}{e_2}{E}{p_1}{B_1}\varepsilon$, there is an $\mathbb E$-triangle $\con {A_2}{f_2}{B_2}{g_2}{C}{\delta_2}$ such that $f_2=p_2e_2$, $\varepsilon=(g_1)^\ast \delta_2$ and $(e_1)_\ast\delta_1+(e_2)_\ast\delta_2= 0$.
\vskip5pt
$(2)$ \  For any $\mathbb E$-triangle $\con {A_2}{f_2}{B_2}{g_2}{C}{\delta_2}$, there is an $\mathbb E$-triangle $\con {A_2}{e_2}{E}{p_1}{B_1}\varepsilon$ such that $f_2=p_2e_2$, $\varepsilon=(g_1)^\ast \delta_2$ and $(e_1)_\ast\delta_1+(e_2)_\ast\delta_2= 0$.

	\begin{equation*}
\xymatrix@R=0.4cm{
  {} & A_2\ar@{=}[r]\ar[d]_{e_2} & A_2\ar[d]_{f_2} & {} \\
  A_1\ar[r]^{e_1}\ar@{=}[d] & E\ar[r]^{p_2}\ar[d]_{p_1} & B_2\ar@{-->}[r]^{{(g_2)^\ast \delta_1}}\ar[d]_{g_2} & {} \\
  A_1\ar[r]^{f_1} & B_1\ar[r]^{g_1}\ar@{-->}[d]_{{\varepsilon }} & C\ar@{-->}[r]^{\delta_1}\ar@{-->}[d]_{\delta_2} & {} \\
  {} & {} & {} & {}
}
\end{equation*}
\end{proposition}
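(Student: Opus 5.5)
By \Cref{prop:mor-of-confl}$(1)'$, the hypothesis that $(1,p_1,g_2)$ is homotopic means exactly that
$$\con E{\dg{-p_2\\ p_1}}{B_2\oplus B_1}{\dg{g_2,g_1}}C{(e_1)_\ast\delta_1}$$
is an $\mathbb E$-triangle. The plan mirrors the proofs of \Cref{prop:strict-et4-h} and \Cref{prop:po-1-h}: feed this triangle, a split triangle and the given triangle into \Cref{prop:pb-2}, then read off the required identities by composing with the projections and inclusions of the direct sums.

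For $(1)$ I would apply \Cref{prop:pb-2} with the following data:
\begin{itemize}
\item middle object $B_2\oplus E$;
\item the split $\mathbb E$-triangle $\con E{\dg{-p_2\\1}}{B_2\oplus E}{\dg{1,p_2}}{B_2}{0}$ in the role of the $\eta$-row;
\item the direct sum of the split triangle $B_2 = B_2 \to 0$ and the given triangle, namely $\con{A_2}{\dg{0\\ e_2}}{B_2\oplus E}{\dg{1&0\\0&p_1}}{B_2\oplus B_1}{0\oplus\varepsilon}$, in the role of the $\varepsilon$-row;
\item the homotopic triangle above in the role of the $\delta_1$-row.
\end{itemize}
The hypothesis of \Cref{prop:pb-2} holds, since $\dg{1&0\\0&p_1}\dg{-p_2\\1}=\dg{-p_2\\p_1}$.

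\Cref{prop:pb-2} then yields an $\mathbb E$-triangle $\con{A_2}{f_2}{B_2}{g_2'}C{\delta_2}$ with the following properties.
\begin{itemize}
\item $f_2=\dg{1,p_2}\dg{0\\e_2}=p_2e_2$.
\item From commutativity, $g_2'\dg{1,p_2}=\dg{g_2,g_1}\dg{1&0\\0&p_1}$. The first component gives $g_2'=g_2$; the second is the known relation $g_2p_2=g_1p_1$.
\item We have $\dg{g_2,g_1}^\ast\delta_2=0\oplus\varepsilon$. Restricting along the inclusion of $B_1$ gives $(g_1)^\ast\delta_2=\varepsilon$.
\item The relation $\dg{-p_2\\1}_\ast(e_1)_\ast\delta_1+\dg{0\\e_2}_\ast\delta_2=0$ holds in $\mathbb E(C,B_2\oplus E)$. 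Applying $\dg{0,1}_\ast$ gives $(e_1)_\ast\delta_1+(e_2)_\ast\delta_2=0$.
\end{itemize}

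For $(2)$ I would swap the roles so that the given $\con{A_2}{f_2}{B_2}{g_2}C{\delta_2}$ plays the $\delta_1$-row of \Cref{prop:pb-2}, and produce $\varepsilon$ as the output. Concretely, I would take:
\begin{itemize}
\item the homotopic triangle $\con E{\dg{-p_2\\p_1}}{B_2\oplus B_1}{\dg{g_2,g_1}}C{(e_1)_\ast\delta_1}$ as the $\varepsilon$-row, so the middle object is $B_2\oplus B_1$;
\item the split triangle $\con{B_1}{\dg{0\\1}}{B_2\oplus B_1}{\dg{1,0}}{B_2}0$ as the $\eta$-row;
\item the given $\delta_2$-triangle as the third input, using $\dg{g_2,g_1}\dg{0\\1}=g_1$ in the appropriate composite condition.
\end{itemize}
This output would be a triangle ending in $B_1$ but starting from $B_1$, which is not what we want. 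So instead I expect to use a triangle of the form $A_2\to B_2\oplus E\to B_2\oplus B_1$, or, as in \Cref{prop:po-1-h}$(2)$, to arrange the signs (e.g.\ work with $-p_1$, $-\varepsilon$) so that the output is $\con{A_2}{e_2'}E{\pm p_1}{B_1}{\pm\varepsilon}$. I would then correct the signs via \Cref{isomorphisms} and verify $f_2=p_2e_2$, $(g_1)^\ast\delta_2=\varepsilon$ and $(e_1)_\ast\delta_1+(e_2)_\ast\delta_2=0$ by the same projection/inclusion computations as in $(1)$.

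The main obstacle is choosing the correct configuration, including signs, in $(2)$ so that the deflation produced by \Cref{prop:pb-2} is forced to equal $p_1$ rather than merely some morphism $E\to B_1$. The equality should come from a composite with a split projection, exactly as $w'=w$ and $u'=u$ were forced in \Cref{prop:strict-et4-h}. The deflation assumptions on $p_1$ and $g_2$ are only needed to guarantee that the relevant triangles exist (for instance, a triangle realizing $p_1$ in $(2)$).
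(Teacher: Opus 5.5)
Your part $(1)$ is correct. The three rows you feed into \Cref{prop:pb-2} satisfy its hypothesis, the deflation of the output is forced to be $g_2$, and all three identities come out with no sign correction. The rows are the split row $\con{E}{\dg{-p_2\\1}}{B_2\oplus E}{\dg{1,p_2}}{B_2}{0}$, the sum row $A_2\to B_2\oplus E\to B_2\oplus B_1$, and $\con{E}{\dg{-p_2\\p_1}}{B_2\oplus B_1}{\dg{g_2,g_1}}{C}{(e_1)_\ast\delta_1}$ from \Cref{prop:mor-of-confl}$(1)'$. The paper gives no separate proof. The statement is the exact dual of \Cref{prop:po-1-h}, and dualizing that proof gives an argument via the dual of \Cref{prop:pb-2}. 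That argument uses middle term $B_2\oplus B_1$ and the same three inputs (homotopic, split, given) in both parts, with their roles exchanged. Your configuration gives a sign-free $(1)$, but, as you noticed, it does not transpose to $(2)$.

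The gap is in part $(2)$: you give no configuration that works, and the ones you sketch cannot work. In \Cref{prop:pb-2} the output starts at the first object of the $\varepsilon$-row, has middle term the last object of the $\eta$-row, and ends at the last object of the $\delta_1$-row. To output $A_2\to E\to B_1$, the homotopic triangle (which starts at $E$ and ends at $C$) fits none of the three roles, so the homotopy hypothesis can never enter, whatever the signs. Your fallback triangle $A_2\to B_2\oplus E\to B_2\oplus B_1$ is only available as a direct sum with the triangle you are trying to construct, so it is circular.

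The missing step is to use the dual of \Cref{prop:pb-2}, i.e.\ the original form of Nakaoka--Palu's Proposition 3.17. It says: for $\mathbb E$-triangles $\con{X_2}{a_2}{M}{b_1}{Y_1}{\kappa}$, $\con{Z}{c}{X_1}{d}{Y_1}{\lambda}$ and $\con{X_1}{a_1}{M}{b_2}{Y_2}{\mu}$ with $d=b_1a_1$, there is an $\mathbb E$-triangle $\con{Z}{c'}{X_2}{d'}{Y_2}{\theta}$ with $d'=b_2a_2$, $a_2c'=a_1c$, $c'_\ast\lambda=\kappa$, $c_\ast\theta=\mu$ and $b_1^\ast\lambda+b_2^\ast\theta=0$. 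Apply it with these data:
\begin{itemize}
\item $M=B_2\oplus B_1$;
\item the homotopic triangle as the $\kappa$-row;
\item $\con{B_2}{\dg{1\\0}}{B_2\oplus B_1}{\dg{0,1}}{B_1}{0}$ as the $\mu$-row;
\item the given $\con{A_2}{f_2}{B_2}{g_2}{C}{\delta_2}$ as the $\lambda$-row, which is allowed since $\dg{g_2,g_1}\dg{1\\0}=g_2$.
\end{itemize}
You obtain $\con{A_2}{c'}{E}{p_1}{B_1}{\theta}$, whose deflation is forced to equal $\dg{0,1}\dg{-p_2\\p_1}=p_1$. Moreover $\dg{-p_2\\p_1}c'=\dg{f_2\\0}$ and $c'_\ast\delta_2=(e_1)_\ast\delta_1$. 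Restricting $\dg{g_2,g_1}^\ast\delta_2+\dg{0,1}^\ast\theta=0$ along $\dg{0\\1}$ gives $\theta=-g_1^\ast\delta_2$. Now set $e_2:=-c'$ and apply \Cref{isomorphisms}. This gives the $\mathbb E$-triangle $\con{A_2}{e_2}{E}{p_1}{B_1}{g_1^\ast\delta_2}$ with $p_2e_2=f_2$ and $(e_1)_\ast\delta_1+(e_2)_\ast\delta_2=0$. Exchanging the roles of the homotopic and split rows in the same lemma reproves $(1)$.
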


\section{Semi-homotopic morphisms and $6$-term exact sequences}\label{sec_ce}

\subsection{$6$-term exact sequences}\label{sec:hierarchy} To investigate the behavior of morphisms of $\mathbb E$-triangles which are not necessarily homotopic, we concentrate on the morphisms of the form $(\alpha, \beta, 1):$
\begin{equation}\label{eq:hs-fg1}
\xymatrix@R=0.4cm{
  X\ar[r]^{f}\ar[d]_-{\alpha} & Y\ar[r]^{g}\ar[d]^-{\beta} & Z\ar@{-->}[r]^{\delta}\ar@{=}[d] & {} \\
  X'\ar[r]^{{f'}} & Y'\ar[r]^{{g'}} & Z\ar@{-->}[r]^{{\alpha_\ast \delta}} &. {}
}
\end{equation}

\vskip5pt

If $(\alpha,\beta,1)$ is a homotopic morphism,  then by \Cref{prop:mor-of-confl} there is an $\mathbb E$-triangle  $\con X{\dg{f\\\alpha}}{Y\oplus X'}{\dg{\beta,-f'}}{Y'}{(g')^*\delta}$, and hence by \Cref{lem:long-ext-seq} one has the $6$-term exact sequences$:$
    \begin{equation*}    \resizebox{1\textwidth}{!}{
      $\mathcal C(Y',-)\xrightarrow{\mathcal C(\dg{\beta,-f'},-)}\mathcal C(Y\oplus X',-)\xrightarrow{\mathcal C(\dg{f \\\alpha},-)}\mathcal C(X,-)\xrightarrow{((g')^*\delta)^\sharp}\mathbb E(Y',-)\xrightarrow{\dg{\beta,-f'}^\ast}\mathbb E(Y \oplus X',-)\xrightarrow{\dg{f \\\alpha}^\ast}\mathbb E(X,-)$
      }
      \end{equation*}
      and
    \begin{equation*}    \resizebox{1\textwidth}{!}{
      $\mathcal C(-,X)\xrightarrow{\mathcal C(-,\dg{f \\\alpha})}\mathcal C(-,Y\oplus X')\xrightarrow{\mathcal C(-,\dg{\beta,-f'})}\mathcal C(-,Y')\xrightarrow{((g')^*\delta)_\sharp}\mathbb E(-,X)\xrightarrow{\dg{f \\\alpha}_\ast}\mathbb E(-,Y\oplus X')\xrightarrow{\dg{\beta,-f'}_\ast}\mathbb E(-,Y').$}
\end{equation*}
In general, $\con X{\dg{f \\\alpha}}{Y\oplus X'}{\dg{\beta,-f'}}{Y'}{(g')^\ast\delta}$ is not an $\mathbb E$-triangle; however, we will see that
it still induces a $6$-term exact sequence of contravariant functors;
while the other sequence of covariant functors is not exact in general.

\vskip5pt

\begin{lemma}\label{lem:hs-fg1} \ Let $(\mathcal{C}, \mathbb E, \mathfrak s)$ be an extriangulated category, $(\alpha, \beta, 1)$ a morphism of $\mathbb E$-triangles as in \Cref{eq:hs-fg1}, and $\con X{\dg{f \\\alpha}}{Y\oplus X'}{\dg{\beta,-f'}}{Y'}{(g')^\ast\delta}$ the associated sequence. Then $\dg{\beta,-f'}\dg{f\\\alpha} = 0$, $\dg{\beta,-f'}^\ast((g')^\ast\delta) = 0$ and $\dg{f\\\alpha}_\ast((g')^\ast\delta) = 0$.
\end{lemma}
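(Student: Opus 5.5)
The three identities are checked one at a time. Each uses only that $(\alpha,\beta,1)$ is a morphism of $\mathbb E$-triangles, plus the standard fact that for an $\mathbb E$-triangle $\con XfYgZ\delta$ one has $gf=0$, $f_\ast\delta=0$ and $g^\ast\delta=0$. This fact follows from exactness of the $6$-term sequences in \Cref{lem:long-ext-seq}: $\delta=\delta_\sharp(1_Z)$, so $f_\ast\delta_\sharp=0$ gives $f_\ast\delta=0$; dually $\delta=\delta^\sharp(1_X)$ and $g^\ast\delta^\sharp=0$ give $g^\ast\delta=0$.

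\textbf{First identity.} We compute $\dg{\beta,-f'}\dg{f\\\alpha}=\beta f-f'\alpha$. This vanishes by commutativity of the left square of \Cref{eq:hs-fg1}.

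\textbf{Second identity.} Write $\dg{\beta,-f'}^\ast$ as the pair of components $(\beta^\ast,-(f')^\ast)$ under $\mathbb E(Y\oplus X',X)\cong\mathbb E(Y,X)\oplus\mathbb E(X',X)$. The components of $\dg{\beta,-f'}^\ast((g')^\ast\delta)$ are then:
\begin{itemize}
\item $\beta^\ast(g')^\ast\delta=(g'\beta)^\ast\delta=g^\ast\delta$, which is $0$ since $g'\beta=g$ and $g^\ast\delta=0$ for the top triangle;
\item $-(f')^\ast(g')^\ast\delta=-(g'f')^\ast\delta$, which is $0$ since $g'f'=0$.
\end{itemize}

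\textbf{Third identity.} Likewise decompose $\dg{f\\\alpha}_\ast$ as $(f_\ast,\alpha_\ast)$ into $\mathbb E(Y',Y)\oplus\mathbb E(Y',X')$. The components of $\dg{f\\\alpha}_\ast((g')^\ast\delta)$ are:
\begin{itemize}
\item $f_\ast(g')^\ast\delta=(g')^\ast f_\ast\delta$, which is $0$ since $f_\ast\delta=0$;
\item $\alpha_\ast(g')^\ast\delta=(g')^\ast\alpha_\ast\delta$, which is $0$ since $\alpha_\ast\delta$ is the extension realized by the bottom triangle and so $(g')^\ast(\alpha_\ast\delta)=0$.
\end{itemize}

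There is no real obstacle here. The only care needed is the bookkeeping of the biproduct decomposition of $\mathbb E$, which is additive in each variable, and the commutation $f_\ast g^\ast=g^\ast f_\ast$ from bifunctoriality.
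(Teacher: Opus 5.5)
Your proof is correct. It agrees with the paper on the first identity, but for the second and third identities you take a genuinely different, more elementary route.

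\textbf{The paper's route.} It does not compute componentwise. It invokes \Cref{lem:hs-f?1}$(1)$ to obtain some $\beta'$ such that $\con X{\dg{f\\\alpha}}{Y\oplus X'}{\dg{\beta',-f'}}{Y'}{(g')^\ast\delta}$ is an $\mathbb E$-triangle. It invokes \Cref{lem:hs-f?1}$(2)$ to obtain some $\alpha'$ such that $\con X{\dg{f\\\alpha'}}{Y\oplus X'}{\dg{\beta,-f'}}{Y'}{(g')^\ast\delta}$ is an $\mathbb E$-triangle. It then reads off $\dg{f\\\alpha}_\ast((g')^\ast\delta)=0$ and $\dg{\beta,-f'}^\ast((g')^\ast\delta)=0$ from the $6$-term sequences of \Cref{lem:long-ext-seq} applied to these two auxiliary $\mathbb E$-triangles.

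\textbf{Your route.} You decompose $\mathbb E(Y\oplus X',X)$ and $\mathbb E(Y',Y\oplus X')$ via additivity and check each component directly. This needs only bifunctoriality and the vanishing facts $g^\ast\delta=0$, $f_\ast\delta=0$, $g'f'=0$ and $(g')^\ast(\alpha_\ast\delta)=0$ for the two given $\mathbb E$-triangles. All of these you correctly justify, using $g'\beta=g$ and $\delta'=\alpha_\ast\delta$ from $\gamma=1_Z$. The paper's route instead leans on the nontrivial realization results quoted in \Cref{lem:hs-f?1}.

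\textbf{What each buys.} The paper's choice is economical in context. The very same auxiliary $\mathbb E$-triangles, with $\beta'$ and with $\alpha'$, are reused in the proof of \Cref{prop:quasi-long-ext} to obtain exactness at $\mathcal C(X,-)$, $\mathbb E(Y',-)$ and $\mathbb E(Y\oplus X',-)$. Your computation shows that the lemma itself does not require that machinery.
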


	\begin{proof} \ The commutative diagram \Cref{eq:hs-fg1} shows the first identity $\dg{\beta,-f'}\dg{f\\\alpha} = 0$. By \Cref{lem:hs-f?1}$(1)$ there exists $\beta': Y\to Y'$ such that $\con X{\dg{f \\\alpha}}{Y \oplus X'}{\dg{\beta',-f'}}{Y'}{(g')^\ast \delta}$ is an $\mathbb E$-triangle realizing $(g')^\ast \delta$. By \Cref{lem:long-ext-seq} one has $\dg{ f\\\alpha}_\ast ((g')^\ast \delta) = 0$.
Similarly, by \Cref{lem:hs-f?1}$(2)$ there exists $\alpha': X \to X'$ such that $\con X{\dg{f \\\alpha'}}{Y \oplus X'}{\dg{\beta,-f'}}{Y'}{(g')^\ast \delta}$ is an $\mathbb E$-triangle realizing $(g')^\ast \delta$.
Again by \Cref{lem:long-ext-seq} one has $\dg{\beta,-f'}^\ast ((g')^\ast \delta) = 0$.
	\end{proof}

\vskip5pt

\begin{proposition}\label{prop:quasi-long-ext} \  Let $(\mathcal{C}, \mathbb E, \mathfrak s)$ be an extriangulated category.

\vskip5pt

$(1)$ \ Let $(\alpha, \beta, 1)$ be a morphism of $\mathbb E$-triangles as in \Cref{eq:hs-fg1}. Then the sequence  $\con X{\dg{f \\\alpha}}{Y\oplus X'}{\dg{\beta,-f'}}{Y'}{(g')^\ast\delta}$ induces a $6$-term exact sequence of functors
    \begin{equation*}    \resizebox{1\textwidth}{!}{
      $\mathcal C(Y',-)\xrightarrow{\mathcal C(\dg{\beta,-f'},-)}\mathcal C(Y\oplus X',-)\xrightarrow{\mathcal C(\dg{f \\\alpha},-)}\mathcal C(X,-)\xrightarrow{((g')^\ast \delta)^\sharp}\mathbb E(Y',-)\xrightarrow{\dg{\beta,-f'}^\ast}\mathbb E(Y \oplus X',-)\xrightarrow{\dg{f \\\alpha}^\ast}\mathbb E(X,-).$
      }
\end{equation*}

\vskip5pt

$(1)'$ \ Let $(1, \beta, \gamma)$ be a morphism of $\mathbb E$-triangles$:$
    \begin{equation}\label{eq:con-mor-o}
\xymatrix@R=0.4cm{
  X\ar[r]^{f}\ar@{=}[d] & Y\ar[r]^{g}\ar[d]_-{\beta} & Z\ar@{-->}[r]^{{\gamma^*\varepsilon}}\ar[d]^-{\gamma} & {} \\
  X\ar[r]^{f'} & Y'\ar[r]^{g'} & Z'\ar@{-->}[r]^{\varepsilon} &. {}
}
\end{equation}
Then the sequence $\con Y{\dg{-g \\\beta}}{Z\oplus Y'}{\dg{\gamma,g'}}{Z'}{f_\ast\varepsilon}$ induces a $6$-term exact sequence of contravariant functors
    \begin{equation*}    \resizebox{1\textwidth}{!}{
      $\mathcal C(-,Y)\xrightarrow{\mathcal C(-,\dg{-g \\\beta})}\mathcal C(-,Z\oplus Y')\xrightarrow{\mathcal C(-,\dg{\gamma,g'})}\mathcal C(-,Z')\xrightarrow{(f_\ast\varepsilon)_\sharp}\mathbb E(-,Y)\xrightarrow{\dg{-g \\\beta}_\ast}\mathbb E(-,Z\oplus Y')\xrightarrow{\dg{\gamma,g'}_\ast}\mathbb E(-,Z').$
      }
\end{equation*}

\end{proposition}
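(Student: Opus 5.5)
The plan for $(1)$ is to prove exactness at each of the five interior terms by comparing the given sequence with the genuine $\mathbb E$-triangle furnished by \Cref{lem:hs-f?1}$(2)$. That result gives $\alpha':X\to X'$ such that
\[
\con X{\dg{f \\ \alpha'}}{Y\oplus X'}{\dg{\beta,-f'}}{Y'}{(g')^\ast\delta}
\]
is an $\mathbb E$-triangle. Its second map and its extension agree with ours, so \Cref{lem:long-ext-seq} yields a covariant $6$-term exact sequence that shares the terms $\mathcal C(Y',-)$, $\mathbb E(Y',-)$ and the maps $\mathcal C(\dg{\beta,-f'},-)$ and $\dg{\beta,-f'}^\ast$ with ours. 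This settles exactness at $\mathcal C(Y\oplus X',-)$ after one check: the kernel of $\mathcal C(\dg{f\\\alpha},-)$ must equal the kernel of $\mathcal C(\dg{f\\\alpha'},-)$. Both contain the image of $\mathcal C(\dg{\beta,-f'},-)$ by \Cref{lem:hs-fg1}. Conversely, take $(a,b):Y\oplus X'\to T$ with $af+b\alpha=0$. Then $(a,b)$ factors through $\dg{\beta,-f'}$ if and only if it kills $\dg{f\\\alpha'}$, so I must show $b(\alpha-\alpha')=0$.

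To get this, note that $(\alpha-\alpha')_\ast\delta=0$ and that $(\alpha-\alpha',0,0)$ is a morphism from $\delta$ to $\alpha_\ast\delta$, since $f'(\alpha-\alpha')=\beta f-\beta f=0$. Hence $\alpha-\alpha'$ factors through $f$, say $\alpha-\alpha'=cf$ with $c:Y\to X'$; here I use exactness of $\mathcal C(Y,X')\to\mathcal C(X,X')\to\mathbb E(Z,X')$ together with $(\alpha-\alpha')_\ast\delta=0$. Also $f'c f=0$, so $f'c$ factors through $g$, i.e.\ $f'c=dg$. The relation to exploit is $b(\alpha-\alpha')=bcf$. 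I would try replacing $\alpha'$ by $\alpha'+cf=\alpha$ directly, hoping that $\dg{f\\\alpha}$ is still an inflation of some triangle with the same deflation up to isomorphism. This would use the automorphism $\dg{1&0\\ c&1}$ of $Y\oplus X'$, which sends $\dg{f\\\alpha'}$ to $\dg{f\\\alpha}$ and $\dg{\beta,-f'}$ to $\dg{\beta+f'c,-f'}$. So $\con X{\dg{f\\\alpha}}{Y\oplus X'}{\dg{\beta+f'c,-f'}}{Y'}{(g')^\ast\delta}$ is an $\mathbb E$-triangle by \Cref{isomorphisms}.

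This reduces the whole comparison to the pair of morphisms $\dg{\beta,-f'}$ and $\dg{\beta',-f'}$ with $\beta'=\beta+f'c$ and the same inflation $\dg{f\\\alpha}$, which I will label $(\ast)$; here $\beta-\beta'=-f'c$ and $g'f'c=0$. In $(\ast)$ the first two terms, $\mathcal C(X,-)$ and the connecting map $((g')^\ast\delta)^\sharp$, are now common, which gives exactness at $\mathcal C(X,-)$ immediately. The rest becomes comparing two deflations $\dg{\beta,-f'}$ and $\dg{\beta',-f'}$ that differ by $\dg{-f'c,0}$. For exactness at $\mathcal C(Y\oplus X',-)$ and at $\mathbb E(Y',-)$ I would mix the two realizations: use the triangle from \Cref{lem:hs-f?1}$(2)$ for statements about $\dg{\beta,-f'}$, and the triangle in $(\ast)$ for statements about $\dg{f\\\alpha}$. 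Each kernel/image comparison then becomes an element-chase using \Cref{lem:hs-fg1} and the identity $\dg{\beta,-f'}=\dg{\beta',-f'}\circ\theta$ for the automorphism $\theta=\dg{1&0\\ -c&1}$ composed appropriately, noting that $\dg{f\\\alpha}=\theta^{-1}\dg{f\\\alpha'}$. At $\mathbb E(Y\oplus X',-)$ I would chase similarly, using that $\dg{f\\\alpha}^\ast$ comes from the triangle in $(\ast)$ and that $\dg{\beta,-f'}^\ast$ and $\dg{\beta',-f'}^\ast$ have the same image, since they differ by precomposition with an automorphism.

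The main obstacle is the last point. The two deflations are related by an automorphism of $Y\oplus X'$ only if the compatible inflations also match, and I must keep careful track of which automorphism intertwines which pair. I expect this bookkeeping to be the only delicate step.
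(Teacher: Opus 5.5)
\textbf{There is a gap: the two middle positions are not proved.} Your setup is right and close to the paper's. You use one realization sharing the deflation $(\beta,-f')$ and the extension $(g')^\ast\delta$ with the given sequence (from \Cref{lem:hs-?g1}$(2)$), and one sharing the inflation $\binom{f}{\alpha}$. Producing the second from the first by the shear of $Y\oplus X'$, after writing $\alpha-\alpha'=cf$, is correct; the paper instead takes it directly from \Cref{lem:hs-f?1}$(1)$. This settles exactness at $\mathcal C(X,-)$ via $(\ast)$ and at $\mathbb E(Y',-)$ via the first triangle.

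The problem is at $\mathcal C(Y\oplus X',-)$ and $\mathbb E(Y\oplus X',-)$. No single triangle contains both $\binom{f}{\alpha}$ and $(\beta,-f')$, so everything rests on comparing $(\beta,-f')$ with $(\beta',-f')$, where $\beta'=\beta+f'c$. Your stated reason, that $(\beta,-f')^\ast$ and $(\beta',-f')^\ast$ have the same image because the two maps differ by precomposition with an automorphism $\theta$ of $Y\oplus X'$, is not valid. Precomposition gives $\operatorname{Im}\bigl(((\beta,-f')\theta)^\ast\bigr)=\theta^\ast\bigl(\operatorname{Im}(\beta,-f')^\ast\bigr)$, which is in general a different subgroup of $\mathbb E(Y\oplus X',T)$. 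Likewise, on $\mathcal C(Y\oplus X',T)$ the shear only rewrites the exactness of one triangle as that of the other. So your pending identity $b(\alpha-\alpha')=bcf=0$ is never actually established. What you need is a factorization on the target side, through endomorphisms of $Y'$.

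\textbf{How to close it.} You already have the ingredients. Since $g'\beta=g$ and $f'c=dg$, you get $f'c=dg'\beta$. Together with $g'f'=0$ and $g'\beta'=g$, this gives
$(\beta',-f')=(1_{Y'}+dg')(\beta,-f')$ and $(\beta,-f')=(1_{Y'}-dg')(\beta',-f')$.
Hence $\operatorname{Im}\mathcal C((\beta,-f'),-)=\operatorname{Im}\mathcal C((\beta',-f'),-)$ and $\operatorname{Im}(\beta,-f')^\ast=\operatorname{Im}(\beta',-f')^\ast$. Both middle exactnesses then follow from the long exact sequences of $(\ast)$.

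Alternatively, $b(\alpha-\alpha')=0$ is immediate. From $af+b\alpha=0$ you get $b_\ast(\alpha_\ast\delta)=-a_\ast f_\ast\delta=0$, so $b=sf'$ by \Cref{lem:long-ext-seq}, and $f'(\alpha-\alpha')=0$.

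This is essentially the paper's route. It does a direct element chase at $\mathcal C(Y\oplus X',-)$ using the two original triangles. At $\mathbb E(Y\oplus X',-)$ it then uses the realization with inflation $\binom{f}{\alpha}$, together with the exactness just proved, to write $(\beta',-f')=s(\beta,-f')$.
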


\begin{proof} \ We only prove $(1)$ by checking exactness at each position.
\vskip5pt
 By \Cref{lem:hs-fg1} one has $\dg{\beta,-f'}\dg{f \\\alpha} =0$. Thus $\operatorname{Ker} \mathcal{C}(\dg{f \\\alpha}, -) \supseteq \operatorname{Im} \mathcal{C}(\dg{\beta,-f'}, -)$. Conversely, one takes $\dg{a,b}: Y\oplus X'\to T$ such that $\dg{a,b}\dg{f\\\alpha} = 0$. Since $b_\ast (\alpha_\ast \delta) = (b\alpha)_\ast \delta = (-af)_\ast \delta = -a_\ast(f_\ast \delta )= 0$, there exists $s:Y'\to T$ such that $sf' = b$ by \Cref{lem:long-ext-seq}, as shown in the following commutative diagram:
			\begin{equation*}
\xymatrix@R=0.4cm{
  X\ar[r]^{f}\ar[d]_-{\alpha} & Y\ar[r]^{g}\ar[d]^-{\beta}\ar@/^1pc/[ddr]^(0.4)*+<-1.5ex>{\scalebox{0.8}{$-a$}} & Z\ar@{-->}[r]^{\delta}\ar@{=}[d] & {} \\
  X'\ar[r]^{{f'}}\ar@/_1pc/[drr]^{b} & Y'\ar[r]^{{g'}}\ar@{..>}[dr]^{s} & Z\ar@{-->}[r]^{{f_\ast \delta}}\ar@{..>}@/^0pc/[d]^{t} & {} \\
  {} & {} & T & {}
}
\end{equation*}
Note that $(a+s\beta)f = af + sf'\alpha = af+ b\alpha = 0$. By \Cref{lem:long-ext-seq} there exists $t: Z\to T$ such that $tg = a + s\beta$. A direct verification shows $\dg{a,b}=(tg'-s)\dg{\beta,-f'}$. Hence $\operatorname{Ker} \mathcal{C}(\dg{f\\\alpha}, -) \subseteq \operatorname{Im} \mathcal{C}(\dg{\beta,-f'}, -)$. Thus one obtains the exactness at $\mathcal{C}(Y \oplus X', -)$.

            \vskip5pt

By \Cref{lem:hs-f?1}$(1)$ there exists $\beta' : Y \to Y'$ such that $\con X{\dg{f\\\alpha}}{Y\oplus X'}{\dg{\beta',-f'}}{Y'}{(g')^\ast\delta}$ is an $\mathbb E$-triangle. By \Cref{lem:long-ext-seq}, ${\mathcal{C}(Y \oplus X', -)} \xrightarrow{\mathcal{C}(\dg{ f\\\alpha}, -)}  {\mathcal{C}(X, -)} \xrightarrow{((g')^\ast \delta)^\sharp} {\mathbb E(Y', -)}$ is exact at $\mathcal{C}(X, -)$. Thus one obtains the exactness at $\mathcal{C}(X, -)$.

            \vskip5pt
			
By \Cref{lem:hs-?g1}$(2)$ there exists $\alpha' : X \to X'$ such that $\con X{\dg{f\\\alpha'}}{Y\oplus X'}{\dg{\beta,-f'}}{Y'}{(g')^\ast\delta}$ is an $\mathbb E$-triangle. By \Cref{lem:long-ext-seq}, ${\mathcal{C}(X, -)} \xrightarrow{((g')^\ast \delta)^\sharp} {\mathbb E(Y', -)} \xrightarrow{\dg{\beta,-f'}^\ast}  {\mathbb E(Y \oplus X', -)}$ is exact at $\mathbb E(Y', -)$. Thus one obtains the exactness at $\mathbb E(Y', -)$.

            \vskip5pt
By \Cref{lem:hs-fg1} one has $\dg{\beta,-f'}\dg{f\\\alpha} = 0$. Thus $\operatorname{Ker} (\dg{f\\\alpha}^\ast) \supseteq \operatorname{Im}(\dg{\beta,-f'}^\ast)$. Conversely, take $\varepsilon \in \mathbb E(Y \oplus X', T)$ such that $\dg{u\\ f}^\ast \varepsilon = 0$. By \Cref{lem:hs-f?1}$(1)$ there exists $\beta': Y \to Y'$ such that $\con X{\dg{f\\\alpha}}{Y\oplus X'}{\dg{\beta',-f'}}{Y'}{(g')^\ast\delta}$ is an $\mathbb E$-triangle.
By \Cref{lem:long-ext-seq}, ${\mathbb E(Y', -)} \xrightarrow{\dg{\beta',-f'}^\ast}  {\mathbb E(Y \oplus X', -)} \xrightarrow{\dg{f\\\alpha}^\ast} \mathbb E(X, -)$ is exact at $\mathbb E(Y \oplus X', -)$. Thus there exists $\eta\in \mathbb E(Y', T)$ such that $\varepsilon = \dg{\beta',-f'}^\ast \eta$.  Since $\dg{\beta',-f'}\dg{f \\\alpha}=0$, $\dg{\beta',-f'}$ factors through $(\beta,-f')$ by $(1)$ above. Say with $\dg{\beta',-f'}=s\dg{\beta,-f'}$ where $s: Y'\to Y'$. Therefore, $\varepsilon = \dg{\beta',-f'}^\ast \eta=\varepsilon = (s\dg{\beta,-f'})^\ast \eta=\dg{\beta,-f'}^\ast(s^\ast \eta)$. That is, $\varepsilon\in \operatorname{Im}(\dg{\beta,-f'}^\ast)$. Thus one obtains the exactness at $\mathbb E(Y \oplus X', -)$.
	\end{proof}

\vskip5pt
\begin{definition} \label{shm} \ A morphism $(\alpha, \beta, 1)$ of $\mathbb E$-triangles, as in \Cref{eq:hs-fg1}, is \emph{semi-homotopic},  if the left square in \Cref{eq:hs-fg1} is homotopic, i.e.,
$\con{X}{\dg{f\\ \alpha}}{Y \oplus X'}{\dg{\beta , -f'}}{Y'}\eta$ is an $\mathbb E$-triangle for some $\mathbb E$-extension $\eta\in \mathbb E(Y', X)$.
\vskip5pt Similarly, one has the concept of semi-homotopic for a morphism $(1,\beta,\gamma)$ of $\mathbb E$-triangles.
\end{definition}

There are strict inclusions:
\begin{equation*}    \resizebox{1\textwidth}{!}{
  $\quad \{\text{morphism of }\mathbb E\text{-triangles}\} \supsetneqq \{\text{semi-homotopic morphism}\} \supsetneqq \{\text{homotopic morphism}\}.$
  }
\end{equation*}
By \Cref{prop:mor-of-confl}$(1)$, a morphism $(\alpha, \beta, 1)$ of $\mathbb E$-triangles is homotopic if and only if  $\con X{\dg{f \\\alpha}}{Y\oplus X'}{\dg{\beta,-f'}}{Y'}{(g')^\ast\delta}$ is an $\mathbb E$-triangle. The strictness of the inclusions are illustrated by \Cref{ex:contra-les-fails} and \Cref{ex:ho-square-not-ho-mor} in Subsection 4.2.

\vskip5pt

\begin{proposition}\label{prop:semi-homotopic-6} \ Let $(\mathcal{C}, \mathbb E, \mathfrak s)$ be an extriangulated category.
\vskip5pt
$(1)$ \ Suppose that $(\alpha,\beta,1)$ is a semi-homotopic morphism of $\mathbb E$-triangles$:$
	\begin{equation*}
\xymatrix@R=0.4cm{
  X\ar[r]^{f}\ar[d]_-{\alpha} & Y\ar[r]^{g}\ar[d]^-{\beta} & Z\ar@{-->}[r]^{\delta}\ar@{=}[d] & {} \\
  X'\ar[r]^{{f'}} & Y'\ar[r]^{{g'}} & Z\ar@{-->}[r]^{{f_\ast \delta}} & .{}
}
\end{equation*}
    There is a $6$-term exact sequence of covariant functors
    \begin{equation*}    \resizebox{1\textwidth}{!}{
      $\mathcal C(-,X)\xrightarrow{\mathcal C(-,\dg{f \\\alpha})}\mathcal C(-,Y\oplus X')\xrightarrow{\mathcal C(-,\dg{\beta,-f'})}\mathcal C(-,Y')\xrightarrow{((g')^\ast\delta)_\sharp}\mathbb E(-,X)\xrightarrow{\dg{f \\\alpha}_\ast}\mathbb E(-,Y\oplus X')\xrightarrow{\dg{\beta,-f'}_\ast}\mathbb E(-,Y').$
      }
\end{equation*}

\vskip5pt

$(1)'$ \  Suppose that $(1,\beta,\gamma)$ is a semi-homotopic morphism of $\mathbb E$-triangles$:$
	\begin{equation*}
\xymatrix@R=0.4cm{
  X\ar[r]^{f}\ar@{=}[d] & Y\ar[r]^{g}\ar[d]_-{\beta} & Z\ar@{-->}[r]^{{\gamma^\ast \delta'}}\ar[d]^-{\gamma} & {} \\
  X\ar[r]^{{f'}} & Y'\ar[r]^{{g'}} & Z'\ar@{-->}[r]^{{\delta'}} & .{}
}
\end{equation*}
	There is a $6$-term exact sequence of contravariant functors
    \begin{equation*}    \resizebox{1\textwidth}{!}{
      $\mathcal C(Z',-)\xrightarrow{\mathcal C(\dg{\gamma,g'},-)}\mathcal C(Z\oplus Y',-)\xrightarrow{\mathcal C(\dg{-g \\\beta},-)}\mathcal C(Y,-)\xrightarrow{(f_\ast\delta')_\sharp}\mathbb E(Z',-)\xrightarrow{\dg{\gamma,g'}^\ast}\mathbb E(Z\oplus Y',-)\xrightarrow{\dg{-g \\\beta}^\ast}\mathbb E(Y,-).$
      }
      \end{equation*}
\end{proposition}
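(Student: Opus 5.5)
We only prove $(1)$; part $(1)'$ is dual. The plan is to compare the given sequence with the genuine $\mathbb E$-triangle supplied by semi-homotopy, $\con X{\dg{f\\\alpha}}{Y\oplus X'}{\dg{\beta,-f'}}{Y'}\eta$, which has the same two maps but a possibly different extension $\eta$. By \Cref{lem:long-ext-seq} this triangle gives an exact contravariant sequence with $\eta_\sharp$ in place of $((g')^\ast\delta)_\sharp$. Exactness at $\mathcal C(-,Y\oplus X')$ and at $\mathbb E(-,Y\oplus X')$ involves only the maps $\dg{f\\\alpha}$ and $\dg{\beta,-f'}$, so it is inherited immediately.

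The remaining positions are $\mathcal C(-,Y')$ and $\mathbb E(-,X)$, and there we must replace $\eta$ by $\theta:=(g')^\ast\delta$. The key claim is that $\eta$ and $\theta$ differ by an automorphism:
\[
\theta=\varphi^\ast\eta \quad\text{for some automorphism } \varphi:Y'\to Y' \text{ with } \dg{\beta,-f'}=\varphi\dg{\beta,-f'}.
\]
To find $\varphi$, first use \Cref{lem:hs-?g1}$(2)$: there is $\alpha'$ such that $\con X{\dg{f\\\alpha'}}{Y\oplus X'}{\dg{\beta,-f'}}{Y'}{\theta}$ is an $\mathbb E$-triangle. Both triangles end in the same deflation $\dg{\beta,-f'}$. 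Applying ET3$^{\rm op}$ with $(1_{Y\oplus X'},1_{Y'})$ gives a morphism of triangles $(a,1,1)$ with $a:X\to X$, $\dg{f\\\alpha'}=\dg{f\\\alpha}a$ and $a_\ast\theta=\eta$. By \Cref{cone}, $a$ is an automorphism of $X$.

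Hence $\eta=a_\ast\theta$ with $a$ an isomorphism, and $\eta_\sharp=a_\ast\circ\theta_\sharp$. Since $a_\ast$ is an isomorphism of functors $\mathbb E(-,X)\to\mathbb E(-,X)$, we get $\operatorname{Ker}\eta_\sharp=\operatorname{Ker}\theta_\sharp$, which is exactness at $\mathcal C(-,Y')$. For exactness at $\mathbb E(-,X)$ we need $\operatorname{Im}\theta_\sharp=\operatorname{Ker}\dg{f\\\alpha}_\ast$. Here $\operatorname{Im}\eta_\sharp=a_\ast\operatorname{Im}\theta_\sharp$ and $\operatorname{Im}\eta_\sharp=\operatorname{Ker}\dg{f\\\alpha}_\ast$. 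By \Cref{lem:hs-fg1}, $\operatorname{Im}\theta_\sharp\subseteq\operatorname{Ker}\dg{f\\\alpha}_\ast$.

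This is the main obstacle: the automorphism $a$ does not obviously preserve $\operatorname{Ker}\dg{f\\\alpha}_\ast$, since $\dg{f\\\alpha}a=\dg{f\\\alpha'}$ rather than $\dg{f\\\alpha}$. I would instead handle it using the relation between $\dg{f\\\alpha}$ and $\dg{f\\\alpha'}$. Both are inflations with the same deflation $\dg{\beta,-f'}$, and they differ by $a$. Given $\xi\in\mathbb E(T,X)$ with $\dg{f\\\alpha}_\ast\xi=0$, we have $\dg{f\\\alpha'}_\ast(a^{-1}_\ast\xi)=0$. Exactness for the $\theta$-triangle then gives $a^{-1}_\ast\xi=\theta_\sharp(\varphi)$ for some $\varphi$, i.e. $\xi=a_\ast\varphi^\ast\theta=\varphi^\ast\eta$. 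That only re-proves membership in $\operatorname{Im}\eta_\sharp$, so more is needed.

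To conclude, I would show that $a$ can be chosen with $a_\ast\theta=\theta$ modulo $\operatorname{Im}\theta_\sharp$. Concretely, $(a-1)_\ast\theta=\eta-\theta$. Using $\dg{f\\\alpha}_\ast\eta=0=\dg{f\\\alpha}_\ast\theta$, we get $\dg{f\\\alpha}_\ast(\eta-\theta)=0$, so $\eta-\theta=\psi^\ast\theta'$ for suitable $\psi$. From this I would try to show $\operatorname{Im}\eta_\sharp\subseteq\operatorname{Im}\theta_\sharp$ by an iterated correction argument, exploiting that $1-a$ factors through $f$. That factorization comes from $f=fa$ on the first component, via exactness of $\mathcal C(-,Y\oplus X')$ applied to $\dg{0\\\alpha'-\alpha}$.
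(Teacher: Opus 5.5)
The gap is at $\mathbb E(-,X)$. Your arguments at $\mathcal C(-,Y\oplus X')$, $\mathbb E(-,Y\oplus X')$ and $\mathcal C(-,Y')$ are correct. At $\mathcal C(-,Y')$ the automorphism $a$ is a detour: the triangle $\con X{\dg{f\\\alpha'}}{Y\oplus X'}{\dg{\beta,-f'}}{Y'}{\theta}$ from \Cref{lem:hs-?g1}$(2)$ already carries exactly the two maps meeting at that position, so \Cref{lem:long-ext-seq} applies to it directly.

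At $\mathbb E(-,X)$ you only have $\operatorname{Im}\theta_\sharp\subseteq\operatorname{Ker}\dg{f\\\alpha}_\ast$. The reverse inclusion is left to an ``iterated correction argument'' that does not close. The only information you have on $\operatorname{Ker}\dg{f\\\alpha}_\ast$ is that it equals $\operatorname{Im}\eta_\sharp$. This gives $\eta-\theta=\psi^\ast\eta$ for some $\psi\colon Y'\to Y'$, and iterating yields $\eta=(1+\psi+\cdots+\psi^{n-1})^\ast\theta+(\psi^{n})^\ast\eta$. That closes only under an extra hypothesis, such as nilpotence of $\psi$ or invertibility of $1-\psi$. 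This is the kind of unit condition behind \Cref{prop:C1-ho}, and it is not available here. Your intended reduction to $\eta-\theta\in\operatorname{Im}\theta_\sharp$ would suffice, but it is itself the instance $T=Y'$, $\xi=\eta-\theta$ of the missing inclusion.

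The ``key claim'' you announce at the start is also false in general. Suppose $\varphi$ were an automorphism of $Y'$ with $\varphi\dg{\beta,-f'}=\dg{\beta,-f'}$ and $\theta=\varphi^\ast\eta$. Via \Cref{isomorphisms} this would make $\con X{\dg{f\\\alpha}}{Y\oplus X'}{\dg{\beta,-f'}}{Y'}{(g')^\ast\delta}$ an $\mathbb E$-triangle. Then $(\alpha,\beta,1)$ would be homotopic by \Cref{prop:mor-of-confl}, contradicting \Cref{ex:ho-square-not-ho-mor}.

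The missing ingredient is the other half of the same lemma, \Cref{lem:hs-f?1}$(1)$. Since $(\alpha,\beta,1)$ is a morphism, the lower triangle realizes $\alpha_\ast\delta$. So there is $\beta'\colon Y\to Y'$ with $(\alpha,\beta',1)$ homotopic, and by \Cref{prop:mor-of-confl} the sequence $\con X{\dg{f\\\alpha}}{Y\oplus X'}{\dg{\beta',-f'}}{Y'}{(g')^\ast\delta}$ is an $\mathbb E$-triangle. This is exactly how \Cref{lem:hs-fg1}, which you cite, is proved. Exactness at $\mathbb E(-,X)$ involves only $((g')^\ast\delta)_\sharp$ and $\dg{f\\\alpha}_\ast$, both carried by this triangle, so it follows at once from \Cref{lem:long-ext-seq}.

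That is the paper's proof. Each of the four interior positions is read off from whichever of three triangles has the right pair of maps there: the semi-homotopy triangle, the $\beta'$-triangle, or the $\alpha'$-triangle. No comparison automorphism is needed.
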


\begin{proof}
  One only justifies $(1)$. By assumption there is an $\mathbb E$-triangle $\con X{\dg{f\\\alpha}}{Y\oplus X'}{\dg{\beta,-f'}}{Y'}{\eta}$ for some $\eta \in \mathbb E(Y', X)$. The exactness at $\mathcal{C}(-, Y \oplus X')$ and $\mathbb E(-, Y \oplus X')$ are justified by applying \Cref{lem:long-ext-seq} to this $\mathbb E$-triangle.

  By \Cref{lem:hs-f?1}$(1)$ there exists $\beta' : Y \to Y'$ such that $\con X{\dg{f\\\alpha}}{Y\oplus X'}{\dg{\beta',-f'}}{Y'}{(g')^\ast\delta}$ is an $\mathbb E$-triangle. The exactness at $\mathbb E(-, X)$ is justified by applying \Cref{lem:long-ext-seq} to this $\mathbb E$-triangle.

  Similarly, by \Cref{lem:hs-f?1}$(2)$ there exists $\alpha' : X \to X'$ such that $\con X{\dg{f\\\alpha'}}{Y\oplus X'}{\dg{\beta,-f'}}{Y'}{(g')^\ast\delta}$ is an $\mathbb E$-triangle. The exactness at $\mathcal{C}(-, Y')$ is justified by applying \Cref{lem:long-ext-seq} to this $\mathbb E$-triangle.
\end{proof}

\vskip5pt
\begin{remark} \  Suppose $(\alpha,\beta,1)$ is a semi-homotopic morphism of $\mathbb E$-triangles. By \Cref{prop:mor-of-confl} there exists an $\mathbb E$-triangle  $\con X{\dg{f\\\alpha}}{Y\oplus X'}{\dg{\beta,-f'}}{Y'}{\eta}$ for some $\eta \in \mathbb E(Y', X)$. By \Cref{lem:long-ext-seq} one has the $6$-term exact sequences$:$
    \begin{equation*}    \resizebox{1\textwidth}{!}{
      $\mathcal C(Y',-)\xrightarrow{\mathcal C(\dg{\beta,-f'},-)}\mathcal C(Y\oplus X',-)\xrightarrow{\mathcal C(\dg{f \\\alpha},-)}\mathcal C(X,-)\xrightarrow{\eta^\sharp}\mathbb E(Y',-)\xrightarrow{\dg{\beta,-f'}^\ast}\mathbb E(Y \oplus X',-)\xrightarrow{\dg{f \\\alpha}^\ast}\mathbb E(X,-)$
      }
      \end{equation*}
      and
    \begin{equation*}    \resizebox{1\textwidth}{!}{
      $\mathcal C(-,X)\xrightarrow{\mathcal C(-,\dg{f \\\alpha})}\mathcal C(-,Y\oplus X')\xrightarrow{\mathcal C(-,\dg{\beta,-f'})}\mathcal C(-,Y')\xrightarrow{\eta_\sharp}\mathbb E(-,X)\xrightarrow{\dg{f \\\alpha}_\ast}\mathbb E(-,Y\oplus X')\xrightarrow{\dg{\beta,-f'}_\ast}\mathbb E(-,Y')$
      }
\end{equation*}
  which are different from \Cref{prop:quasi-long-ext}$(1)$ and \Cref{prop:semi-homotopic-6}$(1)$ respectively.
\end{remark}
\vskip5pt

Under some good conditions, $(\alpha, \beta, 1)$ and $(1, \beta, \gamma)$ are always semi-homotopic. The following conditions are given by A. Canonaco and M. K\"unzer \cite[Lemma 2.1]{CK11}.

\vskip5pt

\begin{condition}\label{con:c} \ Let $R$ be a unital ring. Consider the following conditions {\rm (C1)} and  {\rm (C2)}.

\vskip5pt

{\rm (C1)} \ For any $r \in R$, there exists $a\in R$ such that $1+r+ar^2$ is a unit in $R$.

\vskip5pt

{\rm (C2)} \ For any $r \in R$, there exists $b \in R$ such that $1+r+ r^2 b$ is a unit in $R$.

\end{condition}

\vskip5pt

For instance, a finite-dimensional algebra over a field satisfies both ({C1}) and ({C2}).
The following result generalizes \cite[Proposition 3.1]{CK11}, which concerns triangulated categories.

\vskip5pt

\begin{proposition}\label{prop:C1-ho} \ Let $(\mathcal{C}, \mathbb E, \mathfrak s)$ be an extriangulated category.
\vskip5pt
$(1)$ \ Suppose that $(\alpha,\beta,1_Z)$ is a morphism of $\mathbb E$-triangles$:$
	\begin{equation*}
\xymatrix@R=0.4cm{
  X\ar[r]^{f}\ar[d]_-{\alpha} & Y\ar[r]^{g}\ar[d]^-{\beta} & Z\ar@{-->}[r]^{\delta}\ar@{=}[d] & {} \\
  X'\ar[r]^{{f'}} & Y'\ar[r]^{{g'}} & Z\ar@{-->}[r]^{{f_\ast \delta}} & {}
}
\end{equation*}
    If $\mathrm{End}_\mathcal C (Y')$ satisfies {\rm Condition(C1)}, then $(\alpha, \beta, 1)$ is semi-homotopic. Moreover, there is an automorphism $\theta \in \mathrm{Aut}_\mathcal C(Y')$ such that
      \begin{equation}\label{eq:modified}
        \con X{\dg{f\\\alpha}}{Y\oplus X'}{\dg{\beta,-f'}}{Y'}{\theta^\ast(g')^\ast\delta}
      \end{equation}
      is an $\mathbb E$-triangle.

\vskip5pt

$(1)'$ \  Suppose that $(1_X,\beta,\gamma)$ is a morphism of $\mathbb E$-triangles$:$
	\begin{equation*}
\xymatrix@R=0.4cm{
  X\ar[r]^{f}\ar@{=}[d] & Y\ar[r]^{g}\ar[d]_-{\beta} & Z\ar@{-->}[r]^{{\gamma^\ast \delta'}}\ar[d]^-{\gamma} & {} \\
  X\ar[r]^{{f'}} & Y'\ar[r]^{{g'}} & Z'\ar@{-->}[r]^{{\delta'}} & {}
}
\end{equation*}
	If $\mathrm{End}_\mathcal C (Y)$ satisfies {\rm Condition(C2)}, then $(1_X,\beta,\gamma)$ is semi-homotopic. Moreover, there is an automorphism $\theta \in \mathrm{Aut}_\mathcal C(Y)$ such that
      \begin{equation}\label{eq:modified-2}
        \con Y{\dg{-g\\\beta}}{Z\oplus Y'}{\dg{\gamma,g'}}{Z'}{\theta_\ast f_\ast\delta'}
      \end{equation}
      is an $\mathbb E$-triangle.
\end{proposition}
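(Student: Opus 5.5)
The approach is to start from a homotopic modification of $\beta$ and correct it by an automorphism of $Y'$, following Canonaco--K\"unzer. I only treat $(1)$; $(1)'$ is dual, using (C2) and automorphisms of $Y$ acting on the left.

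First, by \Cref{lem:hs-f?1}$(1)$ choose $\beta':Y\to Y'$ such that $(\alpha,\beta',1)$ is homotopic. By \Cref{prop:mor-of-confl}$(1)$,
$$\con X{\dg{f\\\alpha}}{Y\oplus X'}{\dg{\beta',-f'}}{Y'}{(g')^\ast\delta}$$
is an $\mathbb E$-triangle. Put $d=\beta-\beta'$. Since both $(\alpha,\beta,1)$ and $(\alpha,\beta',1)$ are morphisms of $\mathbb E$-triangles, we get $g'd=g-g=0$ and $df=f'\alpha-f'\alpha=0$. Because $df=0$, the exact sequence $\mathcal C(Z,Y')\to\mathcal C(Y,Y')\to\mathcal C(X,Y')$ of \Cref{lem:long-ext-seq} gives $b:Z\to Y'$ with $d=bg$.

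Next, set $r=bg'\in\mathrm{End}_{\mathcal C}(Y')$. Then:
\begin{itemize}
\item $rf'=bg'f'=0$;
\item $r\beta'=bg'\beta'=bg=d$;
\item $r^2\beta'=rd=bg'd=0$.
\end{itemize}
By (C1) there is $a\in\mathrm{End}(Y')$ with $\varphi:=1+r+ar^2$ a unit. From the three identities, $\varphi f'=f'$ and $\varphi\beta'=\beta'+d+ar^2\beta'=\beta$. Hence
$$\varphi\dg{\beta',-f'}=\dg{\beta,-f'}.$$

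Then apply \Cref{isomorphisms} to the $\mathbb E$-triangle above, using identities on $X$ and $Y\oplus X'$ and the isomorphism $\varphi$ on $Y'$. This yields the $\mathbb E$-triangle
$$\con X{\dg{f\\\alpha}}{Y\oplus X'}{\dg{\beta,-f'}}{Y'}{(\varphi^{-1})^\ast(g')^\ast\delta}.$$
Setting $\theta=\varphi^{-1}\in\mathrm{Aut}_{\mathcal C}(Y')$ gives \Cref{eq:modified}. In particular the left square is homotopic, so $(\alpha,\beta,1)$ is semi-homotopic.

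The main obstacle is finding a unit correcting $\beta'$ to $\beta$ while fixing $f'$. The naive choice $1+r$ already does both but need not be invertible. The key observation is $r^2\beta'=0$, which holds because $g'd=0$; this makes the (C1) modification $1+r+ar^2$ harmless on $\beta'$ while guaranteeing invertibility.
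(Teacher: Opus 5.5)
Your proof is correct and follows the paper's argument. Both take $\beta'$ from \Cref{lem:hs-f?1}$(1)$, factor the difference of $\beta$ and $\beta'$ through $g$, and use (C1) to make $1+r+ar^2$ invertible, where $r^2$ kills the relevant map because $g'(\beta-\beta')=0$; both then transport the $\mathbb E$-triangle along this automorphism of $Y'$ via \Cref{isomorphisms}. The only difference is a sign convention: the paper factors $\beta'-\beta$ and gets a unit $\theta$ with $\theta\beta=\beta'$ directly, whereas you factor $\beta-\beta'$, obtain $\varphi\beta'=\beta$, and so take $\theta=\varphi^{-1}$.
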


    \begin{proof} \ One only justifies $(1)$. By \Cref{lem:hs-f?1}$(1)$ there exists $\beta' : Y \to Y'$ such that $\beta'f=f'\alpha$, $g'\beta=g$, and $\con X{\dg{f \\\alpha}}{Y\oplus X'}{\dg{\beta',-f'}}{Y'}{(g')^\ast\delta}$ is an $\mathbb E$-triangle. Note that $(\beta-\beta') f = 0$. By \Cref{lem:long-ext-seq} there exists $\varphi : Z \to Y'$ such that $\varphi g = \beta'-\beta$. Since $\mathrm{End}_\mathcal C (Y')$ satisfies Condition{\rm (C1)}, there exists some $a: Y'\to Y'$ such that $\theta:=1_Y + (\varphi g') + a(\varphi g')^2$ is an automorphism. A direct verification shows that
\begin{equation*}	\theta f'=(1_Y + (\varphi g') + a(\varphi g')^2) f' = f' + (\varphi + a \varphi g' \varphi)  (g' f') = f',
\end{equation*}
        and
\begin{equation*}	\theta \beta=(1_Y + (\varphi g') + a(\varphi g')^2) \beta = \beta + \varphi g + a \varphi g' \varphi g' \beta = \beta' + a \varphi g' (\beta' - \beta) = \beta'.
\end{equation*}
    Hence $\theta^{-1}\dg{\beta',-f'}=\dg{\beta,f'}$. By \Cref{isomorphisms} one applies an isomorphism $(1, 1,\theta)$ to the $\mathbb E$-triangle $\con X{\dg{f\\\alpha}}{Y\oplus X'}{\dg{\beta',-f'}}{Y'}{(g')^\ast\delta}$ and obtains the $\mathbb E$-triangle $\con X{\dg{f\\\alpha}}{Y\oplus X'}{\dg{\beta,-f'}}{Y'}{\theta^\ast(g')^\ast\delta}$.
	\end{proof}

\vskip5pt

Recall that a category $\mathcal{C}$ is $k$-linear, if for any objects $X$ and $Y$, $\mathcal C(X,Y)$ is a $k$-vector space, and the composition of morphisms is $k$-bilinear.
A $k$-linear category is $\mathrm{Hom}$-finite if any $\mathcal C(X,Y)$ is finite-dimensional.

\vskip5pt

\begin{theorem} \label{twolongexactseq} \ Let $(\mathcal{C}, \mathbb E, \mathfrak s)$ be an  extriangulated category.

\vskip5pt

$(1)$ \ Consider the sequence $\con X{\dg{f \\\alpha}}{Y\oplus X'}{\dg{\beta,-f'}}{Y'}{(g')^\ast\delta}$ associated to the morphism of $\mathbb E$-triangles$:$
\begin{equation*}
\xymatrix@R=0.4cm{
  X\ar[r]^{f}\ar[d]_-{\alpha} & Y\ar[r]^{g}\ar[d]^-{\beta} & Z\ar@{-->}[r]^{\delta}\ar@{=}[d] & {} \\
  X'\ar[r]^{{f'}} & Y'\ar[r]^{{g'}} & Z\ar@{-->}[r]^{{\alpha_\ast \delta}} &. {}
}
\end{equation*}
There is a $6$-term exact sequence of covariant functors
\begin{equation*}    \resizebox{1\textwidth}{!}{
      $\mathcal C(Y',-)\xrightarrow{\mathcal C(\dg{\beta,-f'},-)}\mathcal C(Y\oplus X',-)\xrightarrow{\mathcal C(\dg{f \\\alpha},-)}\mathcal C(X,-)\xrightarrow{((g')^\ast\delta)^\sharp}\mathbb E(Y',-)\xrightarrow{\dg{\beta,-f'}^\ast}\mathbb E(Y\oplus X',-)\xrightarrow{\dg{f \\\alpha}^\ast}\mathbb E(X,-).$
      }
\end{equation*}

\vskip5pt

Moreover, if $(\mathcal{C}, \mathbb E, \mathfrak s)$ is a $\mathrm{Hom}$-finite $k$-linear  extriangulated category, then there is also a $6$-term exact sequence of contravariant functors
 \begin{equation*}    \resizebox{1\textwidth}{!}{
      $\mathcal C(-,X)\xrightarrow{\mathcal C(-,\dg{f \\\alpha})}\mathcal C(-,Y\oplus X')\xrightarrow{\mathcal C(-,\dg{\beta,-f'})}\mathcal C(-,Y')\xrightarrow{((g')^\ast\delta)_\sharp}\mathbb E(-,X)\xrightarrow{\dg{f \\\alpha}_\ast}\mathbb E(-,Y\oplus X')\xrightarrow{\dg{\beta,-f'}_\ast}\mathbb E(-,Y').$
      }
\end{equation*}

\vskip5pt

$(1)'$  \ Consider the sequence $\con Y{\dg{-g \\\beta}}{Z\oplus Y'}{\dg{\gamma,g'}}{Z'}{f_\ast\varepsilon}$ associated to the morphism of $\mathbb E$-triangles$:$
    \begin{equation*}
\xymatrix@R=0.4cm{
  X\ar[r]^{f}\ar@{=}[d] & Y\ar[r]^{g}\ar[d]_-{\beta} & Z\ar@{-->}[r]^{{\gamma^*\varepsilon}}\ar[d]^-{\gamma} & {} \\  X\ar[r]^{f'} & Y'\ar[r]^{g'} & Z'\ar@{-->}[r]^{\varepsilon} &. {}
}
\end{equation*}
There is a $6$-term exact sequence of contravariant functors
\begin{equation*}    \resizebox{1\textwidth}{!}{
      $\mathcal C(-,Y)\xrightarrow{\mathcal C(-,\dg{-g \\\beta})}\mathcal C(-,Z\oplus Y')\xrightarrow{\mathcal C(-,\dg{\gamma,g'})}\mathcal C(-,Z')\xrightarrow{(f_\ast\varepsilon)_\sharp}\mathbb E(-,Y)\xrightarrow{\dg{-g \\\beta}_\ast}\mathbb E(-,Z\oplus Y')\xrightarrow{\dg{\gamma,g'}_\ast}\mathbb E(-,Z').$
      }
\end{equation*}

Moreover, if $(\mathcal{C}, \mathbb E, \mathfrak s)$ is a $\mathrm{Hom}$-finite $k$-linear extriangulated category, then there is also a $6$-term exact sequence of covariant functors

\begin{equation*}    \resizebox{1\textwidth}{!}{
      $\mathcal C(Z',-)\xrightarrow{\mathcal C(\dg{\gamma,g'},-)}\mathcal C(Z\oplus Y',-)\xrightarrow{\mathcal C(\dg{-g \\\beta},-)}\mathcal C(Y,-)\xrightarrow{(f_\ast\varepsilon)^\sharp}\mathbb E(Z',-)\xrightarrow{\dg{\gamma,g'}^\ast}\mathbb E(Z\oplus Y',-)\xrightarrow{\dg{-g \\\beta}^\ast}\mathbb E(Y,-).$
      }
\end{equation*}
\end{theorem}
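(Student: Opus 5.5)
The first assertions of $(1)$ and $(1)'$ are \Cref{prop:quasi-long-ext}$(1)$ and $(1)'$, respectively, so nothing new is needed there. The plan is to obtain the Hom-finite statements by showing that in that setting every morphism $(\alpha,\beta,1)$ (resp.\ $(1,\beta,\gamma)$) is semi-homotopic, and then to quote \Cref{prop:semi-homotopic-6}. By \Cref{prop:C1-ho}, it suffices to check that for a Hom-finite $k$-linear category every endomorphism ring $\mathrm{End}_{\mathcal C}(Y')$ satisfies Condition (C1) (and, dually, (C2) for $\mathrm{End}_{\mathcal C}(Y)$).

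Put $R=\mathrm{End}_{\mathcal C}(Y')$, a finite-dimensional $k$-algebra. Given $r\in R$, we need $a\in R$ with $1+r+ar^2$ a unit; the symmetric choice $ar^2=r^2b$ then also gives (C2). I would prove this via Fitting's decomposition. Since $k[r]\subseteq R$ is a finite-dimensional commutative algebra, it is a finite product of local algebras, so there is an idempotent $e\in k[r]$ with $re$ invertible in $eRe$ (inside $ek[r]$) and $r(1-e)$ nilpotent. Let $u$ be the inverse of $re$ in $ek[r]$ and set $a=-u^2e-u e\in k[r]$; this makes $ar^2=-e-re$. Then
\[
1+r+ar^2=(1-e)+(1-e)r=(1-e)\bigl(1+r(1-e)\bigr),
\]
and on the $e$-component the expression vanishes, which is bad. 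So I would adjust: on the $e$-part, choose $a$ so that $e+re+ar^2e$ equals $e$, which gives $ae=-(u+u^2)e$, i.e.\ $ar^2e=-re-... $; more simply, pick $ae=c\,u^2e$ with $c$ such that $e+re+ce$ is invertible in $ek[r]$. Since $ek[r]$ is a finite product of local rings, and $e+re+ce$ is a unit iff it is a unit modulo each maximal ideal, one chooses $c\in ek[r]$ as $c=-re$, which gives the value $e$. On the $(1-e)$-part, $1-e+r(1-e)$ is a unit because $r(1-e)$ is nilpotent; there we take $a(1-e)=0$. Altogether $a=-u\,e$ (using $ur^2e=re$) yields
\[
1+r+ar^2=(1-e)(1+r)+e,
\]
which is invertible. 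The key point is that all elements involved commute inside $k[r]$, so invertibility can be checked componentwise.

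With (C1) and (C2) in hand, \Cref{prop:C1-ho}$(1)$ shows $(\alpha,\beta,1)$ is semi-homotopic, and \Cref{prop:semi-homotopic-6}$(1)$ then gives the contravariant exact sequence. Dually, \Cref{prop:C1-ho}$(1)'$ together with \Cref{prop:semi-homotopic-6}$(1)'$ gives the covariant sequence in $(1)'$. The only real obstacle is the ring-theoretic verification of (C1) and (C2). I expect the Fitting-type splitting $k[r]\cong ek[r]\times(1-e)k[r]$ into an invertible part and a nilpotent part to handle it. The existence of such $e$ is standard: $k[r]\cong k[x]/(m(x))$, and one splits $m=x^n m_1$ with $x\nmid m_1$ and applies the Chinese remainder theorem.
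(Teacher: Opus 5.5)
Your proposal is correct and follows the paper's proof: the first assertions are \Cref{prop:quasi-long-ext}, and the Hom-finite parts come from combining \Cref{prop:C1-ho} with \Cref{prop:semi-homotopic-6}. The only extra input is that finite-dimensional algebras satisfy (C1) and (C2). The paper simply asserts this, whereas you verify it via the Fitting splitting of $k[r]$. Your final choice $a=-ue$ works, because it gives $1+r+ar^2=1+r(1-e)$ with $r(1-e)$ nilpotent, and $b=a$ yields (C2) since $k[r]$ is commutative. The false start with $a=-u^2e-ue$ should simply be deleted.
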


\begin{proof} \  One only justifies $(1)$. The assertion $(1)'$ can be dually proved. The $6$-term exact sequence of covariant functors
follows from \Cref{prop:quasi-long-ext}$(1)$. If $\mathcal C$ is a $\mathrm{Hom}$-finite $k$-linear category,
then the endomorphism ring of any object is a finite-dimensional $k$-algebra, which satisfies {\rm Condition(C1)} and {\rm (C2)}, hence the $6$-term exact sequence of contravariant functors follows from \Cref{prop:semi-homotopic-6}$(1)$ and \Cref{prop:C1-ho}$(1)$.
\end{proof}

\subsection{Strict inclusions}\label{sec:strict-implication} \ We show the strict inclusions:
\begin{equation*}    \resizebox{1\textwidth}{!}{
  $\{\text{morphisms of }\mathbb E\text{-triangles}\} \supsetneqq \{\text{semi-homotopic morphisms}\} \supsetneqq \{\text{homotopic morphisms}\}.$
  }
\end{equation*}
\vskip5pt
\begin{example}\label{ex:contra-les-fails} \ Let $(\mathcal{C}, \mathbb E, \mathfrak s)$ be an extriangulated category.
   \vskip5pt
  $(1)$ \ Let $\con X{\dg{f \\\alpha}}{Y\oplus X'}{\dg{\beta,-f'}}{Y'}{(g')^\ast\delta}$ be a sequence associated to the commutative diagram \Cref{eq:hs-fg1}. Then the sequence of covariant functors
  \begin{equation}\label{eq:contra-les-fails}
    \mathcal C(-, X) \xrightarrow{\mathcal C(-,\dg{f \\ \alpha})}\mathcal C(-, Y \oplus X') \xrightarrow{\mathcal C(-,\dg{\beta, -f'})}\mathcal C(-, Y')
  \end{equation}
  is not always exact at $\mathcal C(-, Y \oplus X')$. If this is the case, then $(\alpha, \beta, 1)$ fails to be semi-homotopic.
  \vskip5pt
  $(1)'$ \ Let $\con Y{\dg{-g \\\beta}}{Z\oplus Y'}{\dg{\gamma,g'}}{Z'}{f_\ast\varepsilon}$ be a sequence associated to the commutative diagram \Cref{eq:con-mor-o}. Then the sequence of contravariant functors
\begin{equation*}
  \mathcal C(Z',-)\xrightarrow{\mathcal C(\dg{\gamma,g'},-)}\mathcal C(Z\oplus Y',-)\xrightarrow{\mathcal C(\dg{-g \\\beta},-)}\mathcal C(Y,-)
\end{equation*}
is not always exact at $\mathcal C(Z\oplus Y',-)$. If this is the case, then $(1, \beta, \gamma)$ fails to be semi-homotopic.
  \vskip5pt

   Here is a counterexample of $(1)$ inspired by diagram$(\ast)$ in \cite[Lemma 4.1]{CK11}. Let $\mathcal{C} = K^b(\mathrm{proj}(\mathbb{Z}))$. Consider the following morphisms of distinguished triangles$:$
\begin{equation*}\label{eq:big-diagram}
  \resizebox{1\textwidth}{!}{
      $
  \xymatrix@R=0.4cm{
  {} & {} & \mathbb Z^2\ar[rrrr]^{ \dg{9&0\\ 0&1}}\ar@{=}[ddd] & {} & {} & {} & \mathbb Z^2\ar[rrrr]^{{\dg{-1,0}}}\ar[ddd]^{{\dg{0,1}}} & {} & {} & {} & \mathbb Z\ar@{.>}[rrrr]\ar@{.>}[ddd] & {} & {} & {} & 0\ar@{=}[ddd] \\
  {} & \mathbb Z\ar[ur]^(.6){ \dg{-3 \\ 9}}\ar[rrrr]^{1}\ar@{=}[ddd] & {} & {} & {} & \mathbb Z\ar[ur]^(0.3){ \dg{-27 \\ 9}}\ar[rrrr]^{3}\ar[ddd]^{1} & {} & {} & {} & \mathbb Z\ar[ur]^{{9}}\ar[rrrr]^{ \dg{-1\\0}}\ar[ddd]^{{4}} & {} & {} & {} & \mathbb Z^2\ar[ur]\ar@{=}[ddd] & {}  \\
  0\ar[ur]\ar@{.>}[rrrr]\ar@{=}[ddd] & {} & {} & {} & 0\ar[ur]\ar@{.>}[rrrr]\ar@{.>}[ddd] & {} & {} & {} & 0\ar[ur]\ar@{.>}[rrrr]\ar@{.>}[ddd] & {} & {} & {} & \mathbb Z\ar[ur]_(.3){ \dg{3 \\ -9}}\ar@{=}[ddd] & {} & {}\\
  {} & {} & \mathbb Z^2\ar[rrrr]^(.35){{\dg{0,1}}} & {} & {} & {} & \mathbb Z\ar@{.>}[rrrr] & {} & {} & {} & 0\ar@{.>}[rrrr] & {} & {} & {} &  \\
  {} & \mathbb Z\ar[ur]^(.6){ \dg{-3 \\ 9}}\ar[rrrr]^{1} & {} & {} & {} & \mathbb Z\ar[ur]^{{9}}\ar[rrrr]^{3} & {} & {} & {} & \mathbb Z\ar[ur]\ar[rrrr]^{ \dg{-1\\0}} & {} & {} & {} & \mathbb Z^2\ar[ur] & {}  \\
  0\ar[ur]\ar@{.>}[rrrr] & {} & {} & {} & 0\ar[ur]\ar@{.>}[rrrr] & {} & {} & {} & 0\ar[ur]\ar@{.>}[rrrr] & {} & {} & {} & \mathbb Z\ar[ur]_(.3){ \dg{3 \\ -9}} & {} & {}
}
$}
\end{equation*}
\vskip5pt

For simplicity, we denote the above diagram as follows:
\begin{equation*}
  \xymatrix@R=0.4cm{
  \Sigma^{-1} Z\ar[r]^-{h}\ar@{=}[d] & X\ar[r]^{f}\ar[d]_-{\alpha} & Y\ar[r]^{g}\ar[d]^-{\beta} & Z\ar@{=}[d] \\
  \Sigma^{-1} Z\ar[r]^-{{h'}} & X'\ar[r]^{{f'}} & Y'\ar[r]^{{g'}} & Z
}
\end{equation*}
One fixes the distinguished triangles $\tri LiX\alpha{X'}p$ and $\tri KjY\beta{Y'}q$ induced by $\alpha$ and $\beta$, respectively. Suppose that the sequence $\mathcal C(-, X) \xrightarrow{\mathcal C(-,\dg{f \\ \alpha})}\mathcal C(-, Y \oplus X') \xrightarrow{\mathcal C(-,\dg{\beta, -f'})}\mathcal C(-, Y')$ is exact. Then there must exist a morphism $s: K \to X$ such that $fs = j$ and $\alpha s = 0$. By \Cref{lem:long-ext-seq} there exists a morphism $\varphi: K \to L$ such that the following diagram commutes:
\begin{equation*}
  \xymatrix@R=0.5cm{
  {} & L\ar[d]_{i} & K\ar@{..>}[l]_{{\varphi }}\ar@{..>}[dl]_{s}\ar[d]^{j} & {} \\
  \Sigma^{-1}Z\ar[r]^{h}\ar@{=}[d] & X\ar[r]^{f}\ar[d]_-{\alpha} & Y\ar[r]^{g}\ar[d]^-{\beta} & Z\ar@{=}[d] \\
  \Sigma^{-1}Z\ar[r]^{{h'}} & X'\ar[r]^{{f'}} & Y'\ar[r]^{{g'}} & Z
}
\end{equation*}
\vskip5pt
The mapping cone of $fi$ is constructed as follows:
\begin{equation*}
\xymatrix@R=0.5cm{
  L\ar[d]_{i} & 0\ar[r]\ar[d] & \mathbb Z\ar[r]^-{\scalebox{0.8}{$\dg{-27\\ 9\\ -1}$}}\ar[d]_{1}  & \mathbb Z\oplus \mathbb Z\oplus \mathbb Z\ar[r]^-{\scalebox{0.8}{$-\dg{0,1,9}$}}\ar[d]^{\scalebox{0.8}{$\dg{1&0&0\\0&1&0}$}} & \mathbb Z\ar[d] \\
  X\ar[d]_{f} & 0\ar[r]\ar[d] & \mathbb Z\ar[r]^-{ \dg{-27 \\ 9}}\ar[d]_{3}  & \mathbb Z \oplus \mathbb Z\ar[r]\ar[d]^{{\dg{-1,0}}} & 0\ar[d] \\
  Y\ar@{>->}[d] & 0\ar[r]\ar[d] & \mathbb Z\ar[r]^-{{9}}\ar[d]_-{\scalebox{0.6}{$\dg{0\\0\\0\\1}$}} & \mathbb Z\ar[r]\ar[d]^-{ \dg{0\\1}} & 0\ar[d] \\
  \mathrm{Cone}(fi) & \mathbb Z\ar[r]^-{ \scalebox{0.6}{$\dg{27\\ -9\\ 1\\ 3}$}} & \mathbb Z\oplus \mathbb Z\oplus \mathbb Z\oplus \mathbb Z\ar[r]^-{ \scalebox{0.8}{$\dg{0&1&9&0\\ -1&0&0&9}$}} & \mathbb Z \oplus \mathbb Z\ar[r] & 0
}
\end{equation*}

Since $j$ factors through $fi$, the composition $K \xrightarrow {j} Y \rightarrowtail \mathrm{Cone}(fi)$ is zero in the homotopy category. Consequently, there are $s : \mathbb Z \to \mathbb Z$ and $t = \dg{x'&y'\\z'&w'\\x&y\\z&w}: \mathbb Z\oplus \mathbb Z\to \mathbb Z\oplus \mathbb Z\oplus \mathbb Z\oplus \mathbb Z$ such that the following is a null-homotopic morphism of complexes:

\begin{equation*}
  \xymatrix@R=1cm@C=1.5cm{
  0\ar[r]\ar[d] & \mathbb Z\ar[r]^-{\dg{9 \\ 4}}\ar@/_.5pc/@{..>}[dl]_(.6){s}\ar[d]_-{\scalebox{0.6}{$\dg{0\\0\\0\\1}$}} & \mathbb Z\oplus \mathbb Z\ar[r]\ar@{..>}[dl]_-{t}\ar[d]_-{\scalebox{0.8}{$\dg{0&0\\1&0}$}} & 0\ar[dl]\ar[d] \\
  \mathbb Z\ar[r]^(.4){\scalebox{0.6}{$\dg{27\\-9\\1\\3}$}} &  \mathbb Z\oplus \mathbb Z\oplus \mathbb Z\oplus \mathbb Z\ar[r]^-{\scalebox{0.8}{$\dg{0&1&9&0\\-1&0&0&3}$}} & \mathbb Z\oplus \mathbb Z\ar[r] & 0
}
\end{equation*}

Since $\dg{0&1&9&0\\-1&0&0&3}\dg{x'&y'\\z'&w'\\x&y\\z&w} = \dg{0&0\\1&0}$, $\dg{x'&y'\\z'&w'\\x&y\\z&w}$ takes the form $\dg{-1+3z&3w\\-9x&-9y\\x&y\\z&w}$. Then
\vskip5pt
\begin{equation*}
  \dg{-1+3z&3w\\-9x&-9y\\x&y\\z&w} \dg{9 \\ 4} + \dg{27 \\ -9 \\ 1 \\ 3} s = \dg{-9+27z+12w+27s\\-81x-36y-9s\\ 9x+4y+s\\ 9z + 4w + 3s } = \dg{0\\0\\0\\1}.
\end{equation*}
From rows $1$ and $4$, one has $3s = 1$. This yields a contradiction. \hfill $\square$

\end{example}

\vskip5pt

\begin{example}\label{ex:ho-square-not-ho-mor} \ Let $(\mathcal{C}, \mathbb E, \mathfrak s)$ be an extriangulated category.
   \vskip5pt
  $(1)$ \ Let $\con X{\dg{f \\\alpha}}{Y\oplus X'}{\dg{\beta,-f'}}{Y'}{(g')^\ast\delta}$ be a sequence associated to the commutative diagram \Cref{eq:hs-fg1}. Even though $(\alpha, \beta, 1)$ is semi-homotopic, it may fail to be homotopic.
  \vskip5pt
  $(1)'$ \ Let $\con Y{\dg{-g \\\beta}}{Z\oplus Y'}{\dg{\gamma,g'}}{Z'}{f_\ast\varepsilon}$ be a sequence associated to the commutative diagram \Cref{eq:con-mor-o}. Even though $(1, \beta, \gamma)$ is semi-homotopic, it may fail to be homotopic.
  \vskip5pt

  Here is a counterexample of $(1)$ based on \cite[Example 6.19]{CF22}. Let $k := \mathbb F_2$ be a field, $R := k[\mathbb Z / 4 \mathbb Z]$ be a group algebra. For $r \in R$, let $\mu_r : R/x^i \to R/x^j$ denote the morphism $[1] \mapsto [r]$ when it is well-defined. The category of finite-dimensional $R$-modules $\mathrm{mod}_R$ is a Frobenius exact category, and its stable category is a $\mathrm{Hom}$-finite $k$-linear triangulated category. By \cite[Example 6.19]{CF22} there exists a morphism between distinguished triangles
  \begin{equation*}
    \xymatrix@R=0.7cm{
R/x^3 \oplus R/x^2 \ar@{->}[r]^-{\scalebox{0.8}{$\dg{1&0\\0 & \mu_x}$}} \ar@{=}[d] & R/x^3 \oplus R/x^3 \ar@{->}[r]^-{\scalebox{0.8}{$\dg{0, \mu_1}$}} \ar@{->}[d]_-{\dg{\mu _1, 0}} & R/x \ar@{->}[r]^-{\dg{0 \\ \mu_x}} \ar@{->}[d]^{\dg{0 \\ \mu_x}} & R/x \oplus R/x^2 \ar@{=}[d] \\
R/x^3 \oplus R/x^2 \ar@{->}[r]^-{\dg{\mu_1, 0}} & R/x \ar@{->}[r]^-{\dg{\mu _x \\ 0}} & R/x^2 \oplus R/x^2  \ar@{->}[r]^-{\scalebox{0.8}{$\dg{\mu _1 & \mu _1 \\ 0 & 1}$}} & R/x \oplus R/x^2
}
  \end{equation*}
  By \Cref{twolongexactseq}, the middle square is homotopic. We claim that $(\dg{\mu_1,0}, \dg{0 \\ \mu_x}, 1)$ is not a homotopic morphism. For sake of contradiction, we assume ${R/x^3\oplus R/x^3 }\xrightarrow {\scalebox{0.8}{$\dg{\mu_1&0\\0&\mu_1}$}} {R/x\oplus R/x} \xrightarrow{\scalebox{0.8}{$\dg{\mu_x&0\\0&-\mu_x}$}}{R/x \oplus R/x}\xrightarrow {\scalebox{0.8}{$\dg{\mu_1&\mu_1\\0&\mu_1}$}} {R/x \oplus R/x^2}$ is a distinguished triangle.
  Since $R/x^3 \xrightarrow {\mu_1} R/x \xrightarrow {\mu_x} R/x^2 \xrightarrow {\mu_1} R/x$ is a distinguished triangle, there exists $\varphi \in \mathrm{End} (R/x^3 \oplus R/x^3)$ such that the following diagram commutes:
  \begin{equation*}
    \xymatrix@R=0.5cm@C=1.3cm{
R/x^3\oplus R/x^3  \ar@{..>}[d]^-{\varphi} \ar@{->}[r]^-{\scalebox{0.8}{$\dg{\mu_1&0\\0&\mu_1}$}} & R/x\oplus R/x \ar@{=}[d] \ar@{->}[r]^-{\scalebox{0.8}{$\dg{\mu_x&0\\0&-\mu_x}$}} & R/x^2\oplus R/x^2 \ar@{=}[d]  \ar@{->}[r]^-{\scalebox{0.8}{$\dg{\mu_1&\mu_1\\0&\mu_1}$}} & R/x\oplus R/x \ar@{..>}[d]^-{\Sigma \varphi} \\
R/x^3\oplus R/x^3 \ar@{->}[r]^-{\scalebox{0.8}{$\dg{\mu_1&0\\0&\mu_1}$}} & R/x\oplus R/x \ar@{->}[r]^-{\scalebox{0.8}{$\dg{\mu_x&0\\0&-\mu_x}$}} & R/x^2\oplus R/x^2 \ar@{->}[r]^-{\scalebox{0.8}{$\dg{\mu_1&0\\0&-\mu_1}$}} & R/x\oplus R/x
} \end{equation*}
  Since $R/x \cong \mathbb F_2$, one has $\Sigma \varphi = \dg{1 & 1 \\ 0 & 1}$. Since $\Sigma$ is an auto-equivalence, $\varphi = \dg{1 & 1 \\ 0 & 1}$. The left square fails to commute, which yields a contradiction. \hfill $\square$
\end{example}

\vskip5pt

\begin{remark}
  In this case, the triangle associated to a semi-homotopic morphism takes the form $\tri XuYvZ{\alpha w}$, where $\tri XuYvZw$ is a distinguished triangle and $\alpha \in \mathrm{Aut}(\Sigma X)$ (\Cref{prop:C1-ho}$(2)$).
\end{remark}

\section{Homotopic morphisms and $4 \times 4$ Lemma}

This section aims to generalize $4 \times 4$ Lemma in triangulated categories to extriangulated categories.
Since the class of $\mathbb E$-triangles is not closed under rotations, $4 \times 4$ Lemma in extriangulated categories has several variants.

\subsection{Morphisms which are $\mathbb E$-inflations and $\mathbb E$-deflations}

To study {\rm $4 \times 4$ Lemma}, one inevitably needs to rotate some $\mathbb E$-triangles. Thus one should consider the class of morphisms which are $\mathbb E$-inflations and $\mathbb E$-deflations.

\vskip5pt

\begin{notation}\label{def:slr} \ Let $(\mathcal{C}, \mathbb{E}, \mathfrak{s})$ be an extriangulated category.
Denote by $S$ the class of morphisms which are $\mathbb E$-inflations and $\mathbb E$-deflations,  by $\mathcal{L}$ the class of objects $L$ such that
$L \to 0$ is an $\mathbb E$-inflation, and by $\mathcal{R}$ the class of objects $R$ such that $0 \to R$ is an $\mathbb E$-deflation.
\end{notation}

\vskip5pt

In an exact category, $S$ consists of all the isomorphisms, $\mathcal{L} = \{0\} = \mathcal{R}.$
In a triangulated category, $S$ consists of all the morphisms, $\mathcal{L}  = \mathcal{R}$ consists of all the objects.

\vskip5pt

\begin{lemma}\label{prop:slr}
	Let $(\mathcal{C}, \mathbb E, \mathfrak s)$ be an extriangulated category. Then
\vskip5pt
$(1)$ \ $S$ contains all isomorphisms, and $S$ is closed under finite coproducts and compositions.
\vskip5pt
$(2)$ \ $\mathcal L$ is closed under isomorphisms and finite coproducts. Moreover, let $\con XfYgZ\delta$ be an $\mathbb E$-triangle. If $Y \in \mathcal{L}$, then $X \in \mathcal{L};$ if $X,Z\in\mathcal{L}$, then $Y \in \mathcal{L}$. In particular, $\mathcal L$ is closed under direct summands.
\vskip5pt
$(2)'$ \  $\mathcal R$ is closed under isomorphisms and finite coproducts. Moreover, let $\con XfYgZ\delta$ be an $\mathbb E$-triangle in $\mathcal{C}$. If $Y \in \mathcal{R}$, then $Z \in \mathcal{R};$ if $X,Z\in\mathcal{R}$, then $Y \in \mathcal{R}$. In particular, $\mathcal L$ is closed under direct summands.
\end{lemma}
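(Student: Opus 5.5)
Each part is a short reduction to ET2, ET4/ET4$^{\rm op}$, \Cref{isomorphisms} and \Cref{prop:hs-inflation}; $(2)'$ is dual to $(2)$.

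For $(1)$, first show that every identity lies in $S$. By additivity of $\mathfrak s$ (ET2), the split extension $0\in\mathbb E(0,X)$ is realized by $X\xrightarrow{1}X\to 0$. Likewise $0\in\mathbb E(X,0)$ is realized by $0\to X\xrightarrow{1}X$. Hence $1_X$ is both an $\mathbb E$-inflation and an $\mathbb E$-deflation. For an isomorphism $\varphi:X\to Y$, apply \Cref{isomorphisms} to these two split $\mathbb E$-triangles, changing the middle or the end term by $\varphi$; this shows $\varphi\in S$. Closure under finite coproducts follows because $\mathfrak s(\delta_1\oplus\delta_2)$ is the direct sum of the realizations (ET2). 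Closure under composition is the last sentence of ET4 for inflations and of ET4$^{\rm op}$ for deflations.

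For $(2)$, closure of $\mathcal L$ under isomorphisms follows from $(1)$: if $L\to 0$ is an $\mathbb E$-inflation and $\varphi:L'\to L$ is an isomorphism, then $L'\to 0$ is the composite of $\varphi$ and $L\to 0$. Closure under finite coproducts follows from ET2, since the direct sum of $\mathbb E$-triangles $L_i\to 0\to C_i$ gives an $\mathbb E$-triangle $L_1\oplus L_2\to 0\to C_1\oplus C_2$. Now let $\con XfYgZ\delta$ be an $\mathbb E$-triangle.

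If $Y\in\mathcal L$, then $X\to 0$ equals the composite of $f$ and $Y\to 0$, which is an $\mathbb E$-inflation by ET4; so $X\in\mathcal L$.

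If $X,Z\in\mathcal L$, consider the commutative square with top row $X\to 0$, left column $f$, right column $0\to Z$ and bottom row $-g:Y\to Z$. Its associated sequence is $X\xrightarrow{\dg{0\\ f}}0\oplus Y\xrightarrow{(0,g)}Z$. Up to the canonical isomorphism $0\oplus Y\cong Y$ and a sign, this is the given $\mathbb E$-triangle, so by \Cref{isomorphisms} it is an $\mathbb E$-triangle and the square is homotopic. The top row $X\to 0$ is an $\mathbb E$-inflation because $X\in\mathcal L$. Hence, by \Cref{prop:hs-inflation}$(1)$, $-g$ and therefore $g$ (again by \Cref{isomorphisms}) is an $\mathbb E$-inflation. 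Composing with the $\mathbb E$-inflation $Z\to 0$ via ET4 shows that $Y\to 0$ is an $\mathbb E$-inflation, i.e. $Y\in\mathcal L$.

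Closure under direct summands follows by applying the first assertion to the split $\mathbb E$-triangle $X\to X\oplus X'\to X'$. Part $(2)'$ is dual, using deflations, ET4$^{\rm op}$ and \Cref{prop:hs-inflation}$(2)$.

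The only step that is not routine is $Y\in\mathcal L$ when $X,Z\in\mathcal L$. The point is to recognize the given $\mathbb E$-triangle as a homotopic square against $X\to 0$, so that $g$ becomes an $\mathbb E$-inflation. The only care needed there is with signs and the identification $0\oplus Y\cong Y$, both of which \Cref{isomorphisms} handles.
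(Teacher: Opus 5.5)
Your proposal is correct and takes essentially the same route as the paper. The one nontrivial step ($X,Z\in\mathcal L\Rightarrow Y\in\mathcal L$) is proved exactly as the paper does it: read the $\mathbb E$-triangle as a homotopic square against $X\to 0$, apply \Cref{prop:hs-inflation}$(1)$ to make $g$ an $\mathbb E$-inflation, and compose with $Z\to 0$ via ET4. Using $-g$ as the bottom row is fine; the paper's choice of $g$ is equally valid, since $X\xrightarrow{f}Y\xrightarrow{-g}Z$ realizes $-\delta$. Your explicit checks of the routine assertions, which the paper leaves as clear, are also correct.
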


\begin{proof} \ $(1)$ \ This is clear by ET2, ET4, and ET4$^{\mathrm{op}}$.

\vskip5pt

$(2)$ \  We only justify the last assertion. Let $\con XfYgZ\delta$ be an $\mathbb E$-triangle
with $X, Z\in \mathcal L$. Then there is a homotopic square
    \begin{equation*}
\xymatrix@R=0.4cm{
  X\ar[r]\ar[d]_{f} & 0\ar[d] \\
  Y\ar[r]^{g} & Z
}
\end{equation*}
    Since $X\in \mathcal L$ and $X\to 0$ is an $\mathbb E$-inflation, by \Cref{prop:hs-inflation}$(1)$, $g$ is also an $\mathbb E$-inflation. Note that $Y \to 0$ is the composition of $g$ and $Z\to 0$, which is also an $\mathbb E$-inflation, i.e.,  $Y\in \mathcal L$.
\end{proof}

\vskip5pt

\begin{lemma}\label{prop:slr-2} \ Let $(\mathcal{C}, \mathbb E, \mathfrak s)$ be an extriangulated category. Then

\vskip5pt
$(1)$ \ $X\in \mathcal L$ if and only if there is an $\mathbb E$-triangle $\con{X}{}{A}{f}{B}{}$ with $f\in S$.
\vskip5pt
$(1)'$ \  $Y\in \mathcal R$ if and only if there is an $\mathbb E$-triangle $\con{A}{f}{B}{}{Y}{}$ with $f\in S$.
\vskip5pt
$(2)$ \  $f \in S$ if and only if there is an $\mathbb E$-triangle $\con{X}{}{A}{f}{B}{}$ with $X\in \mathcal L$.
\vskip5pt
$(2)'$ \  $f \in S$ if and only if there is an $\mathbb E$-triangle $\con{A}{f}{B}{}{Y}{}$ with $Y\in \mathcal R$.
\end{lemma}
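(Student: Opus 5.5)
Each of the four statements is an ``if and only if'', and I would prove them directly from the definitions of $S$, $\mathcal L$, $\mathcal R$, using \Cref{prop:slr}, \Cref{prop:comp-infldefl} and \Cref{lem:infl-section}. It suffices to treat $(1)$ and $(2)$; the primed statements are dual.

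For $(2)$, suppose first that $f\in S$. Since $f$ is an $\mathbb E$-deflation, there is an $\mathbb E$-triangle $\con{X}{h}{A}{f}{B}{\delta}$. We must show $X\in\mathcal L$, i.e.\ that $X\to 0$ is an $\mathbb E$-inflation. Because $f$ is an $\mathbb E$-inflation, it has an $\mathbb E$-triangle $\con{A}{f}{B}{}{C}{}$. Now $(X\to 0)$ and $h$ fit into the commutative square
\[\xymatrix@R=0.4cm{X\ar[r]^{h}\ar[d] & A\ar[d]^{f}\\ 0\ar[r] & B}\]
and I would show this square is homotopic. The sequence $\con X{h}{A\oplus 0}{(f,0)}{B}{\delta}$ is just the original $\mathbb E$-triangle up to the identification $A\oplus0=A$, so the square is homotopic. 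By \Cref{prop:hs-inflation}$(1)$, $h$ is an $\mathbb E$-inflation if and only if $X\to0$ is. Since $h$ is an $\mathbb E$-inflation (it is the first map of an $\mathbb E$-triangle), $X\to0$ is an $\mathbb E$-inflation, so $X\in\mathcal L$.

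There is an orientation check here. In \Cref{eq:htsq} the top arrow is $f$ and the left arrow is $u$, and the realized triangle has middle map $\dg{f\\u}$. I would take the top arrow to be $h$ and the left arrow to be $X\to0$, so that the conclusion is ``$h$ inflation $\Leftrightarrow$ bottom $0\to B$ inflation''. That is not what we want, so I would transpose instead: take the top arrow to be $X\to 0$, the left arrow $h$, the right arrow $0\to B$, and the bottom arrow $f$. Then the realized triangle is $\con X{h}{0\oplus A}{(0,-f)}{B}{}$, which is an isomorphic copy of the given one (after the sign change of \Cref{isomorphisms}). \Cref{prop:hs-inflation}$(1)$ then gives: $X\to0$ is an $\mathbb E$-inflation if and only if $f$ is, and $f$ is by hypothesis. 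This is the key step.

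Conversely, suppose $\con{X}{h}{A}{f}{B}{\delta}$ is an $\mathbb E$-triangle with $X\in\mathcal L$. Then $f$ is an $\mathbb E$-deflation, and the same transposed homotopic square together with \Cref{prop:hs-inflation}$(1)$ shows that $f$ is an $\mathbb E$-inflation, since $X\to0$ is. Hence $f\in S$.

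For $(1)$: if $X\in\mathcal L$, take the split triangle $\con{X}{1}{X}{}{0}{0}$ with $X\to0$ in $S$. Here $X\to 0$ is an $\mathbb E$-inflation by hypothesis and an $\mathbb E$-deflation by this split triangle. Conversely, given an $\mathbb E$-triangle $\con X{}{A}{f}{B}{}$ with $f\in S$, part $(2)$ gives $X\in\mathcal L$.

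The main obstacle is getting the orientation and signs right so that the sequence required in \Cref{def:htsq} is genuinely an $\mathbb E$-triangle; \Cref{isomorphisms} handles the sign adjustments.
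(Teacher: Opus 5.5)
Your argument is correct. The transposed square you settle on (top $X\to 0$, left $h$, right $0\to B$, bottom $f$) is exactly the square the paper uses for the converse of (2). The difference is in how you prove $f\in S\Rightarrow X\in\mathcal L$.

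You apply the ``bottom is an $\mathbb E$-inflation $\Rightarrow$ top is an $\mathbb E$-inflation'' half of \Cref{prop:hs-inflation}$(1)$ to that square, and then deduce (1) from (2). The paper goes the other way round. It proves (1) first: it places the homotopic square coming from $X\to A\xrightarrow{f}B$ beside the one coming from an $\mathbb E$-triangle $A\xrightarrow{f}B\xrightarrow{c}Y$. Composing them with \Cref{thm:hs-composition} yields an explicit $\mathbb E$-triangle $X\to 0\to Y$, and (2) then follows from (1).

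That composition is precisely what proves the converse half of \Cref{prop:hs-inflation}$(1)$ in this special case. So your route is shorter but relies on the full ``if and only if'' of the cited lemma. The paper's route is self-contained modulo \Cref{thm:hs-composition}, and it also exhibits the third term $Y$ of an $\mathbb E$-triangle starting with $f$ as an object with $X\to 0\to Y$, i.e.\ $Y\cong FX$. Your use of the split triangle $X\xrightarrow{1}X\to 0$ for the forward direction of (1) is also fine; the paper uses $X\to 0\to FX$ instead.

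When you write this up, make three changes:
\begin{enumerate}
\item Delete the first attempted deduction. \Cref{prop:hs-inflation} relates the top and bottom arrows of a homotopic square, not the top and left ones, so ``$h$ is an $\mathbb E$-inflation iff $X\to 0$ is'' does not follow.
\item Drop the unused triangle $A\xrightarrow{f}B\to C$.
\item For the converse of (1), say that you are applying the \emph{argument} of (2). That argument works for an arbitrary $\mathbb E$-triangle with deflation $f$, whereas the existence statement of (2) alone would not directly give $X\in\mathcal L$ for the specific $X$.
\end{enumerate}
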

\begin{proof} \ We only justify $(1)$ and $(2)$.
\vskip5pt
$(1)$ \ Suppose that there is an $\mathbb E$-triangle $\con {X}kAfB{}$ with $f \in S$. Since $f$ is an $\mathbb E$-inflation, there is an $\mathbb E$-triangle $\con A f B c{Y}{}$.
One writes those $\mathbb E$-triangles as homotopic squares:
		\begin{equation*}
\xymatrix@R=0.4cm{
  X\ar[r]^{k}\ar[d] & A\ar[r]\ar[d]^-{f} & 0\ar[d] \\
  0\ar[r] & B\ar[r]^{c} & Y
}
\end{equation*}
		By \Cref{thm:hs-composition} the outer rectangle is also a homotopic square, which follows that $\con {X}{}0{}{Y}{}$ is an $\mathbb E$-triangle. Hence $X \in \mathcal{L}$.
Conversely, if $X\in \mathcal L$, by definition there is an $\mathbb E$-triangle $\con X{}0fR{}$ with $f\in S$.

\vskip5pt
$(2)$ \ Let $f: A\to B$ be a morphism in $S$. Since $f$ is an $\mathbb E$-deflation, there is $\mathbb E$-triangle $\con {X}kAfB{}$. By $(1)$, $X\in \mathcal L$.
Conversely, let $\con X{}AfB{}$ be an $\mathbb E$-triangle with $X \in \mathcal{L}$. By definition
        \begin{equation*}
\xymatrix@R=0.4cm{
  X\ar[r]\ar[d] & 0\ar[d] \\
  A\ar[r]^{f} & B
}
\end{equation*}
is a homotopic square.  Since $X\in \mathcal L$,  $X\to 0$ is an $\mathbb E$-inflation. By \Cref{prop:hs-inflation}$(1)$, $f$ is also an $\mathbb E$-inflation. Thus $f\in S$.
	\end{proof}

\vskip5pt

\begin{lemma}\label{Ff}
    Let $(\mathcal{C}, \mathbb E, \mathfrak s)$ be an extriangulated category, $\con X{}0{}{X'}{\delta_X}$ and $\con Y{}0{}{Y'}{\delta_Y}$ be $\mathbb E$-triangles.
\vskip5pt
$(1)$ \  For any $f: X\to Y$, there is a unique $g:X'\to Y'$ such that $f_\ast \delta_X=g^\ast \delta_Y$.

\vskip5pt
$(1)'$ \ For any $g:X'\to Y'$, there is a unique $f: X\to Y$ such that $f_\ast \delta_X=g^\ast \delta_Y$.
    \begin{equation*}
\xymatrix@R=0.4cm{
  X\ar[r]\ar[d]_{f} & 0\ar[r]\ar[d] & X'\ar@{-->}[r]^{{\delta_X}}\ar[d]^{g} & {} \\
  Y\ar[r] & 0\ar[r] & Y'\ar@{-->}[r]^{{\delta_Y}} & {}
}
\end{equation*}
\end{lemma}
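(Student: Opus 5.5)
Both parts rest on the two 6-term exact sequences of \Cref{lem:long-ext-seq}, applied to the $\mathbb E$-triangles $\con X{}0{}{X'}{\delta_X}$ and $\con Y{}0{}{Y'}{\delta_Y}$. Since their middle term is $0$, all $\mathcal C(-,0)$ and $\mathcal C(0,-)$ terms vanish. We therefore obtain natural isomorphisms
\[
(\delta_X)^\sharp:\ \mathcal C(X,-)\xrightarrow{\ \cong\ }\mathbb E(X',-),\qquad (\delta_Y)_\sharp:\ \mathcal C(-,Y')\xrightarrow{\ \cong\ }\mathbb E(-,Y).
\]
Indeed, in the covariant sequence $\mathcal C(0,-)\to\mathcal C(X,-)\xrightarrow{\delta_X^\sharp}\mathbb E(X',-)\to\mathbb E(0,-)=0$, exactness at the two ends makes $\delta_X^\sharp$ both injective and surjective. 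The same argument, with the contravariant sequence, applies to $(\delta_Y)_\sharp$.

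For $(1)$, fix $f:X\to Y$. Consider the element $f_\ast\delta_X\in\mathbb E(X',Y)$. Because $(\delta_Y)_\sharp$ evaluated at $X'$ is the bijection $\mathcal C(X',Y')\to\mathbb E(X',Y)$, $g\mapsto g^\ast\delta_Y$, there is a unique $g:X'\to Y'$ with $g^\ast\delta_Y=f_\ast\delta_X$. This gives existence and uniqueness at once.

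For $(1)'$, fix $g:X'\to Y'$ and set $\theta:=g^\ast\delta_Y\in\mathbb E(X',Y)$. Now use $(\delta_X)^\sharp$ evaluated at $Y$, which is the bijection $\mathcal C(X,Y)\to\mathbb E(X',Y)$, $f\mapsto f_\ast\delta_X$. It yields a unique $f$ with $f_\ast\delta_X=\theta$.

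There is no real obstacle here. The only point that needs care is to invoke the exactness at both ends of the relevant portion of each 6-term sequence: $\mathcal C(X',0)=0$ and $\mathbb E(0,Y)=0$ for the contravariant one, and the analogous terms for the covariant one. This is what gives injectivity as well as surjectivity, and hence uniqueness.
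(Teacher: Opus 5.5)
Your proof is correct and is essentially the paper's argument. The paper obtains existence from ET3 and uniqueness from the 6-term sequence (if $(g-g')^\ast\delta_Y=0$ then $g-g'$ factors through $0$), whereas you read off both at once from the bijectivity of $(\delta_Y)_\sharp$ and $(\delta_X)^\sharp$, which is the same observation the paper uses later to define $\ell$ and $\rho$.

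One trivial slip in your last paragraph: the vanishing term needed in the contravariant sequence evaluated at $X'$ is $\mathbb E(X',0)$ rather than $\mathbb E(0,Y)$, though both are zero by additivity of $\mathbb E$.
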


        \begin{proof} \ One only shows $(1)$. By ET3 there is $g:X'\to Y'$ such that $f_\ast \delta_X=g^\ast \delta_Y$. Suppose that there is also $g':X'\to Y'$ such that $f_\ast \delta_X=(g')^\ast \delta_Y$. Then $(g-g')^\ast \delta_Y=0$. By \Cref{lem:long-ext-seq}, $(g-g')$ factors through $0\to Y'$. Hence $g=g'$.
\end{proof}

\vskip5pt

A connection between $\mathcal{L}$ and $\mathcal{R}$ is given as follows. For each $X \in \mathcal{L}$, one fixes an $\mathbb E$-triangle $\con X{}0{}{FX}{\delta_X}$. By definition $FX\in \mathcal R$. For each morphism $f:X\to Y$ with $X, Y\in \mathcal L$, one defines $Ff: FX\to FY$ to be the unique morphism such that $f_\ast \delta_X={(Ff)}^\ast \delta_Y$. Thus $F : \mathcal L \to \mathcal R$ is a functor. For another morphism $h: X\to Y$, one has $$(F(f+h))^\ast\delta_Y=(f+h)_\ast\delta_X=f_\ast\delta_X+h_\ast\delta_X=(Ff+Fh)^*\delta_Y.$$ By the uniqueness, one has $F(f+h) = Ff + Fh$, i.e., $F$ is an additive functor.

\vskip5pt

\begin{lemma}\label{F}
    Let $(\mathcal{C}, \mathbb E, \mathfrak s)$ be an extriangulated category. The additive functor $F: \mathcal L\to \mathcal R$ is an equivalence.
	\end{lemma}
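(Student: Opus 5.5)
To prove that $F:\mathcal L\to\mathcal R$ is an equivalence, I will show three things: $F$ is fully faithful, it is essentially surjective, and it is well defined on objects, meaning $FX\in\mathcal R$.

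\textbf{Well-definedness.} For each $X\in\mathcal L$ the fixed $\mathbb E$-triangle $\con X{}0{}{FX}{\delta_X}$ exhibits $0\to FX$ as an $\mathbb E$-deflation. Hence $FX\in\mathcal R$.

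\textbf{Full faithfulness.} This comes directly from \Cref{Ff}. For $X,Y\in\mathcal L$, \Cref{Ff}$(1)$ says that every $f:X\to Y$ has a unique $g=Ff$ with $f_\ast\delta_X=g^\ast\delta_Y$. Conversely, \Cref{Ff}$(1)'$ says that every $g:FX\to FY$ has a unique $f:X\to Y$ with $f_\ast\delta_X=g^\ast\delta_Y$. By the uniqueness in $(1)$, this $f$ satisfies $Ff=g$, so the map $\mathcal C(X,Y)\to\mathcal C(FX,FY)$ is surjective. It is also injective: if $Ff=Ff'$, then $f$ and $f'$ are both the unique morphism attached to $g=Ff$ in $(1)'$, so $f=f'$. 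Alternatively, for injectivity one can argue directly. The relation $(f-f')_\ast\delta_X=0$ and the $6$-term sequence of \Cref{lem:long-ext-seq} show that $f-f'$ factors through $X\to 0$, hence vanishes.

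\textbf{Essential surjectivity.} Let $R\in\mathcal R$. Then $0\to R$ is an $\mathbb E$-deflation, so there is an $\mathbb E$-triangle $\con X{}0{}R{\delta}$. In it, $X\to 0$ is an $\mathbb E$-inflation, so $X\in\mathcal L$. It remains to show $FX\cong R$. Compare the fixed triangle $\con X{}0{}{FX}{\delta_X}$ with $\con X{}0{}R\delta$. Both realize extensions with the same left end $X$, so \Cref{Ff}$(1)$ applied to $1_X$ in both directions gives morphisms $g:FX\to R$ and $g':R\to FX$ with $\delta_X=g^\ast\delta$ and $\delta=g'^\ast\delta_X$. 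Then $(gg')^\ast\delta=\delta$, and by the uniqueness in \Cref{Ff}$(1)$ applied to the triangle for $\delta$ with $f=1_X$, we get $gg'=1_R$. Symmetrically $g'g=1_{FX}$. (One could instead invoke \Cref{cone} on the morphism of $\mathbb E$-triangles $(1_X,0,g)$, since two of its components are isomorphisms.) Hence $R\cong FX$.

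All steps are routine. The only point requiring care is the use of \Cref{Ff} when the two triangles are not the fixed ones. Its proof works for arbitrary $\mathbb E$-triangles of the shape $\con X{}0{}{X'}{}$, so this is harmless.
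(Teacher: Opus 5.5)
Your proof is correct and follows the paper's argument. Full faithfulness is read off from \Cref{Ff}, and density comes from comparing an arbitrary realization $X\to 0\to R$ with the fixed one $X\to 0\to FX$; the paper obtains the comparison map by ET3 and concludes it is an isomorphism via \Cref{cone}, which is exactly your parenthetical alternative, while your main route through the uniqueness in \Cref{Ff}$(1)$ is an equally valid way to produce the inverse.
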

	\begin{proof} \ $F$ is fully faithful by \Cref{Ff}. Let $Y\in \mathcal R$. By definition there exists an $\mathbb E$-triangle $\con X{}0{}Y{}$ where $X\in \mathcal L$. It follows from ET3 and \Cref{cone} that $Y\cong FX$. Hence, $F$ is dense.
	\end{proof}

\vskip5pt

\begin{lemma}\label{thm:lr-equivalence}\label{thm:rl-equivalence}
	Let $(\mathcal{C}, \mathbb E, \mathfrak s)$ be an extriangulated category, $X \in \mathcal{L}$ and $M \in \mathcal{C}$.
\vskip5pt
$(1)$ \  There is an isomorphism $\ell_{M,X} : \mathbb E(M,X) \cong \mathcal{C}(M, FX)$, which is functorial in both arguments.

\vskip5pt

$(2)$ \  There is an isomorphism $\rho _{X,M} : \mathbb E(FX,M) \cong \mathcal{C}(X,M)$, which is functorial in both arguments.
\end{lemma}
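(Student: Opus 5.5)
The isomorphisms will come from the long exact sequences of \Cref{lem:long-ext-seq} applied to the fixed $\mathbb E$-triangle $\con X{}0{}{FX}{\delta_X}$; the middle term $0$ kills everything else in the relevant segments.

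For $(1)$, apply the contravariant sequence at an object $M$:
\begin{equation*}
\mathcal C(M,0)\to \mathcal C(M,FX)\xrightarrow{(\delta_X)_\sharp}\mathbb E(M,X)\to \mathbb E(M,0)=0 .
\end{equation*}
Since $\mathcal C(M,0)=0$, this is a short exact sequence of abelian groups. Hence $(\delta_X)_\sharp:\varphi\mapsto \varphi^\ast\delta_X$ is an isomorphism $\mathcal C(M,FX)\cong\mathbb E(M,X)$. I set $\ell_{M,X}$ to be its inverse.

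Functoriality in $M$ is immediate, since $(\delta_X)_\sharp$ is a natural transformation of functors in $M$.

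Functoriality in $X$ is the main point to check, and I expect it to be the only real obstacle. Take $f:X\to Y$ with $X,Y\in\mathcal L$. By definition of $F$ we have $f_\ast\delta_X=(Ff)^\ast\delta_Y$, so $(f,0,Ff)$ is a morphism of $\mathbb E$-triangles. By \Cref{naturality}, $f_\ast\circ(\delta_X)_\sharp=(\delta_Y)_\sharp\circ\mathcal C(-,Ff)$. Concretely, $f_\ast(\varphi^\ast\delta_X)=\varphi^\ast f_\ast\delta_X=\varphi^\ast(Ff)^\ast\delta_Y=(Ff\circ\varphi)^\ast\delta_Y$. Hence $\ell$ is natural in $X$, as a transformation between the bifunctors $\mathbb E(-,-)$ and $\mathcal C(-,F-)$ on $\mathcal C^{\rm op}\times\mathcal L$.

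For $(2)$, apply the covariant sequence to the same triangle:
\begin{equation*}
\mathcal C(0,M)\to\mathcal C(X,M)\xrightarrow{\delta_X^\sharp}\mathbb E(FX,M)\to\mathbb E(0,M)=0 .
\end{equation*}
Exactness gives that $\delta_X^\sharp:\psi\mapsto\psi_\ast\delta_X$ is an isomorphism. I set $\rho_{X,M}$ to be its inverse.

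Naturality in $M$ comes from the naturality of $\delta_X^\sharp$. Naturality in $X$ again uses $f_\ast\delta_X=(Ff)^\ast\delta_Y$, via the second square of \Cref{naturality}. Explicitly, for $\psi:Y\to M$ we get $(Ff)^\ast(\psi_\ast\delta_Y)=\psi_\ast(Ff)^\ast\delta_Y=\psi_\ast f_\ast\delta_X=(\psi f)_\ast\delta_X$. So $\rho$ is natural, with contravariance in $X$ matched through $F$.
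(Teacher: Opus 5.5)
Your proof is correct and takes the same approach as the paper. The paper applies the $6$-term exact sequences of \Cref{lem:long-ext-seq} to the fixed $\mathbb E$-triangle $X\to 0\to FX\overset{\delta_X}{\dashrightarrow}$ and takes $\ell$ and $\rho$ to be the inverses of $(\delta_X)_\sharp$ and $\delta_X^\sharp$, exactly as you do. Your explicit check of naturality in $X$ via $f_\ast\delta_X=(Ff)^\ast\delta_Y$ spells out what the paper leaves implicit, and it yields precisely the identities recorded in \Cref{binaturality}.
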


\begin{proof} \ $(1)$ \  Since $X\in \mathcal L$, there is an $\mathbb E$-triangle $X\to 0\to FX\overset{\delta_X}\dashrightarrow$.
By \Cref{lem:long-ext-seq}  there is an isomorphism $(\delta_X)_\sharp(M): \mathcal{C}(M, FX)  \cong \mathbb E(M,X)$. Take $\ell_{M,X}=((\delta_X)_\sharp(M))^{-1}: \mathbb E(M,X) \cong \mathcal{C}(M, FX)$.
\vskip5pt
$(2)$ \ This can be similarly justified. \end{proof}

\begin{notation}\label{binaturality} The natural isomorphisms  $\ell$ and $\rho$ are in fact inspired by ``the connecting morphism'' by Nakaoka and Palu \cite[Definition 6.9]{NP19}.

\vskip5pt

    Let $(\mathcal{C}, \ \mathbb E, \ \mathfrak s)$ be an extriangulated category,  $X\in \mathcal L$ and $M\in \mathcal C$.
In the remaining part of this section, for $\delta \in \mathbb E(M,X)$ (respectively, $\varepsilon \in \mathbb E(FX,M)$) and $\varphi\in \mathcal C(M, FX)$ (respectively, $\psi \in \mathcal C(X, M)$), one writes $\ell(\delta):= \ell_{M, X}(\delta)$ (respectively, $\rho(\varepsilon):=\rho_{X, M}(\varepsilon)$) and $\ell^{-1}(\varphi):= (\ell_{M, X})^{-1}(\varphi)$ (respectively, $\rho^{-1}(\psi):=(\rho_{X, M})^{-1}(\psi)$) for simplicity. Then
\vskip5pt
$(1)$ \ For $f: X\to X'$ and $\delta\in \mathbb E(M, X)$, there holds $(Ff) \ell(\delta) =\ell(f_{\ast}\delta)$.

\vskip5pt
$(2)$ \ For $g: M\to M'$ and $\delta'\in \mathbb E(M', X)$, there holds $\ell(\delta') g=\ell(g^{\ast}\delta')$.

\vskip5pt
$(3)$ \ For $f: X\to X'$ and $\varepsilon'\in \mathbb E(FX', M)$, there holds $\rho(\varepsilon')f =\rho((Ff)^{\ast}\varepsilon')$.

\vskip5pt
$(4)$ \  For $g: M\to M'$ and $\varepsilon\in \mathbb E(FX, M)$, there holds $g(\rho(\varepsilon)) =\rho(g_{\ast}\varepsilon)$.

\end{notation}

\vskip5pt

\begin{lemma}\label{lem:rotation}
	Let $(\mathcal{C}, \ \mathbb E, \ \mathfrak s)$ be an extriangulated category, $F:\mathcal{L}\to \mathcal{R}$ the functor in \Cref{F}, and $f:A\to B$  a morphism in $S$.
\vskip5pt

 \vskip5pt
$(1)$ \  Let $\con KiAfB\eta$ be an $\mathbb E$-triangle. Then $\con AfB{\ell (\eta)}{FK}{-\rho^{-1}(i)}$ is an $\mathbb E$-triangle.

\vskip5pt
$(2)$ \ Let $\con AfBp{FK}\xi$ be an $\mathbb E$-triangle. Then $\con K{\rho(\xi)}AfB{-\ell ^{-1}(p)}$ is an $\mathbb E$-triangle.
\end{lemma}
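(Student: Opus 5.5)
The plan is to realize both rotated sequences as the $\mathbb E$-triangles that \Cref{prop:mor-of-confl} attaches to a homotopic morphism whose middle object is $0$. The natural source of such a morphism is the fixed $\mathbb E$-triangle $\con K{}0{}{FK}{\delta_K}$. First, since $f\in S$, \Cref{prop:slr-2}$(2)$ gives $K\in\mathcal L$ in $(1)$ and in $(2)$, so $FK$, $\delta_K$, $\ell$ and $\rho$ all make sense. Recall that $\ell(\eta)$ is characterized by $\ell(\eta)^\ast\delta_K=\eta$, since $\ell$ is the inverse of $(\delta_K)_\sharp$. Similarly, the map $\rho^{-1}$ sends $\psi\colon K\to M$ to $\psi_\ast\delta_K$, because $\rho$ is the inverse of $\delta_K^\sharp$ (as in \Cref{thm:lr-equivalence}).

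$(1)$ Put $\gamma:=\ell(\eta)\colon B\to FK$. Then $\eta=\gamma^\ast\delta_K$, which is exactly the hypothesis of \Cref{lem:hs-1?h}$(1)'$ for the $\mathbb E$-triangles $\con KiAfB\eta$ and $\con K{}0{}{FK}{\delta_K}$. That lemma yields $\beta\colon A\to 0$ such that $(1_K,\beta,\gamma)$ is a homotopic morphism. Necessarily $\beta=0$, so nothing needs to be checked about $\beta$. By \Cref{prop:mor-of-confl}$(1)'$ the sequence
$$\con A{\dg{-f\\ 0}}{B\oplus 0}{\dg{\gamma,0}}{FK}{i_\ast\delta_K}$$
is an $\mathbb E$-triangle. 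After identifying $B\oplus 0=B$, this is $\con A{-f}B{\ell(\eta)}{FK}{i_\ast\delta_K}$. Applying \Cref{isomorphisms} with the isomorphism $-1$ on $A$ changes the sign of both the first map and the extension. This gives $\con AfB{\ell(\eta)}{FK}{-i_\ast\delta_K}$, and $i_\ast\delta_K=\rho^{-1}(i)$, as required.

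$(2)$ This is the dual argument. Put $\alpha:=\rho(\xi)\colon K\to A$, so that $\alpha_\ast\delta_K=\xi$. This is the hypothesis of \Cref{lem:hs-f?1}$(1)$ for $\con K{}0{}{FK}{\delta_K}$ and $\con AfBp{FK}\xi$. That lemma gives a homotopic morphism $(\alpha,0,1_{FK})$; the middle map $0\to B$ is forced to be zero. By \Cref{prop:mor-of-confl}$(1)$, the sequence $\con K{\dg{0\\ \alpha}}{0\oplus A}{\dg{0,-f}}B{p^\ast\delta_K}$ is an $\mathbb E$-triangle, that is, $\con K{\rho(\xi)}A{-f}B{p^\ast\delta_K}$. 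Applying \Cref{isomorphisms} with $-1$ on $B$ flips the sign of both the second map and the extension. This gives $\con K{\rho(\xi)}AfB{-p^\ast\delta_K}$, and $p^\ast\delta_K=\ell^{-1}(p)$.

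The only genuinely delicate point is bookkeeping. One must match the characterizations of $\ell$ and $\rho$ with the extensions produced by \Cref{prop:mor-of-confl}: $i_\ast\delta_K$ in $(1)$ and $p^\ast\delta_K$ in $(2)$. One must also track the sign changes introduced by \Cref{isomorphisms}. The substantive input is that \Cref{lem:hs-f?1} produces a homotopic morphism rather than a mere morphism, and with middle term $0$ that homotopy is automatic.
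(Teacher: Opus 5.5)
Your proposal is correct and is essentially the paper's proof: the paper likewise applies \Cref{lem:hs-1?h}$(1)'$ (resp.\ \Cref{lem:hs-f?1}$(1)$) against the fixed $\mathbb E$-triangle $K\to 0\to FK$ with extension $\delta_K$. It then reads off the resulting $\mathbb E$-triangle with $-f$ and extension $i_\ast\delta_K=\rho^{-1}(i)$ (resp.\ $p^\ast\delta_K=\ell^{-1}(p)$), and fixes the signs via \Cref{isomorphisms}. One small citation slip: $K\in\mathcal L$ in part $(1)$ comes from \Cref{prop:slr-2}$(1)$ rather than $(2)$, and in part $(2)$ it is simply presupposed by the notation $FK$.
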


	\begin{proof} \ $(1)$ \ By definition $(\ell(\eta))^\ast(\delta_K)=\eta$. By \Cref{lem:hs-1?h}$(1)'$ the following is a homotopic morphism of $\mathbb E$-triangles
			\begin{equation*}
\xymatrix@R=0.4cm{
  K\ar[r]^{i}\ar@{=}[d] & A\ar[r]^{f}\ar[d] & B\ar@{-->}[r]^{\eta}\ar[d]_-{{\ell(\eta)}} & {} \\
  K\ar[r] & 0\ar[r] & FK\ar@{-->}[r]^{{\delta_K }} &. {}
}
\end{equation*}
Thus  $A \xrightarrow{- f} B \xrightarrow{\ell(\eta)} FK \overset {i_\ast \delta _K}\dashrightarrow$ is an $\mathbb E$-triangle. By definition $i_\ast \delta_K = \rho ^{-1}(i)$.
Then by \Cref{isomorphisms}, $\con AfB{\ell (\eta)}{FK}{-\rho^{-1}(i)}$ is an $\mathbb E$-triangle.

\vskip5pt
$(2)$ \   By definition $(\rho(\xi))_\ast(\delta_K)=\xi$. By \Cref{lem:hs-f?1}(1) the following is a homotopic morphism of $\mathbb E$-triangles
\begin{equation*}
\xymatrix@R=0.4cm{
  K\ar[r]\ar[d]_-{{\rho(\xi )}} & 0\ar[r]\ar[d] & FK\ar@{-->}[r]^{{\delta_K}}\ar@{=}[d] & {} \\
  A\ar[r]^{f} & B\ar[r]^{p} & FK\ar@{-->}[r]^{\xi} &. {}
}
\end{equation*}
Thus $K \xrightarrow{\rho (\xi)} A \xrightarrow{- f} B \overset {p^\ast \delta _K}\dashrightarrow$ is an $\mathbb E$-triangle. By definition $p^\ast \delta_K = \ell^{-1} (p)$. Then by \Cref{isomorphisms}, $\con K{\rho(\xi)}AfB{-\ell ^{-1}(p)}$ is an $\mathbb E$-triangle.
	\end{proof}

	\begin{proposition}\label{thm:hs-square-s}
		Let $(\mathcal{C}, \mathbb E, \mathfrak s)$ be an extriangulated category. Suppose that there is a homotopic morphism of $\mathbb E$-triangles with $f,g\in S$
\begin{equation*}
\xymatrix@R=0.4cm{
  K\ar[r]^{i}\ar@{=}[d] & X\ar[r]^{f}\ar[d]_{u} & Y\ar@{-->}[r]^{{l_f}}\ar[d]^{v} & {} \\
  K\ar[r]^{j} & A\ar[r]^{g} & B\ar@{-->}[r]^{{l_g}} &. {}
}
\end{equation*}
Then there is a homotopic morphism of $\mathbb E$-triangles
\begin{equation*}
\xymatrix@R=0.4cm{
  X\ar[r]^{f}\ar[d]_{u} & Y\ar[r]^{p}\ar[d]^{v} & FK\ar@{-->}[r]^{{r_f}} & {} \\
  A\ar[r]^{g} & B\ar[r]^{q} & FK\ar@{=}[u]\ar@{-->}[r]^{{r_g}} & {}
}
\end{equation*}
such that $$u_\ast r_f=r_g, \ \ i_\ast l_g = - q^\ast r_f, \ \ \ell (l_f) = p, \ \ \ell (l_g) = q, \ \ \rho (r_f) = - i, \ \ \rho (r_g) = - j.$$
\end{proposition}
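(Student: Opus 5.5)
The plan is to rotate both given $\mathbb E$-triangles with \Cref{lem:rotation}$(1)$, check the listed identities through the naturality rules of \Cref{binaturality}, and then produce the homotopic decomposition from the homotopic data of $(1,u,v)$.

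\textbf{Step 1 (rotation).} Since $f,g\in S$ and $K\in\mathcal L$ (by \Cref{prop:slr-2}$(1)$), \Cref{lem:rotation}$(1)$ gives $\mathbb E$-triangles
$$\con XfY{p}{FK}{r_f}, \qquad \con AgB{q}{FK}{r_g}.$$
Here we set $p:=\ell(l_f)$, $q:=\ell(l_g)$, $r_f:=-\rho^{-1}(i)$ and $r_g:=-\rho^{-1}(j)$. This immediately yields $\ell(l_f)=p$, $\ell(l_g)=q$, $\rho(r_f)=-i$ and $\rho(r_g)=-j$.

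\textbf{Step 2 (the identities).} By \Cref{binaturality}$(4)$, $\rho(u_\ast r_f)=u\rho(r_f)=-ui=-j=\rho(r_g)$. Injectivity of $\rho$ then gives $u_\ast r_f=r_g$.

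Commutativity $qv=p$ follows from \Cref{binaturality}$(2)$: $q v=\ell(l_g)v=\ell(v^\ast l_g)=\ell(l_f)$, using $v^\ast l_g=l_f$, which is part of $(1,u,v)$ being a morphism. Hence $(u,v,1)$ is a morphism of $\mathbb E$-triangles.

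For $i_\ast l_g=-q^\ast r_f$, I apply $\rho$-type naturality on the $FK$ side together with $\ell$. Concretely, both sides lie in $\mathbb E(B,X)$. I compare them via the defining relations $l_g=q^\ast\delta_K$ and $r_f=-i_\ast\delta_K$: then $q^\ast r_f=-q^\ast i_\ast\delta_K=-i_\ast q^\ast\delta_K=-i_\ast l_g$. This is a one-line bifunctoriality check.

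\textbf{Step 3 (homotopic).} By \Cref{prop:mor-of-confl}$(1)$, it suffices to show that
$$\con X{\dg{f\\u}}{Y\oplus A}{\dg{v,-g}}{B}{q^\ast r_f}$$
is an $\mathbb E$-triangle. Since $(1,u,v)$ is homotopic, \Cref{prop:mor-of-confl}$(1)'$ gives the $\mathbb E$-triangle
$$\con X{\dg{-f\\u}}{Y\oplus A}{\dg{v,g}}{B}{i_\ast l_g}.$$
Applying the isomorphisms $\mathrm{diag}(-1,1)$ on $Y\oplus A$, $-1$ on $X$ and $1$ on $B$ via \Cref{isomorphisms} turns it into
$$\con X{\dg{f\\u}}{Y\oplus A}{\dg{-v,g}}{B}{-i_\ast l_g}.$$
A further sign change $(1,1,-1)$ yields $\dg{v,-g}$ with extension $i_\ast l_g=-q^\ast r_f$.

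This leaves a sign mismatch against the required $q^\ast r_f$. Resolving it is the main obstacle. I would track the signs carefully; \Cref{prop:mor-of-confl}'s convention may absorb one sign, since the extension there is $(g')^\ast\delta$ with the extension of the second row. Alternatively, the intended rotated triangles may carry the opposite sign, which would flip $r_f$. If the sign persists, I would use \Cref{isomorphisms} to negate both maps, giving $\con X{-\dg{f\\u}}{Y\oplus A}{-\dg{v,-g}}{B}{\cdot}$ with the same extension, and then compare it with the form required by \Cref{prop:mor-of-confl}. The rest of the argument is routine.
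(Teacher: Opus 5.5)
Your route is the paper's: rotate both rows with \Cref{lem:rotation}$(1)$, get $u_\ast r_f=r_g$, $qv=p$ and $i_\ast l_g=-q^\ast r_f$ from naturality of $\rho,\ell$ and bifunctoriality, and then convert the $\mathbb E$-triangle coming from the homotopic morphism $(1_K,u,v)$ into the one that \Cref{prop:mor-of-confl}$(1)$ demands for $(u,v,1_{FK})$. Steps 1 and 2 are correct.

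Step 3 does not close as you wrote it. The ``sign mismatch'' you leave open is not a real obstacle; it comes from a bookkeeping slip when applying \Cref{isomorphisms}. That result sends $\con XfYgZ\delta$ along isomorphisms $(a,b,c)$ to $\con{X'}{bfa^{-1}}{Y'}{cgb^{-1}}{Z'}{a_\ast(c^{-1})^\ast\delta}$. Your first move uses $a=-1_X$, $b=\mathrm{diag}(-1_Y,1_A)$, $c=1_B$. The new inflation is then $\dg{f\\u}\circ(-1_X)=\dg{-f\\-u}$, not $\dg{f\\u}$ as you recorded.

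The fix is to leave $X$ alone. Apply $(1_X,\mathrm{diag}(-1_Y,1_A),-1_B)$ to $\con X{\dg{-f\\u}}{Y\oplus A}{\dg{v,g}}B{i_\ast l_g}$. This gives:
\begin{itemize}
\item inflation $\dg{f\\u}$;
\item deflation $-\dg{v,g}\,\mathrm{diag}(-1_Y,1_A)=\dg{v,-g}$;
\item extension $(-1_B)^\ast(i_\ast l_g)=-i_\ast l_g=q^\ast r_f$.
\end{itemize}
The result is exactly $\con X{\dg{f\\u}}{Y\oplus A}{\dg{v,-g}}B{q^\ast r_f}$. This is the triangle \Cref{prop:mor-of-confl}$(1)$ requires (with $g'=q$, $\delta=r_f$), so $(u,v,1_{FK})$ is homotopic and the proof is finished. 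The paper's proof takes exactly this step and leaves the sign check implicit.

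It matters that the mismatch is spurious, because none of your fallbacks would have fixed a genuine one. The extension in \Cref{prop:mor-of-confl}$(1)$ is pinned down as $(g')^\ast\delta$. The sign of $r_f$ is forced by the required identity $\rho(r_f)=-i$. Negating both maps leaves the extension unchanged. More generally, any sign change $(a,b,c)\in\{\pm1\}^3$ that fixes both maps must have $a=b=c$, and then it fixes the extension as well.
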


		\begin{proof} \ By \Cref{lem:rotation} there are $\mathbb E$-triangles $X \xrightarrow{f} Y \xrightarrow{p} FX \overset {r_f} \dashrightarrow$ and  $A \xrightarrow{g} B \xrightarrow{q} FX \overset {r_g} \dashrightarrow$
with $$\ell (l_f)=p,  \ \ -\rho ^{-1}(i)=r_f, \ \ \ell (l_g)=q, \ \ -\rho^{-1}(j)=r_g.$$ By functoriality of $\rho$ there is a commutative diagram
            \begin{equation*}
\xymatrix@R=0.4cm{
  \mathcal C(K, X)\ar[r]^{{\rho^{-1}}}\ar[d]_{{\mathcal C(K, u)}} & \mathbb E(FK, X)\ar[d]^{{{\mathbb E}(FK, u)}} \\
  \mathcal C(K, A)\ar[r]^{{\rho^{-1}}} & {\mathbb E}(FK, A)
}
\end{equation*}
Thus, $u_\ast r_f=-u_\ast(\rho^{-1}(i))=-\rho^{-1}(ui)=-\rho^{-1}(j)=r_g$. Since $v^\ast l_g = l_f$, by functoriality of $\ell$ one has $qv = p$. By definition $i_\ast l_g = i_\ast \ell^{-1}(q)=i_\ast q^\ast \delta_K=q^\ast i_\ast  \delta_K = q^\ast \rho^{-1}(i)=- q^\ast r_f$. Since $(1_K,u,v)$ is homotopic, $\con X{\dg{-f \\ u}}{Y\oplus A}{\dg{v,g}}B{i_\ast l_g}$ is an $\mathbb E$-triangle. Since $i_\ast l_g = - q^\ast r_f$, $(u,v, 1_{FK})$ is homotopic. \end{proof}

\subsection{$4 \times 4$ Lemma}  \ The following result is a generalization of the {\rm $4 \times 4$ Lemma} in triangulated categories.

\vskip 5pt

\begin{theorem}\label{thm:4x4-sss} \ Let $(\mathcal{C}, \mathbb E, \mathfrak s)$ be an extriangulated category. Suppose that
		\begin{equation*}
\xymatrix@R=0.4cm{
  X\ar[r]^{f}\ar[d]_-{\alpha} & Y\ar[r]^{g} & Z\ar@{-->}[r]^{\delta} \ar[d]^-{\gamma}& {} \\
  L\ar[r]^{u} & M\ar[r]^{v} & N\ar@{-->}[r]^{{\varepsilon }} & {}
}
\end{equation*}
is a diagram of $\mathbb E$-triangles such that  $\alpha_\ast \delta=\gamma^\ast \varepsilon$, and $\alpha$ and $\gamma$ are $\mathbb E$-inflations and $\mathbb E$-deflations.	
Then there is an $\mathbb E$-inflation and an $\mathbb E$-deflation $\beta$ such that $(\alpha, \beta, \gamma)$ is a homotopic morphism of $\mathbb E$-triangles. Moreover, there is a commutative diagram
\begin{equation*}
\xymatrix@R=0.4cm{
  K_\alpha\ar@{..>}[r]^{{f'}}\ar@{..>}[d]_{{i_\alpha }} & K_\beta\ar@{..>}[r]^{{g'}}\ar@{..>}[d]^{{i_\beta}} & K_\gamma\ar@{..>}[d]^{{i_\gamma}}\ar@{..>}[r] & {} \\
  X\ar[r]^{f}\ar[d]_-{\alpha} & Y\ar[r]^{g}\ar@{..>}[d]^-{\beta} & Z\ar@{-->}[r]^{\delta}\ar[d]^-{\gamma} & {} \\
  L\ar[r]^{u}\ar@{..>}[d]_{{p_\alpha}} & M\ar[r]^{v}\ar@{..>}[d]^{{p_\beta}} & N\ar@{-->}[r]^{{\varepsilon }}\ar@{..>}[d]^{{p_\gamma}} & {} \\
  C_\alpha\ar@{..>}[r]^{{u'}} & C_\beta\ar@{..>}[r]^{{v'}} & C_\gamma \ar@{..>}[r] & {}
}
\end{equation*}
with connecting morphism $z: K_\gamma\to C_\alpha$, such that the following are $\mathbb E$-triangles$:$
\vskip5pt
\begin{tasks}[style=enumerate, label=$(\arabic*)$, label-width=17.8pt](2)
		\task $\con {K_\alpha}{f'}{K_\beta}{g'}{K_\gamma}{\ell^{-1} (z)};$
		\task $\con {K_\beta}{g'}{K_\gamma}z{C_\alpha}{-\ell^{-1} (u')};$
		\task $\con {K_\gamma}z{C_\alpha}{u'}{C_\beta}{\rho^{-1}(g')};$
		\task $\con {C_\alpha}{u'}{C_\beta}{v'}{C_\gamma}{-\rho^{-1}(z)};$
		\task $\con {K_\alpha}{i_\alpha}X\alpha L{\ell^{-1}(p_\alpha)};$
		\task $\con {K_\beta}{i_\beta}Y\beta M{\ell^{-1}(p_\beta)};$
		\task $\con {K_\gamma}{i_\gamma}Z\gamma N{\ell^{-1}(p_\gamma)};$
		\task $\con X\alpha L{p_\alpha}{C_\alpha}{-\rho^{-1}(i_\alpha)};$
		\task $\con Y\beta M{p_\beta}{C_\beta}{-\rho^{-1}(i_\beta)};$
		\task $\con Z\gamma N{p_\gamma}{C_\gamma}{-\rho^{-1}(i_\gamma)}$
	\end{tasks}
\vskip10pt
with morphisms of $\mathbb E$-triangles $(i_\alpha,i_\beta,i_\gamma)$, $(p_\alpha,p_\beta,p_\gamma)$, $(f',f,u)$, $(f,u, u')$, $(g',g,v)$, $(g,v,v')$,
and that $C_\alpha=FK_\alpha$, $C_\beta=FK_\beta$, $C_\gamma=FK_\gamma$, $u'=Ff'$, $v'=Fg'$, where $F$ is the functor defined in \Cref{F}, and $\ell$ and $\rho$ are defined in \Cref{binaturality}.

\vskip5pt

In particular, there is a sequence
	\begin{equation*}		K_\alpha \xrightarrow{f'} K_\beta \xrightarrow{g'} K_\gamma \xrightarrow{z} C_\alpha \xrightarrow{u'} C_\beta \xrightarrow{v'} C_\gamma
\end{equation*}
such that any subsequent three objects form an $\mathbb E$-triangle as in $(1)$-$(4)$.
\end{theorem}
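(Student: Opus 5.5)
We first obtain $\beta$ from \Cref{thm:hmor}$(1)$. Given $\alpha_\ast\delta=\gamma^\ast\varepsilon$, realize this extension as $\con LsEtZ{\eta}$. Then \Cref{lem:hs-f?1}$(1)$ and $(1)'$ produce the decomposition $\beta=\beta_2\beta_1$ with $(\alpha,\beta_1,1_Z)$ and $(1_L,\beta_2,\gamma)$ homotopic. By \Cref{prop:hs-inflation}, both $\beta_1$ and $\beta_2$ lie in $S$, and hence $\beta\in S$ by \Cref{prop:slr}$(1)$.

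The kernels come next. Since $\alpha,\beta,\gamma\in S$, we choose $\mathbb E$-triangles $(5)$, $(6)$ and $(7)$, and by \Cref{prop:slr-2}$(2)$ the objects $K_\alpha,K_\beta,K_\gamma$ lie in $\mathcal L$. \Cref{lem:rotation}$(1)$ then rotates these into $(8)$, $(9)$ and $(10)$ with $C_\bullet=FK_\bullet$ and $p_\bullet=\ell(\text{extension})$. To produce $f',g'$ and the morphisms of triangles, we work one factor at a time. Consider the homotopic morphism $(\alpha,\beta_1,1_Z)$. By \Cref{prop:mor-of-confl}$(1)$ it yields the $\mathbb E$-triangle $X\to Y\oplus L\to E$, which expresses the left square as a homotopic square with $\alpha,\beta_1$ deflations. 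We apply \Cref{prop:hs-inflation}$(2)$ to this square to get a homotopic morphism $(1_{K},\,\cdot\,,\,\cdot\,)$ relating the kernels of $\alpha$ and $\beta_1$; this identifies $K_\alpha\cong K_{\beta_1}$. Dually, the square for $(1_L,\beta_2,\gamma)$ identifies $K_{\beta_2}\cong K_\gamma$.

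For the composite $\beta=\beta_2\beta_1$, we use ET4$^{\rm op}$ or \Cref{prop:strict-et4op-h}. This gives an $\mathbb E$-triangle $K_{\beta_1}\to K_\beta\to K_{\beta_2}$, which we read as $(1)$: $K_\alpha\xrightarrow{f'}K_\beta\xrightarrow{g'}K_\gamma$. At the same time it makes $(i_\alpha,i_\beta,i_\gamma)$ commute with $f,g$ and $f',g'$, and it gives the morphisms of triangles $(f',f,u)$ and $(g',g,v)$, where $K_\gamma\to Z\to N$ enters through $K_{\beta_2}$. Transporting along $F$ and using the naturality formulas of \Cref{binaturality} gives $u'=Ff'$ and $v'=Fg'$, together with the morphisms $(p_\alpha,p_\beta,p_\gamma)$, $(f,u,u')$ and $(g,v,v')$. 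Naturality of $\ell,\rho$ provides exactly the needed compatibilities $(\,\cdot\,)_\ast$ and $(\,\cdot\,)^\ast$ between the extensions.

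Finally comes the long sequence. Let $\zeta\in\mathbb E(K_\gamma,K_\alpha)$ be the extension of $(1)$, and set $z:=\ell(\zeta)$, so that $(1)$ has extension $\ell^{-1}(z)$. Since $K_\alpha\in\mathcal L$, \Cref{prop:slr-2}$(2)$ shows that $g'\in S$ whenever $K_\alpha\in\mathcal L$. Then \Cref{lem:rotation}$(1)$ applied to $(1)$ gives $(2)$ with extension $-\rho^{-1}(f')$. We must identify this extension with $-\ell^{-1}(u')$; both correspond to $f'_\ast\delta_{K_\alpha}=(Ff')^\ast\delta_{K_\beta}$ by the definition of $F$, and $u'=Ff'$. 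Applying \Cref{lem:rotation} again to $(2)$, with $z\in S$ because $K_\beta\in\mathcal L$, gives $(3)$, and a third application gives $(4)$, with the analogous identifications $\ell(\rho^{-1}(g'))=Fg'$. Signs are tracked via \Cref{isomorphisms}.

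The main obstacle is the middle step. We must obtain $(1)$ with $i_\beta$ compatible simultaneously with both halves and with the chosen $(5)$–$(7)$, rather than only up to an unspecified isomorphism. The first thing to try is to exploit the homotopic (not merely commuting) squares through \Cref{lem:weak}, which gives uniqueness-up-to-factorization and lets us pin down the comparison maps.
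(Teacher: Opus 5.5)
Your outer architecture matches the paper's. You take $\beta=\beta_2\beta_1$ via \Cref{thm:hmor}(1), identify $K_\alpha$ as the kernel of $\beta_1$ and $K_\gamma$ as the kernel of $\beta_2$, and use an ET4$^{\rm op}$-type step for the composite to get (1) with extension $x^\ast\eta_1$. Here $\eta_1$ is the extension of $K_\alpha\xrightarrow{fi_\alpha}Y\xrightarrow{\beta_1}E$, and $x\colon K_\gamma\to E$ satisfies $tx=i_\gamma$. You then apply \Cref{lem:rotation} three times with $u'=Ff'$, $v'=Fg'$. The rotation part and your naturality argument are fine, and a bit tidier than the paper's computation; for example $u'p_\alpha=\ell(f'_\ast l_\alpha)=\ell(u^\ast l_\beta)=p_\beta u$. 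Identifying kernels only up to isomorphism is also harmless, since the statement is existential and you may take (5)--(7) to be whatever triangles your construction produces. (Minor: $K_\bullet\in\mathcal L$ follows from \Cref{prop:slr-2}(1), not (2).)

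The real gap is at the level of extensions. As stated, \Cref{prop:hs-inflation}(2) only gives a homotopic morphism $(1_{K_\alpha},f,s)$ from $K_\alpha\xrightarrow{i_\alpha}X\xrightarrow{\alpha}L$ (extension $l_\alpha=s^\ast\eta_1$) to $K_\alpha\xrightarrow{fi_\alpha}Y\xrightarrow{\beta_1}E$. For $(i_\alpha,i_\beta,i_\gamma)$ to be a morphism of $\mathbb E$-triangles you need $(i_\alpha)_\ast x^\ast\eta_1=(tx)^\ast\delta$, i.e.\ $x^\ast((i_\alpha)_\ast\eta_1-t^\ast\delta)=0$. Nothing in your data forces this. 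Both $(i_\alpha)_\ast\eta_1$ and $t^\ast\delta$ are realized by $X\to Y\oplus L\to E$ up to equivalence, but $\mathfrak s$ need not be injective. The exact sequence only tells you that their difference is $\psi_\ast t^\ast\delta$ for some $\psi\in\mathcal C(X,X)$. Dually, put $y:=\ell(\eta_1)$, so that $z=yx$ and $p_\alpha=ys$. Then the extension condition $(p_\alpha)_\ast\varepsilon=(p_\gamma)^\ast(-\rho^{-1}(z))$ for $(p_\alpha,p_\beta,p_\gamma)$ becomes $y_\ast(s_\ast\varepsilon+x_\ast l_\gamma)=0$. \Cref{prop:hs-inflation}(2) applied to the right square does not supply this either. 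Your proposed remedy via \Cref{lem:weak} cannot close the gap: it only produces morphisms, without uniqueness, and never equalities of extensions.

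The missing ingredient is the strict, homotopic-morphism versions of ET4$^{\rm op}$ and of the pullback proposition. These take the chosen kernel triangles as input and record exactly these identities.
\begin{itemize}
\item Apply \Cref{prop:strict-et4op-h}(2) to the homotopic morphism $(\alpha,\beta_1,1_Z)$ (with $\alpha,\beta_1$ deflations) and the chosen (5). It yields $K_\alpha\xrightarrow{fi_\alpha}Y\xrightarrow{\beta_1}E$ with $l_\alpha=s^\ast\eta_1$ and $(i_\alpha)_\ast\eta_1=t^\ast\delta$.
\item Apply \Cref{prop:pb-2-h}(2) to $(1_L,\beta_2,\gamma)$ and the chosen (7). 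It yields $x$ with $tx=i_\gamma$, $\mu_2=v^\ast l_\gamma$ and $x_\ast l_\gamma+s_\ast\varepsilon=0$.
\item Then run your ET4$^{\rm op}$ step. Alternatively, as in the paper, use \Cref{lem:hs-?g1}(2) to get $g'$ with $(g',\beta_1,1_M)$ homotopic, followed by \Cref{prop:strict-et4op-h}, which keeps a pre-chosen (6). Either way this gives (1) with extension $x^\ast\eta_1$, together with $i_\beta f'=fi_\alpha$, $f'_\ast\eta_1=\beta_2^\ast l_\beta$ and $g'_\ast l_\beta=\mu_2$.
\end{itemize}
Now $(i_\alpha)_\ast x^\ast\eta_1=x^\ast t^\ast\delta=i_\gamma^\ast\delta$, and the rest of your argument goes through.
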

\begin{proof} \ By \Cref{thm:hmor}$(1)$ and $(1)'$, there is a homotopic morphism of $\mathbb E$-triangles $(\alpha, \beta, \gamma)$,
where $\beta: Y\to M$ is an $\mathbb E$-inflation and an $\mathbb E$-deflation. Then by definition there is a decomposition with $\beta = \beta_2 \beta_1:$
        \begin{equation}
\label{eq:4x4-sss--1}
\xymatrix@R=0.4cm{
  X\ar[r]^{f}\ar[d]_-{\alpha} & Y\ar[r]^{g}\ar[d]_{{\beta_1}} & Z\ar@{-->}[r]^{\delta}\ar@{=}[d] & {} \\
  L\ar[r]^{s}\ar@{=}[d] & E\ar[r]^{t}\ar[d]_{{\beta_2}} & Z\ar@{-->}[r]^{\kappa}\ar[d]^-{\gamma} & {} \\
  L\ar[r]^{u} & M\ar[r]^{v} & N\ar@{-->}[r]^{\varepsilon} & {}
}
\end{equation}
where $\con X{\dg{f \\\alpha}}{Y \oplus L}{\dg{\beta_1,-s}}E{t^\ast\delta}$ and $\con E{\dg{-t\\\beta_2}}{Z\oplus M}{\dg{\gamma , v}}N{s_\ast\varepsilon }$ are $\mathbb E$-triangles and  $\alpha_\ast\delta=\kappa =\gamma^\ast\varepsilon$. By \Cref{prop:hs-inflation}, $\beta_1$ and $\beta_2$ are $\mathbb E$-inflations and $\mathbb E$-deflations. By \Cref{prop:strict-et4op-2-h} and \Cref{prop:pb-2-h} there are commutative diagrams
\begin{equation}
\label{eq:4x4-sss-0}
\xymatrix@R=0.4cm{
  K_\alpha\ar@{=}[r]\ar[d]_{{i_\alpha}} & K_\alpha\ar[d]^{{fi_\alpha}} & {} & {} & {} & K_\gamma\ar@{=}[r]\ar[d]_{x} & K_\gamma\ar[d]^{{i_\gamma}} & {} \\
  X\ar[r]^{f}\ar[d]_-{\alpha} & Y\ar[r]^{g}\ar[d]^{{\beta_1}} & Z\ar@{-->}[r]^{\delta}\ar@{=}[d] & {} & L\ar[r]^{s}\ar@{=}[d] & E\ar[r]^{t}\ar[d]_{{\beta_2}} & Z\ar@{-->}[r]^{\kappa}\ar[d]^-{\gamma} & {} \\
  L\ar[r]^{s}\ar@{-->}[d]_{{l_\alpha}} & E\ar[r]^{t}\ar@{-->}[d]^{{\eta_1}} & Z\ar@{-->}[r]^{\kappa} & {} & L\ar[r]^{u} & M\ar[r]^{v}\ar@{-->}[d]_{{\mu_2}} & N\ar@{-->}[r]^{{\varepsilon }}\ar@{-->}[d]^{{l_\gamma}} & {} \\
  {} & {} & {} & {} & {} & {} & {} & {}
}
\end{equation}
where $l_\alpha=s^\ast{\eta_1}$, $(i_\alpha)_\ast\eta_1=t^\ast\delta$ and $\mu_2=v^\ast l_\gamma$, $x_\ast l_\gamma+s_\ast\varepsilon=0$. By \Cref{thm:hs-square-s} there are homotopic morphisms of $\mathbb E$-triangles $(f,s,1_{C_\alpha})$ and $(t,v,1_{C_\gamma})$

		\begin{equation}
\label{eq:4x4-lem-steps}
\xymatrix@R=0.4cm{
  X\ar[r]^-{\alpha}\ar[d]_{f} & L\ar[r]^{{p_\alpha}}\ar[d]^{s} & C_\alpha\ar@{-->}[r]^{{r_\alpha}}\ar@{=}[d] & {} & E\ar[r]^{{\beta_2}}\ar[d]_{t} & M\ar[r]^{{p_\gamma v}}\ar[d]^{v} & C_\gamma\ar@{-->}[r]^{{\eta_2}}\ar@{=}[d] & {} \\
  Y\ar[r]^{{\beta_1}} & E\ar[r]^{y} & C_\alpha\ar@{-->}[r]^{{\mu _1}} & {} & Z\ar[r]^-{\gamma} & N\ar[r]^{{p_\gamma}} & C_\gamma\ar@{-->}[r]^{{r_\gamma}} & {}
}
\end{equation}
    where $$C_\alpha=FK_\alpha, \ (i_\alpha)_\ast \eta_1 = -y^\ast r_\alpha, \ p_\alpha=\ell(l_\alpha), \ y=\ell(\eta_1), \ \rho(r_\alpha)=-i_\alpha, \ \rho(\mu_1)=-(fi_\alpha),$$
    $$C_\gamma=FK_\gamma, \ x_\ast l_\gamma = -(p_\gamma)^\ast \eta_2, \ p_\gamma v=\ell(\mu_2), \ p_\gamma=\ell(l_\gamma), \ \rho(\eta_2)=-x, \ \rho(r_\gamma)=-(i_\gamma).$$ Since $\beta$ is an $\mathbb E$-deflation, there is an $\mathbb E$-triangle $\con{K_\beta}{i_\beta}{Y}{\beta}{M}{l_\beta}$. By \Cref{lem:hs-?g1}$(2)$ there is $g': K_\beta\to K_\gamma$ such that $(g',\beta_1,1_M)$ is a homotopic morphism. By \Cref{prop:strict-et4op-2-h}$(2)$ there is an $\mathbb E$-triangle $\con {K_\alpha}{f'}{K_\beta}{g'}{K_\gamma}{x^\ast \eta_1}$,  as in the following commutative diagram:
\begin{equation}
\label{eq:4x4-sss-1}
\xymatrix@R=0.4cm{
  K_\alpha\ar@{..>}[r]^{{f'}}\ar@{=}[d] & K_\beta\ar@{..>}[r]^{{g'}}\ar[d]_{{i_\beta}} & K_\gamma\ar@{..>}[r]^{{x^\ast \eta_1}}\ar[d]^{x} & {} \\
  K_\alpha\ar[r]^{{fi_\alpha}} & Y\ar[r]^{{\beta_1}}\ar[d]_-{\beta} & E\ar@{-->}[r]^{{\eta_1}}\ar[d]^{{\beta_2}} & {} \\
  {} & M\ar@{=}[r]\ar@{-->}[d]_{{l_\beta}} & M\ar@{-->}[d]^{{\mu_2}} & {} \\
  {} & {} & {} & {}
}
\end{equation}
where $(\beta_2)^\ast l_\beta = (f')_\ast \eta_1$. By \Cref{prop:hs-inflation}, $g'$ is both an $\mathbb E$-inflation and an $\mathbb E$-deflation. Thus, by \Cref{thm:hs-square-s} there is an $\mathbb E$-triangle $\con {K_\beta}{g'}{K_\gamma}{z}{C_\alpha}{\theta}$

\begin{equation}
\label{eq:4x4-sss-2}
\xymatrix@R=0.4cm{
  K_\beta\ar[r]^{{g'}}\ar[d]_{{i_\beta}} & K_\gamma\ar@{..>}[r]^{z}\ar[d]^{x} & C_\alpha\ar@{..>}[r]^{\theta}\ar@{=}[d] & {} \\
  Y\ar[r]^{{\beta_1}}\ar[d]_-{\beta} & E\ar[r]^{y}\ar[d]^{{\beta_2}} & C_\alpha\ar@{-->}[r]^{{\mu_1}} & {} \\
  M\ar@{=}[r]\ar@{-->}[d]_{{l_\beta}} & M\ar@{-->}[d]^{{\mu_2}} & {} & {} \\
  {} & {} & {} & {}
}
\end{equation}
where $z=yx$, $(i_{\beta})_\ast \theta=\mu_1$, $(f')_\ast\eta_1=-y^\ast\theta$, $\ell(x^\ast \eta_1)=z$, $\rho(\theta)=-f'$.

\vskip5pt

Since $\beta$ is both an $\mathbb E$-inflation and an $\mathbb E$-deflation, by \Cref{lem:rotation} there is an $\mathbb E$-triangle $\con Y \beta M{p_\beta} {C_\beta} {r_\beta}$ arising from the $\mathbb E$-triangle $\con {K_\beta} {i_\beta} Y \beta M {l_\beta}$, where $C_\beta=FK_\beta$, $p_\beta=\ell(l_\beta)$ and $\rho(r_\beta)=-i_\beta$. By the $\mathbb E$-triangle $\con{K_\beta}{i_\beta}{Y}{\beta}{M}{l_\beta}$ and \Cref{prop:slr-2}$(1)$, $K_\beta \in \mathcal L ;$ and then by the $\mathbb E$-triangle $\con{K_\beta}{g'}{K_\gamma}{z}{C_\alpha}{\theta}$ and \Cref{prop:slr-2}$(2)$, $z$ is both an $\mathbb E$-inflation and an $\mathbb E$-deflation. By \Cref{lem:rotation} and \Cref{isomorphisms} there is an $\mathbb E$-triangle $\con{K_\gamma}{z}{C_\alpha}{u'}{C_\beta}{\rho^{-1}(g')}$ where $u'=-\ell(\theta)$. Since $(f')_\ast \delta_{K_\alpha}=\rho^{-1}(f')=-\theta=\ell^{-1}(u')=(u')^{\ast}\delta_{K_\beta}$, $u'=Ff'$ by definition.

\vskip5pt

Finally, by the $\mathbb E$-triangle $\con{K_\gamma}{i_\gamma}{Z}{\gamma}{N}{p_\gamma}$ and \Cref{prop:slr-2}(1), $K_\gamma \in \mathcal L;$ and then according to the $\mathbb E$-triangle $\con{K_\gamma}{z}{C_\alpha}{u'}{C_\beta}{\rho^{-1}(g')}$
and \Cref{prop:slr-2}(2), $u'$ is both an $\mathbb E$-inflation and an $\mathbb E$-deflation. Thus, by \Cref{lem:rotation} and \Cref{isomorphisms} there is an $\mathbb E$-triangle $\con {C_\alpha}{u'}{C_\beta}{v'}{C_\gamma}{-\rho^{-1}(z)}$ where $v'=\ell(\rho^{-1}(g'))$. Since $(g')_\ast \delta_{K_\beta}=\rho^{-1}(g')=\ell^{-1}(v')=(v')^{\ast}\delta_{K_\gamma}$, $v'=Fg'$ by definition.

\vskip10pt

Up to now the $\mathbb E$-triangles $(1)$-$(10)$ are obtained. It remains to verify the morphisms of $\mathbb E$-triangles.

\vskip5pt

By \Cref{eq:4x4-sss-1} one has $fi_\alpha=i_\beta f'$. Also one has
$gi_\beta\xlongequal{\text{\Cref{eq:4x4-sss-0}}}t\beta_1i_\beta \xlongequal{\text{\Cref{eq:4x4-sss-1}}} txg'\xlongequal{\text{\Cref{eq:4x4-sss-0}}}i_\gamma g'$, and
    \begin{align*}
        (i_\alpha)_\ast \ell^{-1}(z) & \xlongequal{\text{\Cref{eq:4x4-sss-2}}} (i_\alpha)_\ast x^\ast\eta_1=x^\ast(i_\alpha)_\ast\eta_1 \xlongequal{\text{\Cref{eq:4x4-sss-2}}} x^\ast t^\ast \delta =(tx)^\ast \delta  \xlongequal{\text{\Cref{eq:4x4-sss-0}}} (i_\gamma)^\ast \delta.
    \end{align*}
This shows that $(i_\alpha, i_\beta, i_\gamma)$ is a morphism of $\mathbb E$-triangles.

\vskip5pt

One has
    \begin{align*}
        u'p_\alpha & = -\ell (\theta) p_\alpha\xlongequal{\text{\Cref{eq:4x4-lem-steps}}} -\ell (\theta) y s \xlongequal{\text{\Cref{binaturality}}} - \ell (y^\ast\theta)s \\
        & = \ell (- y^\ast\theta)s  \xlongequal{\text{\Cref{eq:4x4-sss-1}}} \ell ((f')_\ast \eta_1)s\xlongequal{\text{\Cref{eq:4x4-sss-2}}}\ell ((\beta_2)^\ast l_\beta)s \xlongequal{\text{\Cref{binaturality}}} \ell (l_\beta) \beta_2 s\\
        & = p_\beta \beta_2 s \xlongequal{\text{\Cref{eq:4x4-sss--1}}}p_\beta u.
    \end{align*}
and
\begin{align*}
        u'p_\alpha & = -\ell (\theta) p_\alpha\xlongequal{\text{\Cref{eq:4x4-lem-steps}}} -\ell (\theta) y s \xlongequal{\text{\Cref{binaturality}}} - \ell (y^\ast\theta)s \\
        & = \ell (- y^\ast\theta)s  \xlongequal{\text{\Cref{eq:4x4-sss-1}}} \ell ((f')_\ast \eta_1)s\xlongequal{\text{\Cref{eq:4x4-sss-2}}}\ell ((\beta_2)^\ast l_\beta)s \xlongequal{\text{\Cref{binaturality}}} \ell (l_\beta) \beta_2 s\\
        & = p_\beta \beta_2 s \xlongequal{\text{\Cref{eq:4x4-sss--1}}}p_\beta u.
    \end{align*}
Note that
    \begin{align*}
        (p_\beta)^\ast (\rho^{-1}(g')) & \xlongequal{\text{\Cref{thm:lr-equivalence}$(2)$}}  (p_\beta)^\ast (g')_\ast\delta_{K_\beta}= (g')_\ast (p_\beta)^\ast\delta_{K_\beta}\\
        & \xlongequal{\text{\Cref{thm:lr-equivalence}$(1)$}} (g')_\ast \ell^{-1}(p_\beta)=(g')_\ast l_\beta\xlongequal{\text{\Cref{eq:4x4-sss-1}}}\mu_2,
    \end{align*}
one has
    \begin{align*}
        v'p_\beta & = \ell (\rho^{-1}(g')) p_\beta \xlongequal{\text{\Cref{binaturality}}} \ell ((p_\beta)^\ast (\rho^{-1}(g')))\\
        & =\ell(\mu_2) \xlongequal{\text{\Cref{eq:4x4-lem-steps}}} p_\gamma v.
    \end{align*}
This shows that $(p_\alpha, p_\beta, p_\gamma)$ is a morphism of $\mathbb E$-triangles.

\vskip5pt

Note that
    \begin{align*}
        (f')_\ast (\ell^{-1}(p_\alpha))& \xlongequal{\text{\Cref{eq:4x4-lem-steps}}}(f')_\ast l_\alpha \xlongequal{\text{\Cref{eq:4x4-sss-0}}} (f')_\ast s^\ast \eta_1 =  s^\ast (f')_\ast \eta_1\\
        & \xlongequal{\text{\Cref{eq:4x4-sss-1}}} s^\ast (\beta_2)^\ast l_\beta=(\beta_2 s)^\ast l_\beta \\
        & \xlongequal{\text{\Cref{eq:4x4-sss--1}}} u^\ast l_\beta =u^\ast (\ell^{-1}(p_\beta)).
    \end{align*}
Since $fi_\alpha=i_\beta f'$ and $u\alpha = \beta f$, $(f',f,u)$ is a morphism of $\mathbb E$-triangles.

\vskip5pt

 Note that $f_\ast (-\rho^{-1}(i_\alpha)) \xlongequal{\text{\Cref{eq:4x4-lem-steps}}}f_\ast r_\alpha \xlongequal{\text{\Cref{eq:4x4-lem-steps}}} \mu_1$, one has
    \begin{align*}
        \rho((u')^\ast (-\rho^{-1}(i_\beta))) & =\rho((Ff')^\ast (-\rho^{-1}(i_\beta))) \xlongequal{\text{\Cref{binaturality}}} -i_\beta f'  \\
        & \xlongequal{(1)}-fi_\alpha \xlongequal{\text{\Cref{eq:4x4-lem-steps}}} \rho(\mu_1).
    \end{align*}
    Hence $f_\ast (-\rho^{-1}(i_\alpha))=\mu_1=(u')^\ast (-\rho^{-1}(i_\beta))$. Since $u\alpha = \beta f$ and $u'p_\alpha=p_\beta u$, $(f,u,u')$ is a morphism of $\mathbb E$-triangles.

\vskip5pt

 Note that $(g')_\ast (\ell^{-1}(p_\beta))=(g')_\ast l_\beta \xlongequal{\text{\Cref{eq:4x4-sss-1}}}\mu_2  \xlongequal{\text{\Cref{eq:4x4-sss-0}}} v^\ast (\ell^{-1}(p_\gamma))$. Since $i_\gamma g' = g i_\beta$ and $\gamma g = v \beta$, $(g',g,v)$ is a morphism of $\mathbb E$-triangles.

\vskip5pt

 One has
     \begin{align*}
        \rho ((v')^\ast (-\rho^{-1}(i_\gamma))) & =\rho ((Fg')^\ast (-\rho^{-1}(i_\gamma)))\xlongequal{\text{\Cref{binaturality}}}-i_\gamma g'\\
        & \xlongequal{(2)}-gi_\beta \xlongequal{\text{\Cref{binaturality}}} \rho(g_\ast (-\rho^{-1}(i_\beta))) .
    \end{align*}
 Hence $(v')^\ast (-\rho^{-1}(i_\gamma)) = g_\ast (-\rho^{-1}(i_\beta))$. Since $\gamma g = v \beta$ and $v' p_\beta = p_\gamma v$, $(g,v,v')$ is a morphism of $\mathbb E$-triangles.

\vskip5pt

This completes the proof. \end{proof}

\subsection{$3 \times 3$ diagram and Horseshoe Lemma} $\,$\vskip5pt

\begin{definition}\label{def:3x3-diag} \ A \textit{$3 \times 3$ diagram} in an extriangulated category $(\mathcal{C}, \ \mathbb E, \ \mathfrak s)$ is a diagram of six $\mathbb E$-triangles
	\begin{equation*}
\label{eq:3x3}
\xymatrix@R=0.4cm{
  A_1\ar[r]^{{f_A}}\ar[d]_{{i_1}} & A_2\ar[r]^{{g_A}}\ar[d]^{{i_2}} & A_3\ar@{-->}[r]^{{\delta_A}}\ar[d]^{{i_3}} & {} \\
  B_1\ar[r]^{{f_B}}\ar[d]_{{p_1}} & B_2\ar[r]^{{g_B}}\ar[d]^{{p_2}} & B_3\ar@{-->}[r]^{{\delta_B}}\ar[d]^{{p_3}} & {} \\
  C_1\ar[r]^{{f_C}}\ar@{-->}[d]_{{\varepsilon _1}} & C_2\ar[r]^{{g_C}}\ar@{-->}[d]^{{\varepsilon _2}} & C_3\ar@{-->}[r]^{{\delta_C}}\ar@{-->}[d]^{{\varepsilon _3}} & {} \\
  {} & {} & {} & {}
}
\end{equation*}
such that $(i_1,i_2,i_3)$, $(p_1,p_2,p_3)$, $(f_A,f_B,f_C)$ and $(g_A,g_B,g_C)$ are morphisms of $\mathbb E$-triangles.
\end{definition}

\vskip5pt

\begin{theorem} \label{cor:3x3-lemma-inflation}\label{thm:3x3-lemma} \ Let $(\mathcal{C}, \mathbb E, \mathfrak s)$ be an extriangulated category. Suppose that
		\begin{equation*}
\xymatrix@R=0.4cm{
  A_1\ar[r]^{f_A}\ar[d]_-{i_1} & A_2\ar[r]^{g_A} & A_3\ar@{-->}[r]^{\delta_A} \ar[d]^-{i_3}& {} \\
  B_1\ar[r]^{f_B} & B_2\ar[r]^{g_B} & B_3\ar@{-->}[r]^{{\delta_B }} & {}
}
\end{equation*}
is a diagram of $\mathbb E$-triangles such that  $(i_1)_\ast \delta_A=(i_3)^\ast \delta_B$, and $i_1$ and $i_3$ are $\mathbb E$-inflations.	
Then there is an $\mathbb E$-inflation $i_2$ such that $(i_1, i_2, i_3)$ is a homotopic morphism of $\mathbb E$-triangles. Moreover, there is a $3\times 3$ diagram.

\end{theorem}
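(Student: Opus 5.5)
The plan is to build $i_2$ from the homotopic decomposition, read off the two outer columns, and glue them with ET4.

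\textbf{Step 1: the homotopic morphism.} Apply \Cref{thm:hmor}$(4)$ (equivalently \Cref{thm:hs-morphism}) to obtain a homotopic morphism $(i_1,i_2,i_3)$ with $i_2=\beta_2\beta_1$. Its decomposition is
\[
A_1\xrightarrow{f_A}A_2\xrightarrow{g_A}A_3,\qquad B_1\xrightarrow{s}E\xrightarrow{t}A_3,\qquad B_1\xrightarrow{f_B}B_2\xrightarrow{g_B}B_3,
\]
where $(i_1,\beta_1,1)$ and $(1,\beta_2,i_3)$ are homotopic morphisms and $\kappa=(i_1)_\ast\delta_A=i_3^\ast\delta_B$. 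By \Cref{prop:hs-inflation}$(1)$, applied to the left homotopic square (read vertically), $\beta_1$ is an $\mathbb E$-inflation. By the same lemma applied to the homotopic square with $\mathbb E$-triangle $E\to A_3\oplus B_2\to B_3$, $\beta_2$ is an $\mathbb E$-inflation. Hence $i_2=\beta_2\beta_1$ is an $\mathbb E$-inflation by ET4.

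\textbf{Step 2: the two outer columns.} Fix $\mathbb E$-triangles $\con{A_1}{i_1}{B_1}{p_1}{C_1}{\varepsilon_1}$ and $\con{A_3}{i_3}{B_3}{p_3}{C_3}{\varepsilon_3}$.
\begin{itemize}
\item Apply \Cref{prop:po-2-h}$(2)$ to the homotopic morphism $(i_1,\beta_1,1)$, in which both $i_1$ and $\beta_1$ are inflations. This gives an $\mathbb E$-triangle $\con{A_2}{\beta_1}{E}{p'}{C_1}{\varepsilon'}$ with $p_1=p's$, $\varepsilon'=(f_A)_\ast\varepsilon_1$, and a sum relation between $t^\ast\delta_A$ and $(p')^\ast\varepsilon_1$.
\item Apply \Cref{prop:strict-et4-2-h}$(2)$ to the homotopic morphism $(1,\beta_2,i_3)$. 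This gives an $\mathbb E$-triangle $\con{E}{\beta_2}{B_2}{q}{C_3}{\varepsilon''}$ with $q=p_3g_B$, $\varepsilon_3=t_\ast\varepsilon''$ and $s_\ast\delta_B=p_3^\ast\varepsilon''$.
\end{itemize}

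\textbf{Step 3: the middle column and bottom row.} Apply ET4 to the composable inflations $\beta_1$ and $\beta_2$ from Step 2. This produces:
\begin{itemize}
\item an $\mathbb E$-triangle $\con{A_2}{i_2}{B_2}{p_2}{C_2}{\varepsilon_2}$;
\item an $\mathbb E$-triangle $\con{C_1}{f_C}{C_2}{g_C}{C_3}{\delta_C}$;
\item the relations $f_Cp'=p_2\beta_2$, $q=g_Cp_2$, $f_C^\ast\varepsilon_2=\varepsilon'$, $\delta_C=p'_\ast\varepsilon''$ and $(\beta_1)_\ast\varepsilon_2=g_C^\ast\varepsilon''$.
\end{itemize}
These six $\mathbb E$-triangles form the candidate $3\times 3$ diagram.

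\textbf{Step 4: checking the four morphisms of $\mathbb E$-triangles.} Commutativity of the squares follows by chasing the equalities above:
\begin{itemize}
\item $p_2f_B=p_2\beta_2s=f_Cp's=f_Cp_1$;
\item $g_Cp_2=q=p_3g_B$;
\item $i_2f_A=f_Bi_1$ and $g_Bi_2=i_3g_A$ hold from the decomposition.
\end{itemize}
The extension compatibilities are:
\begin{itemize}
\item $(f_A)_\ast\varepsilon_1=\varepsilon'=f_C^\ast\varepsilon_2$, which gives $(f_A,f_B,f_C)$;
\item $(g_A)_\ast\varepsilon_2=g_C^\ast\varepsilon_3$, for $(g_A,g_B,g_C)$: write $g_A=t\beta_1$ and $\varepsilon_3=t_\ast\varepsilon''$, then use $(\beta_1)_\ast\varepsilon_2=g_C^\ast\varepsilon''$;
\item $(i_1)_\ast\delta_A=i_3^\ast\delta_B$, which is given, for $(i_1,i_2,i_3)$;
\item $(p_1)_\ast\delta_A$-type relation $p_1{}_\ast\delta_B=p_3^\ast\delta_C$, for $(p_1,p_2,p_3)$.
\end{itemize}

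\textbf{Main obstacle.} I expect the last identity to be the hard step. It has to combine $\delta_C=p'_\ast\varepsilon''$, $s_\ast\delta_B=p_3^\ast\varepsilon''$ and $p_1=p's$. The difficulty is that ET4 only controls the extensions up to the relations it lists, so signs may need adjusting. If so, I would use \Cref{isomorphisms} to replace $\delta_C$, or one of the columns, by its negative. If the relations obtained are still insufficient, I would fall back on the symmetric construction via \Cref{prop:pb-2}.
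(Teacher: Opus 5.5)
Your proposal is correct and follows the paper's proof essentially step for step: the same homotopic decomposition through $E$, the same use of \Cref{prop:po-2-h}$(2)$ and \Cref{prop:strict-et4-2-h}$(2)$ to produce the $\mathbb E$-triangles on $\beta_1$ and $\beta_2$, ET4 applied to these two, and the same diagram chase. The identity you flag as the main obstacle needs no sign adjustment, since it follows directly from the relations you already listed: $(p_1)_\ast\delta_B=p'_\ast s_\ast\delta_B=p'_\ast p_3^\ast\varepsilon''=p_3^\ast p'_\ast\varepsilon''=p_3^\ast\delta_C$, which is exactly the paper's computation.
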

	\begin{proof} By \Cref{thm:hs-morphism} there is $i_2: A_2\to B_2$ such that $(i_1,i_2,i_3)$ is a homotopic morphism. That is, there is a decomposition
		\begin{equation}
\label{eq:pf-3x3-0}
\xymatrix@R=0.4cm{
  A_1\ar[r]^{{f_A}}\ar[d]_{{i_1}} & A_2\ar[r]^{{g_A}}\ar[d]^{{j_1}} & A_3\ar@{-->}[r]^{{\delta_A}}\ar@{=}[d] & {} \\
  B_1\ar[r]^{s}\ar@{=}[d] & E\ar[r]^{t}\ar[d]^{{j_2}} & A_3\ar@{-->}[r]^{\kappa}\ar[d]^{{i_3}} & {} \\
  B_1\ar[r]^{{f_B}} & B_2\ar[r]^{{g_B}} & B_3\ar@{-->}[r]^{{\delta_B}} & {}
}
\end{equation}
with $j_2j_1=i_2$	such that  $(i_1, j_1, 1)$ and $(1, j_2, i_3)$ are morphisms of $\mathbb E$-triangles, and
$\con{A_1}{\dg{f_A \\ i_1}}{A_2\oplus B_1}{(j_1, -s)}{E}{t^\ast \delta_A}$ and $\con{E}{\dg{-t \\ j_2}}{A_3\oplus B_3}{(i_3, g_B)}{B_3}{ s_\ast \delta_B}$ are $\mathbb E$-triangles,
and  $(i_1)_\ast \delta_A = \kappa =  (i_3)^\ast \delta_B$. By \Cref{prop:po-1-h}$(2)$ and \Cref{prop:strict-et4-2-h}$(2)$, there exist $q_1$ and $q_2$ making the diagrams below commute
\begin{equation}
\label{eq:pf-3x3-1}
\xymatrix@R=0.4cm{
  A_1\ar[r]^{{f_A}}\ar[d]_{{i_1}} & A_2\ar[r]^{{g_A}}\ar[d]^{{j_1}} & A_3\ar@{-->}[r]^{{\delta_A}}\ar@{=}[d] & {} & B_1\ar[r]^{s}\ar@{=}[d] & E\ar[r]^{t}\ar[d]^{{j_2}} & A_3\ar@{-->}[r]^{\kappa}\ar[d]^{{i_3}} & {} \\
  B_1\ar[r]^{s}\ar[d]_{{p_1}} & E\ar[r]^{t}\ar@{..>}[d]^{{q_1}} & A_3\ar@{-->}[r]^{\kappa} & {} & B_1\ar[r]^{{f_B}} & B_2\ar[r]^{{g_B}}\ar@{..>}[d]^{{q_2}} & B_3\ar@{-->}[r]^{{\delta_B}}\ar[d]^{{p_3}} & {} \\
  C_1\ar@{-->}[d]_{{\varepsilon _1}} & C_1\ar@{=}[l]\ar@{..>}[d]^{{(f_A)_\ast \varepsilon _1}} & {} & {} & {} & C_3\ar@{=}[r]\ar@{..>}[d]^{\theta} & C_3\ar@{-->}[d]^{{\varepsilon _3}} & {} \\
  {} & {} & {} & {} & {} & {} & {} & {}
}
\end{equation}
such that $t^\ast \delta_A + q^\ast \varepsilon_1 = 0$ and $(p_3)^\ast \theta = s_\ast \delta_B$.

\vskip5pt

By ET4 there are $\mathbb E$-triangles $\con {A_2}{i_2}{B_2}{p_2}{C_2}{\varepsilon_2}$ and $\con {C_1}{f_C}{C_2}{g_C}{C_3}{\delta_C}$ fit in the following commutative diagram satisfying that $\delta_C=(q_1)_\ast \theta$, $(f_C)^\ast (\varepsilon_2)=(f_A)_\ast(\varepsilon_1)$ and $(j_1)_\ast (\varepsilon_2)=(g_C)^\ast\theta$.

\begin{equation}
\label{eq:pf-3x3-2}
\xymatrix@R=0.4cm{
  A_2\ar[r]^{{j_1}}\ar@{=}[d] & E\ar[r]^{{q_1}}\ar[d]_{{j_2}} & C_1\ar@{-->}[r]^{{(f_A)_\ast \varepsilon _1}}\ar@{..>}[d]_{{f_C}} & {} \\
  A_2\ar@{..>}[r]^{{i_2}} & B_2\ar@{..>}[r]^{{p_2}}\ar[d]_{{q_2}} & C_2\ar@{..>}[r]^{{\varepsilon_2}}\ar@{..>}[d]_{{g_C}} & {} \\
  {} & C_3\ar@{=}[r]\ar@{-->}[d]_{\theta} & C_3\ar@{..>}[d]_{{\delta_C}} & {} \\
  {} & {} & {} & {}
}
\end{equation}
\vskip5pt

Putting  \Cref{eq:pf-3x3-0}, \Cref{eq:pf-3x3-1} and \Cref{eq:pf-3x3-2} together, one obtains the following diagram

\begin{equation*}\xymatrix@R=0.4cm{
  A_1\ar[r]^{{f_A}}\ar[d]_{{i_1}} & A_2\ar[r]^{{g_A}}\ar[d]^{{i_2}} & A_3\ar@{-->}[r]^{{\delta_A}}\ar[d]^{{i_3}} & {} \\
  B_1\ar[r]^{{f_B}}\ar[d]_{{p_1}} & B_2\ar[r]^{{g_B}}\ar[d]^{{p_2}} & B_3\ar@{-->}[r]^{{\delta_B}}\ar[d]^{{p_3}} & {} \\
  C_1\ar[r]^{{f_C}}\ar@{-->}[d]_{{\varepsilon _1}} & C_2\ar[r]^{{g_C}}\ar@{-->}[d]^{{\varepsilon _2}} & C_3\ar@{-->}[r]^{{\delta_C}}\ar@{-->}[d]^{{\varepsilon _3}} & {} \\
  {} & {} & {} & {}
}
\end{equation*} It is indeed a $3\times 3$ diagram. In fact, by assumption one has $f_Bi_1 = i_2f_A$,  $g_Bi_2 = i_3g_A$,  and $(i_1)_\ast\delta_A = (i_3)^\ast\delta_B$. One has
$$f_Cp_1 \xlongequal{\text{\Cref{eq:pf-3x3-1}}} f_Cq_1s\xlongequal{\text{\Cref{eq:pf-3x3-2}}} p_2j_2s\xlongequal{\text{\Cref{eq:pf-3x3-0}}} p_2f_B, \ \ \ \ \
g_Cp_2 \xlongequal{\text{\Cref{eq:pf-3x3-2}}} q_2 \xlongequal{\text{\Cref{eq:pf-3x3-1}}} p_3g_B$$
and
$$(p_1)_\ast\delta_B \xlongequal{\text{\Cref{eq:pf-3x3-1}}} (q_1)_\ast s_\ast\delta_B \xlongequal{\text{\Cref{eq:pf-3x3-1}}} (q_1)_\ast (p_3)^\ast\theta  = (p_3)^\ast (q_1)_\ast \theta \allowbreak \xlongequal{\text{\Cref{eq:pf-3x3-2}}} (p_3)^\ast\delta_C.$$
It follows from \Cref{eq:pf-3x3-2} that $(f_A)_\ast\varepsilon_1 = (f_C)^\ast \varepsilon_2;$ and one has
$$(g_A)_\ast\varepsilon_2 \xlongequal{\text{\Cref{eq:pf-3x3-0}}} t_\ast (j_1)_\ast \varepsilon_2 \xlongequal{\text{\Cref{eq:pf-3x3-2}}} t_\ast (g_C)^\ast \theta = (g_C)^\ast t_\ast \theta \allowbreak\xlongequal{\text{\Cref{eq:pf-3x3-1}}} (g_C)^\ast \varepsilon_3.$$
This completes the proof. \end{proof}

\vskip5pt

\begin{corollary} {\rm (Horseshoe Lemma)} \label{horseshoe} \ Let $(\mathcal{C}, \mathbb E, \mathfrak s)$ be an extriangulated category, and  $\con{A_1}{f_A}{A_2}{g_A}{A_3}{\delta_A}$ an $\mathbb E$-triangle.
Suppose that $i_1 : A_1 \to B_1$ and $i_3 : A_3 \to B_3$ are $\mathbb E$-inflations where $i_1$ factors through $f_A$. Then there is a $3\times 3$ diagram$:$
	\begin{equation*}
\xymatrix@R=0.4cm{
  A_1\ar[r]^{f_A}\ar[d]_-{i_1} & A_2\ar[r]^{g_A}\ar@{..>}[d] & A_3\ar@{-->}[r]^{\delta_A}\ar[d]^-{i_3} & {} \\
  B_1\ar[r]^-{{\dg{1 \\ 0}}}\ar[d] & B_1  \oplus B_3\ar[r]^-{{(0,1)}}\ar@{..>}[d] & B_3\ar@{-->}[r]^{0}\ar[d] & {} \\
  C_1\ar@{..>}[r]\ar@{-->}[d] & C_2\ar@{..>}[r]\ar@{..>}[d] & C_3\ar@{..>}[r]\ar@{-->}[d] & {} \\
  {} & {} & {} & {}
}
\end{equation*}
\end{corollary}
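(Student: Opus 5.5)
The plan is to reduce the Horseshoe Lemma to \Cref{cor:3x3-lemma-inflation}, applied to the given $\mathbb E$-triangle $\con{A_1}{f_A}{A_2}{g_A}{A_3}{\delta_A}$ on top and the split $\mathbb E$-triangle $\con{B_1}{\dg{1\\0}}{B_1\oplus B_3}{(0,1)}{B_3}{0}$ on the bottom. The latter is an $\mathbb E$-triangle by additivity of $\mathfrak s$ (ET2).

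To apply \Cref{cor:3x3-lemma-inflation}, I need the compatibility $(i_1)_\ast\delta_A=(i_3)^\ast 0$, that is, $(i_1)_\ast\delta_A=0$. Since $i_1$ factors through $f_A$, write $i_1=hf_A$ with $h:A_2\to B_1$. Then $(i_1)_\ast\delta_A=h_\ast (f_A)_\ast\delta_A$. This vanishes because $(f_A)_\ast\delta_A=0$, which follows from \Cref{lem:long-ext-seq}: in the exact sequence $\mathbb E(-,A_1)\xrightarrow{(f_A)_\ast}\mathbb E(-,A_2)$ evaluated at $A_3$, the element $\delta_A=(\delta_A)_\sharp(1_{A_3})$ lies in the image of $\delta_\sharp$, hence in the kernel of $(f_A)_\ast$. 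So $(i_1,i_3)$ is a morphism of $\mathbb E$-extensions $\delta_A\to 0$, and $i_1$, $i_3$ are $\mathbb E$-inflations by hypothesis.

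\Cref{cor:3x3-lemma-inflation} then produces an $\mathbb E$-inflation $i_2:A_2\to B_1\oplus B_3$ such that $(i_1,i_2,i_3)$ is a (homotopic) morphism of $\mathbb E$-triangles, together with a full $3\times 3$ diagram. Its middle row is exactly the split $\mathbb E$-triangle, its columns are $\mathbb E$-triangles completing $i_1,i_2,i_3$, and its bottom row is an $\mathbb E$-triangle $\con{C_1}{}{C_2}{}{C_3}{}$. All required commutativities and extension compatibilities are part of the conclusion of \Cref{cor:3x3-lemma-inflation} (\Cref{def:3x3-diag}). The dotted arrows in the statement are precisely the ones constructed there.

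I do not expect a genuine obstacle. The only nonformal point is the vanishing $(i_1)_\ast\delta_A=0$, which uses the factorization hypothesis through \Cref{lem:long-ext-seq}; everything else is a direct invocation of \Cref{cor:3x3-lemma-inflation}. Optionally, one could note that $i_2$ has the form $\dg{h'\\ i_3g_A}$ with $h'f_A=i_1$, but this is not needed.
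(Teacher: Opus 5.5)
Your proposal is correct and matches the paper's argument: the paper gives no separate proof and obtains the Horseshoe Lemma as the special case of \Cref{cor:3x3-lemma-inflation} in which the bottom row is the split $\mathbb E$-triangle ($\delta_B=0$) and $(i_1)_\ast\delta_A=0=(i_3)^\ast 0$ because $i_1$ factors through $f_A$. Your derivation of $(f_A)_\ast\delta_A=0$ from \Cref{lem:long-ext-seq}, using $\delta_A=(\delta_A)_\sharp(1_{A_3})$, fills in the one detail the paper leaves implicit.
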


\subsection{Other variants of $4\times 4$ Lemma}

Since the class of $\mathbb E$-triangles is not closed under rotations, $4\times 4$ Lemma has some variants. All proofs are similar to \Cref{thm:4x4-sss}: one constructs the homotopic morphism $(\alpha, \beta, \gamma)$ by \Cref{thm:hmor} and then follows the proof of \Cref{thm:4x4-sss}. So, we just state the results without the proofs. The dual statements are also omitted.

\vskip5pt

\begin{proposition}\label{prop:4x4-I-?-S} \ Let $(\mathcal{C}, \mathbb E, \mathfrak s)$ be an extriangulated category. Suppose that
		\begin{equation*}
\xymatrix@R=0.4cm{
  X\ar[r]^{f}\ar[d]_-{\alpha} & Y\ar[r]^{g} & Z\ar@{-->}[r]^{\delta}\ar[d]^-{\gamma} & {} \\
  L\ar[r]^{u} & M\ar[r]^{v} & N\ar@{-->}[r]^{{\varepsilon }} & {}
}
\end{equation*}
is a diagram of $\mathbb E$-triangles such that $\alpha_\ast \delta=\gamma^\ast \varepsilon$, and $\alpha$ is an $\mathbb E$-inflation, and $\gamma$ is an $\mathbb E$-inflation and an $\mathbb E$-deflation. Then there is an $\mathbb E$-inflation $\beta$ such that $(\alpha, \beta, \gamma)$ is a homotopic morphism of $\mathbb E$-triangles. Moreover, there is a commutative diagram
\begin{equation*}
\xymatrix@R=0.4cm{
   &   & K_\gamma\ar@{..>}[d]^{{i_\gamma}} & {} \\
  X\ar[r]^{f}\ar[d]_-{\alpha} & Y\ar[r]^{g}\ar@{..>}[d]^-{\beta} & Z\ar@{-->}[r]^{\delta}\ar[d]^-{\gamma} & {} \\
  L\ar[r]^{u}\ar@{..>}[d]_{{p_\alpha}} & M\ar[r]^{v}\ar@{..>}[d]^{{p_\beta}} & N\ar@{-->}[r]^{{\varepsilon }}\ar@{..>}[d]^{{p_\gamma}} & {} \\
  C_\alpha\ar@{..>}[r]^{{u'}} & C_\beta\ar@{..>}[r]^{{v'}} & C_\gamma \ar@{..>}[r] & {}
}
\end{equation*}
    	 with a connecting morphism $z:K_\gamma\to C_\alpha$, such that the following are $\mathbb E$-triangles$:$
             \vskip5pt
\begin{tasks}[style=enumerate, label=$(\arabic*)$, label-width=17.8pt](2)
		\task $\con {K_\gamma}z{C_\alpha}{u'}{C_\beta}{\rho^{-1}(g')};$
		\task $\con {C_\alpha}{u'}{C_\beta}{v'}{C_\gamma}{-\rho^{-1}(z)};$
		\task $\con {K_\gamma}{i_\gamma}Z\gamma N{\ell^{-1}(p_\gamma)};$
		\task $\con X\alpha L{p_\alpha}{C_\alpha}{-\rho^{-1}(i_\alpha)};$
		\task $\con Y\beta M{p_\beta}{C_\beta}{-\rho^{-1}(i_\beta)};$
		\task $\con Z\gamma N{p_\gamma}{C_\gamma}{-\rho^{-1}(i_\gamma)}$
	\end{tasks}
        \vskip10pt
with morphisms of $\mathbb E$-triangles $(p_\alpha,p_\beta,p_\gamma)$, $(f,u, u')$, $(g,v,v')$,
and $C_\gamma=FK_\gamma$, where $F$ is the functor defined in \Cref{F}, and $\ell$ and $\rho$ are defined in \Cref{binaturality}.
    \vskip5pt
    In particular, there is a sequence
$ K_\gamma \xrightarrow{z} C_\alpha \xrightarrow{u'} C_\beta \xrightarrow{v'} C_\gamma$
	such that any subsequent three objects form an $\mathbb E$-triangle as in $(1)$-$(2)$.

\end{proposition}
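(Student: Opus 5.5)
By \Cref{thm:hmor}$(3)$ there is an $\mathbb E$-inflation $\beta$ such that $(\alpha,\beta,\gamma)$ is a homotopic morphism. As in the proof of \Cref{thm:4x4-sss}, we fix a decomposition $\beta=\beta_2\beta_1$ through $\con LsEtZ{\kappa}$ with $\kappa=\alpha_\ast\delta=\gamma^\ast\varepsilon$, together with the two $\mathbb E$-triangles of \Cref{def:htm}. By \Cref{prop:hs-inflation}, $\beta_1$ is an $\mathbb E$-inflation because $\alpha$ is. Likewise $\beta_2$ is an $\mathbb E$-inflation and an $\mathbb E$-deflation because $\gamma$ is. Since $\alpha,\beta,\gamma$ are $\mathbb E$-inflations, we choose $\mathbb E$-triangles $\con X\alpha L{p_\alpha}{C_\alpha}{}$, $\con Y\beta M{p_\beta}{C_\beta}{}$ and $\con Z\gamma N{p_\gamma}{C_\gamma}{}$. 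Since $\gamma\in S$, \Cref{prop:slr-2} and \Cref{lem:rotation} allow us to take $\con{K_\gamma}{i_\gamma}Z\gamma N{\ell^{-1}(p_\gamma)}$ with $K_\gamma\in\mathcal L$ and $C_\gamma=FK_\gamma$, and to rewrite the third triangle with $\mathbb E$-extension $-\rho^{-1}(i_\gamma)$. This gives $(3)$ and $(6)$. The only kernel we can form is $K_\gamma$, since $\alpha$ and $\beta$ need not be deflations.

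To build the cokernel row, apply \Cref{prop:po-1-h}$(2)$ to the homotopic morphism $(\alpha,\beta_1,1_Z)$, in which both $\alpha$ and $\beta_1$ are inflations. This yields a triangle $\con Y{\beta_1}E{y}{C_\alpha}{\mu_1}$ with $ys=p_\alpha$, compatible with the triangle for $\alpha$. Then apply ET4 to the composable inflations $\beta_1,\beta_2$, replacing the triangle for $\beta$ by the one so obtained (changing $C_\beta$ up to isomorphism via \Cref{cone}). This produces $\con{C_\alpha}{u'}{C_\beta}{w}{F'}{}$ together with $\con E{\beta_2}M{}{F'}{}$.

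On the other side, \Cref{prop:pb-2-h}$(1)$ applied to $(1,\beta_2,\gamma)$ gives $x\colon K_\gamma\to E$ with $i_\gamma=tx$. The argument of \Cref{thm:hs-square-s}, applied to $(t,v,1)$ with $t,v$ deflations, gives a homotopic morphism $(\beta_2,\gamma,\ldots)$. Together these identify $F'\cong C_\gamma$ with $w=v'$ and $\con E{\beta_2}M{p_\gamma v}{C_\gamma}{}$, which yields $(2)$ and $(4)$–$(5)$. For $(1)$, set $z:=yx\colon K_\gamma\to C_\alpha$. Since $\con E{\beta_2}M{}{C_\gamma}{}$ comes from rotating $\con{K_\gamma}xE{\beta_2}M{}$, the ET4 octahedron on $\beta_1,\beta_2$ identifies $\con{K_\gamma}z{C_\alpha}{u'}{C_\beta}{}$ as the rotation of the $C$-row. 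We then apply \Cref{lem:rotation} to $u'$, which lies in $S$ by \Cref{prop:slr-2}$(2)$ because $\con{C_\alpha}{u'}{C_\beta}{v'}{FK_\gamma}{}$ is an $\mathbb E$-triangle with $FK_\gamma\in\mathcal R$. This gives $\con{K_\gamma}{\rho(\xi)}{C_\alpha}{u'}{C_\beta}{}$, and it remains to check $\rho(\xi)=z$ and that the extension equals $\rho^{-1}(g')$, where $g'$ is understood as $\rho$ of the relevant extension, i.e.\ the connecting datum.

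The morphism conditions for $(p_\alpha,p_\beta,p_\gamma)$, $(f,u,u')$ and $(g,v,v')$ are then verified as in \Cref{thm:4x4-sss}, using \Cref{binaturality} and the extension identities recorded by \Cref{prop:po-1-h}, ET4 and \Cref{prop:pb-2-h}. The main obstacle is making the two constructions of $C_\gamma$ agree. One comes from ET4 on $\beta_2\beta_1$; the other comes from rotating via $K_\gamma$. The identification has to respect the extensions $-\rho^{-1}(z)$ and $-\rho^{-1}(i_\gamma)$, including signs. I would first fix $C_\gamma=FK_\gamma$ via \Cref{lem:rotation}. Then I would use the uniqueness in \Cref{Ff}, equivalently the functoriality of $\ell$ and $\rho$, to force the comparison map to be the identity and to check that the ET4-produced extension equals $-\rho^{-1}(yx)$.
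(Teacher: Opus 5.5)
Your argument is correct, but at the decisive step it differs from the template the paper cites. The paper says to follow the proof of \Cref{thm:4x4-sss}. There the cokernel row comes from rotating $\con{K_\beta}{g'}{K_\gamma}{z}{C_\alpha}{\theta}$ forward twice, which needs $K_\alpha$, $K_\beta$ and $g'$. These do not exist when $\alpha,\beta$ are only $\mathbb E$-inflations. You instead get $\con Y{\beta_1}E{y}{C_\alpha}{\mu_1}$ from \Cref{prop:po-1-h}$(2)$ and build the cokernel row by ET4 on $\beta=\beta_2\beta_1$, exactly as in the proof of \Cref{thm:3x3-lemma}. You then get $(1)$ by one backward rotation with \Cref{lem:rotation}$(2)$. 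This is what makes the proposition provable here. The template buys more: the whole kernel row and $C_\bullet=FK_\bullet$, $u'=Ff'$, $v'=Fg'$, but only when $\alpha,\beta\in S$.

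The price of your route is that the extensions in $(1)$, $(4)$, $(5)$ come out as $\ell^{-1}(v')$, the chosen $r_\alpha$, and the ET4 output $r_\beta$. The literal labels $\rho^{-1}(g')$, $-\rho^{-1}(i_\alpha)$, $-\rho^{-1}(i_\beta)$ are undefined here; even $C_\alpha,C_\beta\in\mathcal R$ fails in general. Note that $\ell^{-1}(v')$ is exactly what $\rho^{-1}(g')$ equals in \Cref{thm:4x4-sss}, where $v'=\ell(\rho^{-1}(g'))$. So replace your reading of $g'$ as ``$\rho$ of the relevant extension'' by this.

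Two repairs are needed.

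First, your ``main obstacle'' does not exist, because ET4 takes the $\beta_2$-triangle as input. Feed it $\con E{\beta_2}M{p_\gamma v}{C_\gamma}{\eta_2}$, obtained from \Cref{thm:hs-square-s} applied to the homotopic morphism $(1_{K_\gamma},t,v)$. That morphism is homotopic by the second triangle of the decomposition together with $x_\ast l_\gamma=-s_\ast\varepsilon$. (Once the $\gamma$-triangle is fixed, you need part $(2)$ of \Cref{prop:pb-2-h}, not part $(1)$.) Then the third vertex is $C_\gamma=FK_\gamma$ on the nose. The $C$-row extension is $y_\ast\eta_2=-y_\ast\rho^{-1}(x)=-\rho^{-1}(yx)$ by \Cref{binaturality}$(4)$, so no comparison via \Cref{Ff} is needed.

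Second, with $\xi=-\rho^{-1}(z)$ one gets $\rho(\xi)=-z$, not $z$. \Cref{isomorphisms} turns $\con{K_\gamma}{-z}{C_\alpha}{u'}{C_\beta}{-\ell^{-1}(v')}$ into $\con{K_\gamma}{z}{C_\alpha}{u'}{C_\beta}{\ell^{-1}(v')}$. Also, $u'\in S$ comes from \Cref{prop:slr-2}$(2)'$, not $(2)$.

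With these, the morphism checks close:
$u'p_\alpha=u'ys=p_\beta\beta_2 s=p_\beta u$;
$(p_\gamma)^\ast(-\rho^{-1}(z))=-z_\ast l_\gamma=y_\ast s_\ast\varepsilon=(p_\alpha)_\ast\varepsilon$;
$f_\ast r_\alpha=\mu_1=(u')^\ast r_\beta$;
$g_\ast r_\beta=t_\ast(\beta_1)_\ast r_\beta=t_\ast(v')^\ast\eta_2=(v')^\ast r_\gamma$.
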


\vskip5pt

\begin{proposition}\label{prop:4x4-S-S-?}  \ 	Let $(\mathcal{C}, \mathbb E, \mathfrak s)$ be an extriangulated category. Suppose that
		\begin{equation*}
\xymatrix@R=0.4cm{
  X\ar[r]^{f}\ar[d]_-{\alpha} & Y\ar[r]^{g}\ar[d]^-{\beta} & Z\ar@{-->}[r]^{\delta} & {} \\
  L\ar[r]^{u} & M\ar[r]^{v} & N\ar@{-->}[r]^{{\varepsilon }} & {}
}
\end{equation*}
is a diagram of $\mathbb E$-triangles such that  $u\alpha=\beta f$, and $\alpha$ and $\beta$ are $\mathbb E$-inflations and $\mathbb E$-deflations.	Then there is an $\mathbb E$-deflation $\gamma$ such that $(\alpha, \beta, \gamma)$ is a homotopic morphism of $\mathbb E$-triangles. Moreover, there is a commutative diagram
	\begin{equation*}
\xymatrix@R=0.4cm{
  K_\alpha\ar@{..>}[r]^{{f'}}\ar@{..>}[d]_-{{i_\alpha }} & K_\beta\ar@{..>}[r]^{{g'}}\ar@{..>}[d]^{{i_\beta}} & K_\gamma\ar@{..>}[d]^{{i_\gamma}}\ar@{..>}[r] & {} \\
  X\ar[r]^{f}\ar[d]_-{\alpha} & Y\ar[r]^{g}\ar[d]^-{\beta} & Z\ar@{-->}[r]^{\delta}\ar@{..>}[d]^-{\gamma} & {} \\
  L\ar[r]^{u}\ar@{..>}[d]_{{p_\alpha}} & M\ar[r]^{v}\ar@{..>}[d]^{{p_\beta}} & N\ar@{-->}[r]^{{\varepsilon }} & {} \\
  C_\alpha\ar@{..>}[r]^{{u'}} & C_\beta & {} & {}
}
\end{equation*}
with a connecting morphism $z:K_\gamma\to C_\alpha$, such that the following are $\mathbb E$-triangles$:$

            \vskip5pt
\begin{tasks}[style=enumerate, label=$(\arabic*)$, label-width=4ex](2)
		\task $\con {K_\alpha}{f'}{K_\beta}{g'}{K_\gamma}{\ell^{-1} (z)};$
		\task $\con {K_\beta}{g'}{K_\gamma}z{C_\alpha}{-\ell^{-1} (u')};$
		\task $\con {K_\gamma}z{C_\alpha}{u'}{C_\beta}{\rho^{-1}(g')};$
		\task $\con {K_\alpha}{i_\alpha}X\alpha L{\ell^{-1}(p_\alpha)};$
		\task $\con {K_\beta}{i_\beta}Y\beta M{\ell^{-1}(p_\beta)};$
		\task $\con {K_\gamma}{i_\gamma}Z\gamma N{\ell^{-1}(p_\gamma)};$
		\task $\con X\alpha L{p_\alpha}{C_\alpha}{-\rho^{-1}(i_\alpha)};$
		\task $\con Y\beta M{p_\beta}{C_\beta}{-\rho^{-1}(i_\beta)}$
	\end{tasks}
        \vskip10pt
        with morphisms of $\mathbb E$-triangles $(i_\alpha,i_\beta,i_\gamma)$, $(f',f,u)$, $(f,u, u')$, $(g',g,v)$,
and that $C_\alpha=FK_\alpha$, $C_\beta=FK_\beta$, $u'=Ff'$, where $F$ is the functor defined in \Cref{F}, and $\ell$ and $\rho$ are defined in \Cref{binaturality}.
\vskip5pt
In particular, there is a sequence
$K_\alpha \xrightarrow{f'} K_\beta \xrightarrow{g'} K_\gamma \xrightarrow{z} C_\alpha \xrightarrow{u'} C_\beta$
such that any subsequent three objects form an $\mathbb E$-triangle as in $(1)$-$(3)$.
\end{proposition}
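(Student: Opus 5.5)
The plan is to follow the proof of \Cref{thm:4x4-sss} closely; the difference is that now $\gamma$ has to be constructed rather than given. By \Cref{thm:hmor}$(5)$ there is $\gamma: Z\to N$ such that $(\alpha,\beta,\gamma)$ is a homotopic morphism, and $\gamma$ is an $\mathbb E$-deflation. We fix the decomposition $\beta=\beta_2\beta_1$ through $\con L s E t Z \kappa$ with $\kappa=\alpha_\ast\delta=\gamma^\ast\varepsilon$. By \Cref{prop:hs-inflation}$(1)$, $\beta_1$ is an $\mathbb E$-inflation since $\alpha$ is. By \Cref{prop:hs-inflation}$(2)$, $\beta_1$ is an $\mathbb E$-deflation since $\alpha$ is, so $\beta_1\in S$. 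As in the proof of \Cref{thm:hmor}$(5)$, \Cref{prop:comp-infldefl}$(1)'$ gives that $\beta_2$ is an $\mathbb E$-deflation. Then \Cref{prop:hs-inflation}$(2)$ applied to the right homotopic square shows $\gamma$ is an $\mathbb E$-deflation. We do not get that $\beta_2$ or $\gamma$ lies in $S$, and this is exactly why the bottom row $C_\gamma$ is absent from the statement.

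Next we build the kernel row, exactly as in \Cref{eq:4x4-sss-0}. Apply \Cref{prop:strict-et4op-2-h} to $(\alpha,\beta_1,1)$ and $\con{K_\alpha}{i_\alpha}X\alpha L{}$ to get $\eta_1$ with $\con{K_\alpha}{fi_\alpha}Y{\beta_1}E{\eta_1}$. Apply \Cref{prop:pb-2-h} to $(1,\beta_2,\gamma)$, which only requires $\beta_2,\gamma$ to be $\mathbb E$-deflations, to get $x:K_\gamma\to E$ and $\con{K_\gamma}{i_\gamma}Z\gamma N{l_\gamma}$ with $tx=i_\gamma$. Since $\beta\in S$, take $\con{K_\beta}{i_\beta}Y\beta M{l_\beta}$, and use \Cref{lem:hs-?g1}$(2)$ to obtain $g'$ with $(g',\beta_1,1_M)$ homotopic. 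Then \Cref{prop:strict-et4op-2-h}$(2)$ yields $(1)$: $\con{K_\alpha}{f'}{K_\beta}{g'}{K_\gamma}{x^\ast\eta_1}$, together with \Cref{eq:4x4-sss-1}. By \Cref{prop:hs-inflation}, $g'\in S$ because $\beta_1\in S$.

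We then rotate. Since $\alpha,\beta\in S$, \Cref{lem:rotation} gives $(7)$, $(8)$ with $C_\alpha=FK_\alpha$ and $C_\beta=FK_\beta$. \Cref{thm:hs-square-s} applied to $(1,f,s)$ gives $(f,s,1_{C_\alpha})$ and $y:E\to C_\alpha$. Because $\beta_1,g'\in S$, the same proposition applied to the homotopic morphism $(1,i_\beta,x)$ from \Cref{eq:4x4-sss-1} produces $z=yx$ and $\con{K_\beta}{g'}{K_\gamma}{z}{C_\alpha}{\theta}$. Here $K_\beta\in\mathcal L$ by \Cref{prop:slr-2}$(1)$, so \Cref{prop:slr-2}$(2)$ puts $z\in S$. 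Another application of \Cref{lem:rotation} then gives $(3)$, with $u'=-\ell(\theta)=Ff'$. Triangle $(2)$ is the rotated form of $(1)$ supplied by \Cref{lem:rotation}, since $g'\in S$ and $K_\alpha\in\mathcal L$. We stop at $(3)$: continuing would require $K_\gamma\in\mathcal L$, which is not available because $\gamma\notin S$ in general.

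Finally, the compatibilities for $(i_\alpha,i_\beta,i_\gamma)$, $(f',f,u)$, $(f,u,u')$ and $(g',g,v)$ are checked by the same computations with $\ell,\rho$ as in \Cref{thm:4x4-sss}; \Cref{binaturality} handles them, and none of them involves $C_\gamma$. There are two places where care is needed. The first is verifying that every step borrowed from \Cref{thm:4x4-sss} uses only that $\gamma$ is an $\mathbb E$-deflation and never that $\gamma\in S$, in particular that \Cref{prop:pb-2-h} applies with $\beta_2$ merely a deflation. The second is checking that $(f,u,u')$ is a morphism, which uses $\rho(\mu_1)=-fi_\alpha$ and the relation $i_\beta f'=fi_\alpha$; I expect this second check to be the main bookkeeping obstacle.
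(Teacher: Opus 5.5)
Your proposal is correct and follows essentially the route the paper indicates (the paper states this proposition without proof and says it goes like \Cref{thm:4x4-sss}): obtain $\gamma$ from \Cref{thm:hmor}$(5)$, then rerun the proof of \Cref{thm:4x4-sss}, dropping exactly the steps that need $\gamma\in S$, namely the homotopic morphism $(t,v,1_{C_\gamma})$ and the rotations that lead to $C_\gamma$. Note that the extension you produce in $(6)$ is $l_\gamma$; this is the correct reading, because the $\ell^{-1}(p_\gamma)$ written in the statement is undefined when $K_\gamma\notin\mathcal L$, which can happen here.
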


\vskip5pt

\begin{proposition}\label{prop:4x4-S-D-?} \ Let $(\mathcal{C}, \mathbb E, \mathfrak s)$ be an extriangulated category. Suppose that
		\begin{equation*}
\xymatrix@R=0.4cm{
  X\ar[r]^{f}\ar[d]_-{\alpha} & Y\ar[r]^{g}\ar[d]^-{\beta} & Z\ar@{-->}[r]^{\delta} & {} \\
  L\ar[r]^{u} & M\ar[r]^{v} & N\ar@{-->}[r]^{{\varepsilon }} & {}
}
\end{equation*}
is a diagram of $\mathbb E$-triangles such that $u\alpha=\beta f$, and $\alpha$ is an $\mathbb E$-inflation and $\mathbb E$-deflation, and  $\beta$ is an $\mathbb E$-deflation. Then there is an $\mathbb E$-deflation $\gamma$ such that $(\alpha, \beta, \gamma)$ is a homotopic morphism of $\mathbb E$-triangles. Moreover, there is a commutative diagram
	\begin{equation*}
\xymatrix@R=0.4cm{
  K_\alpha\ar@{..>}[r]^{{f'}}\ar@{..>}[d]_{{i_\alpha }} & K_\beta\ar@{..>}[r]^{{g'}}\ar@{..>}[d]^{{i_\beta}} & K_\gamma\ar@{..>}[d]^{{i_\gamma}} \ar@{..>}[r]& {} \\
  X\ar[r]^{f}\ar[d]_-{\alpha} & Y\ar[r]^{g}\ar[d]^-{\beta} & Z\ar@{-->}[r]^{\delta}\ar@{..>}[d]^-{\gamma} & {} \\
  L\ar[r]^{u}\ar@{..>}[d]_{{p_\alpha}} & M\ar[r]^{v} & N\ar@{-->}[r]^{{\varepsilon }} & {} \\
  C_\alpha & {} & {} & {}
}
\end{equation*}
    	 with a connecting morphism $z:K_\gamma\to C_\alpha$, such that the following are $\mathbb E$-triangles$:$
             \vskip5pt
\begin{tasks}[style=enumerate, label=$(\arabic*)$, label-width=4ex](2)
		\task $\con {K_\alpha}{f'}{K_\beta}{g'}{K_\gamma}{\ell^{-1} (z)};$
		\task $\con {K_\beta}{g'}{K_\gamma}z{C_\alpha}{-\ell^{-1} (u')};$
		\task $\con {K_\alpha}{i_\alpha}X\alpha L{\ell^{-1}(p_\alpha)};$
		\task $\con {K_\beta}{i_\beta}Y\beta M{\ell^{-1}(p_\beta)};$
		\task $\con {K_\gamma}{i_\gamma}Z\gamma N{\ell^{-1}(p_\gamma)};$
		\task $\con X\alpha L{p_\alpha}{C_\alpha}{-\rho^{-1}(i_\alpha)}$
	\end{tasks}
        \vskip10pt
    with morphism of $\mathbb E$-triangles  $(i_\alpha,i_\beta,i_\gamma)$, $(f',f,u)$, $(g',g,v)$, and  $C_\alpha=FK_\alpha$, where $F$ is the functor defined in \Cref{F}, and $\ell$ and $\rho$ are defined in \Cref{binaturality}.
    \vskip5pt
    In particular, there is a sequence
$K_\alpha \xrightarrow{f'} K_\beta \xrightarrow{g'} K_\gamma \xrightarrow{z} C_\alpha$
	such that any subsequent three objects form an $\mathbb E$-triangle as in $(1)$-$(2)$.

\end{proposition}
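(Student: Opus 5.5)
First I would obtain $\gamma$ and the decomposition. By \Cref{thm:hmor}$(6)$ (the case where $\alpha$ is an $\mathbb E$-inflation and $\mathbb E$-deflation and $\beta$ is an $\mathbb E$-deflation) there is an $\mathbb E$-deflation $\gamma$ making $(\alpha,\beta,\gamma)$ homotopic. Concretely, this gives the decomposition $\beta=\beta_2\beta_1$ through $\con LsEtZ\kappa$, with $\kappa=\alpha_\ast\delta=\gamma^\ast\varepsilon$. \Cref{prop:hs-inflation} then says $\beta_1$ lies in $S$, since $\alpha$ does. Because $\beta$ is an $\mathbb E$-deflation and $\beta_1$ an $\mathbb E$-inflation, \Cref{prop:comp-infldefl}$(1)'$ makes $\beta_2$ an $\mathbb E$-deflation. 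Hence $\gamma$ is an $\mathbb E$-deflation by \Cref{prop:hs-inflation}$(2)$. Note that $\beta_2$ and $\gamma$ need not be $\mathbb E$-inflations, so every step of the proof of \Cref{thm:4x4-sss} that rotates on the $\gamma$-side has to be dropped.

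Next I would build the kernel row exactly as in \Cref{thm:4x4-sss}. Since $\alpha$ is an $\mathbb E$-deflation, fix $\con{K_\alpha}{i_\alpha}X\alpha L{l_\alpha}$. Applying \Cref{prop:strict-et4op-2-h} to the homotopic morphism $(\alpha,\beta_1,1)$ gives the diagram with $fi_\alpha$ and $\con{K_\alpha}{fi_\alpha}Y{\beta_1}E{\eta_1}$. Applying \Cref{prop:pb-2-h} to $(1,\beta_2,\gamma)$, both of which are $\mathbb E$-deflations, gives $\con{K_\gamma}{i_\gamma}Z\gamma N{l_\gamma}$ together with $x:K_\gamma\to E$ such that $tx=i_\gamma$. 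I would then choose $\con{K_\beta}{i_\beta}Y\beta M{l_\beta}$, obtain $g'$ from \Cref{lem:hs-?g1}$(2)$ so that $(g',\beta_1,1_M)$ is homotopic, and use \Cref{prop:strict-et4op-2-h}$(2)$ to get triangle $(1)$, $\con{K_\alpha}{f'}{K_\beta}{g'}{K_\gamma}{x^\ast\eta_1}$. This step needs only that $\beta$ and $\beta_2$ are $\mathbb E$-deflations.

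Then I would produce triangles $(2)$ and $(6)$ by rotation on the $\alpha$-side. \Cref{lem:rotation} applied to $\alpha\in S$ gives $(6)$ with $C_\alpha=FK_\alpha$. \Cref{thm:hs-square-s} applied to $(1,\beta_1,\ldots)$ gives $\con Y{\beta_1}E y{C_\alpha}{\mu_1}$ and the homotopic morphism $(f,s,1)$. The main obstacle is that for $z$ I can no longer invoke \Cref{thm:hs-square-s} on $(g',\ldots)$ the same way, because $g'$ need not be in $S$. Instead I would use $K_\alpha\in\mathcal L$ (by \Cref{prop:slr-2}$(1)$, from triangle $(6)$'s source). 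By \Cref{prop:slr-2}$(2)$, the $\mathbb E$-deflation $g'$ in triangle $(1)$ then lies in $S$, since its kernel-side object $K_\alpha$ is in $\mathcal L$. With $g'\in S$, \Cref{lem:rotation} and \Cref{thm:hs-square-s} apply to the morphism $(1,i_\beta,x)$ of triangle $(1)$ into $\con{K_\alpha}{fi_\alpha}Y{\beta_1}E{\eta_1}$. This yields $\con{K_\beta}{g'}{K_\gamma}z{C_\alpha}{\theta}$ with $z=yx$ and $\rho(\theta)=-f'$. Setting $u'=-\ell(\theta)$ identifies $\theta=-\ell^{-1}(u')$.

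Finally I would verify the morphisms of $\mathbb E$-triangles $(i_\alpha,i_\beta,i_\gamma)$, $(f',f,u)$ and $(g',g,v)$ by the same calculations as in \Cref{thm:4x4-sss}. These use only the relations coming from \eqref{eq:4x4-sss-0}, \eqref{eq:4x4-sss-1}, \eqref{eq:4x4-sss-2} together with the naturality rules in \Cref{binaturality}. Specifically, I would check $fi_\alpha=i_\beta f'$, $gi_\beta=txg'=i_\gamma g'$ and $(i_\alpha)_\ast x^\ast\eta_1=(i_\gamma)^\ast\delta$. For the extension identities I would use $(f')_\ast l_\alpha=u^\ast l_\beta$ and $(g')_\ast l_\beta=\mu_2=v^\ast l_\gamma$. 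These computations do not involve $C_\beta$ or $C_\gamma$, so they go through unchanged.
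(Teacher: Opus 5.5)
Your route is the paper's: obtain $\gamma$ from \Cref{thm:hmor}$(6)$, then run the kernel-side half of the proof of \Cref{thm:4x4-sss}, dropping everything that rotates on the $\beta$- and $\gamma$-sides. Two points need correcting; neither changes the strategy.

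First, the ``main obstacle'' does not exist. Since $(g',\beta_1,1_M)$ is homotopic, its left square is a homotopic square (\Cref{prop:mor-of-confl}), with $g'$ and $\beta_1$ as parallel sides. So \Cref{prop:hs-inflation} transfers $\beta_1\in S$ to $g'\in S$; you already have $\beta_1\in S$ because $\alpha\in S$. This is exactly how the paper argues in \Cref{thm:4x4-sss}, and it works verbatim here. You need this fact before triangle $(1)$ exists. The hypotheses of \Cref{prop:strict-et4op-2-h} for the homotopic morphism $(g',\beta_1,1_M)$ are that $g'$ and $\beta_1$ are $\mathbb E$-deflations. They are not that $\beta$ and $\beta_2$ are; those are only needed to have the two columns and to produce $g'$. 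Your alternative argument is correct but redundant: $K_\alpha\in\mathcal L$ (which comes from triangle $(3)$, not $(6)$) combined with \Cref{prop:slr-2}$(2)$. In your ordering it also comes after a step that already uses $g'$ being an $\mathbb E$-deflation.

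Second, do not define $u'=-\ell(\theta)$. Here $\theta\in\mathbb E(C_\alpha,K_\beta)$, and $\ell$ on this group requires $K_\beta\in\mathcal L$. That fails in general when $\beta$ is only an $\mathbb E$-deflation. For example, in an exact category $\mathcal L=\{0\}$; take $\alpha=1$ and $\beta$ a deflation with nonzero kernel. What your argument actually yields is the $\mathbb E$-triangle $K_\beta\xrightarrow{g'}K_\gamma\xrightarrow{z}C_\alpha$ with extension $\theta$ satisfying $\rho(\theta)=-f'$. That identity is meaningful, since $K_\alpha\in\mathcal L$ and $C_\alpha=FK_\alpha$. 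Likewise the extensions in $(4)$ and $(5)$ are just $l_\beta$ and $l_\gamma$, because $K_\beta$ and $K_\gamma$ need not lie in $\mathcal L$. This imprecision is inherited from the statement, which never introduces $u'$, $p_\beta$ or $p_\gamma$ in this case. Your verifications of $(i_\alpha,i_\beta,i_\gamma)$, $(f',f,u)$ and $(g',g,v)$ correctly work with $l_\beta$ and $l_\gamma$ and go through as written.
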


\vskip5pt

\begin{proposition}\label{prop:4x4-I-S-?} \ Let $(\mathcal{C}, \mathbb E, \mathfrak s)$ be an extriangulated category. Suppose that
		\begin{equation*}
\xymatrix@R=0.4cm{
  X\ar[r]^{f}\ar[d]_-{\alpha} & Y\ar[r]^{g}\ar[d]^-{\beta} & Z\ar@{-->}[r]^{\delta} & {} \\
  L\ar[r]^{u} & M\ar[r]^{v} & N\ar@{-->}[r]^{{\varepsilon }} & {}
}
\end{equation*}
is a diagram of $\mathbb E$-triangles such that $u\alpha=\beta f$, and $\alpha$ is an $\mathbb E$-inflation, and that $\beta$ is an $\mathbb E$-inflation and $\mathbb E$-deflation.
Then there is an $\mathbb E$-deflation $\gamma$ such that $(\alpha, \beta, \gamma)$ is a homotopic morphism of $\mathbb E$-triangles. Moreover, there is a commutative diagram
	\begin{equation*}
\xymatrix@R=0.4cm{
  {} & K_\beta\ar@{..>}[r]^{{g'}}\ar@{..>}[d]^{{i_\beta}} & K_\gamma\ar@{..>}[d]^{{i_\gamma}} & {} \\
  X\ar[r]^{f}\ar[d]_-{\alpha} & Y\ar[r]^{g}\ar[d]^-{\beta} & Z\ar@{-->}[r]^{\delta}\ar@{..>}[d]^-{\gamma} & {} \\
  L\ar[r]^{u}\ar@{..>}[d]_-{{p_\alpha}} & M\ar[r]^{v}\ar@{..>}[d]^-{{p_\beta}} & N\ar@{-->}[r]^{{\varepsilon }} & {} \\
  C_\alpha\ar@{..>}[r]^{{u'}} & C_\beta & {} & {}
}
\end{equation*}
with a connecting morphism $z: K_\gamma \to C_\alpha$, such that the following are $\mathbb E$-triangles$:$
            \vskip5pt
\begin{tasks}[style=enumerate, label=$(\arabic*)$, label-width=4ex](2)
		\task $\con {K_\beta}{g'}{K_\gamma}z{C_\alpha}{-\ell^{-1} (u')};$
		\task $\con {K_\gamma}z{C_\alpha}{u'}{C_\beta}{\rho^{-1}(g')};$
		\task $\con {K_\beta}{i_\beta}Y\beta M{\ell^{-1}(p_\beta)};$
		\task $\con {K_\gamma}{i_\gamma}Z\gamma N{\ell^{-1}(p_\gamma)};$
		\task $\con X\alpha L{p_\alpha}{C_\alpha}{-\rho^{-1}(i_\alpha)};$
		\task $\con Y\beta M{p_\beta}{C_\beta}{-\rho^{-1}(i_\beta)}$
	\end{tasks}
        \vskip10pt
    with morphisms of $\mathbb E$-triangles $(f,u, u')$ and $(g',g,v)$, and  $C_\beta=FK_\beta$, where $F$ is the functor defined in \Cref{F}, and $\ell$ and $\rho$ are defined in \Cref{binaturality}.
    \vskip5pt
    In particular, there is a sequence
$	K_\beta \xrightarrow{g'} K_\gamma \xrightarrow{z} C_\alpha \xrightarrow{u'} C_\beta$
	such that any subsequent three objects form an $\mathbb E$-triangle as in $(1)$-$(2)$.
\end{proposition}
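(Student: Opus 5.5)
We follow the template of the proof of \Cref{thm:4x4-sss}, with the roles adapted to the case $(\mathrm{i},\mathrm{i\text{-}d},?)$. First, \Cref{thm:hmor}$(7)$ applied to $(\alpha,\beta)$ gives an $\mathbb E$-deflation $\gamma$ such that $(\alpha,\beta,\gamma)$ is homotopic. Concretely, we get a decomposition $\beta=\beta_2\beta_1$ through $\con LsEtZ\kappa$ as in \Cref{eq:4x4-sss--1}, together with the two $\mathbb E$-triangles of \Cref{def:htm}. By \Cref{prop:hs-inflation}, $\beta_1$ is an $\mathbb E$-inflation (since $\alpha$ is), and $\beta_2$, $\gamma$ are $\mathbb E$-deflations. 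Since $\beta\in S$ and $\beta_1$ is an inflation, \Cref{prop:comp-infldefl}$(1)'$ again shows that $\beta_2$ is a deflation.

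Next we fix the kernel and cokernel triangles. Since $\beta$ and $\gamma$ are deflations, we take $\con{K_\beta}{i_\beta}Y\beta M{l_\beta}$ and $\con{K_\gamma}{i_\gamma}Z\gamma N{l_\gamma}$. Since $\alpha$ and $\beta$ are inflations, we take $\con X\alpha L{p_\alpha}{C_\alpha}{}$ and $\con Y\beta M{p_\beta}{C_\beta}{}$. As $\beta\in S$, \Cref{prop:slr-2}$(1)$ gives $K_\beta\in\mathcal L$, and \Cref{lem:rotation} lets us choose $C_\beta=FK_\beta$, $p_\beta=\ell(l_\beta)$ and the extension $-\rho^{-1}(i_\beta)$; this yields (3) and (6). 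The diagram on the right of \Cref{eq:4x4-sss-0}, obtained from \Cref{prop:pb-2-h} (which requires $\beta_2,\gamma$ to be deflations, as established), gives $x:K_\gamma\to E$ with $tx=i_\gamma$, together with $\mu_2=v^\ast l_\gamma$. Applying \Cref{lem:hs-?g1}$(2)$ to $(?,\beta_1,1_M)$ then produces $g':K_\beta\to K_\gamma$ with $(g',\beta_1,1_M)$ homotopic. For the cokernel side, \Cref{prop:po-1-h}$(2)$ applied to the homotopic morphism $(f,\beta_1,1_Z)$ (with $f$ playing the role of the given inflation) produces an inflation triangle $\con Y{\beta_1}E y{C_\alpha}{\mu_1}$ compatible with the triangle of $\alpha$, as in \Cref{eq:4x4-lem-steps}, with $ys=p_\alpha$.

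The key step is the connecting morphism. Here we cannot use \Cref{thm:hs-square-s} verbatim, since $g'$ and $\beta_1$ need not lie in $S$. Instead we set $z:=yx$ and obtain the $\mathbb E$-triangle $\con{K_\beta}{g'}{K_\gamma}z{C_\alpha}{-\ell^{-1}(u')}$ directly. Since $(g',\beta_1,1_M)$ is homotopic with $\beta_1$ an inflation, \Cref{prop:hs-inflation}$(1)$ shows that $g'$ is an inflation. The homotopic square formed by $i_\beta$, $x$, $g'$, $\beta_1$ can then be composed (via \Cref{thm:hs-composition}) with the homotopic square $\beta_1$, $y$ over $0\to C_\alpha$, which comes from the triangle of $\beta_1$. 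This composite realizes $K_\beta\to K_\gamma\to C_\alpha$ as an $\mathbb E$-triangle. Because $K_\beta\in\mathcal L$, \Cref{prop:slr-2}$(2)$ shows that $z\in S$. Rotating via \Cref{lem:rotation} then gives (2) in the form $\con{K_\gamma}z{C_\alpha}{u'}{FK_\beta}{\rho^{-1}(g')}$ with $u'=-\ell(\theta)$, which is (1) after identifying $C_\beta=FK_\beta$. We expect this identification of the extension signs, and the verification that the $u'$ obtained here agrees with the $u'$ determined by $(f,u,u')$, to be the main obstacle. We would handle it exactly as in \Cref{thm:4x4-sss}, through the naturality rules of \Cref{binaturality}.

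The final step checks the two claimed morphisms of $\mathbb E$-triangles by the same diagram chases as in \Cref{thm:4x4-sss}. For $(g',g,v)$ we use $gi_\beta=t\beta_1i_\beta=txg'=i_\gamma g'$ and $(g')_\ast l_\beta=\mu_2=v^\ast l_\gamma$. For $(f,u,u')$ we use $u'p_\alpha=p_\beta u$, computed via $\ell$-naturality through $y s=p_\alpha$ and $\beta_2s=u$, and compare extensions by applying $\rho$: the identity $\rho((u')^\ast(-\rho^{-1}(i_\beta)))=\rho(f_\ast(-\rho^{-1}(i_\alpha)))$ reduces to $i_\beta f'=fi_\alpha$ on the level of morphisms into $Y$. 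Since $K_\alpha$ is absent here, this last comparison should instead be done directly with $\mu_1$ from \Cref{prop:po-1-h}.
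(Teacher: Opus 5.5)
Your argument is correct. The paper gives no separate proof here; it only says to apply \Cref{thm:hmor} and then run the proof of \Cref{thm:4x4-sss}. Taken literally, that does not work in this case. In \Cref{thm:4x4-sss}, both $Y\xrightarrow{\beta_1}E\xrightarrow{y}C_\alpha$ and $K_\beta\xrightarrow{g'}K_\gamma\xrightarrow{z}C_\alpha$ are produced by \Cref{thm:hs-square-s}, the latter by rotating $K_\alpha\xrightarrow{f'}K_\beta\xrightarrow{g'}K_\gamma$. That requires $\alpha,\beta_1,g'\in S$ and the object $K_\alpha$, none of which are available when $\alpha$ is only an inflation.

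Your substitutes use only that $\alpha,\beta_1,g'$ are inflations, which is exactly what this case supplies:
\begin{itemize}
\item \Cref{prop:po-1-h}$(2)$ gives the cokernel triangle of $\beta_1$.
\item The vertical composite of the homotopic square of $(g',\beta_1,1_M)$ with the square of $Y\xrightarrow{\beta_1}E\xrightarrow{y}C_\alpha$ over $0\to C_\alpha$ gives the connecting triangle. This composite is exactly \Cref{prop:po-1-h}$(1)$ applied to $(g',\beta_1,1_M)$.
\end{itemize}
After that, $z\in S$ and the rotation by \Cref{lem:rotation} go through as you say. The paper's route yields the longer sequence starting at $K_\alpha$, but it is confined to $\alpha\in S$; yours is the right adaptation.

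Some bookkeeping should be tightened.
\begin{itemize}
\item The morphism fed into \Cref{prop:po-1-h}$(2)$ is $(\alpha,\beta_1,1_Z)$, with $\alpha$ in the role of $f_2$, not $(f,\beta_1,1_Z)$. It gives $p_\alpha=ys$ and $\mu_1=f_\ast r_\alpha$, where $r_\alpha$ is the extension of $X\xrightarrow{\alpha}L\xrightarrow{p_\alpha}C_\alpha$.
\item You need ET3 to obtain some morphism $(\alpha,\beta,\gamma_0)$ before invoking \Cref{thm:hmor}.
\item The ``main obstacle'' you anticipate disappears once you record what \Cref{thm:hs-composition} returns. Take $\kappa=(\beta_2)^\ast l_\beta$ from \Cref{prop:mor-of-confl}$(1)$ and $\varepsilon=-\mu_1$. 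You get $K_\beta\xrightarrow{g'}K_\gamma\xrightarrow{-z}C_\alpha$ with an extension $\delta$ satisfying $y^\ast\delta=(\beta_2)^\ast l_\beta$ and $(i_\beta)_\ast\delta=-\mu_1$. Hence $\theta:=-\delta$ satisfies $y^\ast\theta=-(\beta_2)^\ast l_\beta$ and $(i_\beta)_\ast\theta=\mu_1$.
\item With $u':=-\ell(\theta)$ there is no second $u'$ to reconcile. Directly, $u'p_\alpha=-\ell(y^\ast\theta)s=\ell(l_\beta)\beta_2s=p_\beta u$. Also $f_\ast r_\alpha=\mu_1=(i_\beta)_\ast\theta=-(i_\beta)_\ast(u')^\ast\delta_{K_\beta}=(u')^\ast(-\rho^{-1}(i_\beta))$. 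Neither $f'$ nor $i_\alpha$ is needed.
\end{itemize}

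Finally, you are right to read items (4) and (5) as plain kernel and cokernel triangles. The extensions $\ell^{-1}(p_\gamma)$ and $-\rho^{-1}(i_\alpha)$ written there are carried over from \Cref{thm:4x4-sss} and are undefined in this case: $K_\alpha$ does not exist, and $K_\gamma$ need not lie in $\mathcal L$ (for instance, in an exact category $\mathcal L=\{0\}$ while $K_\gamma$ can be nonzero).
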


\vskip5pt

\begin{proposition}\label{prop:4x4-I-D-?} \	Let $(\mathcal{C}, \mathbb E, \mathfrak s)$ be an extriangulated category. Suppose that
		\begin{equation*}
\xymatrix@R=0.4cm{
  X\ar[r]^{f}\ar[d]_-{\alpha} & Y\ar[r]^{g}\ar[d]^-{\beta} & Z\ar@{-->}[r]^{\delta} & {} \\
  L\ar[r]^{u} & M\ar[r]^{v} & N\ar@{-->}[r]^{{\varepsilon }} & {}
}
\end{equation*}
is a diagram of $\mathbb E$-triangles such that $u\alpha=\beta f$, and $\alpha$ is an $\mathbb E$-deflation, and that $\beta$ is an $\mathbb E$-inflation. Then there is an $\mathbb E$-deflation $\gamma$ such that $(\alpha, \beta, \gamma)$ is a homotopic morphism of $\mathbb E$-triangles. Moreover, there is a commutative diagram
	\begin{equation*}
\xymatrix@R=0.4cm{
  {} & K_\beta\ar@{..>}[r]^{{g'}}\ar@{..>}[d]^{{i_\beta}} & K_\gamma\ar@{..>}[d]^{{i_\gamma}} & {} \\
  X\ar[r]^{f}\ar[d]_-{\alpha} & Y\ar[r]^{g}\ar[d]^-{\beta} & Z\ar@{-->}[r]^{\delta}\ar@{..>}[d]^-{\gamma} & {} \\
  L\ar[r]^{u}\ar@{..>}[d]_{{p_\alpha}} & M\ar[r]^{v} & N\ar@{-->}[r]^{{\varepsilon }} & {} \\
  C_\alpha  & {} & {} & {}
}
\end{equation*}
with a connecting morphism $z: K_\gamma\to C_\alpha$, such that the following are $\mathbb E$-triangles$:$
            \vskip5pt
\begin{tasks}[style=enumerate, label=$(\arabic*)$, label-width=4ex](2)
		\task $\con {K_\beta}{g'}{K_\gamma}z{C_\alpha}{-\ell^{-1} (u')};$
		\task $\con {K_\beta}{i_\beta}Y\beta M{\ell^{-1}(p_\beta)};$
		\task $\con {K_\gamma}{i_\gamma}Z\gamma N{\ell^{-1}(p_\gamma)};$
		\task $\con X\alpha L{p_\alpha}{C_\alpha}{-\rho^{-1}(i_\alpha)}$
	\end{tasks}
        \vskip10pt
    with $(g',g,v)$ a morphism of $\mathbb E$-triangles, and $C_\alpha=FK_\alpha$, where $F$ is the functor defined in \Cref{F}, and $\ell$ and $\rho$ are defined in \Cref{binaturality}.
\end{proposition}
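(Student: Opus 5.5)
Read literally, the hypotheses do not match the conclusion. The table of homotopic morphisms lists (d,i,?) as ``implicit'', and the conclusion requires objects $K_\beta$ and $C_\alpha$. The conclusion only makes sense if $\alpha$ is an $\mathbb E$-inflation, so that the cone $C_\alpha$ exists, and $\beta$ is an $\mathbb E$-deflation, so that $K_\beta$ exists. I will therefore prove the statement under these hypotheses: $\alpha$ an $\mathbb E$-inflation and $\beta$ an $\mathbb E$-deflation. This is case $(8)$ of \Cref{thm:hmor}.

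\emph{Step 1: a homotopic morphism and its decomposition.} By \Cref{thm:hmor}$(8)$ there is an $\mathbb E$-deflation $\gamma$ such that $(\alpha,\beta,\gamma)$ is homotopic. Hence there is a decomposition $\beta=\beta_2\beta_1$ through an $\mathbb E$-triangle $\con LsEtZ\kappa$, exactly as in \Cref{eq:4x4-sss--1}. The maps have the following types.
\begin{itemize}
\item $\beta_1$ is an $\mathbb E$-inflation: apply \Cref{prop:hs-inflation}$(1)$, since $\alpha$ is an $\mathbb E$-inflation.
\item $\beta_2$ is an $\mathbb E$-deflation: apply \Cref{prop:comp-infldefl}$(1)'$ to $\beta=\beta_2\beta_1$.
\item $\gamma$ is an $\mathbb E$-deflation: apply \Cref{prop:hs-inflation}$(2)$ to the lower homotopic square.
\end{itemize}
Now fix $\mathbb E$-triangles $\con{K_\beta}{i_\beta}Y\beta M{}$, $\con{K_\gamma}{i_\gamma}Z\gamma N{}$ and $\con X\alpha L{p_\alpha}{C_\alpha}{}$. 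These give triangles $(2)$, $(3)$ and $(4)$ of the statement.

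\emph{Step 2: the two halves of the decomposition.} For the lower part, $(1,\beta_2,\gamma)$ is homotopic with $\beta_2,\gamma$ both $\mathbb E$-deflations. So \Cref{prop:pb-2-h}$(2)$ gives $x\colon K_\gamma\to E$ with $tx=i_\gamma$, and an $\mathbb E$-triangle $\con{K_\gamma}xE{\beta_2}M{}$. For the upper part, $(\alpha,\beta_1,1)$ is homotopic with $\alpha,\beta_1$ both $\mathbb E$-inflations. So \Cref{prop:po-2-h}$(2)$, applied to the triangle $\con X\alpha L{p_\alpha}{C_\alpha}{}$, gives an $\mathbb E$-triangle $\con Y{\beta_1}Ey{C_\alpha}{\mu_1}$ with $ys=p_\alpha$ and the extension compatibilities.

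\emph{Step 3: the maps $g'$ and $z$.} Apply \Cref{lem:hs-?g1}$(2)$ to the triangles $K_\beta\to Y\xrightarrow{\beta}M$ and $K_\gamma\xrightarrow{x}E\xrightarrow{\beta_2}M$, using $\beta_2\beta_1=\beta$. This produces $g'$ with $(g',\beta_1,1_M)$ homotopic. The left square, with rows $g'$ and $\beta_1$, is then homotopic. Since $\beta_1$ is an $\mathbb E$-inflation, \Cref{prop:hs-inflation}$(1)$ shows that $g'$ is an $\mathbb E$-inflation with the same cone as $\beta_1$.

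Concretely, I plan to compare the resulting triangle $K_\beta\xrightarrow{g'}K_\gamma\to C$ with $\con Y{\beta_1}Ey{C_\alpha}{\mu_1}$ via ET3 and \Cref{cone}, identify $C\cong C_\alpha$, and set $z=yx$ after adjusting by this isomorphism. This gives triangle $(1)$ of the statement, $\con{K_\beta}{g'}{K_\gamma}z{C_\alpha}{\theta}$ with $(i_\beta)_\ast\theta=\mu_1$. Commutativity is then immediate:
\[
gi_\beta=t\beta_1 i_\beta=txg'=i_\gamma g'.
\]
The extension identity making $(g',g,v)$ a morphism of $\mathbb E$-triangles is derived as in the proof of \Cref{thm:4x4-sss}, from $\mu_2=(g')_\ast l_\beta=v^\ast l_\gamma$.

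\emph{Main obstacle.} The hard part is Step 3. Without $\beta\in S$ we cannot use \Cref{thm:hs-square-s} or the functor $F$ to rotate, so the identification of the cone of $g'$ with $C_\alpha$, and the exact form of the extension $\theta$, must be obtained directly. I would first try a dual version of \Cref{prop:strict-et4-h}, applied to the homotopic morphism $(g',\beta_1,1)$ together with the triangle through $y$. The sign and extension bookkeeping should follow the computations in \Cref{thm:4x4-sss} verbatim.
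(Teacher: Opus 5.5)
Your reading of the hypotheses is right. The statement has a typo: the intended case is (10) of the table, with $\alpha$ an $\mathbb E$-inflation and $\beta$ an $\mathbb E$-deflation, and the literal version is refuted by \Cref{ex:no-good-completion}(5). Under that reading your argument is correct and follows the paper's indicated route: \Cref{thm:hmor}(8) (after ET3 supplies an initial morphism from $u\alpha=\beta f$), then the template of \Cref{thm:4x4-sss}. You rightly replace the two rotation steps there, which need $\alpha, g'\in S$, by \Cref{prop:po-2-h}(2) and by \Cref{prop:hs-inflation}(1) together with ET3 and \Cref{cone}.

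The ``main obstacle'' is already resolved by your concrete Step 3. The extensions $-\ell^{-1}(u')$, $\ell^{-1}(p_\beta)$, $\ell^{-1}(p_\gamma)$, $\rho^{-1}(i_\alpha)$ and the identity $C_\alpha=FK_\alpha$ are leftovers from \Cref{thm:4x4-sss} and are undefined in this variant. Indeed $K_\alpha$, $i_\alpha$, $u'$, $p_\beta$, $p_\gamma$ do not exist here, and $K_\beta, K_\gamma$ need not lie in $\mathcal L$. So there is no further extension to match.
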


\vskip5pt

We summarize all possible variants of $4\times 4$ Lemma in the following table. Each variant starts with two $\mathbb E$-triangles,
and two morphisms which belong to the set $$W = \{\text{$\mathbb E$-inflation and $\mathbb E$-deflation, $\mathbb E$-inflation,  $\mathbb E$-deflation}\}$$
satisfying the corresponding commutative relation. As in \Cref{thm:hmor}, logically, there are $27$ possible cases,
but only $15$ cases admit affirmative answer (the other $12$ cases do not hold in general, as shown in \Cref{ex:no-good-completion}). \Cref{thm:4x4-sss}, \Cref{thm:3x3-lemma} and \Cref{prop:4x4-I-?-S}-\ref{prop:4x4-I-D-?} claims the $8$ affirmative cases among them
(the rest $7$ cases are dual which are omitted).

\vskip5pt

For a better view, we include the following table, where  $(\alpha, \beta, \gamma)$ is a homotopic morphism of $\mathbb E$-triangles.

\begin{equation*}
\xymatrix@R=0.4cm{
  X\ar[r]^{f}\ar[d]_-{\alpha} & Y\ar[r]^{g}\ar[d]_-{\beta} & Z\ar@{-->}[r]^{\delta}\ar[d]_-{\gamma} & {} \\
  X'\ar[r]^{{f'}} & Y'\ar[r]^{{g'}} & Z'\ar@{-->}[r]^{{\delta'}} & .
}
\end{equation*}
The abbreviations i, d and i-d stand for $\mathbb E$-inflation, $\mathbb E$-deflation, and $\mathbb E$-inflation and $\mathbb E$-deflation, respectively. For instance,
(i,?,d) means that an $\mathbb E$-inflation $\alpha$ and an $\mathbb E$-deflation $\gamma$ are given; ``?'' means that whether there is a $\beta$ in the set $W$ such that $(\alpha, \beta, \gamma)$ is a homotopic morphism of $\mathbb E$-triangles, or equivalently, whether a variant of $4 \times 4$ Lemma exists; the answer ``implicit'' means that $\beta$ is neither an $\mathbb E$-inflation nor an $\mathbb E$-deflation in general, i.e., there is no corresponding variant of $4 \times 4$ Lemma.

\vskip5pt
\centerline {\bf Table of $\boldsymbol{4 \times 4}$ Lemma and its variants}
\vskip5pt
\begin{center}
    \renewcommand{\arraystretch}{1.3}
    \scriptsize
    \begin{tabular}{|c|c| c|c|c|}
      \hline
      \textbf{Prerequisite} & \textbf{Case} & $\boldsymbol{(\alpha, \beta, \gamma)}$ & \textbf{Completion} & \textbf{Reference} \\
      \hline
           \multirow{9}{*}{\makecell{Given $(\alpha,\gamma)$ \\\\ with $\alpha_\ast \delta = \gamma^\ast \delta'$}}
           & $(1)$ & (i-d, ?, i-d) & $\beta$ is i-d & {\Cref{thm:4x4-sss}} \\
            \cline{2-3}\cline{4-5}
           & $(2)$ & (i-d, ?, i) & \multirow{3}{*}{$\beta$ is i} & \Cref{thm:3x3-lemma} \\
            \cline{2-3}\cline{5-5}
           & $(3)$ & (i, ?, i-d) & & \Cref{prop:4x4-I-?-S} \\
            \cline{2-3}\cline{5-5}
           & $(4)$ & (i, ?, i) & & \Cref{thm:3x3-lemma} \\
            \cline{2-3}\cline{4-4}\cline{5-5}
           & $(2)'$ & (d, ?, i-d) &  \multirow{3}{*}{$\beta$ is d} &  {The dual of \Cref{thm:3x3-lemma}} \\
            \cline{2-3}\cline{5-5}
           & $(3)'$ & (i-d, ?, d) & & {The dual of \Cref{prop:4x4-I-?-S}} \\
            \cline{2-3}\cline{5-5}
           & $(4)'$ & (d, ?, d) & & {The dual of \Cref{thm:3x3-lemma}} \\
            \cline{2-3}\cline{4-4}\cline{5-5}
           & $(5)$ & (i, ?, d) & \multirow{2}{*}{Implicit} & \multirow{2}{*}{\Cref{ex:no-good-completion}} \\
            \cline{2-3}
           & $(6)$ & (d, ?, i) &  &  \\
            \hline
          \multirow{9}{*}{\makecell{Given $(\alpha, \beta)$ \\\\ with $f'\alpha=\beta f$}}
           & $(7)$ & (i-d, i-d, ?) & \multirow{4}{*}{$\gamma$ is d} & \Cref{prop:4x4-S-S-?} \\
            \cline{2-3}\cline{5-5}
           & $(8)$ & (i-d, d, ?) &  & \Cref{prop:4x4-S-D-?} \\
            \cline{2-3}\cline{5-5}
           & $(9)$ & (i, i-d, ?) &  &  \Cref{prop:4x4-I-S-?} \\
            \cline{2-3}\cline{5-5}
           & $(10)$ & (i, d, ?) &  & \Cref{prop:4x4-I-D-?} \\
            \cline{2-3}\cline{4-4}\cline{5-5}
           & $(11)$ & (i-d, i, ?) & \multirow{5}{*}{Implicit} & \multirow{5}{*}{\Cref{ex:no-good-completion}} \\
            \cline{2-3}
           & $(12)$ & (i, i, ?) &  & \\
            \cline{2-3}
           & $(13)$ & (d, i, ?) &  &\\
            \cline{2-3}
           & $(14)$ & (d, i-d, ?) &  &  \\
            \cline{2-3}
           & $(15)$ & (d, d, ?) &  &  \\
            \hline
          \multirow{9}{*}{\makecell{Given $(\beta, \gamma)$ \\\\with $g'\beta=\gamma g$}}
           & $(7)'$ & (?, i-d, i-d) & \multirow{4}{*}{$\alpha$ is i} & {The dual of \Cref{prop:4x4-S-S-?}} \\
            \cline{2-3}\cline{5-5}
           & $(8)'$ & (?, i, i-d) &  & {The dual of  \Cref{prop:4x4-S-D-?}} \\
            \cline{2-3}\cline{5-5}
           & $(9)'$ & (?, i-d, d) &  & {The dual of \Cref{prop:4x4-I-S-?}}\\
            \cline{2-3}\cline{5-5}
           & $(10)'$ & (?, i, d) &  & {The dual of \Cref{prop:4x4-I-D-?}}\\
            \cline{2-3}\cline{4-4}\cline{5-5}
           & $(11)'$ & (?, d, i-d) & \multirow{5}{*}{Implicit} &  \multirow{5}{*}{The dual of \Cref{ex:no-good-completion}} \\
            \cline{2-3}
           & $(12)'$ & (?, d, d) &  & \\
            \cline{2-3}
           & $(13)'$ & (?, d, i) &  &\\
            \cline{2-3}
           & $(14)'$ & (?, i-d, i) &  &   \\
            \cline{2-3}
           & $(15)'$ & (?, i, i) &  &  \\
            \hline
    \end{tabular}
  \end{center}
\vskip5pt

\vskip5pt

\section{Homotopic morphisms and (middling) good morphisms}\label{4x4_triangulated}

In this subsection one considers a triangulated category $(\mathcal T, \Sigma, \Delta)$.
Following Neeman \cite[Definition 1.9]{Nee91} a morphism $(\alpha,\beta,\gamma)$ of distinguished triangles
\begin{equation*}
\xymatrix@R=0.4cm{
  X\ar[r]^{f}\ar[d]_-{\alpha} & Y\ar[r]^{g}\ar[d]^-{\beta} & Z\ar[r]^{h}\ar[d]^-{\gamma} & \Sigma X\ar[d]^{{\Sigma\alpha}} \\
  X'\ar[r]^{{f'}} & Y'\ar[r]^{{g'}} & Z'\ar[r]^{{h'}} & \Sigma X'
}
\end{equation*}
is {\it good} if the mapping cone
    \begin{equation*}        Y \oplus X'
		\xrightarrow{\scalebox{0.8}{$\dg{-g&0\\\beta&f'}$}}
		Z \oplus Y'
		\xrightarrow{\scalebox{0.8}{$\dg{-h&0\\\gamma&g'}$}}
		\Sigma X \oplus Z'
		\xrightarrow{\scalebox{0.8}{$\dg{-\Sigma f&0\\\Sigma \alpha&h'}$}}
		\Sigma Y \oplus \Sigma X'
\end{equation*}
is a distinguished triangle;  and  it is {\it middling good} if there is a $4 \times 4$ diagram
\begin{equation}\label{eq:4-origin}
\xymatrix@R=0.4cm{
  X\ar[r]^{f}\ar[d]_-{\alpha} & Y\ar[r]^{g}\ar[d]^-{\beta} & Z\ar[r]^{h}\ar[d]^-{\gamma} & \Sigma X\ar[d]^{{\Sigma \alpha}} \\
  X'\ar[r]^{{f'}}\ar[d]_-{{\alpha'}} & Y'\ar[r]^{{g'}}\ar[d]^{{\beta'}} & Z'\ar[r]^{{h'}}\ar[d]^{{\gamma'}} & \Sigma X'\ar[d]^{{\Sigma \alpha '}} \\
  X''\ar[r]^{{f''}}\ar[d]_-{{\alpha''}} & Y''\ar[r]^{{g''}}\ar[d]^{{\beta''}} & Z''\ar[r]^{{h''}}\ar[d]^{{\gamma''}} & \Sigma X''\ar[d]^{{-\Sigma \alpha''}} \\
  \Sigma X\ar[r]^{{\Sigma f}} & \Sigma Y\ar[r]^{{\Sigma g}} & \Sigma Z\ar[r]^{{-\Sigma h}} & \Sigma ^2X
}
\end{equation}
where all the squares commute, except the right bottom square satisfies $(\Sigma \alpha'') h''=-(\Sigma h) \gamma''$,
and all rows and columns are distinguished triangles. See also \cite[Definitions 2.5 and 2.3]{CF22}. By \cite[Theorem 2.3]{Nee91} a good morphism is middling good.

\vskip5pt

When viewing $\mathcal T$ as an extriangulated category, every morphism is both an $\mathbb E$-inflation and an $\mathbb E$-deflation,  $\mathcal L=\mathcal R=\mathcal T$,
and the functor $F$ can be $\Sigma$. Note that in this case for $f\in \mathcal T(X, Y)$, $\ell(f)=f$ and $\rho(f)=\Sigma^{-1}f$.

\vskip5pt

\begin{proposition}\label{middlinggood} \ A homotopic morphism of  distinguished triangles  is middling good.
\end{proposition}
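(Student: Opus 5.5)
Viewing $\mathcal T$ as an extriangulated category, every morphism lies in $S$, $\mathcal L=\mathcal R=\mathcal T$, and $F=\Sigma$. The plan is to apply \Cref{thm:4x4-sss} to the given homotopic morphism $(\alpha,\beta,\gamma)$ and read off a $4\times 4$ diagram of the shape \Cref{eq:4-origin}. The one point needing care is the \emph{given} $\beta$. \Cref{thm:4x4-sss} produces some $\beta$ with $(\alpha,\beta,\gamma)$ homotopic, and its proof only uses a decomposition $\beta=\beta_2\beta_1$ as in \Cref{def:htm}. So I will rerun that proof starting from the decomposition \Cref{eq:4x4-sss--1} supplied by the hypothesis that the given morphism is homotopic, rather than the one produced by \Cref{thm:hmor}. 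Nothing else in that proof depends on how $\beta_1,\beta_2$ were obtained: they lie in $S$ automatically in a triangulated category, and \Cref{prop:strict-et4op-2-h}, \Cref{prop:pb-2-h}, \Cref{thm:hs-square-s} only need homotopicity. The output is then the diagram with rows $K_\alpha\to K_\beta\to K_\gamma$, $X\to Y\to Z$, $L\to M\to N$, $C_\alpha\to C_\beta\to C_\gamma$, together with the $\mathbb E$-triangles (1)--(10) and the listed morphisms of $\mathbb E$-triangles.

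Next I translate $\mathbb E$-triangles into distinguished triangles. An $\mathbb E$-triangle $A\xrightarrow{a}B\xrightarrow{b}C\overset{\xi}{\dashrightarrow}$ is the distinguished triangle $A\to B\to C\xrightarrow{\xi}\Sigma A$. Using $\ell(f)=f$ and $\rho(f)=\Sigma^{-1}f$, triangles (8)--(10) become $X\xrightarrow{\alpha}L\xrightarrow{p_\alpha}\Sigma K_\alpha\xrightarrow{-i_\alpha'}\Sigma X$, where $i'_\alpha$ denotes the appropriate shift of $i_\alpha$, and similarly for $\beta,\gamma$. Rows (5)--(7) give the columns starting at $K$. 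To match \Cref{eq:4-origin} I take the first row to be $X\to Y\to Z\xrightarrow{\delta}\Sigma X$, the second $L\to M\to N\xrightarrow{\varepsilon}\Sigma L$, the third $C_\alpha=\Sigma K_\alpha\xrightarrow{u'}C_\beta\xrightarrow{v'}C_\gamma\xrightarrow{-\Sigma(\ell^{-1}z)\text{-type map}}\Sigma C_\alpha$, which comes from triangle (4), and the fourth $\Sigma X\to\Sigma Y\to\Sigma Z\to\Sigma^2X$. The columns are $X\xrightarrow{\alpha}L\xrightarrow{p_\alpha}C_\alpha\to\Sigma X$ and so on, with final column given by $\Sigma$ applied to the first column with the appropriate sign.

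Commutativity of the squares follows from the morphisms of $\mathbb E$-triangles: $(\alpha,\beta,\gamma)$, $(p_\alpha,p_\beta,p_\gamma)$, $(f,u,u')$ and $(g,v,v')$. The compatibility conditions on extensions become commutativity of the squares involving $\Sigma$-components. For example, $f_\ast(-\rho^{-1}(i_\alpha))=(u')^\ast(-\rho^{-1}(i_\beta))$ gives commutativity of the $C_\alpha\to C_\beta\to\Sigma Y$ square, and $\alpha_\ast\delta=\gamma^\ast\varepsilon$ gives the $Z\to\Sigma X\to\Sigma L$ square.

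I expect the main obstacle to be the sign bookkeeping. Columns ending in $\Sigma X$ carry $-i$-type maps, so they must be adjusted by \Cref{isomorphisms} (replacing a triangle $(a,b,c)$ by $(a,b,-c)$ is not allowed, so instead negate two of the maps) so that the four squares touching the last row and column commute except for the single anticommuting corner $(\Sigma\alpha'')h''=-(\Sigma h)\gamma''$. I would fix the signs on the third row and the third-to-fourth column maps first, then check the bottom-right square last, using triangle (4)'s extension $-\rho^{-1}(z)$ together with $z=yx$ and the relations $\rho(r_\gamma)=-i_\gamma$, $\rho(\eta_2)=-x$ recorded in the proof of \Cref{thm:4x4-sss}.
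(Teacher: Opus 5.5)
Your proposal is correct and follows the paper's argument: apply \Cref{thm:4x4-sss} and rearrange its output into the shape \Cref{eq:4-origin}, with the $C$-row placed third and the columns given by triangles (8)--(10) of that theorem, which is exactly what the paper's ``rotate the first row to the bottom'' step produces. Rerunning that proof from the decomposition $\beta=\beta_2\beta_1$ of the \emph{given} homotopic morphism is a genuine improvement, since the paper applies the theorem (which constructs its own $\beta$) without comment; and the sign adjustments you anticipate are unnecessary, because reading (4) and (8)--(10) literally already yields the required signs, with the anticommuting corner being exactly the extension compatibility $(i_\alpha)_\ast\ell^{-1}(z)=(i_\gamma)^\ast\delta$ of the morphism $(i_\alpha,i_\beta,i_\gamma)$.
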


	\begin{proof} \ Let $(\alpha, \beta, \gamma)$ be a homotopic morphism of distinguished triangles.
\begin{equation*}
\xymatrix@R=0.4cm{
  X\ar[r]^{f}\ar[d]_-{\alpha} & Y\ar[r]^{g}\ar[d]^-{\beta} & Z\ar[r]^{h}\ar[d]^-{\gamma} & \Sigma X\ar[d]^{{\Sigma\alpha}} \\
  X'\ar[r]^{{f'}} & Y'\ar[r]^{{g'}} & Z'\ar[r]^{{h'}} & \Sigma X'
}
\end{equation*}
By \Cref{thm:4x4-sss} there is a diagram
        \begin{equation}
\label{tri44}
\xymatrix@R=0.4cm{
  X''\ar[r]^-{{f''}}\ar[d]_-{{\alpha''}} & Y''\ar[r]^-{{g''}}\ar[d]^-{{\beta''}} & Z''\ar[r]^-{{h''}}\ar[d]^-{{\gamma''}} & \Sigma X''\ar[d]^-{{\Sigma \alpha''}} \\
  X\ar[r]^-{f}\ar[d]_-{\alpha} & Y\ar[r]^-{g}\ar[d]^-{\beta} & Z\ar[r]^-{h}\ar[d]^-{\gamma} & \Sigma X\ar[d]^-{{\Sigma \alpha}} \\
  X'\ar[r]^-{{f'}}\ar[d]_-{{\alpha'}} & Y'\ar[r]^-{{g'}}\ar[d]^-{{\beta'}} & Z'\ar[r]^-{{h'}}\ar[d]^-{{\gamma'}} & \Sigma X'\ar[d]^-{{\Sigma \alpha '}} \\
  \Sigma X''\ar[r]^-{{\Sigma f''}} & \Sigma Y''\ar[r]^-{{\Sigma g''}} & \Sigma Z''\ar[r]^-{{-\Sigma h''}} & \Sigma^2 X''
}
\end{equation}
    where all nine squares are commutative, the left three columns and all four rows are distinguished triangles. By rotating the first row of the diagram \Cref{tri44} to the bottom, there is a $4\times 4$ diagram \Cref{eq4}$:$

    \begin{equation}\label{eq4}
\xymatrix@R=0.6cm{
  X\ar[r]^{f}\ar[d]_-{\alpha} & Y\ar[r]^{g}\ar[d]^-{\beta} & Z\ar[r]^{h}\ar[d]^-{\gamma} & \Sigma X\ar[d]^{{\Sigma \alpha}} \\
  X'\ar[r]^{{f'}}\ar[d]_-{{\alpha'}} & Y'\ar[r]^{{g'}}\ar[d]^{{\beta'}} & Z'\ar[r]^{{h'}}\ar[d]^{{\gamma'}} & \Sigma X'\ar[d]^{{\Sigma \alpha '}} \\
 \Sigma X''\ar[r]^{{\Sigma f''}}\ar[d]_-{{-\Sigma\alpha''}} & \Sigma Y''\ar[r]^{{\Sigma g''}}\ar[d]^{{-\Sigma\beta''}} & \Sigma Z''\ar[r]^{{-\Sigma h''}}\ar[d]^{{-\Sigma\gamma''}} & \Sigma^2 X''\ar[d]^{{\Sigma^2 \alpha''}} \\
  \Sigma X\ar[r]^-{{\Sigma f}} & \Sigma Y\ar[r]^-{{\Sigma g}} & \Sigma Z\ar[r]^-{{-\Sigma h}} & \Sigma ^2X
}
\end{equation}
    where all the squares commute, except the right bottom square satisfies $(\Sigma^2 \alpha'') (-\Sigma h'')=-(-\Sigma h) (-\Sigma \gamma'')$,
and all rows and columns are distinguished triangles.
By definition $(\alpha, \beta,\gamma)$ is middling good.
	\end{proof}

\Cref{middlinggood} together with \Cref{thm:hs-morphism} recover the classical {\rm $4 \times 4$ Lemma} in triangulated categories.

\begin{corollary} \label{44intri} \ For distinguished triangles $X\xrightarrow{f}Y\xrightarrow{g}Z\xrightarrow{h}\Sigma X$,  $X'\xrightarrow{f'}Y'\xrightarrow{g'}Z'\xrightarrow{h'}\Sigma X'$ with $\beta f=f'\alpha$, there is $\gamma: Z\to Z'$ such that $(\alpha, \beta,\gamma)$ is a middling good morphism.
    \begin{equation*}
\xymatrix@R=0.4cm{
  X\ar[r]^{f}\ar[d]_-{\alpha} & Y\ar[r]^{g}\ar[d]^-{\beta} & Z\ar[r]^{h}\ar@{..>}[d]^-{\gamma} & \Sigma X\ar[d]^{{\Sigma \alpha}} \\
  X'\ar[r]^{{f'}} & Y'\ar[r]^{{g'}} & Z'\ar[r]^{{h'}} & \Sigma X'
}
\end{equation*}
\end{corollary}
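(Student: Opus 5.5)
The plan is to combine \Cref{thm:hs-morphism} with \Cref{middlinggood}. Start from the given morphism $(\alpha,\beta,\gamma_0)$ of distinguished triangles, where $\gamma_0$ comes from TR3 (equivalently ET3, since $\beta f=f'\alpha$). Under the standard identification of a triangulated category with an extriangulated category, a distinguished triangle $X\xrightarrow{f}Y\xrightarrow{g}Z\xrightarrow{h}\Sigma X$ is the $\mathbb E$-triangle $\con XfYgZh$, with $\mathbb E(Z,X)=\mathcal T(Z,\Sigma X)$. A morphism of distinguished triangles $(\alpha,\beta,\gamma)$ is then exactly a morphism of $\mathbb E$-triangles, because the condition $\alpha_\ast h=\gamma^\ast h'$ reads $(\Sigma\alpha)h=h'\gamma$.

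First, apply \Cref{thm:hs-morphism} to $(\alpha,\beta,\gamma_0)$ to obtain $\gamma:=\gamma'$ such that $(\alpha,\beta,\gamma)$ is a homotopic morphism of $\mathbb E$-triangles. Concretely, this is the argument in that proof: choose an $\mathbb E$-triangle $X'\xrightarrow{s}E\xrightarrow{t}Y\dashrightarrow$ realizing $(g')^\ast$ of the appropriate extension, or in the dual form use the duals of \Cref{lem:hs-1?h}$(1)'$ and \Cref{lem:weak}$(1)$ to factor $\beta=\beta_2\beta_1$ through $E$ with $\beta_1 f=s\alpha$. Then apply \Cref{lem:hs-1g?}$(2)'$ to produce $\gamma$ completing the homotopic decomposition. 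In particular $(\alpha,\beta,\gamma)$ is still a morphism of triangles, so the square $\gamma g=g'\beta$ commutes and $(\Sigma\alpha)h=h'\gamma$ holds.

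Second, apply \Cref{middlinggood} to the homotopic morphism $(\alpha,\beta,\gamma)$; this gives that it is middling good. No work beyond checking the hypotheses is needed, since in a triangulated category every morphism lies in $S$, so \Cref{thm:4x4-sss} (used inside \Cref{middlinggood}) applies.

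The main point to check is that \Cref{thm:hs-morphism} changes only $\gamma$ and leaves $\alpha$ and $\beta$ exactly as given. The statement of that theorem says precisely this for $\gamma'$ ($(\alpha,\beta,\gamma')$ is homotopic), so the obstacle is only bookkeeping: verifying that the sign conventions for $\mathbb E$-triangles versus distinguished triangles (via \Cref{isomorphisms}) do not alter $\alpha$ or $\beta$.
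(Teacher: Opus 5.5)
Your proposal is correct and is essentially the paper's proof: take any $\gamma_0$ from the triangulated axiom, use \Cref{thm:hs-morphism} to replace it by a $\gamma$ for which $(\alpha,\beta,\gamma)$ is homotopic, and then conclude with \Cref{middlinggood}. One small slip: your parenthetical recap of how \Cref{thm:hs-morphism} builds $\gamma$ is garbled. The auxiliary triangle should be $X'\xrightarrow{s}E\xrightarrow{t}Z$ realizing $(\Sigma\alpha)h$, and \Cref{lem:weak}$(2)$ should give $\beta_2\beta_1=\beta$ with $\beta_2 s=f'$. This does not matter, because you simply cite the theorem.
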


\begin{proof} \ By the axiom of triangulated category there is $\gamma': Z\to Z'$ such that $(\alpha,\beta,\gamma')$ is a morphism of distinguished triangles. There is $\gamma: Z\to Z'$ such that $(\alpha,\beta,\gamma)$ is a homotopic morphism by \Cref{thm:hs-morphism}. Then $(\alpha, \beta,\gamma)$ is middling good by \Cref{middlinggood}.
\end{proof}

The following proposition gives a sufficient condition for a homotopic morphism to be good.

\vskip5pt

\begin{proposition}\label{good} \  Let $(\alpha, \beta, \gamma)$ be a homotopic morphism of distinguished triangles
\begin{equation*}
\xymatrix@R=0.4cm{
  X\ar[r]^{f}\ar[d]_-{\alpha} & Y\ar[r]^{g}\ar[d]^-{\beta} & Z\ar[r]^{h}\ar[d]^-{\gamma} & \Sigma X\ar[d]^{{\Sigma\alpha}} \\
  X'\ar[r]^{{f'}} & Y'\ar[r]^{{g'}} & Z'\ar[r]^{{h'}} & \Sigma X'
}
\end{equation*}
with $\mathcal T(Z', \Sigma X)=0$. Then it is good.
\end{proposition}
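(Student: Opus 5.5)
The plan is to realize the mapping cone of $(\alpha,\beta,\gamma)$ as the cone of a composite, and then use the octahedral axiom. Write the homotopic decomposition $\beta=\beta_2\beta_1$ through $X'\xrightarrow{s}E\xrightarrow{t}Z$ as in \Cref{def:htm}. By definition we have two distinguished triangles
$$X\xrightarrow{\dg{f\\\alpha}}Y\oplus X'\xrightarrow{\dg{\beta_1,-s}}E\xrightarrow{ht}\Sigma X$$
and
$$E\xrightarrow{\dg{-t\\\beta_2}}Z\oplus Y'\xrightarrow{\dg{\gamma,g'}}Z'\xrightarrow{(\Sigma s)h'}\Sigma E.$$
Here we translate the $\mathbb E$-extensions $t^\ast\delta$ and $s_\ast\delta'$ into connecting morphisms.

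\textbf{Step 1: the composite.} Compute
$$\dg{-t\\\beta_2}\dg{\beta_1,-s}=\dg{-g&0\\\beta&-f'},$$
using $t\beta_1=g$, $ts=0$, $\beta_2\beta_1=\beta$ and $\beta_2 s=f'$. Up to the automorphism $-1$ on the summand $X'$, this is exactly the first map $\dg{-g&0\\\beta&f'}$ of the Neeman mapping cone. By \Cref{isomorphisms} it therefore suffices to show that the composite $b a$, with $a=\dg{\beta_1,-s}$ and $b=\dg{-t\\\beta_2}$, has a cone triangle isomorphic to the twisted cone sequence.

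\textbf{Step 2: the octahedron.} Apply the octahedral axiom to $a$ and $b$. We have $\mathrm{Cone}(a)=\Sigma X$ and $\mathrm{Cone}(b)=Z'$. This gives a distinguished triangle
$$Z\oplus Y'\to C\to \Sigma Y\oplus\Sigma X',$$
together with a triangle
$$\Sigma X\xrightarrow{\ m\ } C\xrightarrow{\ n\ } Z'\xrightarrow{\ \partial\ }\Sigma^2X$$
and the standard compatibilities. These say that $C\to\Sigma Y\oplus\Sigma X'$ restricted along $m$ is $\Sigma\dg{f\\\alpha}$, and that $n$ composed with $Z\oplus Y'\to C$ is $\dg{\gamma,g'}$. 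The connecting map is $\partial=\Sigma(ht)\circ(\Sigma s)h'=0$, because $ts=0$. Hence this triangle splits and $C\cong\Sigma X\oplus Z'$ via $m$ together with a section $\sigma$ of $n$.

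\textbf{Step 3: matching the matrices.} I would now compare the maps in the octahedral cone triangle, transported along $C\cong\Sigma X\oplus Z'$, with the matrices $\dg{-h&0\\\gamma&g'}$ and $\dg{-\Sigma f&0\\\Sigma\alpha&h'}$, up to signs absorbed by \Cref{isomorphisms}.

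The $Z'$-row of the first map is forced to be $(\gamma,g')$ by the octahedral compatibility. Its $\Sigma X$-row agrees with $(-h,0)$ only up to an error term $e\circ(\gamma,g')$ with $e\colon Z'\to\Sigma X$. This error comes from the non-unique choice of the section $\sigma$, and the hypothesis $\mathcal T(Z',\Sigma X)=0$ kills it.

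Similarly, the last map restricted to $\Sigma X$ is $\Sigma\dg{f\\\alpha}$. Restricted to $Z'$ through $\sigma$, it is determined by the octahedral identity relating it to $(\Sigma s)h'$ and $\Sigma\dg{\beta_1,-s}$, which yields $\dg{0\\h'}$ up to sign. Again the hypothesis removes the dependence on $\sigma$.

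\textbf{Main obstacle.} Step 3 is where the work lies. I expect it to be the main obstacle because it requires tracking exactly which components the octahedral axiom pins down, showing that every remaining ambiguity is a morphism $Z'\to\Sigma X$, and reconciling the signs on $X'$ and $\Sigma X'$ with \Cref{isomorphisms}.
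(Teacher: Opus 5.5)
Your plan is correct and is essentially the paper's proof. Both use the same two distinguished triangles from the homotopic decomposition, a single application of the octahedral axiom, and the hypothesis $\mathcal T(Z',\Sigma X)=0$ to pin down the unknown matrix entries. The paper applies the octahedral axiom to the zero composite $\Sigma^{-1}Z'\to E\xrightarrow{ht}\Sigma X$. That is another face of the same octahedron, and it hands you $\Sigma X\oplus Z'$ with its canonical splitting instead of a chosen section $\sigma$.

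Your Step 3 closes just as you sketched, and it is exactly the paper's entry-by-entry computation:
\begin{itemize}
\item The difference between the $\Sigma X$-row and $(-h,0)$ is killed by $\dg{-t\\ \beta_2}$. Hence it factors through $\dg{\gamma,g'}$ via a map $Z'\to\Sigma X$, which vanishes by hypothesis.
\item The difference between the $Z'$-column of the last map and $\dg{0\\ -h'}$ is killed by $\Sigma\dg{\beta_1,-s}$. Hence it factors through $\Sigma\dg{f\\ \alpha}$ via a map $Z'\to\Sigma X$, which also vanishes.
\end{itemize}
After the sign change on $X'$ and $\Sigma X'$, the resulting triangle is Neeman's mapping cone.
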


      \begin{proof} \ Since $(\alpha, \beta, \gamma)$ is a homotopic morphism, there is a decomposition with two distinguished triangles.
\begin{equation*}
  \xymatrix@R=0.4cm{
  X\ar[r]^{f}\ar[d]_-{\alpha} & Y\ar[r]^{g}\ar[d]^{{\beta_1}} & Z\ar[r]^{h} & \Sigma X\ar[d]^{{\Sigma \alpha }} \\
  X'\ar[r]^{s}\ar@{=}[d] & E\ar[r]^{t}\ar[d]_{{\beta_2}} & Z\ar@{=}[u]\ar[r]^{{h'\gamma}}\ar[d]^-{\gamma} & \Sigma X'\ar@{=}[d] \\
  X'\ar[r]^{{f'}} & Y'\ar[r]^{{g'}} & Z'\ar[r]^{{h'}} & \Sigma X'
}
\end{equation*}
    where $\tri X{\dg{f \\ \alpha}}{Y \oplus X'}{\dg{\beta_1, -s}}E{ht}$ and $\tri E{\dg{-t \\ \beta_2}}{Z \oplus Y'} {(\gamma, g')} {Z'} {(\Sigma s)h'}$ are distinguished triangles, $\beta = \beta_2\beta_1$ and $h'\gamma=(\Sigma \alpha)h$. It follows that $ht (-s\Sigma^{-1}h')= -h(t s)\Sigma^{-1}h'= 0$ and $\dg {-\Sigma t\\ \Sigma \beta_2} \dg{-\Sigma \beta_1 , \Sigma s}=\dg{-\Sigma g & 0\\ -\Sigma \beta & \Sigma f'}$. Then the octahedral axiom gives rise to a distinguished triangle
\begin{equation}\label{triangle}
    Z\oplus Y'\xrightarrow{\scalebox{0.8}{$\dg {a & b\\ \gamma & g'}$}} \Sigma X\oplus Z'\xrightarrow{\scalebox{0.8}{$\dg {-\Sigma f & c\\ -\Sigma \alpha & d}$}} \Sigma Y\oplus \Sigma X' \xrightarrow{\scalebox{0.8}{$\dg{-\Sigma g & 0\\ -\Sigma \beta & \Sigma f'}$}} \Sigma Z\oplus \Sigma Y'
\end{equation}
for some morphisms $a$, $b$, $c$ and $d$. See the following diagram.

\begin{equation*}
\xymatrix@R=0.8cm@C=1.4cm{
  \Sigma^{-1}Z'\ar[r]^(0.55){{-s(\Sigma^{-1}h')}}\ar@{=}[d] &  E\ar[r]^(0.4){\scalebox{0.8}{${\dg {-t\\ \beta_2}}$}}\ar[d]_{ht} & Z \oplus Y'\ar[r]^(0.6){{\dg{\gamma , g'}}}\ar@{..>}[d]_{\scalebox{0.8}{${\dg {a & b\\ \gamma & g'}}$}} & Z'\ar@{=}[d] \\
  \Sigma^{-1}Z'\ar[r]^{0} & \Sigma X\ar[r]^(0.4){\scalebox{0.8}{${\dg{1\\ 0}}$}}\ar[d]_{\scalebox{0.8}{${\dg {-\Sigma f\\ -\Sigma \alpha}}$}} & \Sigma X \oplus Z'\ar[r]^(0.6){{\dg{0 , 1}}}\ar@{..>}[d]_{\scalebox{0.8}{${\dg {-\Sigma f & c\\ -\Sigma \alpha & d}}$}} & Z'\ar[d]^{{-(\Sigma s)h'}} \\
  {} & \Sigma Y \oplus \Sigma X'\ar@{=}[r]\ar[d]_{\scalebox{0.8}{${\dg{-\Sigma \beta_1 , \Sigma s}}$}} & \Sigma Y \oplus \Sigma X'\ar[r]^(0.6){\scalebox{0.8}{${\dg{-\Sigma \beta_1 , \Sigma s}}$}}\ar@{..>}[d]_{\scalebox{0.8}{${\dg{\Sigma g & 0\\ -\Sigma \beta & \Sigma f'}}$}} & \Sigma E \\
  {} & \Sigma E\ar[r]_(0.4){\scalebox{0.8}{${\dg {-\Sigma t\\ \Sigma \beta_2}}$}} & \Sigma Z\oplus \Sigma Y' & {}
}
\end{equation*}

By \Cref{triangle} one has $\dg {a & b\\ \gamma & g'} \dg{ -g & 0\\  \beta & - f'}=0$, and then $bf'=0$. Thus $b: Y'\to \Sigma X$ factors through $g': Y'\to Z'$, i.e.,  $b=ug'$ for some $u: Z'\to \Sigma X$. By the assumption $\mathcal T(Z', \Sigma X)=0$, thus $u=0$ and $b= 0$. Since $\dg{1\\ 0}ht=\dg {a & b\\ \gamma & g'} \dg{-t\\ \beta_2}$, $ht=-at+b\beta_2=-at$, and hence $(a+h)t=0$. Then $a+h: Z\to \Sigma X$ factors through $h'\gamma: Z\to \Sigma X'$, i.e.,  $a+h=vh'\gamma$ for some $v: \Sigma X'\to \Sigma X$. Since $vh'\in \mathcal T(Z', \Sigma X)=0$, $a+h=0$. Thus $\dg {a & b\\ \gamma & g'}=\dg {-h & 0\\ \gamma & g'}$.

\vskip5pt

By \Cref{triangle} one has $\dg{\Sigma g & 0\\ -\Sigma \beta & \Sigma f'} \dg {-\Sigma f & c\\ -\Sigma \alpha & d}=0$, and then $(\Sigma g)c=0$. Thus $c: Z'\to \Sigma Y$ factors through $\Sigma f:\Sigma X\to \Sigma Y$,
i.e.,  $c=fu'$ for some $u': Z'\to \Sigma X$, which is the zero map. So $c=0$. Since $-(\Sigma s )h'\dg{1, 0}=\dg{-\Sigma \beta_1 , \Sigma s } \dg {-\Sigma f & c\\ -\Sigma \alpha & d}$,  $-(\Sigma s )h'=-(\Sigma \beta_1) c+(\Sigma s)d=(\Sigma s)d$, and hence $(\Sigma s)(d+h')=0$. Then $d+h': Z'\to \Sigma X'$ factors through $h'\gamma=(\Sigma \alpha) h: Z\to \Sigma X'$, i.e., $d+h'=v'(\Sigma \alpha) h $ for some $v': Z'\to Z$. Since $h v'\in \mathcal T(Z', \Sigma X)=0$, $d+h'=0$. Thus $\dg {-\Sigma f & c\\ -\Sigma \alpha & d}=\dg {-\Sigma f & 0\\ -\Sigma \alpha & -h'}$.

\vskip5pt

Therefore the distinguished triangle \Cref{triangle} reads as
\begin{equation*}    Z\oplus Y' \xrightarrow{\scalebox{0.8}{$\dg{ -g & 0\\  \beta & - f'}$}}   Z\oplus Y'\xrightarrow{\scalebox{0.8}{$\dg {-h & 0\\ \gamma & g'}$}} \Sigma X\oplus Z'\xrightarrow{\scalebox{0.8}{$\dg {-\Sigma f & 0\\ -\Sigma \alpha & -h'}$}} \Sigma Y\oplus \Sigma X'
\end{equation*}
By the following isomorphisms of distinguished triangles
\begin{equation*}
\xymatrix@R=0.8cm@C=1.4cm{
  Z\oplus  Y'\ar[r]^{\scalebox{0.8}{$ \dg{ -g & 0\\ \beta & -f'}$}}\ar[d]^{\cong}_{\scalebox{0.8}{${\dg{1 & 0\\ 0 & -1}}$}} & Z \oplus Y'\ar@{..>}[r]^{\scalebox{0.8}{$ \dg {-h & 0\\ \gamma & g'} $}}\ar@{=}[d] & \Sigma X \oplus Z'\ar@{..>}[r]^{\scalebox{0.8}{$ \dg {-\Sigma f & 0\\ -\Sigma \alpha & -h'}$}}\ar@{=}[d] & \Sigma Y \oplus \Sigma X'\ar[d]^{\scalebox{0.8}{${\dg{1 & 0\\ 0 & -1}}$}}_{\cong} \\
  Z\oplus  Y'\ar[r]^{ \scalebox{0.8}{$\dg{ -g & 0\\ \beta & f'}$}} & Z \oplus Y'\ar@{..>}[r]^{ \scalebox{0.8}{$\dg {-h & 0\\ \gamma & g'}$}} & \Sigma X \oplus Z'\ar@{..>}[r]^{\scalebox{0.8}{$\dg {-\Sigma f & 0\\ \Sigma \alpha & h'}$}} & \Sigma Y \oplus \Sigma X'
}
\end{equation*}
one sees that $(\alpha, \beta,\gamma)$ is a good morphism.
    \end{proof}

\section{Weakly idempotent completness}

Several equivalent conditions of weakly idempotent complete extriangulated categories will be shown, inspired by Heller's axiom \cite[Appendix B]{Buh10} and \cite[Proposition 1.31]{Gil25};
on the other hand, some variants of these conditions will be discussed when an extriangulated category is not assumed to be weakly idempotent complete.

\subsection{Equivalent characterizations} \ A morphism  $f': A'\to B'$ is a \textit{retract} of morphism $f: A\to B$ if there is a commutative diagram
    \begin{equation*}
\xymatrix@R=0.4cm{
  A'\ar[r]^{i}\ar[d]_{{f'}} & A\ar[r]^{p}\ar[d]^{f} & A'\ar[d]^{{f'}} \\
  B'\ar[r]^{j} & B\ar[r]^{q} & B'
}
\end{equation*}
such that  $pi=1_{A'}$ and $qj=1_{B'}$.

\vskip5pt

\begin{proposition}\label{thm:wic} \ Let $(\mathcal{C}, \mathbb E, \mathfrak{s})$ be an extriangulated category. The following statements are equivalent:
\vskip5pt
$(1)$ \  $\mathcal C$ is weakly idempotent complete.
\vskip5pt
$(2)$ \ For  $f: X\to Y$ and $g: Y\to Z$, if $gf$ is an $\mathbb E$-inflation, then so is $f$.
\vskip5pt
$(2)'$ \  For $f: X\to Y$ and $g: Y\to Z$, if $gf$ is an $\mathbb E$-deflation, then so is $g$.
\vskip5pt
$(3)$ \  The class of $\mathbb E$-inflations is closed under retracts.
\vskip5pt
$(3)'$ \  The class of  $\mathbb E$-deflations is closed under retracts.	
\vskip5pt
$(4)$ \  For $i: X\to Y$,  $Z\in \mathcal C$, if $\dg{i \\ 0}: X\to Y\oplus Z$ is an $\mathbb E$-inflation, then so is $i$.
\vskip5pt
$(4)'$   For $p: X\to Y$, $Z\in \mathcal C$, if $(p,0): X\oplus Z\to Y$ is an $\mathbb E$-deflation, then so is $p$.
\vskip5pt
$(5)$ \  {\rm ($3 \times 3$ Lemma)} \ Suppose that there are four $\mathbb E$-triangles $\con{A_1}{f_A}{A_2}{g_A}{A_3}{\delta_A}$, $\con{B_1}{f_B}{B_2}{g_B}{B_3}{\delta_B}$, $\con{A_2}{i_2}{B_2}{p_2}{C_2}{\varepsilon_2}$ and $\con{A_3}{i_3}{B_3}{p_3}{C_3}{\varepsilon_3}$ such that $g_B   i_2 = i_3 g_A$. Then there is a $3 \times 3$ diagram
        \begin{equation*}
\xymatrix@R=0.4cm{
  A_1\ar[r]^{{f_A}}\ar@{..>}[d]_{{i_1}} & A_2\ar[r]^{{g_A}}\ar[d]^{{i_2}} & A_3\ar@{-->}[r]^{{\delta_A}}\ar[d]^{{i_3}} & {} \\
  B_1\ar[r]^{{f_B}}\ar@{..>}[d]_{{p_1}} & B_2\ar[r]^{{g_B}}\ar[d]^{{p_2}} & B_3\ar@{-->}[r]^{{\delta_B}}\ar[d]^{{p_3}} & {} \\
  C_1\ar@{..>}[r]^{{f_C}}\ar@{..>}[d]_-{{\varepsilon _1}} & C_2\ar@{..>}[r]^{{g_C}}\ar@{-->}[d]^{{\varepsilon _2}} & C_3\ar@{..>}[r]^{{\delta_C}}\ar@{-->}[d]^{{\varepsilon _3}} & {} \\
  {} & {} & {} & {}
}
\end{equation*}
\end{proposition}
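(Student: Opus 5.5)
The plan is to prove the cycle $(1)\Rightarrow(2)\Rightarrow(3)\Rightarrow(4)\Rightarrow(1)$ and its dual $(1)\Rightarrow(2)'\Rightarrow(3)'\Rightarrow(4)'\Rightarrow(1)$. Then $(5)$ is attached through $(1)\Rightarrow(5)\Rightarrow(4)$.

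\emph{The implication $(1)\Rightarrow(2)$.} This, together with its dual, is the content of \Cref{con:wic} and \cite[Proposition C]{Kla22}, so I take it as known.

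\emph{The implication $(2)\Rightarrow(3)$.} Let $f'$ be a retract of $f$ via $i,p,j,q$ as in the definition. Since $(2)$ already gives $(1)$ (via \cite{Kla22}), the section $i$ has a cokernel. Hence $i$ is a split inflation, i.e.\ it is realized by a split $\mathbb E$-triangle. Then $fi=jf'$ is an $\mathbb E$-inflation by ET4, and $(2)$ yields that $f'$ is an $\mathbb E$-inflation.

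\emph{The implication $(3)\Rightarrow(4)$.} The morphism $i$ is a retract of $\dg{i\\0}$ through the diagram with top row $X=X=X$ and bottom row $Y\xrightarrow{\dg{1\\0}}Y\oplus Z\xrightarrow{(1,0)}Y$. Both squares commute, so $(3)$ applies.

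\emph{The implication $(4)\Rightarrow(1)$.} Let $s:A\to B$ be a section with $rs=1_A$. The morphism $\dg{s\\1}:A\to B\oplus A$ is an $\mathbb E$-inflation by \Cref{prop:comp-infldefl}$(1)$, applied after swapping summands, since $1_A$ is an inflation. Composing with the automorphism $\dg{1&0\\-r&1}$ of $B\oplus A$ gives $\dg{s\\0}$, which is therefore an inflation by \Cref{isomorphisms}. By $(4)$, $s$ is an $\mathbb E$-inflation. Its $\mathbb E$-triangle has $\delta=0$ by \Cref{lem:infl-section}$(1)$, so it splits and $s$ has a cokernel. The primed chain is dual.

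\emph{The implication $(5)\Rightarrow(4)$.} Suppose $\dg{i\\0}:X\to Y\oplus Z$ is an $\mathbb E$-inflation, and take a realizing $\mathbb E$-triangle $\con{X}{\dg{i\\0}}{Y\oplus Z}{p_2}{C_2}{\varepsilon_2}$. Apply $(5)$ to the following data:
\begin{itemize}
\item first row $X\xrightarrow{1}X\to 0$;
\item second row the split triangle $Y\xrightarrow{\dg{1\\0}}Y\oplus Z\xrightarrow{(0,1)}Z$;
\item middle column $i_2=\dg{i\\0}$ with the chosen triangle;
\item right column $0\to Z\xrightarrow{1}Z$.
\end{itemize}
The relation $g_Bi_2=0=i_3g_A$ holds. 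The resulting $i_1$ satisfies $\dg{1\\0}i_1=\dg{i\\0}$, so $i_1=i$. The first column of the $3\times3$ diagram is an $\mathbb E$-triangle, so $i$ is an $\mathbb E$-inflation.

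\emph{The implication $(1)\Rightarrow(5)$, which I expect to be the main obstacle.} The difficulty is that $i_2$ is given, whereas \Cref{cor:3x3-lemma-inflation} constructs its own middle map; moreover case $(15)'$ (?, i, i) is ``implicit'' in general, so the WIC hypothesis must be used essentially.

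The first step is to produce $i_1$. By the construction of $\alpha'$ in \Cref{thm:hs-morphism}, where $\beta$ is kept fixed via \Cref{lem:hs-1?h}$(1)'$, \Cref{lem:weak} and \Cref{lem:hs-?g1}$(2)$, I obtain $i_1$ such that $(i_1,i_2,i_3)$ is a homotopic morphism. To get the input morphism I need to apply this to, I would start from ET3$^{\rm op}$. Then $f_Bi_1=i_2f_A$ is an $\mathbb E$-inflation by ET4, and $(2)$ gives that $i_1$ is an inflation.

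The second step is to build the diagram. I would then rerun the proof of \Cref{cor:3x3-lemma-inflation} using the decomposition $i_2=j_2j_1$ supplied by the homotopic morphism, so that ET4 builds the middle column on the given $i_2$. Finally, the resulting $\mathbb E$-triangle on $i_2$ is compared with the prescribed one $(p_2,\varepsilon_2)$ through \Cref{isomorphisms}. I expect the bookkeeping of the extensions $\varepsilon_2$ and $\delta_C$ under this isomorphism to be the most delicate point.
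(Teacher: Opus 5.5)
Your proposal is correct. It matches the paper on (1)$\Rightarrow$(2), (2)$\Rightarrow$(3) and (3)$\Rightarrow$(4), and departs from it where (5) is involved.

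\textbf{Shared steps and a small citation fix.} The paper gets (1)$\Leftrightarrow$(2)$\Leftrightarrow$(2)$'$ by citation and then proves (1)+(2)$\Rightarrow$(3)$\Rightarrow$(4)$\Rightarrow$(2). Your (4)$\Rightarrow$(1) is the paper's (4)$\Rightarrow$(2) argument applied to $rs=1_A$, followed by \Cref{lem:infl-section}. As quoted in the paper, \cite[Proposition C]{Kla22} concerns the full WIC condition, both halves together. So it is not the right source for (2)$\Rightarrow$(1); the paper uses \cite[Proposition 3.33]{Msa22}. You need no citation there at all. (2)$\Rightarrow$(4) is immediate from $\dg{i\\0}=\dg{1\\0}i$, and your (4)$\Rightarrow$(1) then gives (1).

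\textbf{The direction from (5).} The paper proves (5)$\Rightarrow$(1). It feeds the rotated split triangle $Y\xrightarrow{\dg{p\\1-ip}}X\oplus Y\xrightarrow{(0,p)}X$ of a retraction $p$ into (5) and reads off that $g_C=p$ is an $\mathbb E$-deflation. Your (5)$\Rightarrow$(4) uses only split rows and reads $i_1=i$ directly off $f_Bi_1=i_2f_A$. This is slightly more direct and needs no isomorphism trick.

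\textbf{The direction into (5).} The paper simply cites \cite[Lemma 5.9]{NP19} to get (5) from (2) and (2)$'$. You instead derive it from the paper's own machinery, and your outline goes through.
\begin{enumerate}
\item ET3$^{\rm op}$ and the $\alpha'$-construction of \Cref{thm:hs-morphism} give $i_1$ with $(i_1,i_2,i_3)$ homotopic. This is exactly the first step of \Cref{prop:deflation-retract}.
\item Applying (2) to $f_Bi_1=i_2f_A$ makes $i_1$ an $\mathbb E$-inflation. This is the only place WIC is used, and it is precisely what fails in general, where $i_1$ is merely a retract of an inflation.
\item Then $j_1$ and $j_2$ are inflations by \Cref{prop:hs-inflation}, so the proof of \Cref{cor:3x3-lemma-inflation} runs verbatim on the given $i_2=j_2j_1$.
\end{enumerate}

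\textbf{The final comparison.} The step you flag as delicate is routine. ET4 produces a triangle on $i_2$ with deflation $p_2'\colon B_2\to C_2'$ and extension $\varepsilon_2'$. ET3 applied to $(1,1)$, together with \Cref{cone}, gives an isomorphism $c\colon C_2'\to C_2$ with $cp_2'=p_2$ and $\varepsilon_2'=c^\ast\varepsilon_2$. Replace $f_C,g_C$ by $cf_C$ and $g_Cc^{-1}$, keep $\delta_C$, and apply \Cref{isomorphisms}. All compatibilities transfer, for instance:
\begin{itemize}
\item $(cf_C)^\ast\varepsilon_2=f_C^\ast\varepsilon_2'=(f_A)_\ast\varepsilon_1$;
\item $(g_A)_\ast\varepsilon_2=t_\ast(c^{-1})^\ast(j_1)_\ast\varepsilon_2'=(g_Cc^{-1})^\ast t_\ast\theta=(g_Cc^{-1})^\ast\varepsilon_3$;
\item $cf_Cp_1=p_2f_B$ and $g_Cc^{-1}p_2=p_3g_B$.
\end{itemize}

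\textbf{What each route buys.} Your route costs more than a citation. In return it makes the proposition self-contained within the homotopic-morphism framework, isolates exactly where weak idempotent completeness enters, and yields a $3\times 3$ diagram in which $(i_1,i_2,i_3)$ is moreover a homotopic morphism.
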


	\begin{proof} \ Either $(2)$ or $(2)'$ implies $(1)$ (\cite[Proposition 3.33]{Msa22}), and $(1)$ implies $(2)$ and $(2)'$ (\cite[Proposition 2.7]{Kla22}). Hence $(1)$, $(2)$ and $(2)'$ are equivalent.

    \vskip5pt

    From $(1)$ and $(2)$ one gets $(3)$. Let $f'$ be a retract of some $\mathbb E$-inflation $f:$
		\begin{equation*}
\xymatrix@R=0.4cm{
  X'\ar[r]^{i}\ar[d]_{{f'}} & X\ar[r]^{p}\ar[d]^{f} & X'\ar[d]^{{f'}} \\
  Y'\ar[r]^{j} & Y\ar[r]^{q} & Y'
}
\end{equation*}
		By $(1)$,  $i$ has a cokernel, thus $i$ is an $\mathbb E$-inflation in a split $\mathbb E$-triangle. Note that $jf'=fi$ is an $\mathbb E$-inflation, by $(2)$ $f'$ is an $\mathbb E$-inflation. Thus one obtains $(3)$.

    \vskip5pt

    From $(3)$ one gets $(4)$. Consider $i: X\to Y$ and $Z\in \mathcal C$ given in $(4)$. The following commutative diagram shows that $i$ is a retract of $\dg{i \\ 0}:$
    \begin{equation*}
\xymatrix@R=0.5cm{
  X\ar@{=}[r]\ar[d]_{i} & X\ar@{=}[r]\ar[d]^{{\dg{i \\ 0}}} & X\ar[d]^{i} \\
  Y\ar[r]^(0.4){{\dg{1 \\ 0}}} & Y\oplus Z\ar[r]^(0.6){{\dg{1 , 0}}} & Y.
}
\end{equation*}
By assumption $i$ is an $\mathbb E$-inflation. Thus one obtains $(4)$.

    \vskip5pt

    From $(4)$ one gets $(2)$. Consider $f: X\to Y$ and $g: Y\to Z$ with $gf$ an $\mathbb E$-inflation. By \Cref{prop:comp-infldefl}$(1)$, $\dg{f \\ gf} : X\to Y\oplus Z$ is an $\mathbb E$-inflation. Hence $\dg{f \\ 0} = \dg{1 & 0 \\ -g & 1} \dg{f \\ gf}$ is an $\mathbb E$-inflation. By assumption $f$ is an $\mathbb E$-inflation. Thus one obtains $(2)$.

    \vskip5pt

	Note that from $(2)$ and $(2)'$ one gets $(5)$ (\cite[Lemma 5.9]{NP19}).
	
    \vskip5pt
	From $(5)$ one gets $(1)$. Consider the retraction $p: Y \to X$ with a right inverse $i$. Since $\dg{0 & p \\ i & 1-ip}\dg{0 & p \\ i & 1-ip} = \dg{1 & 0\\ 0 & 1}$, $\dg{0 & p \\ i & 1-ip}$ is an isomorphism. By \Cref{isomorphisms} $\con Y {\dg{p \\ 1-ip}}{X\oplus Y}{\dg{0 , p}}X 0$ is a split $\mathbb E$-triangle:
\begin{equation*}
\xymatrix@C=1.1cm{
  Y\ar[r]^(0.4){{\dg{0 \\ 1}}} & X \oplus Y\ar[r]^(0.6){{\dg{1 , 0}}}\ar[d]^-{\scalebox{0.8}{${\dg{0 & p \\ i & 1-ip}}$}}_-{\cong} & X\ar@{-->}[r]^{0} & {} \\
  Y\ar@{=}[u]\ar[r]^(0.4){\scalebox{0.8}{${\dg{p \\ 1-ip}}$}} & X \oplus Y\ar[r]^(0.6){{\dg{0 , p}}} & X\ar@{=}[u]\ar@{-->}[r]^{0}. & {}
}
\end{equation*}
By assumption there exists a $3 \times 3$ diagram
		\begin{equation*}
\xymatrix@R=0.4cm@C=1.1cm{
  X\ar@{=}[r]\ar@{..>}[d] & X\ar[r]\ar[d]^{\scalebox{0.8}{${\dg{1 \\ 0}}$}} & 0\ar[d]\ar@{-->}[r]^{0} & \\
  Y\ar[r]^(0.4){\scalebox{0.8}{${\dg{p \\ 1-ip}}$}}\ar@{..>}[d] & X \oplus Y\ar[r]^(0.6){{\dg{0,p}}}\ar[d]^{{\dg{0,1}}} & X\ar@{=}[d] \ar@{-->}[r]^{0} &\\
  Z\ar@{..>}[r] \ar@{..>}[d] & Y\ar@{..>}[r]^{p}\ar@{-->}[d]^{0} & X \ar@{..>}[r] \ar@{-->}[d]^{0} & \\
  & & & \\
}
\end{equation*}
		$p$ is an $\mathbb E$-deflation, which has a kernel by \Cref{lem:infl-section}$(3)$. Thus one obtains $(1)$.
\vskip5pt
Similarly $(3)'$ and $(4)'$ are equivalent to the others. This completes the proof.
		\end{proof}

\begin{remark} \ As shown in Case $(15)'$ of the table in Section 5.4,  \Cref{thm:wic}$(5)$ may fail if the category is not weakly idempotent complete.
\end{remark}

\subsection{Extriangulated categories without weakly idempotent completeness}

\vskip5pt

\begin{proposition}\label{prop:comp-infldefl-2} \ Let $(\mathcal{C}, \mathbb E, \mathfrak s)$ be an extriangulated category, and $f: X \to Y$ and $g: Y \to Z$ morphisms.

\vskip5pt

$(1)$ \ If $gf$ is an $\mathbb E$-inflation, then $f$ is a retract of an $\mathbb E$-inflation.

\vskip5pt

$(1)'$ \ If $gf$ is an $\mathbb E$-deflation, then $g$ is a retract of an $\mathbb E$-deflation.
\end{proposition}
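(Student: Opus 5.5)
The plan is to exploit the "in particular" clause of \Cref{prop:comp-infldefl}$(1)$, together with the retract diagram already used in the proof of \Cref{thm:wic}, $(3)\Rightarrow(4)$. I will only treat $(1)$; statement $(1)'$ is the dual and follows from \Cref{prop:comp-infldefl}$(1)'$ by the same argument.

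Assume $gf$ is an $\mathbb E$-inflation. First I claim that $\dg{f \\ gf}: X\to Y\oplus Z$ is an $\mathbb E$-inflation. To see this, regard $gf$ as an $\mathbb E$-inflation into $Z$ and apply the "in particular" part of \Cref{prop:comp-infldefl}$(1)$ with $s=f: X\to Y$. That gives the $\mathbb E$-inflation $\dg{gf \\ f}: X\to Z\oplus Y$. Composing with the swap isomorphism $Z\oplus Y\cong Y\oplus Z$ keeps it an $\mathbb E$-inflation, by \Cref{isomorphisms}. Next, compose with the automorphism $\dg{1 & 0 \\ -g & 1}$ of $Y\oplus Z$. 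Again using \Cref{isomorphisms}, this shows $\dg{f \\ 0}: X\to Y\oplus Z$ is an $\mathbb E$-inflation. (This is exactly the computation in the step $(4)\Rightarrow(2)$ of \Cref{thm:wic}, and it does not need weak idempotent completeness.)

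Finally, I will exhibit $f$ as a retract of $\dg{f \\ 0}$ via the commutative diagram
\begin{equation*}
\xymatrix@R=0.5cm{
  X\ar@{=}[r]\ar[d]_{f} & X\ar@{=}[r]\ar[d]^{{\dg{f \\ 0}}} & X\ar[d]^{f} \\
  Y\ar[r]^(0.4){{\dg{1 \\ 0}}} & Y\oplus Z\ar[r]^(0.6){{\dg{1 , 0}}} & Y
}
\end{equation*}
Both compositions of the horizontal maps are identities, and the two squares commute. Hence $f$ is a retract of the $\mathbb E$-inflation $\dg{f \\ 0}$.

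There is no real obstacle here. The only point that needs care is the justification that composing an $\mathbb E$-inflation with an isomorphism on the target gives an $\mathbb E$-inflation, and \Cref{isomorphisms} covers this directly (take $\alpha=1$ and $\beta$ the isomorphism).
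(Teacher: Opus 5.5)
Your proof is correct and follows essentially the same route as the paper: obtain $\dg{f \\ gf}$ as an $\mathbb E$-inflation from \Cref{prop:comp-infldefl}$(1)$, multiply by the automorphism $\dg{1 & 0 \\ -g & 1}$ to get $\dg{f \\ 0}$, and exhibit $f$ as a retract of it via the same diagram. The only differences are minor. You make the swap isomorphism $Z\oplus Y\cong Y\oplus Z$ explicit, and you justify composing with isomorphisms via \Cref{isomorphisms}, whereas the paper treats the automorphism as an $\mathbb E$-inflation and uses ET4; both justifications are valid.
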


    \begin{proof} \ One justifies $(1)$, and the assertion $(1)'$ can be proved dually. Since $gf$ is an $\mathbb E$-inflation, $\dg {f \\ gf}: X\to Y\oplus Z$ is also an $\mathbb E$-inflation by \Cref{prop:comp-infldefl}$(1)$.
As an isomorphism,  $\dg {1_Y & 0\\-g & 1_Z}: Y\oplus Z\to Y\oplus Z$ is an $\mathbb E$-inflation. Thus $\dg {1_Y & 0\\-g & 1_Z}\dg {f \\ gf}=\dg {f \\ 0}$ is an $\mathbb E$-inflation. By the commutative diagram

\begin{equation*}
\xymatrix@R=0.8cm{
  X\ar@{=}[r]\ar[d]_-{f} & X\ar@{=}[r]\ar[d]^{\dg {f\\ 0}} & X\ar[d]^{f} \\
  Y\ar[r]^(0.4){\dg{1\\ 0}} & Y\oplus Z\ar[r]^(0.6){{\dg{1,0}}} & Y
}
\end{equation*}
one sees that $f$ is a retract of the $\mathbb E$-inflation $\dg {f \\ 0}$. \end{proof}

\begin{lemma}\label{lem:hs-retraction}
	Let $(\mathcal{C}, \mathbb E, \mathfrak s)$ be an extriangulated category. Suppose that there is a homotopic square
	\begin{equation*}
\xymatrix@R=0.4cm{
  A_1\ar[r]^{u}\ar[d]_{f} & B_1\ar[d]^{g} \\
  A_2\ar[r]^{v} & B_2
}
\end{equation*}
If $v$ is a retraction then so is $u;$ and if $u$ is a section then so is $v$.
\end{lemma}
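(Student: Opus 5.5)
We use \Cref{lem:weak} together with the homotopic square. By \Cref{def:htsq} there is an $\mathbb E$-triangle
\[
\con{A_1}{\dg{u\\ f}}{B_1\oplus A_2}{\dg{g,-v}}{B_2}{\delta},
\]
where the square is read with top row $u$ and left column $f$. Both factorization properties of \Cref{lem:weak} therefore hold for this square.

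\textbf{First assertion.} Suppose $v$ is a retraction, say $vr=1_{B_2}$ for some $r\colon B_2\to A_2$. The goal is a morphism $s\colon B_1\to A_1$ with $us=1_{B_1}$.

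We apply \Cref{lem:weak}$(1)$ with $T=B_1$, taking $m=1_{B_1}$ and $n=rg\colon B_1\to A_2$. The compatibility condition required is $g m = v n$. This holds, since $v n = v r g = g = g\,1_{B_1}$.

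\Cref{lem:weak}$(1)$ then yields $s\colon B_1\to A_1$ with $us=m=1_{B_1}$ (and also $fs=rg$). Hence $u$ is a retraction.

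\textbf{Second assertion.} Suppose $u$ is a section, say $pu=1_{A_1}$ for some $p\colon B_1\to A_1$. The goal is a morphism $s'\colon B_2\to A_2$ with $s'v=1_{A_2}$.

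We apply \Cref{lem:weak}$(2)$ with $T'=A_2$, taking $n'=1_{A_2}$ and $m'=fp\colon B_1\to A_2$. The compatibility condition required is $m'u=n'f$. This holds, since $m'u=fpu=f=1_{A_2}f$.

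\Cref{lem:weak}$(2)$ then yields $s'\colon B_2\to A_2$ with $s'v=n'=1_{A_2}$ (and also $s'g=fp$). Hence $v$ is a section.

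The only delicate point is matching the orientation of the square to the roles in \Cref{lem:weak}: the horizontal maps $u,v$ there play the roles of $f,g$ in \Cref{eq:htsq}. Once the maps are assigned as above, each assertion reduces to a one-line verification of the compatibility hypothesis.
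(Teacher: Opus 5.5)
Your proof is correct and matches the paper's. The first assertion is exactly the paper's application of \Cref{lem:weak}$(1)$ with $m=1_{B_1}$ and $n=rg$. Your explicit use of \Cref{lem:weak}$(2)$ with $m'=fp$ and $n'=1_{A_2}$, with the orientation of the square handled correctly, is precisely the dual argument the paper leaves to the reader.
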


	\begin{proof} \ Suppose that there is $i: B_2\to A_2$ such that $vi=1_{B_2}$. Then $vig=g=g 1_{B_1}$. By \Cref{lem:weak}$(1)$ there exists $s : B_1 \to A_1$ such that $us=1_{B_1}$, i.e., $u$ is a retraction.
		\begin{equation*}
\xymatrix@R=0.4cm{
  B_1\ar@{..>}[dr]^{s}\ar@{=}@/^1pc/[drr]\ar@/_1pc/[ddr]_{ig} & {} & {} \\
  {} & A_1\ar[r]^{u}\ar[d]_{f} & B_1\ar[d]^{g} \\
  {} & A_2\ar[r]^{v} & B_2
}
\end{equation*}
	The other assertion is dually proved.
	\end{proof}

    \begin{lemma}\label{lem:hs-retract-inf} \ Let $(\mathcal{C}, \mathbb E, \mathfrak s)$ be an extriangulated category, $f': A'\to B'$ a retract of an $\mathbb E$-inflation $f:A\to B$.
    Then there exists an $\mathbb E$-inflation $\overline f$, a section $\overline j: B' \to \overline B$ and a retraction $\overline q: \overline B \to B'$
    such that
    \begin{equation*}
\xymatrix@R=0.4cm{
  A'\ar@{=}[r]\ar[d]_{{f'}} & A'\ar@{=}[r]\ar[d]^{{\overline f}} & A'\ar[d]^{{f'}} \\
  B'\ar[r]^{{\overline j}} & \overline B\ar[r]^{{\overline q}} & B'
}
\end{equation*}
commutes and $\overline{q}\overline{j}=1_{B'}$.\end{lemma}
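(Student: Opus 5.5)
We have a retract diagram $A'\xrightarrow{i}A\xrightarrow{p}A'$ over $B'\xrightarrow{j}B\xrightarrow{q}B'$ with $pi=1_{A'}$, $qj=1_{B'}$, $fi=jf'$, $f'p=qf$, and $f$ an $\mathbb E$-inflation. The plan is to replace $A$ by $A'$ while keeping an $\mathbb E$-inflation, using the fact that $i$ is a section. The natural candidate is $\overline B:=B\oplus A'$ or a variant built from $f$ and $p$, with $\overline f$ obtained from $f i$ by adding a component that makes it an $\mathbb E$-inflation.

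First, by \Cref{prop:comp-infldefl}$(1)$ (the "in particular" part), $\dg{f\\ p}:A\to B\oplus A'$ is an $\mathbb E$-inflation, since $f$ is. Then I precompose with $i$. The composite $\dg{f\\p}i=\dg{fi\\ 1_{A'}}=\dg{jf'\\1_{A'}}$ is not obviously an inflation, because $i$ itself need not be one. To avoid this, I would instead use the split structure directly. $\dg{jf'\\ 1_{A'}}:A'\to B\oplus A'$ is a section (left inverse $(0,1)$). Moreover, composing with the automorphism $\dg{1&-jf'\\0&1}$ of $B\oplus A'$ turns it into $\dg{0\\1}:A'\to B\oplus A'$, which is a split $\mathbb E$-inflation by ET2. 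Hence $\dg{jf'\\1}$ is an $\mathbb E$-inflation by \Cref{isomorphisms}. This is too cheap, however: with $\overline B=B\oplus A'$, $\overline j=\dg{j\\0}$ and $\overline q=(q,0)$, we get $\overline q\,\overline f=qjf'=f'$ and $\overline f=\dg{jf'\\1}\ne\overline j f'=\dg{jf'\\0}$. So the left square fails to commute, and the $A'$ component must be killed on the $\overline j$ side.

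The fix is to take $\overline j=\dg{j\\ f'}:B'\to B\oplus A'$? That needs a map $B'\to A'$, which we don't have. I will therefore look for $\overline f$ with $\overline f=\overline j f'$ exactly. The correct construction is probably the genuine one: $\overline f := \dg{jf'\\ ?}$ obtained via a homotopic square. Realize $f$ in an $\mathbb E$-triangle $A\xrightarrow{f}B\xrightarrow{c}C\dashrightarrow$. Then apply \Cref{lem:hs-f?1}$(1)$ with $p:A\to A'$ and $p_\ast\delta$ to get a homotopic morphism $(p,\beta,1_C)$ into an $\mathbb E$-triangle $A'\xrightarrow{\overline f}\overline B\to C$. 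Now $\overline f$ is an $\mathbb E$-inflation, and the left square $A\to B$, $A'\to\overline B$ is homotopic. Next use the weak pushout property \Cref{lem:weak}$(2)$ with $m'=q:B\to B'$ and $n'=f':A'\to B'$, which satisfy $qf=f'p$. This gives $\overline q:\overline B\to B'$ with $\overline q\beta=q$ and $\overline q\,\overline f=f'$. Set $\overline j:=\beta j$. Then $\overline q\,\overline j=qj=1_{B'}$, and $\overline j f'=\beta jf'=\beta f i=\overline f p i=\overline f$, so both squares commute. Finally, $\overline j$ is a section and $\overline q$ a retraction, since $\overline q\overline j=1$.

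The main obstacle was making the left square commute on the nose; pushing out along $p$ via \Cref{lem:hs-f?1}$(1)$ together with \Cref{lem:weak}$(2)$ resolves it, and the only check needed is the identity $\beta f=\overline f p$ from the homotopic morphism.
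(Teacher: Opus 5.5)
Your final construction is correct and is essentially the paper's proof. You realize $p_\ast\delta$ by $A'\xrightarrow{\overline f}\overline B\to C$, obtain a homotopic left square with $\beta f=\overline f p$ from \Cref{lem:hs-f?1}$(1)$, apply \Cref{lem:weak}$(2)$ to $q$ and $f'$ (using $qf=f'p$) to get $\overline q$, and set $\overline j=\beta j$, so that $\overline j f'=\beta fi=\overline f pi=\overline f$. The abandoned first attempt with $\overline B=B\oplus A'$ can simply be removed from the write-up.
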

    \begin{proof} \ By definition there is a commutative diagram with $pi=1_{A'}$ and $jq=1_{B'}:$
    \begin{equation*}
\label{eq:ret-inf-1}
\xymatrix@R=0.4cm{
  A'\ar[r]^{i}\ar[d]_{{f'}} & A\ar[r]^{p}\ar[d]^{f} & A'\ar[d]^{{f'}}\\
  B'\ar[r]^{j} & B\ar[r]^{q} & B'
}
\end{equation*} Consider an $\mathbb E$-triangle $\con AfBgC\delta$. Let $\con {A'}{\overline f}{\overline B}{\overline g}{\overline C}{p_\ast\delta}$ be an $\mathbb E$-triangle realizing $p_\ast \delta$.
By \Cref{lem:hs-f?1}$(1)$ there exist $t: B \to \overline B$ such that the left square in the following commutative diagram is a homotopic square.
        \begin{equation*}
\label{eq:ret-inf-2}
\xymatrix@R=0.4cm{
  {} & {} & {} & {} \\
  A\ar[r]^{f}\ar[d]_{p} & B\ar[r]^{g}\ar@{..>}[d]_{t}\ar@/^0.6pc/[ddr]^{q} & C\ar@{-->}[r]^{\delta}\ar@{=}[d] & {} \\
  A'\ar[r]^{{\overline f}}\ar@/_1pc/[drr]^{{f'}} & \overline B\ar[r]^{{\overline g}}\ar@{..>}[dr]^{{\overline q}} & C\ar@{-->}[r]^{{p_\ast \delta}} & {} \\
  {} & {} & B' & {}
}
\end{equation*}
 By \Cref{lem:weak}$(2)$ there exists a $\overline q : \overline B \to B'$ such that $f' = \overline q \overline f$ and $q = \overline q t$.
 Take $\overline j:= tj \in \mathcal{C}(B', \overline B)$. Then $\overline q \overline j = \overline q tj = qj = 1_{B'}$. Also
$$\overline f = \overline fpi =  t fi = t jf' = \overline j f'.$$
        This completes the proof.
    \end{proof}

\vskip5pt

\begin{proposition}\label{cor:infl-ret}
    Let $(\mathcal{C}, \mathbb E, \mathfrak s)$ be an extriangulated category. Then $f' : A' \to B'$ is a retract of some $\mathbb E$-inflation if and only if there exists $\overline B\in \mathcal C$ such that $\dg{f' \\ 0} : A' \to B' \oplus \overline B$ is an $\mathbb E$-inflation.
	\end{proposition}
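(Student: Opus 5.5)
The "if" direction is immediate. Suppose $\dg{f' \\ 0}: A'\to B'\oplus\overline B$ is an $\mathbb E$-inflation. The commutative diagram
\begin{equation*}
\xymatrix@R=0.5cm{
  A'\ar@{=}[r]\ar[d]_{f'} & A'\ar@{=}[r]\ar[d]^{{\dg{f' \\ 0}}} & A'\ar[d]^{f'} \\
  B'\ar[r]^(0.4){{\dg{1 \\ 0}}} & B'\oplus \overline B\ar[r]^(0.6){{\dg{1 , 0}}} & B'
}
\end{equation*}
exhibits $f'$ as a retract of $\dg{f' \\ 0}$, exactly as in the proof of \Cref{thm:wic} and \Cref{prop:comp-infldefl-2}.

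For the "only if" direction, let $f'$ be a retract of an $\mathbb E$-inflation. I will first apply \Cref{lem:hs-retract-inf}. It gives an $\mathbb E$-inflation $\overline f: A'\to \overline B$, a section $\overline j: B'\to\overline B$ and a retraction $\overline q:\overline B\to B'$ with $\overline q\,\overline j=1_{B'}$, $\overline f=\overline j f'$ and $f'=\overline q\,\overline f$. The aim is to show that $\dg{f' \\ 0}: A'\to B'\oplus\overline B$ is an $\mathbb E$-inflation, with this same $\overline B$.

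Since $\overline q\,\overline f=f'$ and $\overline f$ is an $\mathbb E$-inflation, \Cref{prop:comp-infldefl}$(1)$ (the "in particular" part, with $s=\overline q\,\overline f$) shows that $\dg{\overline f \\ f'}: A'\to \overline B\oplus B'$ is an $\mathbb E$-inflation. The plan is then to kill the $\overline B$-component by an automorphism of $\overline B\oplus B'$. The matrix $\dg{1 & -\overline j \\ 0 & 1}$ is an isomorphism, hence an $\mathbb E$-inflation, and
\[
\dg{1 & -\overline j \\ 0 & 1}\dg{\overline f \\ f'}=\dg{\overline f-\overline j f' \\ f'}=\dg{0 \\ f'}.
\]
By ET4 the composite $\dg{0 \\ f'}$ is an $\mathbb E$-inflation. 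Composing with the swap isomorphism $\overline B\oplus B'\cong B'\oplus\overline B$ gives that $\dg{f' \\ 0}$ is an $\mathbb E$-inflation.

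The only real input is \Cref{lem:hs-retract-inf}, which supplies the identity $\overline f=\overline j f'$. That identity is the main obstacle, since an arbitrary retract diagram only gives $fi=jf'$ with $f$ and $f'$ having different domains. Once it is available, the rest is the matrix trick above, together with the fact that isomorphisms are $\mathbb E$-inflations and $\mathbb E$-inflations are closed under composition.
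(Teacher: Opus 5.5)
Your proof is correct and is essentially the paper's own argument. You obtain $\overline f=\overline j f'$ from \Cref{lem:hs-retract-inf}, use \Cref{prop:comp-infldefl}$(1)$ to make the column with entries $\overline f$ and $f'$ an $\mathbb E$-inflation, and kill the $\overline B$-component with a unipotent automorphism of the direct sum. The only cosmetic difference is that the paper writes $\dg{f' \\ \overline f}$ and multiplies by $\dg{1 & 0\\ -\overline j & 1}$ directly, whereas you keep the order given by the ``in particular'' clause and add a swap at the end.
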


    \begin{proof} \ The ``if'' part is straightforward. Conversely, suppose that $f' : A' \to B'$ is a retract of an $\mathbb E$-inflation $f : A \to B$.
    By \Cref{lem:hs-retract-inf} there exists an $\mathbb E$-inflation $\overline f$, a section $\overline j: B' \to \overline B$ and a retraction $\overline q: \overline B \to B'$
    such that
    \begin{equation*}
\xymatrix@R=0.4cm{
  A'\ar@{=}[r]\ar[d]_{{f'}} & A'\ar@{=}[r]\ar[d]^{{\overline f}} & A'\ar[d]^{{f'}} \\
  B'\ar[r]^{{\overline j}} & \overline B\ar[r]^{{\overline q}} & B'
}
\end{equation*}
commutes and $\overline{q}\overline{j}=1_{B'}$. By \Cref{prop:comp-infldefl}$(1)$, $\dg{f' \\ \overline f}: A' \to B' \oplus \overline B$ is an $\mathbb E$-inflation. Thus $\dg{f' \\ 0} = \dg{1 & 0\\ -\overline j & 1} \dg{f' \\ \overline f}$ is also an $\mathbb E$-inflation.
    \end{proof}

\begin{remark} \ The result above in fact gives an understanding of the weakly idempotent completion of an extriangulated category,  as defined in \cite[Definition 3.26]{Msa22}.
\end{remark}

\vskip5pt

\begin{corollary}\label{homotopyretract}
	Let $(\mathcal{C}, \mathbb E, \mathfrak s)$ be an extriangulated category. Suppose that there is a homotopic square
	\begin{equation*}
\xymatrix@R=0.4cm{
  A_1\ar[r]^{u}\ar[d]_{f} & B_1\ar[d]^{g} \\
  A_2\ar[r]^{v} & B_2
}
\end{equation*}
\vskip5pt
$(1)$ \ $f$ is a retract of an $\mathbb E$-inflation if and only if $g$ is a retract of an $\mathbb E$-inflation.

    \vskip5pt
$(1)'$ \  $f$ is a retract of an $\mathbb E$-deflation if and only if $g$ is a retract of an $\mathbb E$-deflation.

\end{corollary}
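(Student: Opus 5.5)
By \Cref{cor:infl-ret}, $f$ is a retract of an $\mathbb E$-inflation if and only if there is an object $W$ with $\dg{f \\ 0}: A_1\to A_2\oplus W$ an $\mathbb E$-inflation. The plan is to transport this criterion across the homotopic square by enlarging it with a split summand. We only treat $(1)$ and only the direction ``$f$ retract $\Rightarrow$ $g$ retract''. The converse follows by the symmetry of the homotopic square: swapping the roles of $(u,f)$ and $(g,v)$ merely permutes the middle term and changes a sign, so \Cref{isomorphisms} still gives an $\mathbb E$-triangle. Statement $(1)'$ is dual.

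\emph{Step 1: enlarge the square.} Assume $\dg{f\\0}:A_1\to A_2\oplus W$ is an $\mathbb E$-inflation. Consider the square with top row $u:A_1\to B_1$, left column $\dg{f\\0}$, right column $\dg{g\\0}:B_1\to B_2\oplus W$, and bottom row $\dg{v&0\\0&1}:A_2\oplus W\to B_2\oplus W$. I claim this square is homotopic. Indeed, the $\mathbb E$-triangle
\[
A_1\xrightarrow{\dg{u\\f}}B_1\oplus A_2\xrightarrow{\dg{g,-v}}B_2 \dashrightarrow
\]
can be added (ET2, additivity of $\mathfrak s$) to the split $\mathbb E$-triangle $0\to W\xrightarrow{1}W\overset{0}{\dashrightarrow}$. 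The result is
\[
A_1\to B_1\oplus A_2\oplus W\to B_2\oplus W,
\]
whose maps have exactly the matrix shape $\dg{u\\ \dg{f\\0}}$ and $\dg{\dg{g\\0},-\dg{v&0\\0&1}}$, up to a sign on the $W$ coordinate. That sign is absorbed by \Cref{isomorphisms}.

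\emph{Step 2: transfer the inflation property.} Apply \Cref{prop:hs-inflation}$(1)$ to the transposed square. This is legitimate because transposing a homotopic square gives a homotopic square: swap the summands of $B_1\oplus A_2$ and negate via \Cref{isomorphisms}. In the transposed square the left column $\dg{f\\0}$ is an $\mathbb E$-inflation, so the opposite side $\dg{g\\0}$ is also an $\mathbb E$-inflation. By \Cref{cor:infl-ret} again, $g$ is a retract of an $\mathbb E$-inflation.

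\emph{Expected obstacle.} The only delicate point is the sign and orientation bookkeeping. We must make sure the direct-sum triangle really matches the homotopic square in \Cref{def:htsq} after the isomorphism of \Cref{isomorphisms}, and that \Cref{prop:hs-inflation}$(1)$ is applied with the correct pair of parallel sides, namely the vertical ones after transposing. For $(1)'$ the same argument works with $(f,0):A_1\oplus W\to A_2$, the dual form of \Cref{cor:infl-ret}, and \Cref{prop:hs-inflation}$(2)$.
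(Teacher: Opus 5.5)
Your proposal is correct and is essentially the paper's proof. You reduce via \Cref{cor:infl-ret} to $\dg{f\\0}$ being an $\mathbb E$-inflation, add a split $\mathbb E$-triangle on $W$ to the defining $\mathbb E$-triangle (ET2) to obtain the enlarged homotopic square, apply \Cref{prop:hs-inflation}$(1)$, and return via \Cref{cor:infl-ret}. You also make explicit the sign on the $W$-coordinate and the transposition, both of which the paper leaves implicit.

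One correction concerns the converse. It does not follow from transposition symmetry: transposing fixes the corners $A_1$ and $B_2$, so it never exchanges $f$ and $g$. Instead, Step 1 uses no hypothesis on $f$, and \Cref{prop:hs-inflation}$(1)$ is an equivalence. So if $\dg{g\\0}:B_1\to B_2\oplus W$ is an $\mathbb E$-inflation, the same enlarged square shows that $\dg{f\\0}:A_1\to A_2\oplus W$ is one.
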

\begin{proof} \ One only proves $(1)$. Suppose that $f$ is a retract of some $\mathbb E$-inflation. By \Cref{cor:infl-ret} there is $C\in \mathcal C$ such that $\dg{f\\ 0}: A_1\to A_2\oplus C$ is an $\mathbb E$-inflation. Note that there is an $\mathbb E$-triangle $\con {A_1}{\dg{u\\f}}{B_1\oplus A_2}{\dg{g,-v}}{B_2}\delta$. By ET2, $\con {A_1}{\dg{u\\f\\0}}{B_1\oplus A_2\oplus C}{\dg{g&-v&0\\0&0&-1}}{B_2\oplus C}\delta$ is also an $\mathbb E$-triangle. By definition
\begin{equation*}
\xymatrix@R=0.7cm{
  A_1\ar[r]^{u}\ar[d]_{ \dg{f\\ 0}} & B_1\ar[d]^{ \dg{g\\ 0}} \\
  A_2\oplus C\ar[r]^{\dg{v& 0\\0& 1}} & B_2\oplus C
}
\end{equation*}
is a homotopic square. By \Cref{prop:hs-inflation}, $\dg{g\\ 0}$ is an $\mathbb E$-inflation. It follows from \Cref{cor:infl-ret} that $g$ is a retract of an $\mathbb E$-inflation. \end{proof}

\vskip5pt

Using \Cref{prop:comp-infldefl-2} and \Cref{homotopyretract}, one can get a similar version of $3\times 3$ Lemma (\cite[Lemma 5.9]{NP19}) in an extriangulated category, without assuming that it is weakly idempotent complete.

\vskip5pt

\begin{proposition}\label{prop:deflation-retract}
	Let $(\mathcal{C}, \mathbb E, \mathfrak s)$ be an extriangulated category. Suppose that there are four $\mathbb E$-triangles $\con{A_1}{f_A}{A_2}{g_A}{A_3}{\delta_A}$, $\con{B_1}{f_B}{B_2}{g_B}{B_3}{\delta_B}$, $\con{A_2}{i_2}{B_2}{p_2}{C_2}{\varepsilon_2}$ and $\con{A_3}{i_3}{B_3}{p_3}{C_3}{\varepsilon_3}$ such that $g_B i_2 = i_3 g_A$. Then there exist $i_1: A_1\to B_1$ and $g_C: C_2\to C_3$
	\begin{equation*}
\xymatrix@R=0.4cm{
  A_1\ar[r]^{{f_A}}\ar@{..>}[d]_{{i_1}} & A_2\ar[r]^{{g_A}}\ar[d]^{{i_2}} & A_3\ar@{-->}[r]^{{\delta_A}}\ar[d]^{{i_3}} & {} \\
  B_1\ar[r]^{{f_B}} & B_2\ar[r]^{{g_B}}\ar[d]^{{p_2}} & B_3\ar@{-->}[r]^{{\delta_B}}\ar[d]^{{p_3}} & {} \\
  {} & C_2\ar@{..>}[r]^{{g_C}}\ar@{-->}[d]^{{\varepsilon _2}} & C_3\ar@{-->}[d]^{{\varepsilon _3}} & {} \\
  {} & {} & {} & {}
}
\end{equation*}
	such that
	\vskip5pt
$(1)$ \ $i_1$ is a retract of some $\mathbb E$-inflation, $g_C$ is a retract of some $\mathbb E$-deflation$;$
	\vskip5pt
$(2)$ \ $(i_1,i_2,i_3)$ and $(g_A,g_B,g_C)$ are homotopic morphisms of $\mathbb E$-triangles.
\vskip5pt

Moreover, one can always complete a $3 \times 3$ diagram from the above conditions if and only if $\mathcal C$ is weakly idempotent complete.
\end{proposition}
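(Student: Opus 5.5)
Given the data, I first construct $i_1$ and then $g_C$, following the pattern of \Cref{thm:hs-morphism} but starting from the right-hand square. Since $g_Bi_2=i_3g_A$, $(i_2,i_3)$ is a pair as in ET3$^{\rm op}$. By the dual construction in \Cref{thm:hs-morphism} (the existence of $\alpha'$), there is $i_1:A_1\to B_1$ such that $(i_1,i_2,i_3)$ is a homotopic morphism of $\mathbb E$-triangles. Concretely, we realize $(i_3)^\ast\delta_B$ by an $\mathbb E$-triangle $\con{B_1}{s}{E}{t}{A_3}{}$. By \Cref{lem:hs-1?h}$(1)'$ and \Cref{lem:weak}, we factor $i_2=j_2j_1$ with $tj_1=g_A$, and then apply \Cref{lem:hs-?g1}$(2)$ to obtain $i_1$ with $(i_1,j_1,1)$ homotopic. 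The square $(i_1,j_1)$ is then a homotopic square, with horizontal arrows $f_A$ and $s$.

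For property $(1)$ of $i_1$, I would like to use that $i_2=j_2j_1$ is an $\mathbb E$-inflation. We know $(1,j_2,i_3)$ is homotopic, so its right square is homotopic. Since $i_3$ is an $\mathbb E$-inflation, \Cref{prop:hs-inflation}$(1)$ shows $j_2$ is an $\mathbb E$-inflation, but this does not directly give $j_1$. Instead, $j_2j_1=i_2$ is an $\mathbb E$-inflation, so \Cref{prop:comp-infldefl-2}$(1)$ shows $j_1$ is a retract of an $\mathbb E$-inflation. Then \Cref{homotopyretract}$(1)$ applied to the homotopic square with vertical arrows $i_1,j_1$ shows $i_1$ is a retract of an $\mathbb E$-inflation.

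Next I construct $g_C$. Since $g_B i_2=i_3g_A$, ET3 applied to the $\mathbb E$-triangles $\con{A_2}{i_2}{B_2}{p_2}{C_2}{\varepsilon_2}$ and $\con{A_3}{i_3}{B_3}{p_3}{C_3}{\varepsilon_3}$ gives a morphism $(g_A,g_B,\gamma)$. By \Cref{thm:hs-morphism} (the $\gamma'$ part), $\gamma$ can be replaced by some $g_C$ making $(g_A,g_B,g_C)$ homotopic. This decomposes as $(g_A,\beta_1,1)$ followed by $(1,\beta_2,g_C)$ with $\beta_2\beta_1=g_B$. The right square of $(1,\beta_2,g_C)$ is homotopic with horizontal arrows $p_3$ and the deflation of the middle triangle. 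Since $g_B$ is an $\mathbb E$-deflation, \Cref{prop:comp-infldefl-2}$(1)'$ shows $\beta_2$ is a retract of an $\mathbb E$-deflation. By \Cref{homotopyretract}$(1)'$ transported across that homotopic square, $g_C$ is a retract of an $\mathbb E$-deflation. The main bookkeeping risk is orienting these squares so that \Cref{homotopyretract} applies to the correct pair of parallel arrows; transposing a homotopic square is harmless up to a sign via \Cref{isomorphisms}.

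For the final equivalence, the ``if'' direction is \Cref{thm:wic}$(5)$. For ``only if'', I reuse the argument of \Cref{thm:wic} $(5)\Rightarrow(1)$ verbatim. Given a retraction $p$, we build the four $\mathbb E$-triangles displayed there. A $3\times3$ completion forces $p$ to be an $\mathbb E$-deflation with a kernel, which yields weak idempotent completeness.
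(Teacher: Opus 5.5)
Your proposal is correct and follows essentially the same route as the paper. You obtain $i_1$ from \Cref{thm:hs-morphism}, unwinding its $\alpha'$-construction directly instead of first producing a morphism via ET3$^{\rm op}$. You then apply \Cref{prop:comp-infldefl-2} to $i_2=j_2j_1$ and use \Cref{homotopyretract} to pass the retract property to $i_1$. Your $g_C$ argument is the dual case, which the paper only calls ``similar'' and which you spell out correctly. The final assertion is the equivalence $(1)\Leftrightarrow(5)$ of \Cref{thm:wic}.
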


\begin{proof} \ By ET3$^{\mathrm{op}}$ there is $i: A_1\to B_1$ such that $(i, i_2, i_3)$ is a morphism of $\mathbb E$-triangles. Then by \Cref{thm:hs-morphism} there is $i_1: A_1\to B_1$ such that $(i_1,i_2,i_3)$ is a homotopic morphism.
By the definition of a homotopic morphism, there is a commutative diagram
\begin{equation*}
\xymatrix@R=0.4cm{
  A_1\ar[r]^{{f_A}}\ar[d]_{{i_1}} & A_2\ar[r]^{{g_A}}\ar@{..>}[d]^{{j_1}} & A_3\ar@{-->}[r]^{{\delta_A}}\ar@{=}[d] & {} \\
  B_1\ar[r]^{s}\ar@{=}[d] & E\ar[r]^{t}\ar@{..>}[d]^{{j_2}} & A_3\ar@{-->}[r]^{{(i_3)^\ast \delta_B}}\ar[d]^{{i_3}} & {} \\
  B_1\ar[r]^{{f_B}} & B_2\ar[r]^{{g_B}} & B_3\ar@{-->}[r]^{{\delta_B}} & {}
}
\end{equation*}
 with $i_2=j_2j_1$. By \Cref{prop:comp-infldefl-2}, $j_1$ is a retract of some $\mathbb E$-inflation, and hence by \Cref{homotopyretract}, $i_1$ is also a retract of some $\mathbb E$-inflation. The construction of $g_C$ is similar.

 By \Cref{thm:wic}$(5)$, one can always complete a $3 \times 3$ diagram from the above conditions if and only if $\mathcal C$ is weakly idempotent complete.
 This completes the proof.
	\end{proof}

  A weakly idempotent complete extriangulated category has the $\rm (d,i\text{-}d,?)$-variant of $4\times 4$ Lemma.

\begin{proposition} \ 	Let $(\mathcal{C}, \mathbb E, \mathfrak s)$ be a weakly idempotent complete extriangulated category. Suppose that
		\begin{equation*}
\xymatrix@R=0.4cm{
  X\ar[r]^{f}\ar[d]_-{\alpha} & Y\ar[r]^{g}\ar[d]^-{\beta} & Z\ar@{-->}[r]^{\delta} & {} \\
  L\ar[r]^{u} & M\ar[r]^{v} & N\ar@{-->}[r]^{{\varepsilon }} & {}
}
\end{equation*}
is a diagram of $\mathbb E$-triangles such that  $u\alpha=\beta f$, $\alpha$ is an $\mathbb E$-deflation and $\beta$ is an $\mathbb E$-inflations and an $\mathbb E$-deflations.	Then $\alpha$ is also an $\mathbb E$-inflation, and there is an $\mathbb E$-deflation $\gamma$ such that $(\alpha, \beta, \gamma)$ is a homotopic morphism of $\mathbb E$-triangles. Moreover, there is a commutative diagram
	\begin{equation*}
\xymatrix@R=0.4cm{
  K_\alpha\ar@{..>}[r]^{{f'}}\ar@{..>}[d]_-{{i_\alpha }} & K_\beta\ar@{..>}[r]^{{g'}}\ar@{..>}[d]^{{i_\beta}} & K_\gamma\ar@{..>}[d]^{{i_\gamma}}\ar@{..>}[r] & {} \\
  X\ar[r]^{f}\ar[d]_-{\alpha} & Y\ar[r]^{g}\ar[d]^-{\beta} & Z\ar@{-->}[r]^{\delta}\ar@{..>}[d]^-{\gamma} & {} \\
  L\ar[r]^{u}\ar@{..>}[d]_{{p_\alpha}} & M\ar[r]^{v}\ar@{..>}[d]^{{p_\beta}} & N\ar@{-->}[r]^{{\varepsilon }} & {} \\
  C_\alpha\ar@{..>}[r]^{{u'}} & C_\beta & {} & {}
}
\end{equation*}
with a connecting morphism $z:K_\gamma\to C_\alpha$, such that the following are $\mathbb E$-triangles$:$

            \vskip5pt
\begin{tasks}[style=enumerate, label=$(\arabic*)$, label-width=4ex](2)
		\task $\con {K_\alpha}{f'}{K_\beta}{g'}{K_\gamma}{\ell^{-1} (z)};$
		\task $\con {K_\beta}{g'}{K_\gamma}z{C_\alpha}{-\ell^{-1} (u')};$
		\task $\con {K_\gamma}z{C_\alpha}{u'}{C_\beta}{\rho^{-1}(g')};$
		\task $\con {K_\alpha}{i_\alpha}X\alpha L{\ell^{-1}(p_\alpha)};$
		\task $\con {K_\beta}{i_\beta}Y\beta M{\ell^{-1}(p_\beta)};$
		\task $\con {K_\gamma}{i_\gamma}Z\gamma N{\ell^{-1}(p_\gamma)};$
		\task $\con X\alpha L{p_\alpha}{C_\alpha}{-\rho^{-1}(i_\alpha)};$
		\task $\con Y\beta M{p_\beta}{C_\beta}{-\rho^{-1}(i_\beta)}$
	\end{tasks}
        \vskip10pt
        with morphisms of $\mathbb E$-triangles $(i_\alpha,i_\beta,i_\gamma)$, $(f',f,u)$, $(f,u, u')$, $(g',g,v)$,
and that $C_\alpha=FK_\alpha$, $C_\beta=FK_\beta$, $u'=Ff'$, where $F$ is the functor defined in \Cref{F}, and $\ell$ and $\rho$ are defined in \Cref{binaturality}.
\vskip5pt
In particular, there is a sequence
$K_\alpha \xrightarrow{f'} K_\beta \xrightarrow{g'} K_\gamma \xrightarrow{z} C_\alpha \xrightarrow{u'} C_\beta$
such that any subsequent three objects form an $\mathbb E$-triangle as in $(1)$-$(3)$.
\end{proposition}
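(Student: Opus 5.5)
The plan is to first upgrade $\alpha$ to a morphism in $S$, and then reduce to the argument already used for \Cref{prop:4x4-S-S-?}, which in turn mirrors the proof of \Cref{thm:4x4-sss}.

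First, by \Cref{thm:hmor}(9), which is available because $\mathcal C$ is weakly idempotent complete, there is $\gamma: Z\to N$ such that $(\alpha,\beta,\gamma)$ is a homotopic morphism. Explicitly, \Cref{thm:hs-morphism} gives a decomposition $\beta=\beta_2\beta_1$ through an $\mathbb E$-triangle $\con LsEtZ\kappa$, together with the two $\mathbb E$-triangles of \Cref{def:htm}. The WIC condition (\Cref{con:wic}) applied to $\beta=\beta_2\beta_1$ yields:
\begin{itemize}
\item[] $\beta_1$ is an $\mathbb E$-inflation, since $\beta$ is one;
\item[] $\beta_2$ is an $\mathbb E$-deflation, since $\beta$ is one.
\end{itemize}
By \Cref{prop:hs-inflation}, the homotopic squares then transfer these properties: $\alpha$ is an $\mathbb E$-inflation (from $\beta_1$) and $\gamma$ is an $\mathbb E$-deflation (from $\beta_2$).

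Second, I will also want $\beta_1$ to be an $\mathbb E$-deflation, so that $\beta_1\in S$. Here I use that $\alpha$ is now both an $\mathbb E$-inflation and an $\mathbb E$-deflation. By \Cref{prop:hs-inflation}(2), applied to the homotopic left square $(\alpha,\beta_1)$, the deflation property of $\alpha$ passes to $\beta_1$. Hence $\alpha,\beta_1\in S$. Likewise $\beta_2$ is an $\mathbb E$-inflation by \Cref{prop:comp-infldefl}$(1)$, or directly by WIC, so $\beta_2\in S$.

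At this point $\alpha,\beta\in S$ and $(\alpha,\beta,\gamma)$ is a homotopic morphism. This is exactly the hypothesis of \Cref{prop:4x4-S-S-?}, so its conclusion gives the commutative diagram and the $\mathbb E$-triangles (1)--(8) verbatim. If I want to be explicit rather than cite it, I would rerun the first half of the proof of \Cref{thm:4x4-sss}, in order:
\begin{itemize}
\item[] use \Cref{prop:strict-et4op-2-h} and \Cref{prop:pb-2-h} on the two halves of the decomposition to obtain $K_\alpha$, $K_\gamma$ and $x$;
\item[] use \Cref{thm:hs-square-s} on $(f,s,1)$ to obtain $C_\alpha=FK_\alpha$ and $y$;
\item[] use \Cref{lem:hs-?g1}(2) and \Cref{prop:strict-et4op-2-h}(2) to obtain $f',g'$ and triangle (1);
\item[] use \Cref{thm:hs-square-s} again to obtain $z=yx$ and triangle (2);
\item[] use \Cref{lem:rotation} to obtain triangle (3) with $u'=Ff'$, and triangles (4)--(8).
\end{itemize}
The morphisms $(i_\alpha,i_\beta,i_\gamma)$, $(f',f,u)$, $(f,u,u')$ and $(g',g,v)$ are then checked by the same $\ell,\rho$ naturality computations as in \Cref{thm:4x4-sss}. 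In that argument only the parts involving $p_\gamma$ required $\gamma\in S$, and those parts are exactly the ones omitted here.

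The main obstacle is the second step, confirming that $\beta_1$ (equivalently $\alpha$) lies in $S$, since \Cref{thm:4x4-sss} and the $S$-rotation results (\Cref{thm:hs-square-s}, \Cref{lem:rotation}) need $\alpha,\beta_1\in S$ and $K_\beta,K_\gamma\in\mathcal L$. I handle it by the WIC deduction followed by \Cref{prop:hs-inflation} in both directions. One must also check that $\gamma$ being merely an $\mathbb E$-deflation suffices for triangle (6), $\con{K_\gamma}{i_\gamma}Z\gamma N{}$, and for $K_\gamma\in\mathcal L$. For the first, (6) comes from \Cref{prop:pb-2-h}, which needs only that $\beta_2$ and $\gamma$ are $\mathbb E$-deflations. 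For the second, $K_\gamma\in\mathcal L$ follows from triangle (1) and \Cref{prop:slr}(2), since $K_\alpha$ and $K_\beta$ lie in $\mathcal L$ by \Cref{prop:slr-2}(1), and $\mathcal L$ is closed under extensions.
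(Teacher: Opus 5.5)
Your main line is correct and is essentially the paper's: once $\alpha\in S$, everything follows by citing \Cref{prop:4x4-S-S-?}. The paper gets $\alpha\in S$ in one line: $u\alpha=\beta f$ is a composite of $\mathbb E$-inflations, so weak idempotent completeness (\Cref{con:wic}(1)) makes $\alpha$ an $\mathbb E$-inflation. You instead go through the decomposition $\beta=\beta_2\beta_1$ of \Cref{thm:hmor}(9) and transfer the properties along the homotopic squares. That detour is valid, just longer.

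However, two auxiliary claims in your proposal are false and should be deleted.

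\emph{First claim: $\beta_2\in S$.} This does not follow. Both \Cref{con:wic}(1) and \Cref{prop:comp-infldefl}(1) conclude that the \emph{first} factor $\beta_1$ of $\beta_2\beta_1$ is an $\mathbb E$-inflation, not $\beta_2$. By the homotopic right square, $\beta_2$ is an $\mathbb E$-inflation if and only if $\gamma$ is, and that fails in general.

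\emph{Second claim: $K_\gamma\in\mathcal L$.} This does not follow from \Cref{prop:slr}(2). That result gives closure of $\mathcal L$ under first and middle terms of $\mathbb E$-triangles, not third terms. Moreover, by \Cref{prop:slr-2}(2), $K_\gamma\in\mathcal L$ would force $\gamma\in S$.

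\emph{Counterexample to both.} Work in the weakly idempotent complete category $\mathcal D$ of \Cref{ex:no-good-completion}(3).
\begin{itemize}
\item Take the nonsplit $\mathbb E$-triangle $R\to 0\to\Sigma R$ on top and $0\to 0\to 0$ on the bottom, with $\alpha: R\to 0$ (an $\mathbb E$-inflation and $\mathbb E$-deflation) and $\beta=1_0$.
\item The only possible $\gamma$ is $\Sigma R\to 0$. It is an $\mathbb E$-deflation but not an $\mathbb E$-inflation, since its cone $\Sigma^2R$ is not in $\mathcal D$.
\item The decomposition is $\beta_1: 0\to\Sigma R$ and $\beta_2:\Sigma R\to 0$, so $\beta_2\notin S$.
\item The kernels are $K_\alpha=R$, $K_\beta=0$ and $K_\gamma=\Sigma R$, and $K_\gamma\notin\mathcal L$.
\end{itemize}

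Fortunately, neither claim is needed. The statement stops at $C_\beta$. The steps of \Cref{thm:4x4-sss} that use $\beta_2,\gamma\in S$ or $K_\gamma\in\mathcal L$ are exactly the ones dropped here: constructing $p_\gamma$, $C_\gamma$, $v'$, and the triangle $C_\alpha\to C_\beta\to C_\gamma$ via $u'\in S$.

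Triangle (6) needs only that $\beta_2$ and $\gamma$ are $\mathbb E$-deflations, via \Cref{prop:pb-2-h}, as you say. Its extension should be read as the one produced there. The expression $\ell^{-1}(p_\gamma)$ is not even defined when $K_\gamma\notin\mathcal L$.
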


\begin{proof}
  Since $\beta f$ is an $\mathbb E$-inflation, $\alpha$ is an $\mathbb E$-inflation by \Cref{prop:comp-infldefl}$(1)$. The remaining part of the proof follows from \Cref{prop:4x4-S-S-?}.
\end{proof}

\begingroup
\makeatletter\let\addcontentsline\@gobblethree\makeatother

\endgroup
\end{document}